\renewcommand{\eprint}[1]{\href{https://arxiv.org/abs/#1}{arXiv:#1}}
\newcommand{\pageafter}[1]{#1~pp.}
\newcommand{\crefdefpart}[2]{%
  \hyperref[#2]{\namecref{#1}~\labelcref*{#1}~\ref*{#2}}%
}
\setlist[itemize]{leftmargin=*}
\setlist[enumerate]{leftmargin=*,label=(\roman*),ref=(\roman*)}
\newtheorem{theorem}{Theorem}
\crefname{theorem}{Theorem}{Theorems}
\newtheorem{corollary}[theorem]{Corollary}
\crefname{corollary}{Corollary}{Corollaries}
\newtheorem{lemma}[theorem]{Lemma}
\crefname{lemma}{Lemma}{Lemmas}
\newtheorem{proposition}[theorem]{Proposition}
\crefname{proposition}{Proposition}{Propositions}
\crefname{conjecture}{Conjecture}{Conjectures}
\newtheorem{question}{Question}
\crefname{question}{Question}{Questions}
\theoremstyle{definition}
\newtheorem{definition}[theorem]{Definition}
\crefname{definition}{Definition}{Definitions}
\newtheorem{remark}[theorem]{Remark}
\crefname{remark}{Remark}{Remarks}
\crefname{example}{Example}{Examples}
\newtheorem{observation}[theorem]{Observation}
\crefname{observation}{Observation}{Observations}
\crefname{claim}{Claim}{Claims}
\crefname{assumption}{Assumption}{Assumptions}
\numberwithin{theorem}{section}
\newcommand{\cC}{\ensuremath{\mathcal C}}
\newcommand{\cD}{\ensuremath{\mathcal D}}
\newcommand{\cE}{\ensuremath{\mathcal E}}
\newcommand{\cF}{\ensuremath{\mathcal F}}
\newcommand{\cG}{\ensuremath{\mathcal G}}
\newcommand{\cI}{\ensuremath{\mathcal I}}
\newcommand{\cM}{\ensuremath{\mathcal M}}
\newcommand{\cN}{\ensuremath{\mathcal N}}
\newcommand{\cO}{\ensuremath{\mathcal O}}
\newcommand{\cR}{\ensuremath{\mathcal R}}
\newcommand{\cT}{\ensuremath{\mathcal T}}
\def\clap#1{\hbox to 0pt{\hss#1\hss}}
\def\mathclap{\mathpalette\mathclapinternal}
\def\mathclapinternal#1#2{\clap{$\mathsurround=0pt#1{#2}$}}
\newcommand{\bbC}{{\ensuremath{\mathbb C}} }
\newcommand{\bbM}{{\ensuremath{\mathbb M}} }
\newcommand{\bbN}{{\ensuremath{\mathbb N}} }
\newcommand{\bbP}{{\ensuremath{\mathbb P}} }
\newcommand{\bbR}{{\ensuremath{\mathbb R}} }
\newcommand{\bbZ}{{\ensuremath{\mathbb Z}} }
\newcommand{\1}{{\ensuremath{\mathbbm{1}}} }
\newcommand{\md}{\mathrm{d}}
\DeclareMathOperator{\sh}{short}
\DeclareMathOperator{\lng}{long}
\newcommand{\tf}{\tau^{\mathrm F}}
\renewcommand{\leq}{\leqslant}
\renewcommand{\geq}{\geqslant}
\renewcommand{\le}{\leqslant}
\renewcommand{\ge}{\geqslant}
\title{Bootstrap percolation is local}
\author[1]{Ivailo Hartarsky}
\author[2]{Augusto Teixeira}
\affil[1]{\small Technische Universit\"at Wien, Institut für Stochastik und Wirtschaftsmathematik, Wiedner Hauptstra\ss e 8-10, A-1040, Vienna, Austria
\texttt{ivailo.hartarsky@tuwien.ac.at}}
\affil[2]{\small IMPA, Estrada Dona Castorina, 110 - Rio de Janeiro, Brazil and IST, Av. Rovisco Pais, Nº 1, Lisbon, Portugal \texttt{augusto@impa.br}}
\date{\today}
\begin{document}
\maketitle

\begin{abstract}
Metastability thresholds lie at the heart of bootstrap percolation theory. Yet proving precise lower bounds is notoriously hard. We show that for two of the most classical models, two-neighbour and Frob\"ose, upper bounds are sharp to essentially arbitrary precision, by linking them to their local counterparts.

In Frob\"ose bootstrap percolation, iteratively, any vertex of the square lattice that is the only healthy vertex of a $1\times1$ square becomes infected and infections never heal. We prove that if vertices are initially infected independently with probability $p\to0$, then with high probability the origin becomes infected after
\[\exp\left(\frac{\pi^2}{6p}-\frac{\pi\sqrt{2+\sqrt2}}{\sqrt p}+\frac{O(\log^2(1/p))}{\sqrt[3]p}\right)\]
time steps. We achieve this by proposing a new paradigmatic view on bootstrap percolation based on locality. Namely, we show that studying the Frob\"ose model is \emph{equivalent} in an extremely strong sense to studying its local version. As a result, we completely bypass Holroyd's classical but technical hierarchy method, yielding the first term above and systematically used throughout bootstrap percolation for the last two decades. Instead, the proof features novel links to large deviation theory, eigenvalue perturbations and others.

We also use the locality viewpoint to resolve the so-called bootstrap percolation paradox. Indeed, we propose and implement an exact (deterministic) algorithm which exponentially outperforms previous Monte Carlo approaches. This allows us to clearly showcase and quantify the slow convergence we prove rigorously.

The same approach applies, with more extensive computations, to the two-neighbour model, in which vertices are infected when they have at least two infected neighbours and do not recover. We expect it to be applicable to a wider range of models and correspondingly conclude with a number of open problems.
\end{abstract}

\noindent\textbf{MSC2020:} 60C05, 60K35, 82C20, 60-04, 60-08, 60F10, 15A12
\\
\textbf{Keywords:} bootstrap percolation, infection time, sharp threshold, locality, slow convergence, bootstrap percolation paradox

	\tableofcontents

\section{Introduction}
\label{sec:intro}
\subsection{Background}
Bootstrap percolation is a statistical mechanics model branded in 1979 by Chalupa, Leath and Reich \cite{Chalupa79} (also see \cites{Pollak75,Kogut81}). It has since become classical and widespread not only in mathematics and statistical physics, but also in computer and social sciences. The model is so simple and natural to define that it has been reinvented and studied under a number of names, including freezing majority rule \cite{Maldonado18}, $k$-core model \cite{Guggiola15}, jamming percolation \cite{Toninelli07}, diffusion percolation \cite{Adler88}, dynamic monopoly \cite{Zehmakan19}, contagious sets \cite{Guggiola15}, target set selection \cite{Banerjee20} and many more. Bootstrap percolation admits fruitful links with low-temperature stochastic Ising model \cite{Cerf13}, kinetically constrained models of the liquid-glass transition \cite{Hartarsky23FA}, weak saturation in graph theory \cite{Balogh12a}, stability of perturbations of cellular automata \cite{Hartarsky22sharpness} and others. We direct the interested reader to \cites{Morris17,Morris17a,Hartarsky22phd,Adler91,DeGregorio09} for background and more references on the above.

The $r$-neighbour bootstrap percolation model on a graph $G$ is defined as follows (see \cref{subsec:models} for a more formal definition). Initially some of the vertices of $G$ are declared infected. Then, at each discrete time step, one additionally infects each vertex having at least $r$ infected neighbours, while infections never heal. This cellular automaton has been studied on a variety of graphs (e.g.\ high-dimensional hypercubes \cites{Balogh06,Balogh09,Balogh10}, trees \cites{Balogh06a,Biskup09,Bollobas14,Bradonjic15,Fontes08,Gunderson14,Shapira19a}, random regular graphs \cites{Janson09,Balogh07}, Erd\H os--R\'enyi graphs \cites{Angel21,Janson12,Kang16,Torrisi19}, hyperbolic lattices \cite{Sausset10}, Hamming tori \cites{Gravner15,Slivken14}, as well as graphs with ``real world'' features such as sparsity, small diameter, heavy-tailed degree distributions, community structure, etc.\ \cites{Abdullah18,Alves22,Amini10,Amini14,Bradonjic14,Candellero16,Falgas-Ravry23,Fountoulakis18,Gao15,Torrisi23,Whittemore21}) and under a variety of initial conditions. Yet, the most natural and classical setting from the statistical mechanics viewpoint is the following. We take the graph $G$ to be the lattice $\mathbb Z^d$ with edges between points at Euclidean distance 1 and initial condition given by infecting each vertex at random independently with probability $p\in(0,1)$. Thus, the model is completely defined by its \emph{dimension} $d$, \emph{threshold} $r$ and \emph{parameter} $p$. One of the most natural observables is the \emph{infection time}: the random variable $\tau$ given by the (possibly infinite) first time when the origin becomes infected.\footnote{It is also common to consider the finite-volume critical probability. That is, the smallest value of $p$ such that the probability that all vertices the $d$-dimensional torus $G=(\mathbb Z/n\mathbb Z)^d$ become infected eventually. All results we discuss admit essentially equivalent formulations in terms of this critical probability along the lines of \cref{lem:tau}.}

Many variants of the $r$-neighbour model have been introduced, some of which are very similarly behaved. Of particular relevance to us is Frob\"ose bootstrap percolation (on $\mathbb Z^2$) introduced by Frob\"ose in 1989 \cite{Frobose89}. In this model each vertex becomes infected if it completes a $1\times 1$ square of four infections and infections never heal, see \cref{eq:def:FBP} for a more formal definition. As we will see, this model is essentially the same as the two-neighbour one on $\mathbb Z^2$, but technically a bit simpler. We denote the corresponding infection time by~$\tf$. Yet another popular variant is \emph{modified two-neighbour bootstrap percolation}, where two opposite corners of a $1\times1$ square are sufficient to infect the other two corners.

In the initial work \cite{Chalupa79}, $r$-neighbour bootstrap percolation was studied on a regular tree and observed to exhibit a non-trivial phase transition in the sense that $\tau<\infty$ almost surely if and only if $p$ is larger than some critical value bounded away from 0 and 1 (see \cite{Adler88} and the references therein, also \cite{Frobose89} for Frob\"ose bootstrap percolation). Based on simulations, it was initially believed that for certain values of $d$ and $r$ this would also be the case for $r$-neighbour bootstrap percolation on $\mathbb Z^d$. However, this was disproved by van Enter \cite{VanEnter87} for $d=2$ and Schonmann \cite{Schonmann92} for any $d$, showing that the critical parameter is~$0$ for $r\le d$ and $1$ for $r>d$. Consequently, the same holds for the Frob\"ose and modified models. This mismatch between what simulations indicated and the true behaviour of the model is what became known as \emph{the bootstrap percolation paradox}, which has manifested in other erroneous predictions as explained below.

The trivial phase transition was first quantified in the seminal work of Aizenman and Lebowitz \cite{Aizenman88}, who established that for $d=r=2$ there exists $C>0$ such that
\begin{equation}
\label{eq:AL:result}
\lim_{p\to 0}\mathbb P_p\left(\exp\left(\frac{1}{Cp}\right)<\tau<\exp\left(\frac{C}{p}\right)\right)=1,
\end{equation}
the same proof applying to $\tf$ and modified bootstrap percolation. Indeed, the cause of the discrepancy between the numerical and rigorous results were strong finite-size effects. They also pointed out the metastable nature of bootstrap percolation---infection is triggered by the presence of extremely rare ``critical droplets'' which grow easily to invade all space--a phenomenon known to govern the nucleation of a crystal within a metastable liquid phase. Subsequently, suitable analogues of \cref{eq:AL:result} were established for all $r\le d$ by Cerf, Cirillo and Manzo \cites{Cerf99,Cerf02}. More recently, analogues of \cref{eq:AL:result} were proved for a vast class of cellular automata by Bollob\'as, Duminil-Copin, Morris and Smith \cite{Bollobas23} in two dimensions and somewhat weaker versions thereof by Balister, Bollob\'as, Morris and Smith \cites{Balister22,BalisterNaNb} in any dimension.

A breakthrough was made by Holroyd \cite{Holroyd03}, who improved \cref{eq:AL:result} to a sharp threshold:
\begin{equation}
    \label{eq:Holroyd}
\lim_{p\to 0}\mathbb P_p\left(\exp\left(\frac{\lambda_1-\varepsilon}{p}\right)<\tau<\exp\left(\frac{\lambda_1+\varepsilon}{p}\right)\right)=1
\end{equation}
for $\lambda_1=\pi^2/18$ and any $\varepsilon>0$ in the case $r=d=2$. For $\tf$, \cref{eq:Holroyd} holds for $\lambda_1^{\mathrm F}=\pi^2/6$ (the proof of \cite{Holroyd03}*{Theorem 4} applies without change) and similarly for modified bootstrap percolation with $\lambda_1^{\mathrm F}$.\footnote{See \cites{Bringmann12,Holroyd03a} for links between the constants appearing here and integer partitions in combinatorial number theory.} Beyond the importance of the result itself, \cite{Holroyd03} introduced, among others, the fundamental technique of ``hierarchies''. It immediately became the method of choice in bootstrap percolation and has been used in virtually every paper on bootstrap percolation (within the general ``critical'' class studied in \cite{Bollobas23}) with random initial condition in the last two decades. Similarly to \cref{eq:AL:result}, generalisations of \cref{eq:Holroyd} were also intensively sought after. Consequently, several other sharp thresholds have been proved \cites{Duminil-Copin13,Bollobas17,Holroyd03,Holroyd03a}, including a rather general class of models (albeit not as wide as the ones treated in \cite{Bollobas23} at the level of precision of \cref{eq:AL:result}) by Duminil-Copin and the first author \cite{Duminil-Copin23}, as well as the $r$-neighbour model on $\mathbb Z^d$ for all $r\le d$ by Balogh, Bollob\'as, Duminil-Copin and Morris \cites{Balogh12}.

Another reason for the importance of \cref{eq:Holroyd} is its relation with the aforementioned bootstrap percolation paradox. As already indicated, it is very natural to run Monte Carlo simulations to make quantitative predictions for the behaviour of the infection time $\tau$. This was done in order to determine whether the phase transition is trivial, what the correct scaling in \cref{eq:AL:result} is, what the correct value of $\lambda_1$ in \cref{eq:Holroyd} is, what the correct scaling of the error term in \cref{eq:Holroyd} is, etc. The paradox lies in the fact that these predictions have so far systematically been wrong, regardless how detailed rigorous results are taken into account to refine them. An early account of the paradox can be found in \cite{VanEnter90}, while subsequent reassessments include \cites{Gravner08,DeGregorio04,DeGregorio09}.

The large discrepancy between the numerical estimates of $\lambda_1$ \cites{Lenormand84,Nakanishi86,Adler91,Adler89} (also see \cite{Teomy14} for more contemporary simulations) and its actual value \cite{Holroyd03} motivated the quantification of the error term in \cref{eq:Holroyd}. The best upper bound was obtained by Gravner and Holroyd \cite{Gravner08} showing that for some $C>0$
\begin{equation}
\label{eq:GH}\lim_{p\to 0}\mathbb P_p\left(\tau<\exp\left(\frac{\lambda_1}{p}-\frac{1}{C\sqrt p}\right)\right)=1\end{equation}
and a similar argument applies to $\tf$ and $\lambda_1^{\mathrm{F}}$ and to modified bootstrap percolation (see \cite{Uzzell19} for an analogue of \cref{eq:GH} for any $r\le d$). 

Lower bounds were substantially harder to come by. First Gravner and Holroyd \cite{Gravner09} focused on a simplified version of the model called \emph{local two-neighbour bootstrap percolation}, which we discuss next, establishing that for some $C>0$
\begin{equation}
\label{eq:GH:loc}
\lim_{p\to 0}\mathbb P_p\left(\tau_{\mathrm{loc}}>\exp\left(\frac{\lambda_1}{p}-
\frac{\log^{C}(1/p)}{\sqrt p}\right)\right)=1,\end{equation}
where $\tau_{\mathrm{loc}}$ is the infection time for the local model, and similarly for Frob\"ose and modified bootstrap percolation.

In local two-neighbour bootstrap percolation (see \cref{subsec:models} for precise definitions and \cites{Gravner09,DeGregorio05,DeGregorio06,Bringmann12} for works on it) one of the initially infected sites (which are still chosen independently at random with probability $p$) is declared germed; germs are transmitted at each step to all infected neighbours; healthy sites with at least two infected neighbours, at least one of which is germed, become infected. In other words, infection spreads exactly as in ordinary ($r=2$) bootstrap percolation, but only starting from one particular spot. It is clear that (ordinary) bootstrap percolation infects the origin at least as fast as its local counterpart and for this reason all upper bounds in bootstrap percolation (again, more generally, within the critical universality class studied in \cite{Bollobas23}) actually (implicitly) work with a local model instead. It turns out that having to control only one growing patch of infection is significantly simpler than also accounting for the possibility that several such patches of various sizes and relative positions conspire in order to grow. Thanks to this, the exponent $C$ in \cref{eq:GH:loc} was slightly improved by Bringmann and Mahlburg \cite{Bringmann12}, where a few closely related models, including local Frob\"ose and modified bootstrap percolation, were also treated.

Equation \cref{eq:GH:loc} was proved for $\tau$ instead of $\tau_{\mathrm{loc}}$ by Gravner, Holroyd and Morris \cite{Gravner12} by combining \cite{Gravner09} with a refinement of the hierarchy method of \cite{Holroyd03}. Significantly developing this line of work, Morris and the first author \cite{Hartarsky19} matched \cref{eq:GH} by a lower bound to obtain that for $r=d=2$ there exists $C>0$ such that 
\begin{equation}
\label{eq:second}
\lim_{p\to 0}\mathbb P_p\left(\exp\left(\frac{\lambda_1}{p}-\frac{C}{\sqrt p}\right)<\tau<\exp\left(\frac{\lambda_1}{p}-\frac{1}{C\sqrt p}\right)\right)=1.
\end{equation}
Like previous works, this discarded conjectures based on numerical simulations \cite{Teomy14} (see \cites{Gravner12} for more). Analogues of the lower bound in \cref{eq:second} have not yet been obtained for any other model, perhaps owing to the level of technicality of the proof. Finally, the first author \cite{Hartarsky23mod-2n} recently noted that for the modified model \cref{eq:second} actually fails, due to the presence of an additional logarithmic factor in the second order term in the upper bound. This marked the first divergence between the modified and original models (see \cref{subsec:modified} for more on this matter), while the state of Frob\"ose bootstrap percolation remained unsettled.

\subsection{Results}
\subsubsection{Locality}
\label{subsubsec:locality}
The most important novelty of our work is to propose a new viewpoint on bootstrap percolation models based on \emph{locality}. The idea is to establish that the infection time in bootstrap percolation is not only upper bounded by its local counterpart, but that studying the local model is actually \emph{equivalent} to treating the non-local one. Namely, we prove the following (see \cref{cor:reduction} for the technical version we actually use, while the aesthetic one stated here follows directly from \cref{prop:reduction,lem:tau}).
\begin{theorem}[Locality]
\label{th:locality}
For some absolute constant $C>0$ we have
\begin{equation}
\label{eq:locality}\lim_{p\to 0}\mathbb P_p\left(1\le \frac{\tau_{\mathrm{loc}}}{\tau}\le \exp\left(\log^{C}(1/p)\right)\right)=1,
\end{equation}
where $\tau$ (resp.\ $\tau_{\mathrm{loc}}$) is the infection time in two-neighbour bootstrap percolation (resp.\ local) on $\mathbb Z^2$ defined in \cref{eq:def:tau} (resp.\ \cref{eq:def:tauloc}). The same holds for $\tf_{\mathrm{loc}}/\tf$ corresponding to (local) Frob\"ose bootstrap percolation.
\end{theorem}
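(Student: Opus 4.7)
The trivial inequality $\tau \le \tau_{\mathrm{loc}}$ holds deterministically, since restricting the spread of infection to germed sites can only delay the dynamics. The substance of the theorem is the reverse inequality $\tau_{\mathrm{loc}} \le \tau \cdot \exp(\log^{C}(1/p))$ in probability, and this is what the proposal addresses.

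The plan is to produce, for a typical realisation where the non-local model infects the origin by time $T$, an initially infected site $x$ within distance $\mathrm{poly}(1/p)$ of the origin such that the local model \emph{germed at $x$} also infects the origin within time $T \cdot \exp(\log^{C}(1/p))$. Because Aizenman--Lebowitz guarantees $\tau \ge \exp(1/(Cp))$ with high probability, a union bound over the $\mathrm{poly}(1/p)$ candidate locations for $x$ is affordable and yields the claimed comparison in probability. The natural choice for $x$ is an initially infected site sitting inside the \emph{critical droplet} through which the origin becomes infected in the non-local model.

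To make this precise, I would invoke the critical-droplet picture of two-dimensional critical bootstrap percolation (\cite{Holroyd03,Bollobas23}): with high probability, the origin is infected only after a distinguished rectangle of side length $\Theta(\log(1/p)/p)$ close to the origin has become internally spanned and subsequently grows to absorb the origin. The internal spanning event depends only on the initial configuration inside the droplet and can be realised by a single coalescing growing component; consequently, once $x$ is germed inside, the local dynamics reproduces the same internal-spanning event, and the subsequent monotone growth to the origin proceeds identically in both models. The two dynamics therefore agree on the entire post-critical part of the process.

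The main obstacle is the \emph{pre-critical} discrepancy: in the non-local model many small sub-critical droplets outside the chosen germ region may cooperate to accelerate the formation of the critical droplet, while the germed local model relies on its single seed. Quantifying this cooperation speed-up and bounding it by $\exp(\log^{C}(1/p))$ is where I expect the bulk of the work to lie, and it is presumably where the new large-deviation and eigenvalue-perturbation inputs advertised in the abstract replace Holroyd's hierarchy machinery. Once this slack is controlled, the stated comparison---and hence the theorem---follows.
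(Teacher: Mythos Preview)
Your proposal correctly reduces the question to comparing internal filling events for critical-scale rectangles (this is indeed the content of \cref{lem:tau}), but it has a genuine gap at the decisive step. In your third paragraph you assert that the internal spanning of the critical droplet ``can be realised by a single coalescing growing component''; this is precisely the claim that $\cI(R)$ essentially coincides with $\cI_{\mathrm{loc}}(R)$, which is what the theorem asserts and cannot be assumed. A rectangle may be internally filled through the merger of two sub-rectangles neither of which grows to fill $R$ from a single germ, and the entire difficulty is to show that such genuinely non-local mechanisms contribute negligibly to $\bbP_p(\cI(R))$. Your fourth paragraph acknowledges this as ``the bulk of the work'' but provides no method; moreover, the framing there (cooperation of droplets \emph{outside} the germ region) misses that the obstruction already lives \emph{inside} the critical droplet.

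The paper does not argue pathwise. It proves directly that the ratio $\Phi(R)=\bbP_p(\cI(R))/\bbP_p(\bar\cI_{\mathrm{loc}}(R))$ is at most $\exp(\log^{19}(1/p))$ for every rectangle with sides at most $\Lambda$ (\cref{prop:reduction}), by recursion on the size of $R$. One expands $\cI(R)\setminus\cI_{\mathrm{loc}}(R)$ over non-trivial two-piece decompositions $(S,T)$ via \cref{lem:decomposition} and the BK inequality, and for each pair compares $\bbP_p(\cI(T))$ to the crossing probability $\bbP_p(\cC(S,S+T'))$, so that the bound reduces to $\Phi(S)$ times a summable factor. Making this comparison work across all scales and aspect ratios of $T$ is the substance of the proof and relies on new a priori bounds (\cref{lem:isotropic:bound,lem:anisotropic:bound}) obtained by measure tilting; this is where the large-deviation input enters. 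The eigenvalue-perturbation ingredient you mention is not used for locality at all---it appears only later, in the proof of the sharp second-order constant in \cref{th:main}.
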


It is important to note that, while it follows e.g.\ from \cref{eq:GH:loc,eq:second} that $\tau$ and $\tau_{\mathrm{loc}}$ are close (up to a factor of $\exp(C/\sqrt{p})$), our approach is radically different. Namely, we do not recover \cref{th:locality} as a corollary of precise asymptotics on the two quantities involved, but rather prove it \emph{a priori}, only relying on rather crude bounds corresponding to \cref{eq:Holroyd}, thus completely bypassing \cref{eq:GH,eq:GH:loc,eq:second}. This results in the bound in \cref{th:locality} being much stronger than the precision of \cref{eq:second} and the more precise results we show below. Thus, \cref{th:locality} allows one to completely restrict one's attention to the local model also in potential future works essentially all the way down to the critical window (see \cite{Hartarsky22phd}*{Proposition 1.4.3}).

Another important consequence of \cref{th:locality} is completely bypassing Holroyd's hierarchy method, on whose refinements previous works (e.g.\ \cites{Gravner12,Hartarsky19}) rely. Indeed, hierarchies do \emph{not} appear directly in the present paper in any form. Instead, the rather short proof of \cref{th:locality} itself involves some novel ingredients such as tools from large deviation theory (see \cref{subsec:a:priori}).

\subsubsection{Sharp asymptotics}
\label{subsubsec:main}
Our remaining results show that the locality viewpoint tremendously simplifies the study of the models of interest, enabling us to go well beyond previous results. The first such application concerns the classical problem of determining the precise asymptotics of the infection time. Namely, we significantly improve on \cref{eq:GH,eq:GH:loc} for the Frob\"ose model, establishing in particular that \cref{eq:second} does hold for it.
\begin{theorem}
\label{th:main}
The infection time $\tf$ of Frob\"ose bootstrap percolation satisfies\footnote{We have not sought to optimise the power of the logarithm in \cref{eq:main}, since we do not expect $\sqrt[3]p$ to be the right order of the next term.}
\begin{equation}
\label{eq:main}
\lim_{p\to 0}\mathbb P_p\left(\exp\left(\frac{\lambda_1^{\mathrm{F}}}{p}-\frac{\lambda_2^{\mathrm{F}}}{\sqrt p}-\frac{\log^{2}(1/p)}{\sqrt[3]p}\right)<\tf<\exp\left(\frac{\lambda_1^{\mathrm{F}}}{p}-\frac{\lambda_2^{\mathrm{F}}}{\sqrt p}+\frac{\log^{2}(1/p)}{\sqrt[3]p}\right)\right)=1\end{equation}
with $\lambda_1^{\mathrm{F}}=\pi^2/6\approx1.6449$ and $\lambda_2^{\mathrm{F}}=\pi\sqrt{2+\sqrt 2}\approx5.8049$.
\end{theorem}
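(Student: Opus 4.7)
The plan is to reduce the problem to the local Frob\"ose model via \cref{th:locality} and then derive the sharp asymptotics directly for $\tf_{\mathrm{loc}}$. Since \cref{eq:locality} gives $\tf_{\mathrm{loc}}/\exp(\log^{C}(1/p))\le \tf\le \tf_{\mathrm{loc}}$ with high probability, and $\log^{C}(1/p)\ll \log^2(1/p)/\sqrt[3]p$ as $p\to 0$, it will suffice to prove \cref{eq:main} with $\tf_{\mathrm{loc}}$ in place of $\tf$. This is the crucial gain provided by the locality theorem: we may entirely dispense with hierarchies and work with a single growing droplet.

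For the local model, an Aizenman--Lebowitz-style reasoning (considerably cleaner here than for non-local bootstrap percolation since only one droplet is relevant) relates $\tf_{\mathrm{loc}}$ to the probability $q(p)$ that a single germ at the origin grows to cover a box of supercritical size: up to multiplicative corrections of order $\exp(O(\log(1/p)))$, one has $\tf_{\mathrm{loc}}\asymp (p\,q(p))^{-1/2}$. Hence \cref{eq:main} for $\tf_{\mathrm{loc}}$ is equivalent to the precise estimate
\[\log q(p)=-\frac{2\lambda_1^{\mathrm{F}}}{p}+\frac{2\lambda_2^{\mathrm{F}}}{\sqrt p}+O\!\left(\frac{\log^2(1/p)}{\sqrt[3]p}\right).\]

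The heart of the proof is the analysis of $q(p)$. In local Frob\"ose, the growth of the germ is encoded by a sequence of row/column additions to a rectangle: extending an $a\times b$ rectangle by a column (resp.\ row) requires at least one initial infection in the adjacent column (resp.\ row) inside the current vertical (resp.\ horizontal) extent, an event of probability $1-(1-p)^b$ (resp.\ $1-(1-p)^a$). Along any fixed trajectory the probability factorises, so $\log q(p)$ can be written as the supremum of a discrete functional over admissible trajectories from a microscopic seed to a droplet of supercritical size $\gg 1/p$. The leading order $-2\lambda_1^{\mathrm F}/p=-\pi^2/(3p)$ then comes from the Riemann-sum approximation $p\sum_{k\ge 1}\log(1-(1-p)^k)\to -\pi^2/6$. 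The $1/\sqrt p$ correction is subtler: a naive Euler--Maclaurin analysis of a square-shaped trajectory yields only an $O(\log p)$ subleading term, so $\lambda_2^{\mathrm F}/\sqrt p$ must arise from a finer effect. I would extract the precise value $\lambda_2^{\mathrm F}=\pi\sqrt{2+\sqrt 2}=2\pi\cos(\pi/8)$ from a perturbative spectral analysis of the tridiagonal transfer operator governing row-by-row growth, exactly matching the ``eigenvalue perturbations'' advertised in the abstract.

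The hard part will be the matching lower bound on $\log q(p)$, which requires ruling out \emph{all} growth strategies---not just the optimal one---to $1/\sqrt p$-precision. This is where the large deviation principle mentioned in the abstract should enter, quantifying the probabilistic cost of deviations from the optimal trajectory so that summation over all paths preserves the desired sharpness. Additional technical work includes controlling the approximation error $\log(1-(1-p)^k)$ versus $\log(1-e^{-pk})$ uniformly at the critical scale $k\sim 1/p$, and absorbing boundary contributions at the subcritical seed and at the supercritical escape from the droplet into the $\log^2(1/p)/\sqrt[3]p$ error term.
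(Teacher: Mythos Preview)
Your overall architecture---reduce to the local model via \cref{th:locality}, then analyze the growth of a single droplet---matches the paper's. But there is a genuine gap in the mechanism you propose for extracting $\lambda_2^{\mathrm F}$, and two ingredients are misplaced.

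The key missing idea is the \emph{frame} structure (\cref{def:frame}, \cref{subsec:frames}). You describe the growth as a sequence of row/column additions with factorising probabilities, but those events are not disjoint: observing that the rectangle failed to grow to the right reveals that the adjacent column is empty, which biases the next step. The paper resolves this by tracking not just the rectangle dimensions but also which of its four one-thick \emph{buffers} are currently known empty, yielding a genuine Markov chain on framed rectangles with six relevant frame states $\{0,1,2,3,2',1'\}$ (plus the absorbing state $4$ and a minor state $1''$). The transition events then partition exactly (\cref{obs:transitions}), so summing over trajectories is legitimate.

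The constant $\lambda_2^{\mathrm F}$ consequently arises not from a tridiagonal transfer operator but from the $6\times 6$ matrix $\cM$ of \cref{eq:def:M:F:upper}, encoding the non-loop transitions among frame states. Its Perron--Frobenius eigenvalue is $\sqrt{2+\sqrt 2}$ (the top eigenvalue of the bi-directed $6$-cycle with one edge removed, see \cref{fig:augmented:cycle}); combining this with the weights $e^{-qa}$ for buffer creation and $pe^{f(qa)}$ for single buffer deletion produces $h(z)=\sqrt{(2+\sqrt 2)/(e^z-1)}$, and $\lambda_2^{\mathrm F}=\int_0^\infty h=\pi\sqrt{2+\sqrt 2}$ (\cref{eq:lambda2}).

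You also misplace two ingredients. The large deviation tools (measure tilting, Cram\'er-type bounds) appear in \cref{subsec:a:priori} as input to the \emph{locality} proof, not in the sharp-asymptotics analysis. And the bound requiring a sum over all growth strategies is not where they enter: thanks to locality, both directions of \cref{eq:main} go through the \emph{same} framed-rectangle Markov chain (\cref{sec:upper,sec:lower}), with the upper bound on $\bbP_p(\cI^{\mathrm F}_{\mathrm{loc}}(R))$ needing only an additional eigenvalue-perturbation estimate (\cref{lem:lin:alg}) to control the full matrix power rather than just the spectral radius. This near-symmetry between the two bounds is precisely the dividend of the locality reduction, and your proposal does not yet exploit it.
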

We also establish an analogous result for two-neighbour bootstrap percolation. While its proof is conceptually very similar, it requires significantly more extensive casework. Most notably, \cref{tab:transitions} becomes a 221-row table, an excerpt of which is provided in \cref{tab:transitions:2n}, each row corresponding to an event like the seven given in \cref{eq:def:transitions:2n}. For this reason, we leave the additional technical details in the proof of \cref{th:2n} below to \cref{appendix} and warn the prudent reader that only the upper bound will be proved in detail. 
\begin{theorem}
\label{th:2n}
For two-neighbour bootstrap percolation on $\mathbb Z^2$, we have
\begin{equation}
\label{eq:main:2n}
\lim_{p\to 0}\mathbb P_p\left(\exp\left(\frac{\lambda_1}{p}-\frac{\lambda_2}{\sqrt p}-\frac{\log^{2}(1/p)}{\sqrt[3]p}\right)<\tau<\exp\left(\frac{\lambda_1}{p}-\frac{\lambda_2}{\sqrt p}+\frac{\log^{2}(1/p)}{\sqrt[3]p}\right)\right)=1\end{equation}
with $\lambda_1=\pi^2/18\approx 0.54831$ and an explicit $\lambda_2\in(0,\infty)$ given by $\lambda_2=\int_0^\infty h_2\approx7.0545$ with $h_2$ defined in \cref{eq:def:h2}.
\end{theorem}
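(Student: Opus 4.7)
The plan is to imitate, step by step, the proof of \cref{th:main} for the Frob\"ose model, but in the richer combinatorial setting of the two-neighbour dynamics. The very first move is to invoke the locality principle (\cref{th:locality}): since $\tau_{\mathrm{loc}}/\tau$ is squeezed between $1$ and $\exp(\log^C(1/p))$ with high probability, and this gap is negligible on the scale $\exp(\log^2(1/p)/\sqrt[3]p)$ appearing in \cref{eq:main:2n}, it suffices to prove the same double-sided bound for the local infection time $\tau_{\mathrm{loc}}$. This eliminates the need to track several growing droplets simultaneously, and allows us to bypass entirely the hierarchy method used in \cites{Gravner12,Hartarsky19}.

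Next, following the blueprint used for Frob\"ose, I encode the evolution of a single local droplet as a Markov chain on its boundary profile. At each stage the droplet is a roughly rectangular region whose next infections are determined by the pattern of pre-existing infections in a thin strip adjacent to one of its four sides. For each side and each admissible boundary pattern, one records (a) the probability (to leading order in $p$) that the strip realises this pattern and (b) the corresponding displacement of the boundary. In the Frob\"ose case this bookkeeping fits in \cref{tab:transitions} with seven rows; in the two-neighbour case the enlarged list of transitions is the 221-row \cref{tab:transitions:2n}, of which \cref{eq:def:transitions:2n} provides a sample. Once this table is in place, growing one row (say the top) of an existing droplet of width $k$ amounts to running a one-dimensional Markov chain whose transition weights are the entries of the table; the expected time to finish the row is exactly the reciprocal of the principal eigenvalue of the associated transfer matrix, whose order is a fixed constant independent of $p$.

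The global infection time is then, up to multiplicative constants, the product of these single-row times over row widths $k$ ranging from $O(1)$ up to the critical droplet size $\Theta(1/p)$. Taking logarithms and performing a careful Euler--Maclaurin expansion in $p$, the leading term $\lambda_1/p=\pi^2/(18p)$ reproduces Holroyd's constant, and the subleading term equals $-\lambda_2/\sqrt p$ with $\lambda_2=\int_0^\infty h_2(x)\,\md x$, where $h_2$ is the function defined in \cref{eq:def:h2} as the $p\to 0$ rescaled limit of the first-order correction to the log-eigenvalue of the transfer matrix. Positivity $\lambda_2>0$ follows from a strict-monotonicity eigenvalue-perturbation argument, while finiteness requires tail estimates on $h_2$ near $0$ and $\infty$; both ingredients are exactly analogous to their Frob\"ose counterparts but must be re-derived for the larger transfer matrix.

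The hard part will be neither the reduction nor the analytic asymptotic expansion, but the purely combinatorial task of producing and certifying \cref{tab:transitions:2n}: every boundary configuration that contributes to leading order must be enumerated, every contribution beyond leading order must be shown to feed into the $\log^2(1/p)/\sqrt[3]p$ error, and no growth mechanism may be overlooked. Because the upper bound in \cref{eq:main:2n} only needs lower bounds on the one-step growth probabilities, its proof parallels the Frob\"ose case almost verbatim once the table is fixed. The matching lower bound additionally needs the large-deviation input already developed for the proof of \cref{th:locality} to forbid anomalously fast sequences of favourable transitions; both the exhaustive verification of the table and the adaptation of the lower-bound argument will be carried out in \cref{appendix}.
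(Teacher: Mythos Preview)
Your opening move---reduce to the local model via \cref{th:locality} (more precisely \cref{cor:reduction}) and then study the probability $\bbP_p(\cI_{\mathrm{loc}}(R(\Lambda,\Lambda)))$---is exactly right, and matches the paper. From that point on, however, your mental model of the computation diverges from what actually produces the second-order term.

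The Markov chain in the paper is not a ``row-completion'' chain: it tracks a \emph{framed rectangle}, i.e.\ a rectangle together with a frame state in a finite set (nine states for two-neighbour, see \cref{def:frame:2n}), and its transitions are buffer creations, loops (growth by one or two in a single direction) and buffer deletions (corner events). The quantity to compute is not an expected time but the \emph{probability} that the chain reaches a target perimeter; by \cref{lem:tau}, $\log\tau$ is, up to $O(\log(1/p))$, equal to $-\tfrac12\log\bbP_p(\cI_{\mathrm{loc}}(R))$. Writing this probability as a sum over trajectories and grouping by the number $K$ of buffer deletions, each trajectory contributes $e^{-W_p(\gamma)}$ from the loop steps (the first-order cost) times $(\text{buffer creation cost}\times\text{buffer deletion cost})^K$, and the \emph{number} of such trajectories is a binomial coefficient times a matrix power counting frame-state walks. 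Optimising over $K$ is what produces the second-order gain $\lambda_2/\sqrt p$: the function $h_2$ is precisely the (rescaled) Perron--Frobenius eigenvalue of the frame-state transition matrix weighted by these costs (see \cref{subsec:entropy} for the Frob\"ose heuristic and \cref{eq:def:M,eq:def:h2} for the two-neighbour version). In other words, $\lambda_2$ is an \emph{entropy} of growth sequences, not an Euler--Maclaurin correction to the Riemann sum $\sum_k g(kq)$; an Euler--Maclaurin expansion of that sum would only produce terms of order $\log(1/p)$, far below $1/\sqrt p$.

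Two smaller corrections: positivity and finiteness of $\lambda_2$ are immediate from the explicit formula \cref{eq:def:h2} and the asymptotics \cref{eq:h2:asymptotics}, not from a perturbation argument; eigenvalue perturbation (\cref{lem:lin:alg} and the spectral-radius bound in \cref{subsec:lower:step}) is instead the key tool for the \emph{lower} bound on $\tau$, where one must show that the double and triple buffer deletions and other omitted transitions do not inflate the Perron--Frobenius eigenvalue beyond $1+h_2(aq)\sqrt q+o(\sqrt q)$. And the large-deviation ideas from \cref{subsec:a:priori} are used inside the proof of locality itself, not again in the analysis of the local model.
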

\Cref{th:2n} strengthens \cite{Hartarsky19}*{Conjecture 7.1}, postulating the existence of a $\lambda_2$, without identifying its value, based on the fact that the size of the critical window is known to be small \cite{Hartarsky22phd}*{Proposition 1.4.3}. Moreover, thanks to \cite{Hartarsky23FA}*{Section~2}, our result implies an analogous lower bound on the expected infection time of the Fredrickson--Andersen 2-spin facilitated and Frob\"ose kinetically constrained models in two dimensions with the constants $\lambda_1$, $\lambda_2$, $\lambda_1^{\mathrm{F}}$, $\lambda_2^{\mathrm{F}}$ doubled.

Let us highlight a few steps in the proof of \cref{th:main}. Owing to \cref{th:locality}, we may focus on the local Frob\"ose model, starting with the upper bound in \cref{eq:main}. 
Namely, we consider a growing rectangle (the infected patch discussed above) and want to estimate the probability of each possible sequence of sizes recording its growth. Then we need to sum over all such sequences. In order to ensure that the corresponding events are disjoint, we need to keep track of a certain frame around the rectangle known to be free of infections (see \cref{subsec:frames}). The 6 possible states of this frame naturally give rise to a $6\times 6$ matrix whose entries are the probabilities of going from one state to another (see \cref{fig:augmented:cycle}). Subtracting from this matrix the terms yielding the main contribution $\lambda_1^{\mathrm F}/p$ in \cref{eq:main}, we obtain a new matrix. The constant $\lambda_2^{\mathrm F}$ appearing in \cref{th:main} is the integral of the function $h(z)=\sqrt{(2+\sqrt 2)/(e^z-1)}$ arising as the Perron--Frobenius eigenvalue of this matrix. Another way to view $h$ is as the entropy of the fluctuations of growth sequences around the solution of the optimisation problem for a certain differential form, which was identified by Holroyd \cite{Holroyd03} (see \cref{subsec:variations}). Thus, a careful treatment of the deviations of the differential form from its value along this optimal path are needed (see \cref{lem:W:to:g,lem:Hp}).

Thanks to \cref{th:locality} and in stark contrast to previous works, proving the lower bound in \cref{th:main} is essentially analogous to proving the upper one. The only noteworthy additional argument we employ in the lower bound are quantitative results on the sensitivity of the Perron--Frobenius eigenvalue of a positive matrix subjected to a perturbation (see the proof of \cref{prop:main:lower} in \cref{subsec:lower:step}).

\begin{figure}
    \centering
    \begin{tikzpicture}[x=1.2cm,y=4cm]
\draw[->,color=black] (1,0) -- (12.5,0) node[above]{$\log\frac1p$};
\foreach \x in {1,2,3,4,5,6,7,8,9,10,11,12}
\draw[shift={(\x,0)}] (0pt,2pt) -- (0pt,-2pt) node[below] {\footnotesize $\x$};
\draw[->,color=black] (1,0) -- (1,1.75) node[right]{$p\log\Pi(p)$};
\foreach \y in {0,0.25,0.5,0.75,1,1.25,1.5}
\draw[shift={(1,\y)}] (2pt,0pt) -- (-2pt,0pt) node[left] {\footnotesize $\y$};
\fill (1.38629436111989076, 0.4557867386130542) circle (2pt);
\fill (2.0794415416798357, 0.5928339472416244) circle (2pt);
\fill (2.7725887222397811, 0.7745581895375311) circle (2pt);
\fill (3.4657359027997265, 0.9546038641712591) circle (2pt);
\fill (4.1588830833596715, 1.1129858612276886) circle (2pt);
\fill (4.8520302639196169, 1.2430073481077526) circle (2pt);
\fill (5.5451774444795623, 1.345804448976588) circle (2pt);
\fill (6.2383246250395077, 1.424783934531369) circle (2pt);
\fill (6.9314718055994531, 1.4842946522264553) circle (2pt);
\fill (7.6246189861593985, 1.5284964038060636) circle (2pt);
\fill (8.317766166719343, 1.5609736388102249) circle (2pt);
\fill (9.0109133472792884, 1.5846389054119723) circle (2pt);
\fill (9.7040605278392338, 1.6017726607118656) circle (2pt);
\fill (10.397207708399179, 1.6141156659249392) circle (2pt);
\fill (11.090354888959125, 1.6229727530039586) circle (2pt);
\fill (11.78350206951907, 1.6293089682631352) circle (2pt);
\draw (1,1.644934066848226436472415166646) node[left]{$\pi^2/6$}--(12,1.644934066848226436472415166646);
\draw[domain=3:12,smooth] plot (\x,{1.644934066848226436472415166646-5.8049063042788620281153757483082*exp(-\x/2)});
\draw (4.5,1) node[right]{$\pi^2/6-\pi\sqrt{2+\sqrt2}p^{1/2}\pm p^{2/3}$};
\draw[domain=3:12,smooth,dashed] plot (\x,{1.644934066848226436472415166646-5.8049063042788620281153757483082*exp(-\x/2)-exp(-2*\x/3)});
\draw[domain=3:12,smooth,dashed] plot (\x,{1.644934066848226436472415166646-5.8049063042788620281153757483082*exp(-\x/2)+
exp(-2*\x/3)});
\draw[very thick] (1,0)--(4.2,0)--(4.2,1.13)--(1,1.13)--cycle;
\end{tikzpicture}
    \caption{Plot of the numerical estimate of $\Pi(p)$ defined in \cref{eq:def:Pip} for Frob\"ose bootstrap percolation. We also show the asymptotics proved in \cref{eq:Holroyd,eq:main} for comparison. The error term in \cref{eq:main} is simplified to $p^{-1/3}$, since the polylogarithmic factor is not optimised in the proof. The range of parameters previously accessible is given by the thick box, clearly showing why asymptotics could not have been deduced from such data.}
    \label{fig:frobose}
\end{figure}
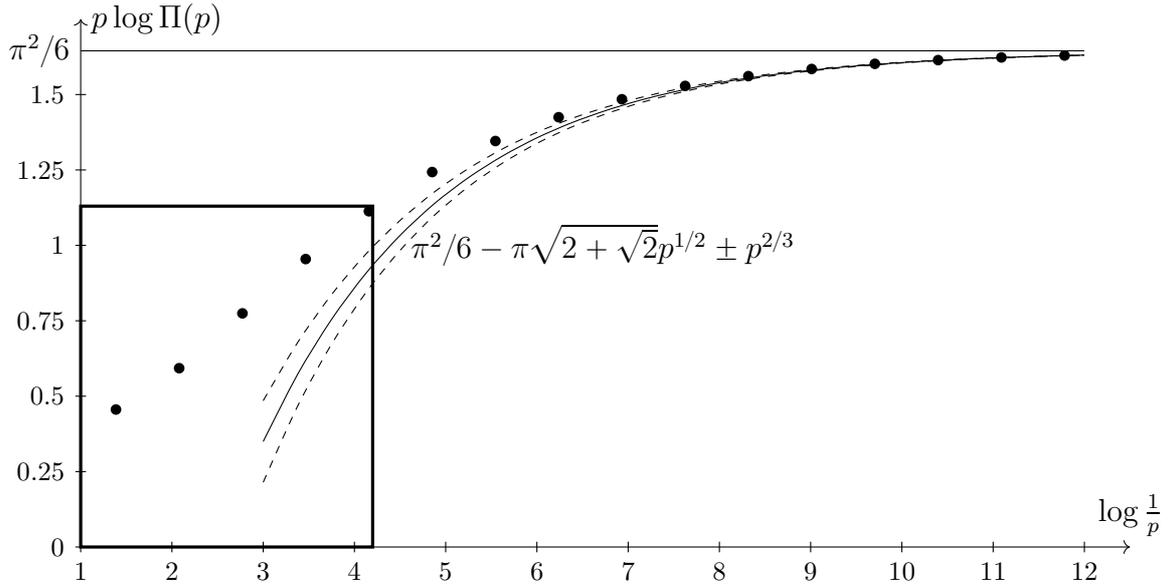

\subsubsection{Bootstrap percolation paradox}

Finally, we explore the implications of the locality \cref{th:locality} for the bootstrap percolation paradox. The present discussion applies equally well to both two-neighbour and Frob\"ose bootstrap percolation, so we abusively identify these models, as well as their local versions, in view of \cref{th:locality}, or even (local) modified two-neighbour bootstrap percolation.

An early stage of the proof of \cref{th:main} discussed above establishes that, up to a very small and well-controlled error, determining the infection time $\tau$ is equivalent to finding the probability that a certain Markov chain on rectangles with their frame state reaches perimeter $L=4\log(1/p)/p$. The trajectory of this chain encodes the growth sequence discussed above (see \cref{subsec:Markov}). There are only 7 possible frame states and the perimeter of the rectangle grows by at most 4 at each step. Therefore, we are able to apply a dynamic programming algorithm (see \cref{sec:paradox} for more details) to compute the probability of reaching the desired perimeter.\footnote{It should be noted that a strategy along these lines was suggested for local bootstrap percolation in \cite{Gravner09}*{p. 387}, but was never carried out.} Note that this deterministic algorithm has time complexity of order $L^2$ and space complexity of order $L$, both of which are polynomial in $1/p$.

This approach compares very favourably to the straightforward Monte Carlo simulation of the initial state and running the dynamics used e.g.\ in \cites{Adler88,Adler89,Adler91,Nakanishi86,Adler03,Lenormand84,Frobose89,Jackle91a,VanEnter90}. Indeed, Monte Carlo simulations of this kind require sampling roughly $\exp(1/p)$ Bernoulli variables in order to obtain any non-trivial result. In addition, they are subject to statistical errors. Thus, our algorithm brings the complexity from exponential down to quadratic, removes all statistical errors (the output of our algorithm is not random) and even allows one to rigorously quantify errors, if desired. We should note that \cites{Teomy14,DeGregorio05,DeGregorio06} attempted numerical computations somewhat related to ours, as opposed to Monte Carlo simulations. However, they still led to incorrect predictions: ``The striking conclusion is that not even the rigorous correction term can be captured reliably by the numerical exact solution yet'' \cite{DeGregorio09}.

The local approach allows us to reach significantly beyond the regimes previously probed (see \cref{fig:frobose}). Indeed, even relatively recent massive simulations \cite{Teomy14} fail to go beyond $p\approx0.016$. Using modest computational resources, we have been able to reach values of $p$ down to $2^{-17}\approx 0.0000076$. Moreover, our algorithm is not informed of any asymptotic results such as \cref{eq:AL:result,eq:Holroyd,eq:second,eq:main} and does not apply any heuristic simplifications to the problem. As it is clear from \cref{fig:frobose}, the agreement with rigorous results is remarkable. We could therefore use our numerical estimates in order to infer the various constants and exponents appearing in these results. Indeed, we successfully predict all these values within an error of about $0.6\%$ and $2\%$ for the first and second order terms respectively (see \cref{fig:frobose:2}) and even reliably estimate all four values simultaneously (see \cref{eq:4fit}). In comparison, Monte Carlo estimates of $\lambda_1$ are typically off by more than $50\%$.

\subsection{Organisation}
The remainder of the paper is structured as follows. In \cref{sec:preliminaries} we gather some definitions and preliminaries standard in the field, along with a few ingredients from \cite{Holroyd03}. The locality \cref{th:locality} is proved in \cref{sec:locality}.\Cref{sec:sequences} sets the stage for the following sections by introducing frames and growth sequences. In \cref{sec:upper} we prove the upper bound in \cref{eq:main}, while \cref{sec:lower} completes the proof of \cref{th:main}, by proving the lower bound, along similar but a bit more technical lines. In \cref{sec:paradox} we discuss numerics for the bootstrap percolation paradox in light of \cref{th:main,th:locality}. We conclude by suggesting a number of future directions of research opened up by the present work in \cref{sec:outlook}. Finally, \cref{appendix} provides the additional technical details needed for the proof of \cref{th:2n} with respect to \cref{th:main}.

\section{Preliminaries}
\label{sec:preliminaries}

In this section we gather various standard preliminaries, which will be used throughout the rest of the paper.

\subsection{Models}
\label{subsec:models}
\subsubsection{Two-neighbour bootstrap percolation in two dimensions}
Let us start by formally defining $2$-neighbour bootstrap percolation on $\bbZ^2$, which we refer to simply as \emph{bootstrap percolation}. Given a set $A\subset \bbZ^2$ of initially \emph{infected} sites, at each time step we further infect sites with at least $2$ infected neighbours. In other words, the set of vertices infected at time $t\ge 1$ is
\begin{equation}
\label{eq:def:2n}
A_t = A_{t-1} \cup \left\{x\in\bbZ^2:\left|A_{t-1}\cap N(x)\right|\ge 2\right\}\end{equation}
with $A_0=A$ and $N(x)$ denotes the set of neighbours of $x$ in the usual, nearest neighbour graph structure on $\bbZ^2$. Given $A$, we denote by $[A]=\bigcup_{t\ge 0} A_t$ its \emph{closure}. We say that $A$ is \emph{stable}, if $[A]=A$. The observable we are interested in is the \emph{infection time} of the origin 
\begin{equation}
\label{eq:def:tau}
\tau=\inf\left\{t\ge 0:0\in A_t\right\}\end{equation}
with $\inf\varnothing=\infty$. Throughout the paper we consider $A$ to be taken at random with distribution $\bbP_p$ such that $(\1_{x\in A})_{x\in\bbZ^2}$ are independent Bernoulli random variables where $p\in(0,1)$ is the \emph{parameter} of the model. Thus, $\tau$ becomes a random variable whose distribution is the object of our study.

\subsubsection{Frob\"ose bootstrap percolation}
\emph{Frob\"ose bootstrap percolation} is defined identically to two-neighnour bootstrap percolation on $\mathbb Z^2$ but with \cref{eq:def:2n} replaced by 
\begin{equation}
\label{eq:def:FBP}
A_t = A_{t-1}\cup\left\{x\in\bbZ^2:\exists \{a,b,c\}\subset A_{t-1},\{a,b\}=N(x)\cap N(c),a\neq b\neq c\neq a\right\}
\end{equation}
Its closure and infection time are defined analogously and denoted by $[\cdot]^{\mathrm{F}}$ and $\tf$ respectively.

\subsubsection{Local two-neighbour bootstrap percolation}
We next turn to \emph{local two-neighbour bootstrap percolation} on $\mathbb Z^2$. As for its non-local version, we need a set $A\subset\bbZ^2$ of \emph{initially infected sites}. However, we further require a \emph{germ} $x\in A$. All germs are considered infected. In the local bootstrap percolation process any site with at least two infected neighbours, at least one of which is a germ, becomes a germ. Moreover, any infected site with a germ neighbour becomes a germ. Formally, this leads to the following definition for $t\ge 1$
\begin{align}
\label{eq:def:loc2n}A_t^{x}&{}= A_{t-1}^x \cup \left\{y\in\bbZ^2:\left|A^x_{t-1}\cap N(y)\right|\ge 2,X^x_{t-1}\cap N(y)\neq\varnothing\right\},\\
\nonumber X^x_t&{} = X_{t-1}^x \cup \left\{y\in A^x_{t}:X^x_{t-1}\cap N(y)\neq\varnothing\right\}
\end{align}
with $A^x_0=A$ and $X^x_0=\{x\}$. Thus, $A_t^x$ and $X_t^x$ are the sets of infected sites and germs with initial germ $x$ at time $t$ respectively. The local closure is defined by $[A]^x=\bigcup_{t\ge 0}X^x_t$ and the local infection time by
\begin{equation}
\label{eq:def:tauloc}
\tau_{\mathrm{loc}}=\inf_{x\in A}\inf\left\{t\ge 0:0\in A_t^x\right\}\ge \tau.
\end{equation}
Note the importance of the infimum over $x \in A$ to avoid introducing a big discrepancy between the local and original models e.g.\ if $x$ is an isolated infection.

\subsubsection{Local Frob\"ose bootstrap percolation}
\label{subsec:local:F}
Local Frob\"ose bootstrap percolation is defined by its set $A\subset\bbZ^2$ of \emph{initially infected sites} and \emph{germ} $x\in A$ by setting $A^x_0=A$, $X_0^x=\{x\}$ and for $t\ge 1$
\begin{align*}
\label{eq:def:locFBP}A_t^{x}&{}=A_{t-1}^x\cup\left\{y\in\bbZ^2:\exists a\in X_{t-1}^x,\exists\{b,c\}\subset A^x_{t-1},\{a,b\}=N(y)\cap N(c),a\neq b\neq c\neq a\right\},\\
X_t^x&{}=X_{t-1}^x\cup\left\{y\in A^x_t:X_{t-1}^x\cap N(y)\neq\varnothing\right\}\end{align*}
Its closure and infection time are denoted by $[A]^x_{\mathrm{F}}=\bigcup_{t\ge 0}X_t^x$ and 
\begin{equation}
\label{eq:def:tfloc}
\tf_{\mathrm{loc}}=\inf_{x\in A}\inf\left\{t\ge 0:0\in A_t^x\right\}\ge \tf.
\end{equation}

\subsection{Correlation inequalities}
We next recall two standard correlation inequalities for the product measure $\bbP_p$. We say that an event $\cE$ (i.e.\ a family of subsets of $\bbZ^2$) is \emph{increasing} if for any $E\in \cE$ and $F\subset\bbZ^2$ such that $F\supset E$ it holds that $F\in\cE$. The first inequality was proved by Harris \cite{Harris60}, but is often referred to as FKG inequality.
\begin{lemma}[Harris inequality]
\label{lem:Harris}For any increasing events $\cE,\cF$ we have
\[\bbP_p(\cE\cap\cF)\ge\bbP_p(\cE)\bbP_p(\cF).\]
\end{lemma}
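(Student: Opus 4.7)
The plan is to prove this classical positive-correlation inequality by induction on the number of coordinates, after a routine reduction to cylindrical events depending on finitely many sites of $\bbZ^2$.

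For the reduction, fix an exhaustion of $\bbZ^2$ by finite boxes $B_n$ and set $\cG_n = \sigma(\1_{x\in A} : x \in B_n)$. The martingale $M_n = \bbP_p(\cE \mid \cG_n)$ converges almost surely to $\1_\cE$, and crucially $M_n$ is an increasing function of the configuration on $B_n$ (since $\cE$ is increasing and the sites outside $B_n$ are independent under $\bbP_p$). Thresholding produces cylindrical increasing events $\cE_n = \{M_n \ge 1/2\}$ with $\bbP_p(\cE_n \triangle \cE) \to 0$, and likewise for $\cF$. Hence it suffices to prove the inequality for cylindrical increasing events depending on finitely many coordinates $X_1, \ldots, X_N$.

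I would then induct on $N$. The base case $N=1$ is elementary: for increasing $f, g \colon \{0,1\} \to \bbR$ and an independent copy $X'$ of $X$, the inequality $(f(X)-f(X'))(g(X)-g(X')) \ge 0$ holds pointwise, so taking expectations gives $\bbE[fg] \ge \bbE[f]\bbE[g]$. For the inductive step, set
\[
f_\cE(x) = \bbP_p(\cE \mid X_N = x), \qquad f_\cF(x) = \bbP_p(\cF \mid X_N = x).
\]
Both are increasing in $x$, and conditioning on $X_N = x$ reduces $\cE \cap \cF$ to an event in the remaining $N-1$ coordinates, so the induction hypothesis yields $\bbP_p(\cE \cap \cF \mid X_N = x) \ge f_\cE(x) f_\cF(x)$ pointwise. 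Averaging over $X_N$ and applying the $N=1$ case to $f_\cE, f_\cF$ gives $\bbP_p(\cE \cap \cF) \ge \bbE[f_\cE(X_N)]\bbE[f_\cF(X_N)] = \bbP_p(\cE)\bbP_p(\cF)$.

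The only mildly delicate point is the reduction, which requires the cylindrical approximants to be not merely close to $\cE, \cF$ but also increasing; the martingale-plus-thresholding trick handles this. The heart of the argument is the entirely elementary single-coordinate case, with independence and induction doing the rest.
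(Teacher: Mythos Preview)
Your argument is correct and is essentially the standard proof of the Harris inequality for product measures: the Chebyshev-type pointwise inequality for a single coordinate, followed by conditioning and induction, with a preliminary reduction to cylinder events. The martingale-plus-thresholding device for producing \emph{increasing} cylindrical approximants is a nice touch and works as stated.

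Note, however, that the paper does not actually prove this lemma: it is quoted as a classical result due to Harris~\cite{Harris60} and used as a black box. So there is no ``paper's own proof'' to compare against here; you have simply supplied a complete proof where the paper gives only a citation.
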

For the second inequality we need the notion of disjoint occurrence. We say that the increasing events $\cE$ and $\cF$ \emph{occur disjointly} for a realisation $A\subset \bbZ^2$, if there exist disjoint sets $B,C\subset A$ such that $\{E\subset \bbZ^2:B\subset E\}\subset\cE$ and $\{F\subset\bbZ^2:C\subset F\}\subset\cF$. The disjoint occurrence of $\cE$ and $\cF$ is denoted by $\cE\circ\cF$. We can then state the van den Berg--Kesten inequality \cite{BK85}.
\begin{lemma}[BK inequality]
\label{lem:BK} For any increasing events $\cE$ and $\cF$ measurable with respect to $A\cap[-N,N]^2$ for some $N\ge 0$, then
\[\bbP_p(\cE\circ\cF)\le \bbP_p(\cE)\bbP_p(\cF).\]
\end{lemma}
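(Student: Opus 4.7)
The plan is to follow the classical van den Berg--Kesten strategy: reduce to a finite product space, reformulate disjoint occurrence in terms of minimal witnesses for increasing events, and then induct on the number of coordinates via a coordinate duplication trick. First I would use the measurability hypothesis to trade $\bbZ^2$ for the finite index set $[n]=\{1,\dots,(2N+1)^2\}$ equipped with the product Bernoulli$(p)$ measure, since $\cE$ and $\cF$ depend only on $A\cap[-N,N]^2$. It then suffices to prove the statement there.

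In this finite setting, for increasing events, disjoint occurrence admits a clean rephrasing: $\omega\in\cE\circ\cF$ if and only if there exist disjoint subsets $K,L\subseteq\operatorname{supp}(\omega)$ such that $\mathbf{1}_K\in\cE$ and $\mathbf{1}_L\in\cF$. Indeed, by monotonicity, the minimal configurations compatible with prescribed values on $K$ and $L$ are precisely the indicators of these sets, so only the supports matter. With this reformulation in hand, I would proceed by induction on $n$. The trivial case $n=0$ provides the base. For the inductive step, I would duplicate the last coordinate: replace index $n$ by two independent Bernoulli$(p)$ copies $n_{\cE}$ and $n_{\cF}$ and lift $\cE$, $\cF$ so that $\cE$ inspects only $n_{\cE}$ and $\cF$ only $n_{\cF}$. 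The lifted events keep the same marginal probabilities, and their disjoint occurrence dominates the original one because the competition for the shared coordinate $n$ has been removed. Projecting out one copy reduces to $n-1$ genuinely shared coordinates, and the induction closes by multiplicativity on disjoint index sets together with Fubini.

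The main obstacle will be making the duplication step rigorous while preserving monotonicity and marginals simultaneously, and carefully tracking how witnesses for the two events may arbitrate the shared coordinate. A more robust alternative would be to invoke Reimer's inequality, which removes the increasing hypothesis entirely; however, that machinery is overkill here, and the elementary duplication argument above is self-contained and specifically tailored to the monotone setting of \cref{lem:BK}. Since this is a classical fact, I anticipate that in practice one simply cites \cite{BK85} rather than reproducing the proof.
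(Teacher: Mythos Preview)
Your proposal is correct, and you correctly anticipated the paper's treatment: the paper does not prove \cref{lem:BK} at all but simply states it and cites \cite{BK85}. Your sketch of the classical coordinate-duplication induction is the standard argument and would constitute a valid proof if one were required.
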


\subsection{Rectangles}

As it will become clear in \cref{subsec:rectangles:process}, rectangles with axis-parallel sides arise naturally in bootstrap percolation. We therefore call a \emph{rectangle} a set of the form
\[R(a,b;c,d)=([a,c)\times[b,d))\cap\bbZ^2\]
for any integers $a<c$ and $b<d$. The \emph{side lengths} of a rectangle $R=R(a,b;c,d)$ are $\sh(R)=\min(c-a,d-b)$ and $\lng(R)=\max(c-a,d-b)$, while the \emph{semi-perimeter} of $R$ is $\phi(R)=c-a+b-d$. We further set
\[R(a,b)=R(0,0;a,b).\]

We next define a few important events for a rectangles, which are central to our work. Recall from \cref{subsec:models} that $A$ denotes the random subset of $\bbZ^2$ of initial infections with law $\bbP_p$, $[\cdot]$ denotes the closure and $[\cdot]^x$ the local closure. We say that a rectangle $R$ is \emph{internally filled} (resp.\ \emph{Frob\"ose internally filled}, \emph{locally internally filled}, \emph{Frob\"ose locally internally filled}), if the event
\begin{align}
\label{eq:def:I}
\cI(R)&{}=\left\{A\subset \bbZ^2:[A\cap R]=R\right\},\\
\label{eq:def:IF}\cI^{\mathrm{F}}(R)&{}=\left\{A\subset \bbZ^2:[A\cap R]^{\mathrm{F}}=R\right\},\\
\label{eq:def:Iloc}\cI_{\mathrm{loc}}(R)&{}=\left\{A\subset \bbZ^2:\exists x\in A,[A\cap R]^x=R\right\}\subset\cI(R),\\
\label{eq:def:IFloc}\cI^{\mathrm{F}}_{\mathrm{loc}}(R)&{}=\left\{A\subset \bbZ^2:\exists x\in A,[A\cap R]^x_{\mathrm{F}}=R\right\}\subset\cI^{\mathrm{F}}(R)
\end{align}
occurs. For two nested rectangles $S\subset R$, we say that there is a \emph{crossing from $S$ to $R$}, if
\begin{align}
\label{eq:def:C}
\cC(S,R)&{}=\left\{A\subset \bbZ^2:\exists s\in S,[S\cup (A\cap R)]^s=R\right\}\\
\label{eq:def:C:F}\cC^{\mathrm{F}}(S,R)&{}=\left\{A\subset \bbZ^2:\exists s\in S,[S\cup (A\cap R)]^s_{\mathrm{F}}=R\right\}
\end{align}
occurs.
The use of crossings comes from the next observation, which follows directly from \cref{eq:def:C,eq:def:I,eq:def:Iloc}.
\begin{observation}[Stacking crossings]
\label{obs:stacking}
For any rectangles $T\subset S\subset R$ we have
\begin{align*}
\cI(S)\cap\cC(S,R)&{}\subset\cI(R),\\
\cI_{\mathrm{loc}}(S)\cap\cC(S,R)&{}\subset \cI_{\mathrm{loc}}(R),\\
\cC(T,S)\cap\cC(S,R)&{}\subset\cC(T,R).\end{align*}
The same holds for the Frob\"ose model.
\end{observation}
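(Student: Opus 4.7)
The plan is to unpack each of the three inclusions directly from the definitions, relying on a single monotonicity principle for (local) two-neighbour dynamics: a move that is legal from a given pair (infections, germs) remains legal from any pair containing it, so any execution of the process witnessing some closure can be \emph{replayed verbatim} inside a process started from a larger initial configuration. I will also use the standard rectangle no-leakage fact: if the initial infections lie inside a rectangle $R$, then $[A\cap R]\subseteq R$ (and likewise for the local and Frob\"ose variants), because a site just outside $R$ has only one neighbour inside $R$, ruling out any first leak and hence, by induction, any leak at all.

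For the first inclusion $\cI(S)\cap\cC(S,R)\subseteq\cI(R)$, I note that $A\cap S\subseteq A\cap R$, so by monotonicity $[A\cap R]\supseteq[A\cap S]=S$, and therefore $[A\cap R]\supseteq S\cup(A\cap R)$. Taking closures again and using that the local closure is contained in the global one gives $[A\cap R]\supseteq[S\cup(A\cap R)]\supseteq[S\cup(A\cap R)]^s=R$, where $s\in S$ is the germ witnessing $\cC(S,R)$. The reverse inclusion $[A\cap R]\subseteq R$ is the no-leakage fact. For the second inclusion $\cI_{\mathrm{loc}}(S)\cap\cC(S,R)\subseteq\cI_{\mathrm{loc}}(R)$, I pick a germ $x\in A$ witnessing $[A\cap S]^x=S$ and run the local dynamics $[A\cap R]^x$. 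Since $A\cap R\supseteq A\cap S$, I can first replay all moves of $[A\cap S]^x$, germifying all of $S$. At that intermediate stage the germ set contains the witness $s\in S$ of $\cC(S,R)$ and the infection set contains $S\cup(A\cap R)$, so I may then replay the moves of $[S\cup(A\cap R)]^s$ to fill $R$, proving $[A\cap R]^x=R$. The third inclusion $\cC(T,S)\cap\cC(S,R)\subseteq\cC(T,R)$ is established by exactly the same concatenation of replays, starting from a germ $t\in T$ witnessing $\cC(T,S)$ and using $T\cup(A\cap R)\supseteq T\cup(A\cap S)$.

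The Frob\"ose variants are handled by the identical argument: the Frob\"ose infection rule in \cref{eq:def:FBP} (and its local counterpart of \cref{subsec:local:F}) has the same germ-plus-two-infections-in-fixed-geometry shape, is monotone in both the germ set and the infection set, and also cannot leak outside a rectangle. I do not anticipate a genuine obstacle here; the only point that benefits from a moment of care is the mixing of global and local closures in the first inclusion, where the direction of the inclusion between the two has to point the right way, and the bookkeeping that "replay in a larger configuration" really is a legal execution of the dynamics---both of which reduce to inspecting the one-step rules \cref{eq:def:2n,eq:def:loc2n,eq:def:FBP}.
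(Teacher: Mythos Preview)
Your proof is correct and is precisely the expansion of definitions that the paper alludes to when it says the observation ``follows directly from \cref{eq:def:C,eq:def:I,eq:def:Iloc}.'' The paper gives no further argument, so your monotonicity-and-replay account is exactly what is intended.
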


Finally, we say that a set (not necessarily a rectangle) $X\subset \bbZ^2$ is \emph{occupied}, if the event
\begin{equation}
    \label{eq:def:O}\cO(X)=\left\{A\subset\bbZ^2:A\cap X\neq\varnothing\right\}
\end{equation}
occurs.

\subsection{Rectangles process}
\label{subsec:rectangles:process}
The first thing to notice about two-neighbour bootstrap percolation (resp.\ Frob\"ose bootstrap percolation) is that the closure of any finite set of infections is the smallest\footnote{A collection $\cC$ is \emph{smaller} than a collection $\cC'$, if $\bigcup_{R\in\cC}R\subset\bigcup_{R\in\cC'}R$.} collection of rectangles at graph distance at least 3 (resp.\ 2) from each other containing all the infections. Thus, the closure of any set can be determined via the following \emph{rectangles process}. We start off with a collection of rectangles consisting of each of the initial infections. At each step we merge two of them at graph distance 2 or less (resp.\ 1 or less), replacing them by the smallest rectangle containing their union. Repeating this until the process becomes stationary yields the collection of rectangles in the closure.

We next derive a few simple but fundamental consequences of the rectangles process. The first one is the following extremal bound, which constitutes a folklore exercise.
\begin{lemma}[Extremal bound]
\label{lem:perimeter}
Let $R=R(a,b)$ be a rectangle. If $\cI(R)$ occurs, then $|R\cap A|\ge \lceil (a+b)/2\rceil$. If $\cI^{\mathrm{F}}(R)$ occurs, then $|R\cap A|\ge a+b-1$.
\end{lemma}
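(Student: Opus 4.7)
The plan is to apply the rectangles process recalled just above the statement to the restricted configuration $A\cap R$, tracking the sum of semi-perimeters of the current collection of rectangles. Under $\cI(R)$ (resp.\ $\cI^{\mathrm F}(R)$) the process starts from $n:=|A\cap R|$ singleton rectangles of semi-perimeter $2$ each and, since it terminates with the single rectangle $R$ of semi-perimeter $a+b$, performs exactly $n-1$ merges. The whole argument reduces to bounding the change of the total semi-perimeter at one merge.

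At the merge of two rectangles $R_1,R_2$ into their enclosing rectangle $R_3$, set $\delta_H:=w(R_3)-w(R_1)-w(R_2)$ and $\delta_V:=h(R_3)-h(R_1)-h(R_2)$, where $w$ and $h$ denote width and height; then $\phi(R_3)-\phi(R_1)-\phi(R_2)=\delta_H+\delta_V$. A direct inspection shows that $\delta_H$ equals minus the horizontal overlap of $R_1,R_2$ when their horizontal projections intersect and equals the horizontal gap (number of empty lattice columns between them) otherwise, and similarly for $\delta_V$. Consequently, the graph distance between $R_1$ and $R_2$ is exactly $\max(\delta_H+1,0)+\max(\delta_V+1,0)$. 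The two-neighbour merge condition (graph distance $\le 2$) therefore forces $\delta_H+\delta_V\le 0$, while the Frob\"ose one (graph distance $\le 1$) forces $\delta_H+\delta_V\le -1$, as one sees by splitting on whether zero, one or two of the projections overlap.

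Summing the one-merge inequality over the $n-1$ merges yields $a+b-2n\le 0$, whence $n\ge\lceil(a+b)/2\rceil$ by integrality, in the two-neighbour case; and $a+b-2n\le -(n-1)$, whence $n\ge a+b-1$, in the Frob\"ose case. The only genuinely computational step is the short case analysis relating graph distance to $(\delta_H,\delta_V)$, and it is entirely elementary; I do not foresee any serious obstacle, consistent with the paper's own description of the statement as a folklore exercise.
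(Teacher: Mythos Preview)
Your argument is correct. The case analysis relating graph distance to $(\delta_H,\delta_V)$ checks out exactly as you describe, and the telescoping over the $n-1$ merges gives both bounds cleanly.

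For the Frob\"ose case your approach coincides with the paper's: the paper invokes \cref{lem:decomposition} and induction on $\phi(R)$, which amounts to running the rectangles process backwards and is the same computation you carry out. For the two-neighbour case, however, the paper takes a different and even shorter route: it observes that infecting a site with at least two infected neighbours cannot increase the edge-boundary of the infected set, so $4|R\cap A|\ge 2(a+b)$. Your rectangles-process argument has the advantage of treating both models uniformly with essentially the same inequality, while the paper's edge-boundary argument for the two-neighbour model is a one-liner that bypasses the merge bookkeeping entirely but does not adapt directly to Frob\"ose (hence the paper switches methods there).
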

\begin{proof}
Observe that infecting a site with at least 2 infected neighbours cannot increase the edge-boundary of the infected zone. Initially the edge-boundary is at most $4|R\cap A|$, while in the end it is $2\phi(R)=2a+2b$. For Frob\"ose bootstrap percolation see \cite{Gravner12}*{p.~20} (use \cref{lem:decomposition} and induction on $\phi(R)$).\footnote{For Frob\"ose bootstrap percolation another proof can be obtained by considering the bipartite graph with vertices being the rows and columns of $R$ and edges corresponding to $A$ and observing that this graph needs to be connected.}
\end{proof}
The second corollary of the rectangles process we require is the following lemma due to Holroyd~\cite{Holroyd03}*{Proposition 30} (see \cite{Hartarsky19}*{Lemma 5.6} for a strengthening). It follows by considering the last merging step of the rectangles process.
\begin{lemma}[Disjoint occurrence decomposition]
\label{lem:decomposition}
Let $R$ be a rectangle with $|R|>1$. If $\cI(R)$ occurs, then there exist rectangles $S,T\subsetneq R$ such that $\cI(S)\circ \cI(T)$ occurs and $[S\cup T]=R$.
The same holds for Frob\"ose bootstrap percolation.\end{lemma}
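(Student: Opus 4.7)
The plan is to run the rectangles process from \cref{subsec:rectangles:process} on the restriction $A\cap R$. Since $\cI(R)$ holds, the process must terminate with the singleton collection $\{R\}$, and since $|R|>1$, \cref{lem:perimeter} forces $|A\cap R|\geq 2$, so at least one merge must take place. Focus on the \emph{last} merge: it combines two rectangles $S,T$ of the then-current collection, at graph distance at most $2$, into their smallest bounding rectangle, which must equal $R$. Consequently $S,T\subsetneq R$, and by the definition of the merging rule the smallest rectangle containing $S\cup T$ is $R$, which in turn coincides with $[S\cup T]$ by the standard geometric fact that two rectangles at graph distance at most $2$ fill their bounding rectangle under the $2$-neighbour rule.

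The key auxiliary claim, established by induction on the number of merges performed, is that throughout the process each rectangle $U$ currently in the collection satisfies $U = [A\cap U]$. The base case of singletons is immediate; in the inductive step, when we merge $U_1, U_2$ into $W$, the geometric fact above gives $[U_1\cup U_2]=W$, hence $[A\cap W]\supset W$, and the reverse inclusion $[A\cap W]\subset W$ holds because no cell outside a rectangle has two of its neighbours inside. A mild subtlety is that $A$ may contain initial infections lying in $W$ but outside $U_1\cup U_2$. I handle this by appealing to the confluence of the rectangles process: one may reorder the merges so that every initial infection of $A\cap W$ has been absorbed into $U_1$ or $U_2$ before the step producing $W$, in which case $A\cap W=(A\cap U_1)\cup(A\cap U_2)$ and the induction closes cleanly.

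Applying the invariant at the last merge yields $S = [A\cap S]$, $T = [A\cap T]$ and $S\cap T=\varnothing$, so $\cI(S)$ and $\cI(T)$ both occur and are witnessed by the disjoint subsets $A\cap S$ and $A\cap T$ of $A$, establishing $\cI(S)\circ \cI(T)$. The Frob\"ose case follows by exactly the same argument after replacing the merging threshold by $1$ and using the analogous geometric fact that two rectangles at graph distance at most $1$ Frob\"ose-fill their bounding rectangle. The main obstacle is the careful scheduling of merges needed to justify $U=[A\cap U]$ at every stage and in particular to ensure $S$ and $T$ are disjoint; this is essentially the bookkeeping underlying \cite{Holroyd03}*{Proposition 30}, which I would either reproduce or invoke directly.
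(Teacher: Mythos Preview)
Your proposal is correct and follows the same approach as the paper, which simply states that the lemma ``follows by considering the last merging step of the rectangles process'' and cites \cite{Holroyd03}*{Proposition 30}. One minor remark: the invariant $U=[A\cap U]$ in fact holds regardless of merge order (since $[A\cap W]\supset[U_1\cup U_2]=W$ already, stray infections in $W\setminus(U_1\cup U_2)$ only help), so the reordering you invoke is really needed to secure disjoint witnesses and $S,T\subsetneq R$ rather than the invariant itself---but you essentially acknowledge this in your final paragraph.
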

Finally, iterating \cref{lem:decomposition} and taking the larger of the two resulting rectangles at each step yields the following fundamental lemma of Aizenman and Lebowitz~\cite{Aizenman88}*{Lemma~1}.
\begin{lemma}[AL lemma]
\label{lem:AL}
Let $R=R(a,b)$ and $1\le l\le \max(a,b)$. If $\cI(R)$ occurs, then there exists a rectangle $S\subset R$ with longest side length $s$ satisfying $l\le s\le 2l$ such that $\cI(S)$ occurs. The same holds for Frob\"ose bootstrap percolation.
\end{lemma}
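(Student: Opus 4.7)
The plan is to iterate the disjoint occurrence decomposition (\cref{lem:decomposition}) along a path in the resulting binary tree, always following the sub-rectangle with the larger longest side, and stop as soon as this longest side enters the target window $[l,2l]$.

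Concretely, I would build a strictly nested sequence $R=R_0\supsetneq R_1\supsetneq R_2\supsetneq\cdots$ of rectangles, each satisfying $\cI$, as follows. If $\lng(R_i)\le 2l$, stop and output $R_i$; otherwise $|R_i|>1$, so \cref{lem:decomposition} yields proper sub-rectangles $S_i,T_i\subsetneq R_i$ with $\cI(S_i)\circ\cI(T_i)$ and $[S_i\cup T_i]=R_i$. Set $R_{i+1}$ to be whichever of $S_i,T_i$ has larger $\lng$ (ties broken arbitrarily); disjoint occurrence yields $\cI(R_{i+1})$. The process terminates because $|R_i|$ strictly decreases through positive integers.

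The main geometric point is the bound $\lng(R_{i+1})\ge \lng(R_i)/2-1$ at every non-terminating step. From the rectangles process of \cref{subsec:rectangles:process}, the identity $[S_i\cup T_i]=R_i$ (a single rectangle) forces $S_i$ and $T_i$ to be at graph distance at most $2$ (respectively $1$ in the Frob\"ose case) and $R_i$ to be their bounding rectangle. Projecting onto the long axis of $R_i$, the projection of $R_i$ is the union of those of $S_i$ and $T_i$ together with a possible gap of size at most $2$ (respectively $1$), whence $\lng(R_i)\le \lng(S_i)+\lng(T_i)+2\le 2\lng(R_{i+1})+2$, as claimed.

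To conclude, let $k$ be the termination index. If $k=0$ then $\lng(R)\le 2l$ and $\lng(R)=\max(a,b)\ge l$ by hypothesis, so $S:=R$ works. If $k\ge 1$ then $\lng(R_{k-1})>2l$, so the displayed bound gives $\lng(R_k)>l-1$, hence $\lng(R_k)\ge l$ by integrality, while $\lng(R_k)\le 2l$ by the stopping rule. In either case $S:=R_k$ satisfies $l\le \lng(S)\le 2l$ with $\cI(S)$, as required. The Frob\"ose case is identical, using the Frob\"ose version of \cref{lem:decomposition} and replacing the gap bound $2$ by $1$. The only mildly delicate ingredient is the bounding-box claim for the rectangles process, which is a routine case analysis.
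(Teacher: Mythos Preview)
Your proof is correct and follows exactly the approach indicated in the paper, which merely states that one iterates \cref{lem:decomposition}, always keeping the sub-rectangle with the larger longest side. Your write-up simply makes explicit the halving bound $\lng(R_{i+1})\ge \lng(R_i)/2-1$ and the integrality step that the paper leaves implicit.
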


Turning to local Frob\"ose bootstrap percolation, a similar rectangles process is available. Namely, there is only one ``seed'' rectangle, which grows by merging with a single infection adjacent to it at a time. This immediately entails a local version of the AL lemma.
\begin{lemma}[Local Frob\"ose AL lemma]
\label{lem:AL:loc}
Let $R=R(a,b)$ and $2\le l\le a+b$. If $\cI_{\mathrm{loc}}^{\mathrm{F}}(R)$ occurs, then there exists a rectangle $S\subset R$ with $\phi(S)=l$ such that $\cI_{\mathrm{loc}}^{\mathrm{F}}(S)\cap \cC^{\mathrm{F}}(S,R)$ occurs.
\end{lemma}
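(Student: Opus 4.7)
The plan is to run the local Frob\"ose rectangles process described just before the statement. Starting from the germ $x$, the set of germs grows as a single rectangle $S_0\subset S_1\subset\dots\subset S_N$, with $S_0=\{x\}$, where at each step we merge the current seed $S_{i-1}$ with one initial infection $p_i\in A$ at graph distance $1$ from it. Under $\cI_{\mathrm{loc}}^{\mathrm{F}}(R)$, we may fix an order so that the process terminates with $S_N=R$, and then we only need to extract the correct intermediate rectangle.

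The main geometric observation is that each merge step increases $\phi$ by exactly $1$. A point at graph distance $1$ from a rectangle $S_{i-1}$ necessarily lies in one of the four rows or columns immediately bordering $S_{i-1}$, so the smallest rectangle containing $S_{i-1}\cup\{p_i\}$ extends $S_{i-1}$ by exactly one row or column and raises the semi-perimeter by $1$. Since $\phi(S_0)=2$ and $\phi(S_N)=\phi(R)=a+b$, the numbers $\phi(S_i)=i+2$ exhaust $\{2,3,\dots,a+b\}$. Given $l$, I therefore set $S:=S_{l-2}\subset R$, which has $\phi(S)=l$ by construction.

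It remains to check the two events. For $\cI_{\mathrm{loc}}^{\mathrm{F}}(S)$, observe that the initial infections $p_1,\dots,p_{l-2}$ used to produce $S$ all lie in $S$, so restricting $A$ to $S$ and running local Frob\"ose with germ $x\in S$ reproduces exactly the same sequence $S_0,\dots,S_{l-2}=S$, yielding $[A\cap S]^x_{\mathrm{F}}=S$. For $\cC^{\mathrm{F}}(S,R)$, I take the same $s=x\in S$. Starting from initially infected set $S\cup(A\cap R)$ with germ $x$, the germ status first propagates throughout $S$ (connected, fully infected, containing $x$), after which the remaining initial infections $p_{l-1},\dots,p_N\in A\cap(R\setminus S)$ trigger precisely the same merges as in the original process, so $[S\cup(A\cap R)]^x_{\mathrm{F}}=R$.

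I do not anticipate any serious obstacle; the lemma is essentially a bookkeeping consequence of the local Frob\"ose rectangles process. The only point worth articulating carefully, which I would establish once and reuse, is that whenever a single initial infection $p$ sits in a row or column adjacent to a fully-germed rectangle, the Frob\"ose rule iteratively completes $2\times 2$ squares along that row or column and converts it entirely into germs. This is what guarantees that a single merge truly grows the seed by exactly one row or column, and hence produces the clean increment $\phi(S_i)-\phi(S_{i-1})=1$ on which the rest of the argument rests. Note in particular that, in contrast to \cref{lem:AL}, no dyadic doubling is needed: unit increments allow us to hit the target semi-perimeter $l$ exactly.
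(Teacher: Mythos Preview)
Your proposal is correct and follows exactly the approach the paper intends: the paper itself does not give a detailed proof but simply remarks that in local Frob\"ose bootstrap percolation the seed rectangle grows by merging with a single adjacent infection at a time, and that ``this immediately entails a local version of the AL lemma.'' Your write-up supplies precisely the details behind that sentence, including the key observation that each merge increments the semi-perimeter by exactly one.
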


\subsection{Constants and asymptotic notation}

Before we proceed with ways of bounding the probability of internal filling, we should say a few words about asymptotic notation. Given a real function $f$ and a positive one $g$, we say that $f(x)=O(g(x))$, if there exists $C>0$ such that $|f(x)|\le C g(x)$ for all $x$ in the domain of $f$. We write $f(x)=O(g(x))$ as $x\to a\in\bbR\cup\{\pm\infty\}$, if there exists $C>0$ such that $\liminf_{x\to a}Cg(x)-|f(x)|\ge 0$. We write $f(x)=o(g(x))$ as $x\to a\in\bbR\cup\{\pm\infty\}$, if $\lim_{x\to a} f(x)/g(x)=0$. Unless otherwise specified, all asymptotic notation holds as $p\to 0$.

We also need several constants
\[1\ll C_0\ll C_1\ll C_2\ll C_3\ll C_4.\]
That is, $C_0$ is chosen large enough, $C_1$ is chosen larger than a sufficiently large function of $C_0$, etc. All of these constants are allowed to depend on implicit constants in asymptotic $O(\cdot)$ notation, but not on $p$. Indeed, we require $p>0$ to be small enough depending on $C_4$ unless otherwise stated (this will only be the case in \cref{prop:Holroyd:upper} and its application in \cref{lem:isotropic:bound}).

\subsection{Traversability}
We next import some notions from Holroyd's work \cite{Holroyd03}, starting with some important functions. 
\begin{align}
\label{eq:def:f}f&{}:(0,\infty)\to(0,\infty):z\mapsto-\log(1-e^{-z}),\\
\label{eq:def:beta}\beta&{}:(0,1)\to(0,1):u\mapsto\frac{u+\sqrt{u(4-3u)}}{2},\\
g&{}:(0,\infty)\to(0,\infty):z\mapsto-\log\beta(1-e^{-z}).\label{eq:def:g}
\end{align}
It is not hard to check that $f$ and $g$ are decreasing convex analytic functions with the following asymptotics\footnote{Corresponding but less precise asymptotics for first and second derivatives of $f$ and $g$ will also be used.}
\begin{align}
\label{eq:f:asymptotics}f(z)&=\begin{cases}-\log z+z/2+O(z^2)&z\to 0,\\
e^{-z}+O(e^{-2z})&z\to\infty,\end{cases}\\
\label{eq:g:asymptotics}g(z)&{}=\begin{cases}-\frac12\left(\log(z)+\sqrt{z}\right)+O(z)&z\to 0,\\
e^{-2z}+O\left(e^{-3z}\right)&z\to\infty.
\end{cases}
\end{align}
In particular, $f$ and $g$ are integrable and one can show that \cite{Holroyd03}*{Proposition~5}
\begin{align}
\label{eq:integral}
\lambda_1^{\mathrm{F}}&{}=\int_0^\infty f=\frac{\pi^2}{6}&\lambda_1&{}=\int_0^\infty g=\frac{\pi^2}{18}\end{align}
(also see \cites{Holroyd03a,Bringmann12} for generalisations of the functions $f,g$ and \cref{eq:integral}).

The relevance of $f$ comes from the following fact. Let \begin{equation}
    \label{eq:def:q}
q=-\log(1-p)=p+O(p^2),
\end{equation}
so that $q\ge p$. Then, recalling \cref{eq:def:O}, for any finite set $X\subset\bbZ^2$, we have 
\[\bbP_p\left(\cO^c(X)\right)=\bbP_p\left(A\cap X\neq\varnothing\right)=1-e^{-|X|q}=e^{-f(|X|q)}.\]
\Cref{lem:traversability} below gives a similar relation for $g$. We say that $R=R(a,b)$ is \emph{East-traversable}, if 
\[\cT_\rightarrow(R)=\cO\left(R(a-1,0;a,b)\right)\cap \bigcap_{i=1}^{a-1}\cO\left(R(i-1,0;i+1,b)\right)\]
occurs. In other words, we require that there is an infection on every two consecutive columns of $R(a,b)$ and that the right-most column does contain an infection. We similarly define traversability for other directions. We denote the corresponding events by $\cT_{\zeta}(R)$ with $\zeta\in\{\uparrow,\leftarrow,\downarrow\}$.
For the Frob\"ose model the corresponding notion is the following. We say that $R=R(a,b)$ has no \emph{horizontal gaps} (resp.\ vertical gaps), if 
\begin{align*}
\cG_-(R)&{}=\bigcap_{i=0}^{b-1}\cO\left(R(0,i;a,i+1)\right),&\cG_|(R)&{}=\bigcap_{i=0}^{a-1}\cO\left(R(i,0;i+1,b)\right)\end{align*}
occurs. The importance of these events comes from the following observation.
\begin{observation}
\label{obs:traversability}
Let $R=R(a,b)$ be a rectangle and $A\subset\bbZ^2$. The following hold.
\begin{enumerate}    \item\label{obs:travers:infect} $\cI(R)\subset\bigcap_{\zeta\in\{\rightarrow,\uparrow,\leftarrow,\downarrow\}}\cT_\zeta(R)$. Similarly, $\cI^{\mathrm{F}}(R)\subset\cG_-(R)\cap\cG_|(R)$.
    \item\label{obs:travers:boundary} If $R(-1,0;0,b)\subset[A]$ and $A\in\cT_\rightarrow(R)$, then $R\subset[A]$. Similarly, if $R(-1,0;0,b)\subset [A]^{\mathrm{F}}$ and $A\in\cG_|(R)$, then $R\subset [A]^{\mathrm{F}}$.
    \item\label{obs:travers:germ_boundary} If $x\in A$, $R(-1,0;0,b)\subset [A]^x$ and $A\in\cT_\rightarrow(R)$, then $R\subset[A]^x$, where we recall that $[\cdot]^x$ is the closure of local bootstrap percolation with germ $x$.
    Similarly, if $x\in A$, $R(-1,0;0,b)\subset [A]^x_{\mathrm{F}}$ and $A\in\cG_|(R)$, then $R\subset[A]^x_{\mathrm{F}}$.
\end{enumerate}
\end{observation}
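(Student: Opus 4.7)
The plan is to handle all three parts via a single column-by-column (symmetrically row-by-row) argument, using the traversability or no-gap hypothesis to produce a seed infection in each new column and then fill it by lateral propagation from the already-filled neighbour column. For part (1), I would argue the contrapositive: if $R$ fails to be East-traversable, then $\cI(R)$ fails. Suppose two consecutive columns $i, i+1$ of $R$ contain no initial infection, and consider the first time the dynamics starting from $A\cap R$ infects a site $(j,y)$ with $j\in\{i,i+1\}$. By choice of first time, no site in columns $i,i+1$ is yet infected; since $\cI(R)$ implies no site outside $R$ is ever infected, the only candidate infected neighbour is in column $j-1$ or $j+1$, but exactly one of those lies outside $\{i,i+1\}$. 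So at most one neighbour of $(j,y)$ is infected, contradicting the $2$-neighbour rule. The clause about the rightmost column is even simpler. For the Frob\"ose model, the analogous contrapositive involves a single empty column or row: three of the four corners of any $1\times 1$ square crossing an empty column cannot be infected at a first infection time, because two of those corners lie in the empty column itself.

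For part (2), I would induct on $i\ge 0$ with the hypothesis that columns $-1,0,\dots,i-1$ of $R$ are contained in $[A]$. The key point is that once column $i$ contains even a single infected site, the whole column fills easily: from any infected $(i,y_0)$, the site $(i,y_0+1)$ has two infected neighbours $(i,y_0)$ and $(i-1,y_0+1)$, so one propagates vertically. East-traversability supplies the seed. If $A$ already contains a point of column $i$, done; otherwise, for $i\le a-2$ the pair strip $R(i,0;i+2,b)$ contains some initial infection $(i+1,y_0)$, and then $(i,y_0)$ has infected neighbours $(i-1,y_0)$ and $(i+1,y_0)$; for $i=a-1$ the explicit clause $\cO(R(a-1,0;a,b))$ directly furnishes an initial infection in column $a-1$. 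The Frob\"ose variant is analogous, filling column $i$ from $(i,y_0)$ by iteratively completing $1\times 1$ squares with three infected corners at $(i-1,y_0),(i,y_0),(i-1,y_0\pm 1)$.

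Part (3) is the same induction carried out in the local dynamics, where germs have to be tracked alongside infections. Since $R(-1,0;0,b)\subset[A]^x$ means the left-neighbour column is entirely germed (and analogously for $[A]^x_{\mathrm{F}}$), every appeal to a ``neighbour in column $i-1$'' in the above argument can be read as an appeal to a germ, so the ``at least one germ neighbour'' condition of the local update rule is automatically met; the newly infected site then inherits germ status from that germed neighbour, powering the next propagation step. The only mild obstacle, and the one point requiring care, is precisely this bookkeeping in part (3): one must check that every new infection in column $i$ is produced via a rule invocation in which the distinguished germed neighbour lies in the already-germed column $i-1$, rather than in an as-yet-ungermed site of column $i$, which is immediate from the structure of the seed-plus-propagation argument.
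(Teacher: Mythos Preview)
Your argument is correct. The paper states this result as an \emph{Observation} without proof, treating it as immediate from the definitions; your write-up supplies exactly the standard column-by-column verification that the paper leaves implicit, so there is nothing to compare.
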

Clearly, \begin{equation}
\label{eq:gaps}\bbP_p\left(\cG_|(R)\right)=\exp(-af(bq)).\end{equation} A similar link between traversability and the function $g$ is provided by \cite{Holroyd03}*{Lemma 8} as follows.
\begin{lemma}[Traversability probability]
\label{lem:traversability}
Let $R=R(a,b)$. Then
\[
\exp(-ag(bq))\ge\bbP_p\left(\cT_\rightarrow(R)\right)\ge\exp(-(a-1)g(bq)-f(bq))\ge p\exp(-(a-1)g(bq)).\]
\end{lemma}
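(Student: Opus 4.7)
The plan is to encode $\cT_\rightarrow(R)$ as an event on a sequence of i.i.d.\ column indicators and reduce to a $2\times 2$ transfer matrix computation whose Perron--Frobenius eigenvalue is precisely $\beta(u)$. Let $Y_i=\1\{R(i-1,0;i,b)\cap A\neq\varnothing\}$ for $i=1,\dots,a$; these are i.i.d.\ Bernoulli random variables with parameter $u:=1-(1-p)^b=1-e^{-bq}$, so that $-\log u=f(bq)$ by \cref{eq:def:f,eq:def:q}. Unpacking the definition of $\cT_\rightarrow(R)$ and \cref{eq:def:O}, the event occurs exactly when $Y_a=1$ and $(Y_1,\dots,Y_a)$ contains no two consecutive zeros. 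Setting $\pi_n(s)=\bbP_p((Y_1,\dots,Y_n)\text{ has no two consecutive zeros, }Y_n=s)$ for $s\in\{0,1\}$, a one-step case analysis yields the recursion $(\pi_n(1),\pi_n(0))^T=M(\pi_{n-1}(1),\pi_{n-1}(0))^T$ with initial condition $(1,0)^T$ and transfer matrix
\[M=\begin{pmatrix} u & u\\ 1-u & 0\end{pmatrix}.\]
Its characteristic polynomial $\lambda^2-u\lambda-u(1-u)=0$ has Perron root exactly $\beta(u)$ from \cref{eq:def:beta} and second root in $(-\beta(u),0)$; note also $\beta(u)\le 1$, which follows from $(2-u)^2\ge u(4-3u)$.

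For the upper bound, the row vector $v=(1,u/\beta(u))$ is a left eigenvector with eigenvalue $\beta(u)$, so $v\cdot\pi_n=\beta(u)^n\,v\cdot\pi_0=\beta(u)^n$ for every $n$. Positivity of $\pi_a(0)$ immediately gives
\[\bbP_p(\cT_\rightarrow(R))=\pi_a(1)=\beta(u)^a-\frac{u}{\beta(u)}\,\pi_a(0)\le \beta(u)^a=\exp(-ag(bq))\]
by \cref{eq:def:g}, as desired.

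For the lower bound, let $T_n=\pi_n(1)+\pi_n(0)$. Substituting $\pi_n(0)=(1-u)\pi_{n-1}(1)$ and $\pi_{n-1}(1)=uT_{n-2}$ into the definition of $T_n$ produces the scalar recursion $T_n=uT_{n-1}+u(1-u)T_{n-2}$ with $T_0=T_1=1$. Because $\beta(u)^2=u\beta(u)+u(1-u)$ from the characteristic equation, and $T_0,T_1\ge \beta(u)^0,\beta(u)^1$ (using $\beta(u)\le 1$), a one-line strong induction gives $T_n\ge\beta(u)^n$ for every $n\ge 0$. Since $\pi_a(1)=uT_{a-1}$, we deduce
\[\bbP_p(\cT_\rightarrow(R))=\pi_a(1)\ge u\,\beta(u)^{a-1}=\exp\bigl(-(a-1)g(bq)-f(bq)\bigr),\]
while the final bound $\ge p\exp(-(a-1)g(bq))$ follows from $u=1-(1-p)^b\ge p$ for integer $b\ge 1$. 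The only step requiring even a moment's thought is the induction $T_n\ge\beta(u)^n$, which collapses to a single line via the characteristic equation; everything else is bookkeeping, so the whole proof is essentially a textbook Perron--Frobenius analysis of a $2\times 2$ matrix.
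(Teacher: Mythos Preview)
Your proof is correct. The paper itself does not give a proof of this lemma but merely cites \cite{Holroyd03}*{Lemma 8}; however, your transfer-matrix/linear-recurrence approach is exactly the standard one used by Holroyd, and it is also the method the paper employs in the appendix (\cref{lem:traversability:exact}) to prove the refined version, where the same recurrence $x_{n+2}=ux_{n+1}+u(1-u)x_n$ for $x_n=\bbP_p(\cT_\rightarrow(R(n,b)))$ is solved explicitly as $x_n=(\beta(u)^{n+1}-\bar\beta(u)^{n+1})/(\beta(u)-\bar\beta(u))$. Your argument differs only cosmetically: rather than writing out the closed form, you extract the upper bound via the left Perron eigenvector and the lower bound via a one-line induction on $T_n=\pi_n(1)+\pi_n(0)$, which is arguably cleaner for the purposes of this lemma since it avoids ever mentioning $\bar\beta$.
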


\subsection{Variational principles}
\label{subsec:variations}
We next recall some variational tools from \cite{Holroyd03}*{Section 6}. For any coordinate-wise non-decreasing piecewise-differentiable path in $[0,\infty)^2$ (simply \emph{path} in the sequel) we set
\begin{align*}
W(\gamma)&{}=\int_\gamma g(x)\md y+g(y)\md x,&W^{\mathrm{F}}(\gamma)&{}=\int_\gamma f(x)\md y+f(y)\md x.
\end{align*}
In other words, if we view $\gamma$ as a piecewise-differentiable function $\gamma:[0,1]\to [0,\infty)^2:t\mapsto(\gamma_1(t),\gamma_2(t))$, we have
\[W(\gamma)=\int_0^1\left(g(\gamma_1(t))\gamma_2'(t)+g(\gamma_2(t))\gamma_1'(t)\right)\md t\]
and similarly for $W^{\mathrm{F}}$. Recalling \cref{eq:def:q}, we further set
\begin{align}
\label{eq:def:Wp}
W_p(\gamma)&{}=\int_\gamma g(qx)\md y+g(qy)\md x,&W^{\mathrm{F}}_p(\gamma)&{}=\int_\gamma f(qx)\md y+f(qy)\md x.
\end{align}
Since paths we need are all piecewise linear, we write them out by specifying the points to be joined by straight line segments, such as $\gamma = (x_1, x_2, x_3)$, for $x_i \in [0, \infty)^2$.

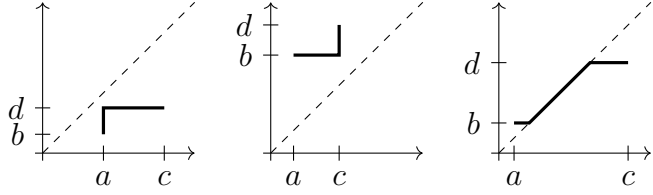
\begin{figure}
    \centering
    \begin{tikzpicture}
        \draw[->] (-.1, 0) -- (2, 0);
        \draw[->] (0, -.1) -- (0, 2);
        \draw[dashed] (0, 0) -- (2, 2);
        \draw[very thick] (0.8, 0.25) -- (0.8, 0.6) -- (1.6, 0.6);
        \draw (0.8,0.1) -- (0.8,-0.1) node[below]{$a$};
        \draw (1.6,0.1) -- (1.6,-0.1) node[below]{$c$};
        \draw (0.1,0.25) -- (-0.1,0.25) node[left]{$b$};
        \draw (0.1,0.6) -- (-0.1,0.6) node[left]{$d$};
        \draw[->] (2.9, 0) -- (5, 0);
        \draw[->] (3, -.1) -- (3, 2);
        \draw[dashed] (3, 0) -- (5, 2);
        \draw[very thick] (3.3, 1.3) -- (3.9, 1.3) -- (3.9, 1.7);
        \draw (3.3,0.1) -- (3.3,-0.1) node[below]{$a$};
        \draw (3.9,0.1) -- (3.9,-0.1) node[below]{$c$};
        \draw (3.1,1.3) -- (2.9,1.3) node[left]{$b$};
        \draw (3.1,1.7) -- (2.9,1.7) node[left]{$d$};
        \draw[->] (5.9, 0) -- (8, 0);
        \draw[->] (6, -.1) -- (6, 2);
        \draw[dashed] (6, 0) -- (8, 2);
        \draw[very thick] (6.2, .4) -- (6.4, .4) -- (7.2, 1.2) -- (7.7, 1.2);
        \draw (6.2,0.1) -- (6.2,-0.1) node[below]{$a$};
        \draw (7.7,0.1) -- (7.7,-0.1) node[below]{$c$};
        \draw (6.1,0.4) -- (5.9,0.4) node[left]{$b$};
        \draw (6.1,1.2) -- (5.9,1.2) node[left]{$d$};
    \end{tikzpicture}
    \caption{The three cases for the shape of $\gamma_{S,T}$ defined in \cref{eq:optimal:path}.}
    \label{fig:optimal_path}
\end{figure}
We next define certain specific paths which turn out to optimise $W$ and $W^{\mathrm F}$. Given $0\le a\le c$, $0\le b\le d$ and rectangles $S,T$ which are translates of $R(a,b)$ and $R(c,d)$ respectively, we define 
\begin{align}
    \label{eq:gammaR}
    \gamma_S&{}=((0,0),(\min(a,b),\min(a,b)),(a,b)),\\
    \label{eq:optimal:path}
    \gamma_{S,T}&{}=\begin{cases}
    ((a,b),(a,d),(c,d))&d<a,\\
    ((a,b),(c,b),(c,d))&c<b,\\
    ((a,b),(\max(a,b),\max(a,b)),(\min(c,d),\min(c,d)),(c,d))&\text{otherwise}\end{cases}
\end{align}
(see \cref{fig:optimal_path}), so that in fact $\gamma_{S}=\gamma_{\varnothing,S}$. In words, these are the paths staying as close to the diagonal as possible.
The following is proved in \cite{Holroyd03}*{Proof of Proposition 14}.
\begin{lemma}[Optimal path]
\label{lem:optimal:path}
Let $0\le a\le c$, $0\le b\le d$ and $S,T$ be translates of $R(a,b)$ and $R(c,d)$ respectively. Then,
    \begin{align*}
    \inf_{\gamma:(a,b)\to(c,d)}W(\gamma)&{}=W(\gamma_{S,T}),&\inf_{\gamma:(a,b)\to(c,d)}W^{\mathrm{F}}(\gamma)&{}=W^{\mathrm{F}}(\gamma_{S,T}),
    \end{align*}
    where the infimum is over all continuous paths $\gamma$ starting at $(a,b)$ and ending at $(c,d)$.
\end{lemma}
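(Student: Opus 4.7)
The plan is to reduce to staircase (piecewise axis-parallel) paths and then run a local two-segment exchange argument. I would first note that since $g$ and $f$ are continuous on $(0,\infty)$ and integrable at $0$, any coordinate-wise non-decreasing piecewise-differentiable path from $(a,b)$ to $(c,d)$ can be approximated by staircases with the same endpoints whose $W$ (resp.\ $W^{\mathrm F}$) values converge. It therefore suffices to minimise over staircases, with the diagonal segment appearing in the third case of \cref{eq:optimal:path} realised as the limit of finely alternating horizontal and vertical micro-steps tracking $\{x=y\}$.

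Next I would establish a local exchange lemma. Given two consecutive segments of a staircase meeting at an ``L-corner'' at $(x_0,y_0)$ with arm lengths $\epsilon,\delta>0$, the two competing L-shapes joining $(x_0,y_0)$ to $(x_0+\epsilon,y_0+\delta)$ have $W$-costs differing by
\[
\epsilon\int_{y_0}^{y_0+\delta}h(t)\,\md t-\delta\int_{x_0}^{x_0+\epsilon}h(s)\,\md s=\int_{y_0}^{y_0+\delta}\int_{x_0}^{x_0+\epsilon}(h(t)-h(s))\,\md s\,\md t,
\]
where $h=-g'>0$. Because $g$ is convex and decreasing, $h$ is decreasing, so $h(t)-h(s)$ has the sign of $s-t$. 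Thus the swap that moves the corner towards the diagonal $\{x=y\}$ never increases the cost, and strictly decreases it when the L lies strictly off the diagonal. The identical computation with $h=-f'$ handles $W^{\mathrm F}$.

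Iterating this exchange, any staircase from $(a,b)$ to $(c,d)$ can be converted, by finitely many cost-non-increasing swaps, into a path that first moves monotonically towards the diagonal, then stays as close to $\{x=y\}$ as possible, and finally leaves the diagonal to reach $(c,d)$. The three cases in the definition of $\gamma_{S,T}$ merely record whether the target rectangle lies entirely below ($d<a$), entirely above ($c<b$), or straddles the diagonal; in each case the iterated-swap limit is precisely $\gamma_{S,T}$.

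The main obstacle I anticipate is the straddling case, where the extremal path contains a segment of positive length along $\{x=y\}$ itself. There one must verify that the cost of any fine staircase approximation of this diagonal segment converges from above to the limiting integral $\int g(x)\,\md y+g(y)\,\md x$ along the diagonal, which follows from uniform continuity of $g$ on compact subsets bounded away from $0$ (and analogously for $f$), together with the fact that the diagonal segment in \cref{eq:optimal:path} lies in $[\max(a,b),\min(c,d)]^{2}$, hence away from the singularity at $0$ whenever $\max(a,b)>0$. Combining the exchange iteration with this limit yields $W(\gamma_{S,T})\le W(\gamma)$ for any competitor, and identically for $W^{\mathrm F}$.
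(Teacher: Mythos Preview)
Your argument is essentially correct and captures the right mechanism: convexity of $g$ (and of $f$) is exactly what makes paths closer to the diagonal cheaper. The paper does not give its own proof but simply cites Holroyd \cite{Holroyd03}*{Proof of Proposition 14}; the underlying idea there, and the one the present paper itself exploits later in \cref{lem:Hp}, is a direct Green's theorem comparison: for two competing paths with the same endpoints one writes
\[
W(\gamma)-W(\gamma_{S,T})=\iint_{\Gamma}\bigl(g'(x)-g'(y)\bigr)\,\md x\,\md y,
\]
where $\Gamma$ is the region they enclose, and convexity gives the integrand the right sign on each side of $\{x=y\}$. Your local $L$-swap computation is precisely this identity applied to an infinitesimal rectangle, so the two arguments are equivalent in content.

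Where your write-up is a bit loose is the iteration step. As stated, a single $L$-swap need not be cost-non-increasing when the rectangle $[x_0,x_0+\epsilon]\times[y_0,y_0+\delta]$ straddles the diagonal, since $h(t)-h(s)$ then changes sign over the domain. You implicitly fix this by first refining the staircase so that every $L$ lies entirely on one side of $\{x=y\}$, but you should say so explicitly (and note that at most one $L$ per staircase can straddle, with vanishing contribution as the mesh shrinks). Once that is done, the global Green's theorem statement actually makes the limiting argument for the diagonal segment in the third case of \cref{eq:optimal:path} unnecessary: one can compare an arbitrary path to $\gamma_{S,T}$ in one shot rather than via swaps-then-limits. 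This is the main practical advantage of the Green's theorem formulation over the rearrangement one.
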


\subsection{Holroyd bounds}
The next lemma relates the functional $W$ to the probability of internally filling a rectangle. It is morally due to Gravner and Holroyd~\cite{Gravner08}, up to minor adjustments as detailed below.
\begin{proposition}[A priori lower bound]
\label{prop:Holroyd:lower}
Let $R$ be a rectangle. Recalling \cref{eq:def:Iloc,eq:def:Wp,eq:gammaR}, we have
\begin{align*}
\bbP_p\left(\cI_{\mathrm{loc}}(R)\right)&{}\ge p^3\exp\left(-W_p(\gamma_R)\right),&\bbP_p\left(\cI^{\mathrm{F}}_{\mathrm{loc}}(R)\right)&{}\ge p\exp\left(-W^{\mathrm{F}}_p(\gamma_R)\right).\end{align*}
\end{proposition}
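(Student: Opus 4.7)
The plan is to bound $\bbP_p(\cI^{\mathrm{F}}_{\mathrm{loc}}(R))$ from below by the probability of an explicit favourable initial configuration: a germ at a corner of $R$ together with no-gap events forcing growth of the infected rectangle along the optimal path $\gamma_R$. The two-neighbour case then follows the same skeleton with an L-shaped seed and traversability events in place of no-gap events.

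For the Frob\"ose bound, assume without loss of generality that $R=R(a,b)$ with $a\leq b$, so that $\gamma_R=((0,0),(a,a),(a,b))$. I would require $(0,0)\in A$ (contributing a factor $p=e^{-f(q)}$) and then impose one independent no-gap event for each growth step along $\gamma_R$: the column $R(k,0;k+1,k)$ non-empty for $k=1,\dots,a-1$ (extending $R(k,k)$ to $R(k+1,k)$), the row $R(0,k;k+1,k+1)$ non-empty for $k=1,\dots,a-1$ (extending $R(k+1,k)$ to $R(k+1,k+1)$), and the row $R(0,k;a,k+1)$ non-empty for $k=a,\dots,b-1$ (extending $R(a,k)$ to $R(a,k+1)$). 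Iterating \cref{obs:traversability}, the conjunction of these events with $\{(0,0)\in A\}$ implies $\cI^{\mathrm{F}}_{\mathrm{loc}}(R)$. Since the underlying subsets of $\bbZ^2$ are pairwise disjoint, these events are independent, and each non-emptiness event for a segment of length $k$ has probability $e^{-f(kq)}$. Multiplying,
\[\bbP_p(\cI^{\mathrm{F}}_{\mathrm{loc}}(R))\geq p\cdot\exp\left(-\sum_{k=1}^{a-1}f(kq)-\sum_{k=2}^{a}f(kq)-(b-a)f(aq)\right).\]

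The estimate is then closed by a sum-to-integral comparison. Because $f$ is positive and decreasing, $f(kq)\leq\int_{k-1}^{k}f(qx)\,dx$, so $\sum_{k=1}^{a-1}f(kq)\leq\int_0^{a-1}f(qx)\,dx$ and $\sum_{k=2}^{a}f(kq)\leq\int_1^{a}f(qx)\,dx$, whose sum is at most $2\int_0^a f(qx)\,dx$. Combined with the identity $W^{\mathrm{F}}_p(\gamma_R)=2\int_0^a f(qx)\,dx+(b-a)f(qa)$ obtained by parametrising $\gamma_R$, this yields the Frob\"ose claim.

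For the two-neighbour model the strategy is identical modulo two adaptations. A single germ does not suffice to nucleate 2-neighbour growth, so I would use an L-shaped seed $\{(0,0),(1,0),(0,1)\}\subset A$ with germ $(0,0)$, which locally fills $R(2,2)$ and contributes the prefactor $p^3$. Each single-strip extension is then driven by a traversability event $\cT_\rightarrow$ or $\cT_\uparrow$ on the newly added strip, with probability bounded via \cref{lem:traversability}; this replaces $f$-factors by $g$-factors, and the same integral comparison recovers $W_p(\gamma_R)$. The main technical point throughout is the singularity of $f$ and $g$ at zero: the smallest-$k$ terms of the discrete sum are not controlled by the integral, but are precisely cancelled by the prefactor $p$ (resp.\ $p^3$) via the identity $e^{-f(q)}=p$.
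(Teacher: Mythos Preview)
Your Frob\"ose argument is correct and is essentially what the paper does (the paper just writes ``analogous'' for that case, and you have spelled out the details cleanly, including the disjointness check and the sum--to--integral comparison).

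The two-neighbour argument, however, has a real gap. Applying \cref{lem:traversability} to a single newly added strip does \emph{not} produce a $g$-factor: for a width-one strip, $\cT_\rightarrow(R(1,k))$ is nothing but occupation of that column, with probability $e^{-f(kq)}$ (and \cref{lem:traversability} correspondingly gives only $e^{-f(kq)}$ when $a=1$). Running your staircase growth one strip at a time therefore yields
\[
\bbP_p(\cI_{\mathrm{loc}}(R))\ge p^3\exp\left(-W^{\mathrm F}_p(\gamma_R)\right),
\]
with $f$ in place of $g$. Since $f\ge g$ pointwise, this is \emph{weaker} than the target $p^3\exp(-W_p(\gamma_R))$ and does not imply it. More generally, growing by fixed-width strips and invoking \cref{lem:traversability} always leaves at least one $f$-factor per strip, so no simple stacking of traversability events along the diagonal recovers $g$. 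The paper handles the square part by importing \cite{Gravner08}*{Lemma~12}, which gives directly
\[
\bbP_p(\cI_{\mathrm{loc}}(R(a,a)))\ge \bbP_p(\cI(R(2,2)))\exp\left(-2\sum_{i=1}^{a-2}g(iq)\right),
\]
and this is the missing ingredient in your sketch: obtaining $g$-factors for diagonal growth requires a more refined growth event than strip-by-strip occupation.

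A minor point: your closing remark that the smallest-$k$ terms ``are not controlled by the integral, but are precisely cancelled by the prefactor $p$ (resp.\ $p^3$)'' is not accurate. Because $f$ and $g$ are decreasing, every term satisfies $f(kq)\le\int_{k-1}^{k}f(qx)\,\md x$, so the whole sum is dominated by the integral, including $k=1$. The prefactor comes purely from the seed event, not from any cancellation with the integral's singularity.
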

\begin{proof}
Let $R=R(a,b)$ with $a\le b$. By \crefdefpart{obs:traversability}{obs:travers:germ_boundary} and the fact that $g$ is decreasing, we have
\begin{equation}
\label{eq:square:traversability:decomposition}
\bbP_p(\cI_{\mathrm{loc}}(R))\ge \bbP_p(\cI_{\mathrm{loc}}(R(a,a)))\bbP_p(\cT_\uparrow(R(a,b-a))).
\end{equation}
\cite{Gravner08}*{Lemma 12} gives
\begin{equation}
\label{eq:square:lower:bound}\bbP_p(\cI_{\mathrm{loc}}(R(a,a)))\ge \bbP_p(\cI(R(2,2)))\exp\left(-2\sum_{i=1}^{a-2}g(iq)\right)\ge p^2\exp\left(-\frac2q\int_0^{aq}g\right),\end{equation}
if $a>1$, while clearly $\bbP_p(\cI_{\mathrm{loc}}(R(a,a)))=p$ if $a=1$, so \cref{eq:square:lower:bound} holds for any $a\ge 1$. Combining \cref{eq:square:lower:bound,eq:square:traversability:decomposition} with \cref{lem:traversability} yields the desired result. The proof for Frob\"ose bootstrap percolation is analogous.
\end{proof}

Converse bounds are significantly harder to prove. The following proposition is essentially due to Holroyd \cite{Holroyd03}, even though it only appears in a convenient form in \cite{Gravner12}*{Proposition 15} (see also \cite{Gravner12}*{Theorem 20} for the Frob\"ose model).
\begin{proposition}[A priori upper bound on the critical scale]
\label{prop:Holroyd:upper}
Let $p\le 1/C_4$. Let $R=R(a,b)$ with
\[\frac{1}{C_3p}\le a\le b\le \frac{C_3\log(1/p)}{p}.\]
Then, recalling \cref{eq:gammaR}, we have
\begin{align*}
\bbP_p(\cI(R))&{}\le \exp\left(1/(C_3p)-W_p(\gamma_R)\right),&\bbP_p\left(\cI^{\mathrm{F}}(R)\right)&{}\le \exp\left(1/(C_3p)-W_p^{\mathrm{F}}(\gamma_R)\right).\end{align*}
\end{proposition}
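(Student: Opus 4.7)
The plan is to adapt the classical hierarchy argument of Holroyd \cite{Holroyd03}, in the streamlined form used by Gravner, Holroyd and Morris \cite{Gravner12}. I focus on the two-neighbour case; the Frob\"ose case is completely analogous, with $g$ replaced by $f$ and traversability replaced by the no-gaps events from \cref{obs:traversability}.

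First I would build a \emph{hierarchy} for $\cI(R)$: by iterating the AL \cref{lem:AL} together with the disjoint occurrence decomposition \cref{lem:decomposition}, we extract from $\cI(R)$ a chain of nested rectangles $R_0 \subsetneq R_1 \subsetneq \dots \subsetneq R_k = R$ with $\phi(R_j)$ growing geometrically (say dyadically) and such that the events $\cI(R_0), \cC(R_0,R_1), \dots, \cC(R_{k-1},R_k)$ occur disjointly. The BK \cref{lem:BK} then yields
\[
\bbP_p(\cI(R)) \le \sum_{(R_j)} \bbP_p(\cI(R_0)) \prod_{j=1}^k \bbP_p\bigl(\cC(R_{j-1}, R_j)\bigr),
\]
where the sum ranges over all such hierarchies. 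Next I would bound each crossing probability: growing from $R_{j-1}$ to $R_j$ forces the added horizontal and vertical strips to be traversable by \crefdefpart{obs:traversability}{obs:travers:boundary}, so \cref{lem:traversability} gives
\[
\bbP_p\bigl(\cC(R_{j-1}, R_j)\bigr) \le \exp\bigl(-g(b_{j-1}q)\,\Delta a_j - g(a_{j-1}q)\,\Delta b_j + O(1)\bigr),
\]
where $\Delta a_j, \Delta b_j$ are the side-length increments. The product over $j$ is a Riemann sum for the path integral $W_p(\eta)$ along the polygonal path $\eta$ through the corners $(a_j,b_j)$; by the optimal path \cref{lem:optimal:path}, $W_p(\eta) \ge W_p(\gamma_R)$, so each hierarchy contributes at most $\exp(-W_p(\gamma_R) + \text{error})$.

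The hard part will be quantitatively controlling the two error sources and showing that they are absorbed into $\exp(1/(C_3 p))$. First, the Riemann-sum-versus-integral discrepancy across a hierarchy of length $k = O(\log(1/p))$ must be bounded using convexity of $g$ and the dyadic spacing of scales; this requires some care since $g$ blows up at $0$, making the contribution of the smallest scales somewhat delicate, but it is precisely here that the lower bound $a \ge 1/(C_3 p)$ becomes crucial, truncating the integrand at $g(q/(C_3 p)) = O(\log C_3)$. Second, the number of hierarchies must be counted: since each rectangle is specified by four coordinates in a box of side $O(\log(1/p)/p)$ and $k = O(\log(1/p))$, the total count is at most $\exp(O(\log^2(1/p)))$. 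Both errors are then much smaller than $\exp(1/(C_3 p))$ for $C_3$ large enough, completing the bound.
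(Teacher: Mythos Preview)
The paper does not prove this proposition; it is imported as a black box from \cite{Holroyd03} (in the form of \cite{Gravner12}*{Proposition 15 and Theorem 20}). You are right that the hierarchy method is what underlies those references, but your outline has a structural gap that is exactly where the real work lies.

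Holroyd's hierarchies are \emph{trees}, not chains. Iterating \cref{lem:decomposition} produces, at each step, \emph{two} sub-rectangles $S,T$ with $\cI(S)\circ\cI(T)$; this yields a tree with ``splitter'' vertices (two children, both internally filled disjointly) in addition to the ``growth'' vertices (one child, plus a crossing). Your claimed chain $R_0\subsetneq\dots\subsetneq R_k$ with $\cI(R_0)\circ\cC(R_0,R_1)\circ\dots\circ\cC(R_{k-1},R_k)$ does \emph{not} follow from \cref{lem:AL} and \cref{lem:decomposition}; the AL lemma gives nested internally filled rectangles at each scale, but neither disjointness nor crossings. Such a chain decomposition is precisely what holds for the \emph{local} model (cf.\ \cref{lem:AL:loc}), and you cannot invoke locality here: \cref{prop:Holroyd:upper} is used in the proof of \cref{lem:isotropic:bound}, hence of \cref{prop:reduction} and \cref{th:locality}, so that would be circular. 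The hard part of Holroyd's argument is exactly bounding the contribution of the splitter vertices (the ``pod'' estimate in \cite{Holroyd03}), which your outline omits.

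A secondary issue: you misread the role of the hypothesis $a\ge 1/(C_3p)$. Here $a$ is the short side of the \emph{final} rectangle $R$, not of the seeds; the hierarchy still descends to microscopic seeds, and the small-scale part of $W_p(\gamma_R)$ near the origin is handled by the seed bound, not by this hypothesis.
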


Roughly speaking, \cref{prop:Holroyd:lower,prop:Holroyd:upper} are exactly the full content of \cite{Holroyd03}. In particular, together with the classical \cref{lem:tau} below, they establish \cref{eq:Holroyd}. Fortunately, we do not need to look deeper into the proof of \cite{Holroyd03}, but only use the above results as black boxes.

\section{Reduction to local bootstrap percolation}
\label{sec:locality}
In the present section we prove the locality \cref{th:locality}. We start by proving some rough \emph{a priori} bounds on the probability of a rectangle being internally filled in \cref{subsec:a:priori}, which is then used repeatedly in the main argument in \cref{subsec:reduction}.

\subsection{A priori upper bounds on subcritical scales}
\label{subsec:a:priori}
While \cref{prop:Holroyd:upper} is tight to leading order for rectangles on scale $1/p$, we also need (crude) bounds for smaller rectangles. Both \cref{lem:isotropic:bound,lem:anisotropic:bound} below can be viewed as improvements on \cite{Gravner12}*{Lemma 2}, which is insufficient for our purposes. Both lemmas incorporate tilting ideas common in large deviations theory (see e.g.\ \cite{DenHollander08} for background). In \cref{lem:isotropic:bound} we treat rectangles with bounded aspect ratio, by tilting the $\mathbb P_p$ measure to a $\mathbb P_{p_0}$ for $p_0>p$ so that we can apply \cref{prop:Holroyd:upper}.
\begin{lemma}[Bounded aspect ratio]
\label{lem:isotropic:bound}
Let $C_4\le s\le t\le C_1 s$. If $s\le1/(C_2p)$ and $R=R(s,t)$, then
\begin{align*}
\bbP_p(\cI(R))&{}\le \left(spe^{-1/(3C_1)}\right)^{\lceil(s+t)/2\rceil},&\bbP_p\left(\cI^{\mathrm{F}}(R)\right)&{}\le \left(spe^{-1/(3C_1)}\right)^{s+t-1}.\end{align*}
\end{lemma}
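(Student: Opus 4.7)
The idea, following the hint in the paper text, is a change-of-measure (tilting) argument: one moves from the Bernoulli product measure $\bbP_p$ to $\bbP_{p_0}$ for some $p_0>p$ chosen so that $R$ is of near-critical size for $p_0$, thereby bringing \cref{prop:Holroyd:upper} into play. The gain in the likelihood ratio is paid for by the extremal \cref{lem:perimeter}, which forces $|A\cap R|\ge k_0$ for $k_0=\lceil(s+t)/2\rceil$ (resp.\ $k_0=s+t-1$ in the Frob\"ose case) on the event $\cI(R)$ (resp.\ $\cI^{\mathrm F}(R)$). Since $\cI(R)$ depends only on $A\cap R$, one has
\[\bbP_p(\cI(R))=\mathbb E_{p_0}\!\left[\1_{\cI(R)}\left(\tfrac{p}{p_0}\right)^{|A\cap R|}\left(\tfrac{1-p}{1-p_0}\right)^{|R|-|A\cap R|}\right]\le\left(\tfrac{p}{p_0}\right)^{k_0}\!\left(\tfrac{1-p}{1-p_0}\right)^{|R|-k_0}\!\bbP_{p_0}(\cI(R)),\]
where the inequality uses that both ratios make the integrand decreasing in $|A\cap R|$ (since $p<p_0$), together with the extremal lower bound on $|A\cap R|$ on the event $\cI(R)$.

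The concrete choice is $p_0=1/(C_0 s)$, which is legitimate because the hypothesis $s\le 1/(C_2 p)$ and $C_0\ll C_2$ give $p\le p_0$. One then verifies the hypotheses of \cref{prop:Holroyd:upper} at parameter $p_0$: the lower bound $1/(C_3p_0)=C_0s/C_3\le s$ uses $C_0\ll C_3$, while the upper bound $t\le C_3\log(1/p_0)/p_0=C_0C_3 s\log(C_0s)$ follows from $t\le C_1 s$ and $s\ge C_4$ with $C_4$ large enough compared to $C_0,C_1,C_3$. \cref{prop:Holroyd:upper} then gives $\bbP_{p_0}(\cI(R))\le\exp(C_0s/C_3-W_{p_0}(\gamma_R))$.

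The crux is the asymptotic evaluation of $W_{p_0}(\gamma_R)$. With $\gamma_R=((0,0),(s,s),(s,t))$ and using $sp_0=1/C_0$,
\[W_{p_0}(\gamma_R)=\frac{2}{p_0}\int_0^{1/C_0}\! g(v)\,dv+(t-s)\,g(1/C_0).\]
The small-$z$ expansion $g(z)=-\tfrac12(\log z+\sqrt z)+O(z)$ from \cref{eq:g:asymptotics} yields $\int_0^a g=(a/2)(1-\log a)+O(a^{3/2})$, so after multiplying out the two pieces one obtains
\[W_{p_0}(\gamma_R)=s+\frac{s+t}{2}\log C_0+O\bigl(s/\sqrt{C_0}+(t-s)/\sqrt{C_0}\bigr).\]
On the other hand the tilting factor contributes $k_0\log(p/p_0)=k_0\log(sp)+k_0\log C_0$, while $((1-p)/(1-p_0))^{|R|-k_0}\le e^{p_0|R|}=e^{t/C_0}$. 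Combining, and using that $k_0-(s+t)/2\in\{0,\tfrac12\}$ so that the $\log C_0$ terms cancel up to a harmless $O(\log C_0)$,
\[\log\bbP_p(\cI(R))\le k_0\log(sp)-s+\frac{t}{C_0}+\frac{C_0 s}{C_3}+O(\log C_0+s/\sqrt{C_0}).\]
Since $k_0/(3C_1)\le (s+t)/(6C_1)\le s/3$ for $t\le C_1 s$ and $C_1\ge1$, and since the choices $C_1\ll C_0\ll C_3$ together with $s\ge C_4$ let all error terms above be absorbed into, say, $s/6$, the surplus of $-s$ beats the target of $-k_0/(3C_1)$ with room to spare, giving the announced bound $(sp\,e^{-1/(3C_1)})^{k_0}$.

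The Frob\"ose case is structurally identical, replacing $k_0$ by $s+t-1$, $g$ by $f$, and using $f(z)=-\log z+z/2+O(z^2)$ together with $\int_0^a f=a(1-\log a)+O(a^2)$. The computation then produces $W_{p_0}^{\mathrm F}(\gamma_R)=2s+(s+t)\log C_0+O(\cdot)$, and the tilting/Holroyd $\log C_0$ terms cancel exactly (since now $k_0=s+t-1$), leaving a surplus of $-2s$ that still dominates the required $k_0/(3C_1)\le 2s/3$. The main delicate point throughout is the bookkeeping of the $\log C_0$ contributions: the \emph{a priori} troubling factor $C_0^{k_0}$ from the tilting is exactly what \cref{prop:Holroyd:upper} absorbs through the leading behaviour of $W_{p_0}(\gamma_R)$, and the subleading constant term of $\int_0^a g$ (resp.\ $\int_0^a f$) is what produces the crucial $-s$ (resp.\ $-2s$) surplus; all other contributions are polynomial slack dealt with by the hierarchy $1\ll C_0\ll C_1\ll C_2\ll C_3\ll C_4$.
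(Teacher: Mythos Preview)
Your overall strategy is exactly the paper's: tilt from $\bbP_p$ to $\bbP_{p_0}$ for some $p_0>p$ chosen so that $R$ becomes near-critical, use the extremal bound \cref{lem:perimeter} to control the likelihood ratio, and then invoke \cref{prop:Holroyd:upper} at $p_0$. The algebra of how the $\log C$ pieces cancel and how the subleading term of $\int_0^a g$ produces the crucial $-s$ surplus is also correct in spirit.

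However, there is a genuine gap: your choice $p_0=1/(C_0s)$ is incompatible with the paper's constant hierarchy, which is $C_0\ll C_1\ll C_2\ll C_3\ll C_4$. You invoke ``the choices $C_1\ll C_0\ll C_3$'', but in fact $C_0\ll C_1$. This matters, because two of your error terms blow up:
\begin{itemize}
\item the tilting factor $((1-p)/(1-p_0))^{|R|-k_0}\le e^{p_0|R|}=e^{t/C_0}$ contributes up to $C_1 s/C_0$, which is $\gg s$;
\item the $O(t/\sqrt{C_0})$ remainder in $W_{p_0}(\gamma_R)$ is up to $C_1 s/\sqrt{C_0}$, again $\gg s$.
\end{itemize}
So the ``surplus of $-s$'' is swamped and the final inequality does not follow.

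The fix is simply to move the tilting constant past $C_1$. The paper takes $p_0=2m/(C_2|R|)$, which for our aspect ratios is of order $1/(C_2 s)$; this is still $\ge p$ by the hypothesis $s\le 1/(C_2 p)$, and now $p_0|R|=2m/C_2\le (s+t)/C_2\le (1+C_1)s/C_2\ll s$, while the $W_{p_0}$ remainder is $O(C_1 s/\sqrt{C_2})\ll s$. With this single change your argument goes through essentially verbatim (and likewise for the Frob\"ose case).
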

\begin{proof}
Let $m=\lceil(s+t)/2\rceil$. By \cref{lem:perimeter}, $\cI(R)$ implies $|R\cap A|\ge m$. Set $p_0= 2m/(C_2|R|)\ge p$. Observe that for any $A_0\subset R$ such that $|A_0|\ge m$ we have 
\begin{align*}
\frac{\bbP_p(A\cap R=A_0)}{\bbP_{p_0}(A\cap R=A_0)}&{}=\frac{p^{|A_0|}(1-p)^{|R|-|A_0|}}{p_0^{|A_0|}(1-p_0)^{|R|-|A_0|}}\le \frac{p^{m}(1-p)^{|R|-m}}{p_0^{m}(1-p_0)^{|R|-m}}\\
&{}\le \frac{p^m}{p_0^m(1-p_0)^{|R|}}\le\left(\frac{e^{4/C_2}p}{p_0}\right)^m,\end{align*}
since $p_0\ge p$. Thus,
\begin{equation}
\label{eq:isotropic:intermediate}
\bbP_p(\cI(R))\le \left(\frac{e^{4/C_2}p}{p_0}\right)^m\bbP_{p_0}(\cI(R)).\end{equation}
Yet, by \cref{prop:Holroyd:upper} (note that it applies to $p_0$ instead of $p$, because $p_0\le 2/(C_2C_4)$ is taken small enough depending on $C_3$),
\begin{align*}
    \bbP_{p_0}(\cI(R))&{}\le \exp\left(\frac{1}{C_3p_0}-W_{p_0}(\gamma_{R})\right)=\exp\left(\frac{1}{C_3p_0}-\frac{2}{q_0}\int_{0}^{sq_0}g-(t-s)g(sq_0)\right)\\
    &{}\le\begin{multlined}[t]\exp\left(\frac{1}{C_3p_0}+s(\log(sp_0)-1)+O\left(sp_0+\sqrt{s^3p_0}\right)\right)\\
    \times\exp\left(-(t-s)\log(sp_0)/2+O(tp_0+t\sqrt{sp_0})\right)\end{multlined}\\
    &{}\le \exp\left(-s\left(1-O\left(C_1/\sqrt{C_2}\right)\right)+\frac{s+t}{2}\log(sp_0)\right)\\
    &{}\le \exp\left(m\left(\log(sp_0)-\frac{1}{2C_1}\right)\right),
    \end{align*}
    using the fact that $q_0=p_0+O(p_0^2)$ and $g(z)=-\log(z)/2+O(\sqrt z)$ as $z\to0$ on the second line, $C_4\le s\le t\le C_1s\le 2C_1/(C_2p_0)$ in the third one and $s\ge t/C_1\ge m/C_1\ge C_4/C_1\gg C_2\ge 1/(sp_0)$ in the last one. Injecting this into \cref{eq:isotropic:intermediate} and recalling that $C_2\gg C_1$, we obtain the desired bound.

    The proof for Frob\"ose bootstrap percolation is the same taking $m=s+t-1$.
\end{proof}
We next turn to rectangles with large aspect ratio, starting with Frob\"obse bootstrap percolation. The idea is to replace the event that a rectangle is internally filled by it having no horizontal gaps and a sufficient number of infections. The latter event is then approximated by a large deviation event for a suitable random walk.
\begin{lemma}[Frob\"ose large aspect ratio]
\label{lem:anisotropic:bound:frobose}
There exists a function $\xi^{\mathrm F}:[1,2]\to[1,2]$ such that $\xi^{\mathrm F}(x)=1+O((x-1)\log(x-1))$ as $x\to1$ and the following holds. Let $s\le t$ with $s\le 1/(C_1p)$ and $t>1$. Set $m=s+t-1$. If $R=R(s,t)$, then
\[\bbP_p\left(\cI^{\mathrm{F}}(R)\right)\le\left(sp\xi^{\mathrm F}\left(\frac{m-1}{t-1}\right)\right)^{m}.\]
\end{lemma}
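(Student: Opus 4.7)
By \crefdefpart{obs:traversability}{obs:travers:infect} applied to the Frob\"ose model and the extremal bound in \cref{lem:perimeter}, $\cI^{\mathrm F}(R) \subset \cG_-(R) \cap \{|A \cap R| \ge m\}$. Writing $X_i = |A \cap R(0, i; s, i+1)|$ for $0 \le i < t$, these are independent $\mathrm{Bin}(s, p)$ random variables, and the right-hand event is $\{X_i \ge 1\ \forall i,\ \sum_i X_i \ge m\}$. A standard exponential tilting argument gives, for any $\lambda \ge 0$,
\[\bbP_p(\cI^{\mathrm F}(R)) \le e^{-\lambda m}\prod_{i=0}^{t-1}\bbE_p\!\left[e^{\lambda X_i}\1_{X_i \ge 1}\right] = e^{-\lambda m}\bigl((1-p+pe^\lambda)^s - (1-p)^s\bigr)^t.\]

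Setting $w = pe^\lambda/(1-p) \ge p/(1-p)$ and using $(1-p)^{(s-1)(t-1)} \le 1$ turns this into $p^m w^{-m}((1+w)^s - 1)^t$. The next step is to minimise over admissible $w$: substituting $v = 1/(1+w)$, stationarity reduces to the polynomial equation $1 + v + v^2 + \cdots + v^{s-1} = st/m$, which has a unique root $v_* = v_*(s, t) \in (0,1)$ for $s \ge 2$. When $1 - v_* \ge p$ this interior minimum is admissible and the resulting bound takes the form $(sp\,\xi_{\mathrm{int}}(s, t))^m$ for an explicit $\xi_{\mathrm{int}}$ obtained by back-substitution. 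Otherwise (which forces $m \gtrsim 1/p$) the constrained minimiser is $\lambda = 0$, and the bound simplifies to $(1 - (1-p)^s)^t \le (sp)^t = (sp\,\xi_{\mathrm{bdry}}(s, t))^m$ with $\xi_{\mathrm{bdry}} = (sp)^{-(s-1)/m}$. I would then set $\xi^{\mathrm F}(x)$ to be the supremum of $\max(\xi_{\mathrm{int}}, \xi_{\mathrm{bdry}})$ over admissible $(s, t)$ with $(m-1)/(t-1) = x$, with $\xi^{\mathrm F}(1) = 1$ to match the exact identity $\bbP_p(\cI^{\mathrm F}(R(1, t))) = p^t$.

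The heart of the proof is the asymptotic $\xi^{\mathrm F}(x) - 1 = O((x-1)\log(x-1))$ as $x \to 1^+$. In the interior regime, $x \to 1$ forces $t/s \to \infty$ uniformly over admissible $s$, so expanding the equation for $v_*$ yields $1 - v_* = 2/m + O(1/m^2)$; substituting and Taylor-expanding $\log\xi_{\mathrm{int}}$ produces a main term of order $(s-1)m^{-1}\log(m/s)$, which in terms of $x$ reads $O((x-1)\log(t/s))$, and this is $O((x-1)|\log(x-1)|)$ since $\log(t/s)$ and $|\log(x-1)| = \log((t-1)/(s-1))$ differ only by a bounded additive constant for $s \ge 2$. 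In the boundary regime, the combination of $m \gtrsim 1/p$ and $sp \le 1/C_1$ forces $sp \gtrsim x-1$, whence $\xi_{\mathrm{bdry}} - 1 \le (s-1)m^{-1}|\log(sp)|\cdot(1+o(1)) = O((x-1)|\log(x-1)|)$.

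The main technical obstacle is making this estimate uniform across all admissible $(s, t)$, particularly at the crossover $m \approx 1/p$ between the two regimes; verifying that $\xi^{\mathrm F}$ takes values in $[1, 2]$ then reduces to routine bounds on the explicit formulae, using $sp \le 1/C_1$.
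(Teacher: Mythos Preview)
Your approach is correct but genuinely different from the paper's, and more involved. The paper does not tilt the row counts $X_i$. Instead, having observed that $\cI^{\mathrm F}(R)\subset\cG_-(R)\cap\{|A\cap R|\ge m\}$, it \emph{extracts a set of exactly $m$ witness infections} from any configuration in this event (one per row, plus $s-1$ extras), orders them as $(x_i,y_i)_{i=1}^m$ with $y_i$ non-decreasing, and notes that $(x_i)\in\{0,\dots,s-1\}^m$ while the increments $(y_{i+1}-y_i)\in\{0,1\}^{m-1}$ sum to $t-1$. A Cram\'er bound on $\mathrm{Bin}(m-1,\tfrac12)$ then gives
\[
\bbP_p(\cI^{\mathrm F}(R))\le (sp)^m\bigl(\xi^{\mathrm F}(x)\bigr)^{m-1},\qquad \xi^{\mathrm F}(x)=x(x-1)^{(1-x)/x},
\]
an explicit function of $x=(m-1)/(t-1)$ alone, with the asymptotic read off directly.

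The advantage of the paper's extraction trick is that the $p$-dependence factors out as $p^m$ immediately, and the residual combinatorics depends only on $x$; no optimisation, no case split, no supremum over $(s,t)$, and no $p$-dependence in $\xi^{\mathrm F}$. Your Chernoff bound on the $X_i$ is sound and may even be numerically sharper, but it leaves you with an implicit $\xi^{\mathrm F}$ defined through the root $v_*$ of a degree-$(s-1)$ polynomial and a sup over admissible $(s,t)$, together with a separate boundary regime where the optimiser hits $\lambda=0$. Your sketch of the asymptotic analysis (expanding $1-v_*\approx 2/m$ and tracking $(s-1)/m\sim x-1$) is essentially right, but making it uniform over all $(s,t)$ with a given $x$, including the regime where $s$ itself is large, requires the scaling $v_*=e^{-u/s}$ rather than a naive Taylor expansion; this is the step that needs the most care in your plan.
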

\begin{proof}
We have that $\cI^{\mathrm{F}}(R)$ implies that $R$ has no horizontal gaps by \crefdefpart{obs:traversability}{obs:travers:infect} and contains at least $m$ infections by \cref{lem:perimeter}. Let us denote 
\[\cI'(R)=\left\{A\subset\bbZ^2:|R\cap A|\ge m\right\}\cap\cG_-(R)\supset\cI^{\mathrm{F}}(R).\]
Observe that from any configuration in $\cI'(R)$ one can extract a set of $m$ infections witnessing $\cI'(R)$. We next seek to upper bound the number of such reduced configurations in $\cI'(R)$. Given a configuration with $m$ infections in $R$ we denote the infections by $(x_i,y_i)_{i=1}^m$ with $y_{i+1}\ge y_{i}$ for all $i\in\{1,\dots,m-1\}$. Clearly, $(x_i)_{i=1}^m\in\{0,\dots,s-1\}^m$, while $\cG_-(R)$ implies that $(y_{i+1}-y_i)_{i=1}^{m-1}\in\{0,1\}^{m-1}$, $y_1=0$ and $y_m=t-1$. Thus, $t-1=y_m-y_1=\sum_{i=1}^{m-1}(y_{i+1}-y_i)$. By the exponential Markov inequality (Cram\'er's theorem)
\begin{align}
\nonumber
\bbP_p\left(\cI^{\mathrm{F}}(R)\right)&{}\le \bbP_p\left(\cI'(R)\right)\le \frac{(2sp)^m}{2} \exp\left(-(m-1)\sup_{u\in\bbR}\left(\frac{u(t-1)}{m-1}-\log\frac{1+e^u}{2}\right)\right)\\
&{}=(sp)^m\left(\xi^{\mathrm F}\left(\frac{m-1}{t-1}\right)\right)^{m-1},\label{eq:anisotropic:intermediate:F}\\
\nonumber\xi^{\mathrm F}(x)&{}=x(x-1)^{(1-x)/x}\in[1,2],\end{align}
which has the right asymptotics.
\end{proof}

For the two-neighbour model the statement and proof of \cref{lem:anisotropic:bound:frobose} are analogous, but a little more technical, as seen in what follows.
\begin{lemma}[Large aspect ratio]
\label{lem:anisotropic:bound}
There exists a function $\xi:(1/2,1]\to(1,3]$ such that $\xi(x)=1+O((1/2-x)\log(x-1/2))$ as $x\to1/2+$ and the following holds. Let $s\le t$ with $s\le 1/(C_1p)$ and $t>1$. Set $m=\lceil(s+t)/2\rceil$ and 
\begin{equation}
\label{eq:def:x}
x(s,t)=\max\left(\frac{m-1}{t-1},\frac12+(6sp)^{1/(2C_0)}\right).\end{equation}
If $R=R(s,t)$, then
\[\bbP_p(\cI(R))\le3(sp\xi(x(s,t)))^{m}.\]
\end{lemma}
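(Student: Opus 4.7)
The plan is to follow the scheme of \cref{lem:anisotropic:bound:frobose}, with the ternary alphabet $\{0,1,2\}$ for sorted $y$-increments replacing the binary alphabet $\{0,1\}$ used there. In the Frob\"ose case, the no-horizontal-gaps event $\cG_-(R)$ forces every row of $R$ to contain an infection, while here the vertical traversability $\cT_\uparrow(R)\cap\cT_\downarrow(R)$ (implied by $\cI(R)$ via \crefdefpart{obs:traversability}{obs:travers:infect}) only forces the top and bottom rows to be occupied and forbids two consecutive empty rows. This yields sorted $y$-differences $\delta_i\in\{0,1,2\}$ in an extracted witness.

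First, I would reduce $\cI(R)$ to $\cI'(R):=\{|A\cap R|\ge m\}\cap\cT_\uparrow(R)\cap\cT_\downarrow(R)$ via \cref{lem:perimeter} and \cref{obs:traversability}, then extract $m$ infections $(x_i,y_i)_{i=1}^m$ sorted by $(y,x)$, with $y_1=0$, $y_m=t-1$, $\delta_i:=y_{i+1}-y_i\in\{0,1,2\}$, and $x_i$ free in $\{0,\dots,s-1\}$. A union bound yields
\[\bbP_p(\cI(R))\le s^m p^m N(m-1,t-1),\]
where $N(n,k):=|\{(\delta_i)\in\{0,1,2\}^n:\sum_i\delta_i=k\}|$. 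Cram\'er's exponential-Markov bound for i.i.d.\ $\mathrm{Unif}\{0,1,2\}$ variables gives $N(m-1,t-1)\le 3^{m-1}\exp(-(m-1)I_*((t-1)/(m-1)))$, with $I_*(z):=\sup_\theta(\theta z-\log((1+e^\theta+e^{2\theta})/3))$. Setting $\xi(w):=3\exp(-I_*(1/w))$ for $w\in(1/2,1]$ yields a continuous increasing function $\xi:(1/2,1]\to(1,3]$ with $\xi(1)=3$ and $\xi(w)\to 1$ as $w\to 1/2+$, and combining the estimates gives $\bbP_p(\cI(R))\le(sp)^m\xi((m-1)/(t-1))^{m-1}\le 3(sp\xi(x(s,t)))^m$ by $\xi\ge 1$ and monotonicity.

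The asymptotic $\xi(x)=1+O((1/2-x)\log(x-1/2))$ as $x\to 1/2+$ would come from the boundary behaviour of $I_*$ at the support limit $z=2$: the optimality equation for the Cram\'er transform gives $e^{-\theta^*}=(2-z)(1+o(1))$ as $z\to 2-$, hence $I_*(2-\epsilon)=\log 3+\epsilon\log\epsilon+O(\epsilon)$ as $\epsilon\to 0+$. Combined with $\epsilon=2-1/w\sim 4(w-1/2)$ near $w=1/2$, this produces the required expansion.

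The main technical obstacle arises when the number $k$ of occupied rows exceeds $m$, which is possible for sufficiently thin rectangles. In this regime one cannot in general extract $m$ infections from $A\cap R$ whose sorted $y$-differences all lie in $\{0,1,2\}$ (e.g.\ when occupied rows follow an alternating $1,2,1,2,\dots$ gap pattern, any skip forces a gap of $3$), so the extraction bound above requires supplementation. The floor $1/2+(6sp)^{1/(2C_0)}$ in $x(s,t)$ is tuned precisely to make $\xi(x(s,t))^m$ large enough to absorb this residual contribution, which can itself be controlled by summing $\binom{k-1}{t-k}(sp)^k$ over $k>m$ (counting valid row-occupancy patterns under $\cT_\uparrow\cap\cT_\downarrow$ via a Fibonacci-type formula) together with additional structural consequences of $\cI(R)$ beyond $\cI'(R)$. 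Balancing these contributions and tuning the exponent $1/(2C_0)$ is the most delicate part of the argument; the factor $3$ in the stated bound provides the slack.
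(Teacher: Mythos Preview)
Your overall plan mirrors the paper's proof closely: reduce $\cI(R)$ to $\cI'(R)=\{|A\cap R|\ge m\}\cap\cT_\uparrow(R)\cap\cT_\downarrow(R)$, extract a witness of infections with sorted $y$-increments in $\{0,1,2\}$, and bound the number of such $y$-patterns via the Cram\'er--Chernoff inequality for $\mathrm{Unif}\{0,1,2\}$. Your definition of $\xi$ and the asymptotic analysis near $x=1/2$ are also essentially the paper's.

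The gap is in your handling of the ``technical obstacle''. You correctly observe that one cannot always extract \emph{exactly} $m$ infections with $\delta_i\in\{0,1,2\}$, but the fix you sketch is both more complicated and vaguer than what is actually needed. In particular, no ``additional structural consequences of $\cI(R)$ beyond $\cI'(R)$'' are required. The paper's resolution is simply to let the witness size vary: from any configuration in $\cI'(R)$ one can extract a witness of some size $k\in[m,t]$ (take one infection per occupied row and, if this gives fewer than $m$, pad with further infections on already-occupied rows, which introduces increments equal to $0$). This yields
\[
\bbP_p(\cI(R))\le\sum_{k=m}^{t}(sp)^k\,\xi\!\left(\tfrac{k-1}{t-1}\right)^{k-1},
\]
and the entire role of the floor $\tfrac12+(6sp)^{1/(2C_0)}$ is to control this sum. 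Split it at $k_0=\max\bigl(m,\,1+(t-1)(\tfrac12+(6sp)^{1/(2C_0)})\bigr)$: for $k\le k_0$ use monotonicity of $\xi$ to bound by $2(sp\,\xi(x(s,t)))^m$; for $k>k_0$ show the terms decay geometrically by checking that $(\xi(x(k)))^k/(\xi(x(k+1)))^{k+1}\ge 2sp$, which follows from the bound $\xi'(x)\le -C_0\log(x-1/2)$ and the choice of $k_0$. This is where the exponent $1/(2C_0)$ is ``tuned'', and the factor $3$ absorbs the two pieces. Your row-occupancy sum $\sum_{k>m}\binom{k-1}{t-k}(sp)^k$ is a cousin of this (it corresponds to increments in $\{1,2\}$ rather than $\{0,1,2\}$), but trying to combine it with a separate $k=m$ term and extra structural input is the wrong route; varying $k$ from the outset makes the argument uniform and self-contained.
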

\begin{proof}
We show the inequality with 
\begin{align*}
\xi(x)&{}=\frac{1+T+T^2}{T^{1/x}}\in(1,3],&T&{}=\frac{\sqrt{x^2+6x-3}+1-x}{2(2x-1)}\in[1,\infty)\end{align*}
being the positive root of the equation
\[(2x-1)T^2-T(1-x)-1=0.\]
This will conclude the proof, since $T=1/(4(x-1/2))+O(1)$, so that $\xi(x)=1-4(x-1/2)\log(x-1/2)+O(x-1/2)$ as $x\to1/2+$.

We have that $\cI(R)$ implies that $R$ is North- and South-traversable by \crefdefpart{obs:traversability}{obs:travers:infect} and contains at least $m$ infections by \cref{lem:perimeter}. Let us denote 
\[\cI'(R)=\left\{A\subset\bbZ^2:|R\cap A|\ge m\right\}\cap\cT_\uparrow(R)\cap\cT_\downarrow(R)\supset\cI(R).\]
Observe that from any configuration in $\cI'(R)$ one can extract a set of at least $m$ and at most $t$ infections witnessing $\cI'(R)$. We next seek to upper bound the number of such reduced configurations in $\cI'(R)$. Given a configuration with $k$ infections in $R$ we denote the infections by $(x_i,y_i)_{i=1}^k$ with $y_{i+1}\ge y_{i}$ for all $i\in\{1,\dots,k-1\}$. Clearly, $(x_i)_{i=1}^k\in\{0,\dots,s-1\}^k$, while North and South-traversability imply that $(y_{i+1}-y_i)_{i=1}^{k-1}\in\{0,1,2\}^{k-1}$, $y_1=0$ and $y_k=t-1$. Thus, $t-1=y_k-y_1=\sum_{i=1}^{k-1}(y_{i+1}-y_i)$. By the exponential Markov inequality (Cram\'er's theorem)
\begin{align}
\nonumber
\bbP_p(\cI(R))&{}\le \bbP_p(\cI'(R))\le \sum_{k=m}^{t}\frac{(3sp)^k}{3} \exp\left(-(k-1)\sup_{u\in\bbR}\left(\frac{u(t-1)}{k-1}-\log\frac{1+e^u+e^{2u}}{3}\right)\right)\\
&{}=\sum_{k=m}^{t} (sp)^k(\xi(x(k)))^{k-1}\le \sum_{k=m}^{\lceil k_0\rceil}(sp\xi(x(k_0)))^k+\sum_{k=\lceil k_0\rceil +1}^t(sp\xi(x(k)))^k\label{eq:anisotropic:intermediate}
\end{align}
with $x(k)=(k-1)/(t-1)$ and 
\begin{equation}
    \label{eq:def:k0}
k_0=\max\left(m,1+(t-1)\left(1/2+(6sp)^{1/(2C_0)}\right)\right),
\end{equation}
since $\xi$ is increasing (it is the logarithm of the Legendre transform of a convex increasing function).

The first sum in \cref{eq:anisotropic:intermediate} is bounded by $2(sp\xi(x(k_0)))^m$, since $\xi(x(k_0))\le 3\le 1/(2sp)$. We claim that the second one is bounded by $(sp\xi(x(k_0)))^{k_0}$. To see this, it suffices to show that for any $k\in[k_0,t-1]$ it holds that
\[2sp\le\frac{(\xi(x(k)))^k}{(\xi(x(k+1)))^{k+1}}.\]
But for any $k\in[k_0,t-1]$ we have
\begin{align*}
\frac{(\xi(x(k)))^k}{(\xi(x(k+1)))^{k+1}}&{}\ge \frac13 \left(1-\frac{\xi(x(k+1))-\xi(x(k))}{\xi(x(k+1))}\right)^{t-1}\ge \frac13\left(1-\frac{\max_{x\in[x(k_0),1]}\xi'(x)}{t-1}\right)^{t-1}\\
&{}\ge\frac13(x(k_0)-1/2)^{2C_0}\ge 2sp,\end{align*}
since $\xi'(x)\le -C_0\log (x-1/2)$ for all $x\in(1/2,1]$ and using \cref{eq:def:k0}.

Putting this together and recalling \cref{eq:def:x}, we obtain the desired inequality:
\[\bbP_p(\cI(R))\le 3(sp\xi(x(k_0)))^m+(sp\xi(x(k_0)))^{k_0}\le 3(sp\xi(x(k_0)))^m=3(sp\xi(x(s,t)))^m.\qedhere\]
\end{proof}

\subsection{Reduction}
\label{subsec:reduction}
We fix the following length scale, sizes above which we view as ``very supercritical''
\begin{equation}
\label{eq:def:Lambda}
\Lambda=\frac{C_1\log(1/p)}{p}\le q^{-5/4}.
\end{equation}
For a rectangle $R=R(a,b)$ we denote by 
\[\bar R=R(3a,3b)-(a,b),\]
its three-fold expansion. We set
\begin{align*}\bar\cI_{\mathrm{loc}}(R)&{}=\bigcup_{R\subset R'\subset\bar R}\cI_{\mathrm{loc}}(R'),&
\Phi(R)&{}=\Phi(a,b)=\frac{\bbP_p(\cI(R))}{\bbP_p(\bar\cI_{\mathrm{loc}}(R))},\\
\bar\cI^{\mathrm{F}}_{\mathrm{loc}}(R)&{}=\bigcup_{R\subset R'\subset\bar R}\cI_{\mathrm{loc}}^{\mathrm{F}}(R'),&
\Phi^{\mathrm{F}}(R)&{}=\Phi^{\mathrm{F}}(a,b)=\frac{\bbP_p(\cI^{\mathrm{F}}(R))}{\bbP_p(\bar\cI^{\mathrm{F}}_{\mathrm{loc}}(R))}.\end{align*}
We are now ready to state the main technical step in the reduction to local bootstrap percolation, which is the heart of the present work.
\begin{proposition}[Critical internal filling is local]
\label{prop:reduction}
For any rectangle $R=R(a,b)$ with $a,b\le \Lambda$ we have
\begin{align*}
\Phi(R)&{}\le e^{\log^{19}(1/p)},&\Phi^F(R)&{}\le e^{\log^{19}(1/p)}.\end{align*}
\end{proposition}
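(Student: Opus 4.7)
The plan is to bound both probabilities against the Holroyd--Gravner functional $\exp(-W_p(\gamma_R))$ with only polylog slack. For the denominator, \Cref{prop:Holroyd:lower} applied directly yields $\mathbb{P}_p(\bar{\cI}_{\mathrm{loc}}(R))\ge \mathbb{P}_p(\cI_{\mathrm{loc}}(R))\ge p^3\exp(-W_p(\gamma_R))$, and the $p^3=e^{-O(\log(1/p))}$ prefactor is negligible on our scale. The hard part is the upper bound on $\mathbb{P}_p(\cI(R))$: a direct call to \Cref{prop:Holroyd:upper} wastes a factor $e^{1/(C_3 p)}$, far above the $e^{\log^{19}(1/p)}$ budget, so one needs a much sharper upper bound matching $\exp(-W_p(\gamma_R))$ to polylog precision.

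I would build an iterative ``pyramid'' of internally filled sub-rectangles at geometrically growing scales. Fix a sub-critical base scale $\ell_0\sim \log^{C_0}(1/p)/p$ and doubling scales $\ell_i=2^i\ell_0$ up to $\ell_N\sim\phi(R)$, with $N=O(\log(1/p))$. Iterating the Aizenman--Lebowitz \Cref{lem:AL} together with the disjoint-occurrence decomposition \Cref{lem:decomposition} produces a nested chain $S_0\subset S_1\subset\dots\subset S_N\subseteq R$ of internally filled rectangles with $\phi(S_i)\sim\ell_i$, linked between consecutive scales by BK-disjoint witness infections. At the base scale, \Cref{lem:isotropic:bound,lem:anisotropic:bound} yield $\mathbb{P}_p(\cI(S_0))\le \exp(-W_p(\gamma_{S_0})+\log^{O(1)}(1/p))$, the key point being that these a priori bounds are already tight up to polylog error on sub-critical scales (unlike \Cref{prop:Holroyd:upper}). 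Between consecutive scales, $\cI(S_{i+1})\cap\cI(S_i)$ forces traversability of each of the four annular strips of $S_{i+1}\setminus S_i$ via \crefdefpart{obs:traversability}{obs:travers:infect}; then \Cref{lem:BK,lem:traversability} convert these into a product of $\exp(-ag(bq))$ factors, whose telescoping Riemann sum $\sum_i g(\ell_i q)(\ell_{i+1}-\ell_i)\approx\int g$ reconstructs $W_p(\gamma_R)-W_p(\gamma_{S_0})$ up to polylog error. A union bound over the $O(\Lambda^2)$ position choices at each of the $N$ scales contributes only $O(\log^3(1/p))$ to the exponent, safely within budget. The Frob\"ose case follows the same template with $f,\cG_-,\cG_|$ in place of $g,\cT_\rightarrow,\cT_\uparrow$, further simplified by \Cref{lem:AL:loc}.

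The main obstacle is assembling the per-scale bounds so their product matches the $W_p$-increment to polylog precision. This demands (a)~controlling aspect ratios of the intermediate $S_i$, where \Cref{lem:anisotropic:bound} is essential for thin intermediate rectangles arising from iterated AL; (b)~decomposing each annulus $S_{i+1}\setminus S_i$ into strips whose traversability events are BK-compatible, avoiding double counting at corners; and (c)~keeping the Riemann-sum discretisation error below the polylog budget as $\ell_i q$ crosses from the subcritical regime to order one, where the asymptotics of $g$ in \cref{eq:g:asymptotics} switch form. Combining the resulting upper bound $\mathbb{P}_p(\cI(R))\le e^{\log^{O(1)}(1/p)}\exp(-W_p(\gamma_R))$ with the Holroyd lower bound yields $\Phi(R)\le e^{\log^{O(1)}(1/p)}\le e^{\log^{19}(1/p)}$, and the same for $\Phi^{\mathrm F}$.
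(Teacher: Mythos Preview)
Your strategy reduces the proposition to proving the absolute bound $\mathbb{P}_p(\cI(R))\le \exp(-W_p(\gamma_R)+\log^{O(1)}(1/p))$, which is itself stronger than the main result of \cite{Hartarsky19} and is precisely the content of the refined hierarchy method that this paper is designed to bypass. The sketched pyramid argument does not deliver it. First, iterating \cref{lem:decomposition} does not produce a single nested chain $S_0\subset\dots\subset S_N$ but a binary tree (a hierarchy), and controlling the entropy of such trees while the $W_p$-contributions telescope correctly is exactly the hard part of \cites{Holroyd03,Gravner12,Hartarsky19}. Second, the claim that ``$\cI(S_{i+1})\cap\cI(S_i)$ forces traversability of each of the four annular strips of $S_{i+1}\setminus S_i$'' is false: \crefdefpart{obs:traversability}{obs:travers:infect} only yields traversability of the full rectangles $S_i$ and $S_{i+1}$, saying nothing about the thin strips of the annulus, and there is no BK-disjointness between the witnesses at consecutive scales. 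So the telescoping Riemann sum you describe never materialises.

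The paper's proof avoids any absolute estimate on $\mathbb{P}_p(\cI(R))$ and instead bounds the \emph{ratio} $\Phi(R)$ recursively. If $\cI(R)$ occurs but $\cI_{\mathrm{loc}}(R)$ does not, one extracts via \cref{lem:decomposition} a non-local decomposition $(S,T)$ with $\cI(S)\circ\cI(T)\circ\cC([S\cup T],R)$. The key move is to compare $\mathbb{P}_p(\cI(T))$ not to $e^{-W_p(\gamma_T)}$ but to the crossing probability $\mathbb{P}_p(\cC(S,S+T'))$ for a suitable translate $T'$ of $T$: the large-deviation bounds of \cref{lem:isotropic:bound,lem:anisotropic:bound} show the former is at most the latter up to a small factor. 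Since $\bar\cI_{\mathrm{loc}}(S)\cap\cC(S,S+T')\cap\cC([S\cup T],R)\subset\bar\cI_{\mathrm{loc}}(R)$, the Harris inequality absorbs the crossing back into the denominator, producing a recursion of the form $\Phi(R)\le e^{\log^{O(1)}(1/p)}+\sum_{S\subsetneq R}c_S\,\Phi(S)$ with $\sum_S c_S$ polylogarithmically bounded. Iterating this gives the result without ever proving a sharp upper bound on $\mathbb{P}_p(\cI(R))$.
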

\begin{proof}
We begin with the harder case of two-neighbour bootstrap percolation. We may assume that $\min(a,b)\ge 3$, as otherwise $\cI(R)=\cI_{\mathrm{loc}}(R)\subset\bar\cI_{\mathrm{loc}}(R)$ deterministically, so there is nothing to prove. The proof proceeds recursively on the size of $R$. Let $\cN=\cI(R)\setminus\bar\cI_{\mathrm{loc}}(R)$, so that our main goal is to upper bound $\bbP_p(\cN)$.

For rectangles $S,T\subsetneq R$ with $S\not\subset T$ and $T\not\subset S$, we call $(S,T)$ a \emph{decomposition} of $R$ if $d(S,T)\le 2$. Given a decomposition $(S,T)$ we let $c,d,s,t$ be such that $S$ is a translate of $R(c,d)$ and $T$ is a translate of $R(s,t)$. We say that a decomposition $(S,T)$ is \emph{local} if $\min(c,d,s,t)=1$, $\max(c,s)=a$ or $\max(d,t)=b$.

Let $\cD$ be the set of non-local decompositions of $R$. Recalling \cref{eq:def:C}, we next claim that 
\begin{equation}
\label{eq:I:decomposition}
\cN\subset \cI(R)\setminus\cI_{\mathrm{loc}}(R)\subset \bigcup_{(S,T)\in\cD}\cI(S)\circ\cI(T)\circ\cC([S\cup T],R).\end{equation}
To prove \cref{eq:I:decomposition}, we apply \cref{lem:decomposition} to $R$. Assume the resulting decomposition $S,T$ is local. If $c=a$, then simply notice that $\cI(T)$ implies $\cT_\uparrow(R\setminus S)\cap\cT_\downarrow(R\setminus S)$, so that $\cI(T)\subset \cC(S,R)$.\footnote{Here we observe that $R\setminus S$ is either one or two rectangles of width $a$ and in the latter case mean that each of them is North- and South-traversable.} The cases $s=a$, and $\max(d,t)=b$ are treated analogously. If $s=1$ and $c<a$, then we can also check that $\cI(T)\subset\cC(S,R)$. The remaining cases are treated analogously. Thus, in total, if the decomposition given by \cref{lem:decomposition} is local, we can find a rectangle $S\subsetneq R$ such that $\cI(S)\cap\cC(S,R)$ occurs. We may then repeat the same reasoning for $S$ instead of $R$ until we reach a non-local decomposition or a decomposition consisting of two rectangles consisting of a single site each. In the latter case \cref{obs:stacking} gives that there exists $x\in A\cap R$ such that $\cC(\{x\},R)$ occurs, which is exactly $\cI_{\mathrm{loc}}(R)$ (recall \cref{eq:def:Iloc,eq:def:C}). This completes the proof of \cref{eq:I:decomposition}. 

By \cref{lem:BK,eq:I:decomposition}, we seek to upper bound 
\begin{equation}
\label{eq:sum:decompositions}
\frac{\bbP_p(\cN)}{\bbP_p(\bar\cI_{\mathrm{loc}}(R))}\le \sum_{(S,T)\in\cD}\frac{\bbP_p(\cI(S))\bbP_p(\cI(T))\bbP_p(\cC([S\cup T],R)}{\bbP_p(\bar\cI_{\mathrm{loc}}(R))}.\end{equation}
Let us fix $(S,T)\in\cD$ and without loss of generality assume that $s\le\min(c,d,t)$. We next consider several cases for the values of $c,d,s,t$, as illustrated in \cref{fig:cases}. It is convenient to fix a translate $T'$ of $T$ such that $S+T'=\{x+y:x\in S,y\in T'\}\supset[S\cup T]$.

\begin{figure}
    \centering
\subcaptionbox{\label{fig:1}Cases 1, 2.1, 3}[.19\textwidth]{
\centering  
\begin{tikzpicture}
        \draw (0,0) rectangle (1,1.3);
        \draw (0.5,0.65) node{$S$};
        \draw (0.5,0) node[below]{$c$};
        \draw (0,0.65) node[left]{$d$};
        \draw (0.9,1.2) rectangle (1.8,2.4);
        \draw (1.35,1.8) node{$T$};
        \draw (1.35,2.4) node[above]{$s$};
        \draw (1.8,1.8) node[right]{$t$};
    \end{tikzpicture}
    }  
\subcaptionbox{\label{fig:2.2}Case 2.2}[.19\textwidth]{
\centering
    \begin{tikzpicture}
        \draw (0,0) rectangle (1,1.3);
        \draw (0.5,0.65) node{$S$};
        \draw (0.5,0) node[below]{$c$};
        \draw (0,0.65) node[left]{$d$};
        \draw (0.95,1.2) rectangle (1.05,2.4);
        \draw (0.95,1.8) node[left]{$T$};
        \draw (1,2.4) node[above]{$s$};
        \draw (1.05,1.8) node[right]{$t$};
    \end{tikzpicture}}
\subcaptionbox{\label{fig:4.1}Case 4.1}[.13\textwidth]{
\centering
    \begin{tikzpicture}
        \draw (0,0) rectangle (0.4,2.3);
        \draw (0.2,1.15) node{$S$};
        \draw (0.2,0) node[below]{$c$};
        \draw (0,1.15) node[left]{$d$};
        \draw (0.35,2.2) rectangle (0.5,2.4);
        \draw (0.475,2.2)--(0.7,1.4) node[below]{$T$};
        \draw (0.425,2.4) node[above]{$s$};
        \draw (0.5,2.3) node[right]{$t$};
    \end{tikzpicture}}
\subcaptionbox{\label{fig:4.2}Case 4.2}[.22\textwidth]{
\centering
    \begin{tikzpicture}
        \draw (0,0) rectangle (2.3,0.4);
        \draw (1.15,0.2) node{$S$};
        \draw (0,0.2) node[left]{$d$};
        \draw (1.15,0) node[below]{$c$};
        \draw (2.2,0.35) rectangle (2.4,0.6);
        \draw (2.2,0.5)--(1.4,0.7) node[left]{$T$};
        \draw (2.4,0.425) node[right]{$t$};
        \draw (2.3,0.6) node[above]{$s$};
    \end{tikzpicture}}
\subcaptionbox{\label{fig:4.3}Case 4.3}[.22\textwidth]{
\centering
     \begin{tikzpicture}
        \draw (0,0) rectangle (2,2.3);
        \draw (1,1.15) node{$S$};
        \draw (1,0) node[below]{$c$};
        \draw (0,1.15) node[left]{$d$};
        \draw (1.95,2.2) rectangle (2.1,2.4);
        \draw (2.075,2.2)--(2.3,1.4) node[below]{$T$};
        \draw (2.025,2.4) node[above]{$s$};
        \draw (2.1,2.3) node[right]{$t$};
    \end{tikzpicture}}
    \caption{\label{fig:cases}Illustration of the relative sizes and aspect ratios of the rectangles $S$ and $T$ in the various cases of the proof of \cref{prop:reduction}.}   
    \end{figure}
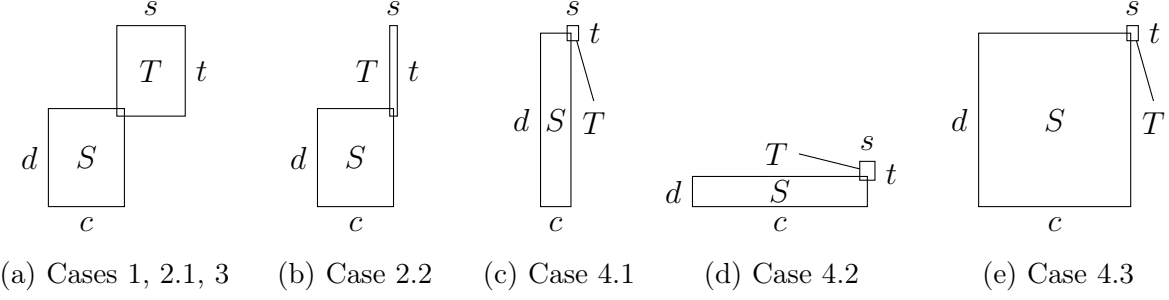

\noindent\textbf{Case 1} (Critical $S$ and $T$). Assume that $s\ge 1/(C_2p)$ (see \cref{fig:1}).
Then \cref{prop:Holroyd:upper,prop:Holroyd:lower} give
\begin{equation}
\label{eq:large:decomposition}\frac{\bbP_p(\cI(S))\bbP_p(\cI(T))}{\bbP_p(\cI_{\mathrm{loc}}(S+T'))}\le \frac{1}{p^{3}}\exp\left(\frac{2}{C_3p}-W_p(\gamma_S)-W_p(\gamma_T)+W_p(\gamma_{S+T'})\right)\end{equation}
However, by \cref{lem:optimal:path}
\[W_p(\gamma_{S+T'})\le W_p(\gamma_T)+W_p((s,t)+\gamma_{S})\]
Moreover, recalling \cref{eq:gammaR}, $s\ge1/(C_2p)\ge 1/(C_2q)$ and the fact that $g$ is convex decreasing, we have
\begin{multline*}
W_p(\gamma_S)-W_p((s,t)+\gamma_S)\\
\begin{aligned}[t]&{}\ge W_p((0,0),(1/(C_2p),1/(C_2p)))-W_p((s,t),(s+1/(C_2p),t+1/(C_2p)))\\
&{}\ge \int_0^{1/C_2} \frac{2g(z)-g(z+sq)-g(z+tq)}{q}\md z\ge 2\frac{g(1/C_2)-g(sq+1/C_2)}{C_2q}\\
&{}\ge 2\frac{g(1/C_2)-g(2/C_2)}{C_2q}\ge\frac{1}{2C_2p},\end{aligned}
\end{multline*}
using \cref{eq:g:asymptotics} in the last inequality. Putting this together and recalling that $C_3\gg C_2$, we get that 
\begin{equation}
\label{eq:reduction:1}\frac{\bbP_p(\cI(S))\bbP_p(\cI(T))\bbP_p(\cC([S\cup T],R))}{\bbP_p(\bar\cI_{\mathrm{loc}}(R))}\le \exp(-1/(3C_2p)),\end{equation}
by observing that $[S\cup T]\subset S+T'\subset \bar R$ and using the Harris inequality \cref{lem:Harris} to deduce that
\begin{equation}
\label{eq:Harris:application}
\bbP_p(\cI_{\mathrm{loc}}(S+T'))\bbP_p(\cC([S\cup T],R))\le \bbP_p(\cI_{\mathrm{loc}}([(S+T')\cup R]))\le \bbP_p(\bar\cI_{\mathrm{loc}}(R)).\end{equation}

\noindent\textbf{Case 2} (Subcritical non-microscopic $T$). Assume that $s\le 1/(C_2p)$ and $t\ge C_2\log(1/p)$. By \cref{lem:traversability}
\begin{align}
\nonumber \bbP_p(\cC(S,S+T'))&{}\ge \bbP_p(\cT_\rightarrow(R(s,d)))\bbP_p(\cT_\uparrow(R(c+s,t)))\\
&{}\ge p^2\exp(-g(dq)(s-1)-g((c+s)q)(t-1))\ge p^2(sp)^{s/2}(2sp)^{t/2},\label{eq:P:bound}\end{align}
since $g(z)<-\frac{1}{2}\log z$ for any $z>0$ small enough and $\min(d,c)\ge s$.

\noindent\textbf{Case 2.1} (Small aspect ratio). Assume that $t\le C_1s$ (see \cref{fig:1}). Then we can apply \cref{lem:isotropic:bound,eq:P:bound} to get
\[\bbP_p(\cI(T))\le (sp)^{\lceil (s+t)/2\rceil}\le \bbP_p(\cC(S,S+T')) e^{-t/C_1}.\]
Note that for any $S\subset R$ we have
\begin{equation}
\label{eq:S-to-R:crossing}\bar\cI_{\mathrm{loc}}(S)\cap\cC(S,S+T')\cap\cC([S\cup T],R)\subset \bar\cI_{\mathrm{loc}}(R).\end{equation}
Then the Harris inequality \cref{lem:Harris} gives
\begin{equation}
\label{eq:reduction:2}
\frac{\bbP_p(\cI(S))\bbP_p(\cI(T))\bbP_p(\cC([S\cup T],R))}{\bbP_p(\bar\cI_{\mathrm{loc}}(R))}\le e^{-t/C_1}\Phi(S).
\end{equation}

\noindent\textbf{Case 2.2} (Large aspect ratio). Assume that $t\ge C_1s$ (see \cref{fig:2.2}). Then we can apply \cref{lem:anisotropic:bound} to get
\[\bbP_p(\cI(T))\le 3(sp\xi(x(s,t)))^{\lceil (s+t)/2\rceil}\le \bbP_p(\cC(S,S+T')) e^{-t/C_1},\]
since $\lim_{x\to1/2}\xi(x)=1<2$. As in Case 2.1. this yields \cref{eq:reduction:2}.

\noindent\textbf{Case 3} (Microscopic $S$ and $T$).
Assume that $\max(c,d,t)\le\log^9(1/p)$ (see \cref{fig:1}). Then 
\begin{align}
\nonumber
\frac{\bbP_p(\cI(S))\bbP_p(\cI(T))\bbP_p(\cC([S\cup T],R))}{\bbP_p(\bar\cI_{\mathrm{loc}}(R))}
&{}\le \frac{\bbP_p(\cC([S\cup T],R))}{\bbP_p(\cI_{\mathrm{loc}}(S+T'))\bbP_p(\cC([S\cup T],R))}\\
&{}\le p^{-|S+T'|}\le e^{\log^{19}(1/p)/3},\label{eq:reduction:4}
\end{align}
using \cref{eq:Harris:application} as above.

\noindent\textbf{Case 4} (Microscopic $T$, non-microscopic $S$). Assume that 
\begin{align*}
t&{}\le C_2\log(1/p),&\max(c,d)&{}\ge\log^9(1/p).\end{align*}

\noindent\textbf{Case 4.1} (Tall $S$). Assume that $c\le \log^4(1/p)$, so that $d\ge \log^9(1/p)$, since $\max(t,c,d)\ge \log^9(1/p)$ (see \cref{fig:4.1}). In this case we proceed as in Case 1, but using \cref{lem:anisotropic:bound} instead of \cref{prop:Holroyd:upper}. Namely, by \cref{lem:anisotropic:bound} we have
\begin{align}
\nonumber
\bbP_p(\cI(T))\bbP_p(\cI(S))&{}\le 3(3sp)^{\lceil (s+t)/2\rceil}\cdot3(cp\xi(x(c,d)))^{\lceil(c+d)/2\rceil}\\
&{}\le 9\left(3C_2p\log(1/p)\right)^{\lceil (s+t)/2\rceil}\left(cp\left(1+\log^{-9/2}(1/p)\right)\right)^{\lceil(c+d)/2\rceil}\label{eq:tall:S:1}
\end{align}
On the other hand, recalling that $f(z)=-\log z+O(z)$ and $q=p+O(p^2)$, we get
\begin{align}\nonumber\bbP_p(\cI_{\mathrm{loc}}(S+T'))&{}\ge p^{c+s}\exp(-\lceil(d+t-c-s)/2\rceil f((c+s)q))\\
\nonumber&{}\ge p^{c+s}((c+s)p)^{\lceil(d+t-c-s)/2\rceil}e^{-O(cdp)}\\
&{}\ge p^{\lceil(s+t)/2\rceil+\lceil(c+d)/2\rceil}\left(e^{-O(p\log^4(1/p))}(c+1)\right)^{d/2-\log^4(1/p)},\label{eq:tall:S:2}\end{align}
asking for the entire diagonal of the top-most translate of $R(c+s,c+s)$ contained in $S+T'$ to be infected and for an infection on every second row in the rest of $S+T'$.

Dividing \cref{eq:tall:S:1} and \cref{eq:tall:S:2}, we see that the terms $p^{\lceil (s+t)/2\rceil}$ and $p^{\lceil (c+d)/2\rceil}$ cancel out, giving
\begin{multline*}
\frac{\bbP_p(\cI(S))\bbP_p(\cI(T))}{\mathbb P_p(\cI_{\mathrm{loc}}(S+T'))}\\
\begin{aligned}[t]&{}\le 9  \left(3C_2\log(1/p)\right)^{2C_2\log(1/p)}\frac{(ce^{2\log^{-9/2}(1/p)})^{\lceil(c+d)/2\rceil}}{(c+1)^{d/2-\log^4(1/p)}}\\
&{}\le e^{\log^2(1/p)}(2c)^{2\log^4(1/p)}\exp\left(\left(d/2-\log^4(1/p)\right)\left(-\frac12\log^{-4}(1/p)+2\log^{-9/2}(1/p)\right)\right)\\
&{}\le\exp\left(\log^{9/2}(1/p)-d/\left(5\log^{4}(1/p)\right)\right)\le \exp\left(-\log^5(1/p)/6\right).\end{aligned}
\end{multline*}
By \cref{eq:Harris:application}, this yields
\begin{equation}
\label{eq:reduction:5}
\frac{\bbP_p(\cI(S))\bbP_p(\cI(T))\bbP_p(\cC([S\cup T],R))}{\bbP_p(\bar\cI_{\mathrm{loc}}(R))}\le \exp\left(-\log^{5}(1/p)/6\right).\end{equation}

\noindent\textbf{Case 4.2} (Wide $S$). Assume that $d\le \log^4(1/p)$, so that $c\ge \log^9(1/p)$, since $\max(t,c,d)\ge \log^9(1/p)$ (see \cref{fig:4.2}). The treatment is essentially identical to Case 4.1., so we omit most of the computations. \Cref{lem:anisotropic:bound} gives
\[\frac{\bbP_p(\cI(S))\bbP_p(\cI(T))}{\bbP_p(\cI_{\mathrm{loc}}(S+T'))}\le \frac{3(3sp)^{\lceil (s+t)/2\rceil}\cdot3(dp\xi(d,c))^{\lceil(c+d)/2\rceil}}{p^{d+t}\exp(-\lceil(c+s-d-t)/2\rceil f((d+t)q))}\le \exp\left(-\log^{5}(1/p)/6\right),\]
as in the previous case. By \cref{eq:Harris:application} this still yields \cref{eq:reduction:5}.

\noindent\textbf{Case 4.3} (Squarish $S$). Assume that $\min(c,d)\ge \log^4(1/p)$ (see \cref{fig:4.3}). We apply \cref{lem:anisotropic:bound} to get
\begin{equation}
\label{eq:PpIT}\bbP_p(\cI(T))\le 3(3sp)^{\lceil(s+t)/2\rceil}.\end{equation}
In order to efficiently compare this quantity to $\bbP_p(\cC(S,S+T'))$, we need to consider the parity of $s$ and $t$, since we cannot afford even a single additional infection for rectangles as small as $T$.

\noindent\textbf{Case 4.3.1} ($T$ with an even side). Assume that $st$ is even, so that $\lceil (s+t)/2\rceil=\lceil s/2\rceil+\lceil t/2\rceil$.  Then
\begin{align*}
\bbP_p(\cC(S,S+T'))&{}\ge \exp(-f(dq)\lceil s/2\rceil-f(cq)\lceil t/2\rceil)\\
&{}\ge \frac{cdp^2}{C^2_2\log^2(1/p)}\left(\frac{p\log^3(1/p)}{C_2}\right)^{\lceil s/2\rceil+\lceil t/2\rceil-2},\end{align*}
where we used that for any $x\le \Lambda$ it holds that $e^{-f(xq)}\ge xp/(C_2\log(1/p))$. Combining this with \cref{eq:PpIT}, we get
\[\frac{\bbP_p(\cI(T))}{\bbP_p(\cC(S,S+T'))}\le \frac{3\log^8(1/p)}{cd}\left(\frac{3C_2^2}{\log^2(1/p)}\right)^{\lceil(s+t)/2\rceil}.\]
Using \cref{eq:S-to-R:crossing} like in Case 2, this gives
\begin{equation}
\label{eq:reduction:6}
\frac{\bbP_p(\cI(S))\bbP_p(\cI(T))\bbP_p(\cC([S\cup T],R))}{\bbP_p(\bar\cI_{\mathrm{loc}}(R))}\le \frac{3\log^8(1/p)}{cd}\left(\frac{3C_2^2}{\log^2(1/p)}\right)^{\lceil(s+t)/2\rceil}\Phi(S).
\end{equation}

\noindent\textbf{Case 4.3.2} ($T$ with odd sides). Assume that $s$ and $t$ are odd and recall that $\min(s,t)\ge 2$, since $(S,T)\in\cD$. Then
\begin{align*}\bbP_p(\cC(S,S+T'))&{}\ge p\exp(-f(dq)(s-1)/2-f(cq)(t-1)/2)\\
&{}\ge \frac{cdp^3}{C_2^2\log^2(1/p)}\left(\frac{p\log^3(1/p)}{C_2}\right)^{s/2+t/2-3},\end{align*}
as in Case 4.3.1. By \cref{eq:PpIT} this entails
\begin{align*}
\nonumber
\frac{\bbP_p(\cI(T))}{\bbP_p(\cC(S,S+T'))}&{}\le \frac{3^4C_2^5\log^5(1/p)}{cd}\left(\frac{3C_2\log(1/p)p}{p\log^3(1/p)/C_2}\right)^{(s+t)/2-3}\\
&{}= \frac{3\log^{11}(1/p)}{C_2cd}\left(\frac{3C_2^2}{\log^2(1/p)}\right)^{(s+t)/2}.
\end{align*}
As in the previous case this gives
\begin{equation}
\label{eq:reduction:7}
\frac{\bbP_p(\cI(S))\bbP_p(\cI(T))\bbP_p(\cC([S\cup T],R))}{\bbP_p(\bar\cI_{\mathrm{loc}}(R))}\le \frac{3\log^{11}(1/p)}{C_2cd}\left(\frac{3C_2^2}{\log^2(1/p)}\right)^{(s+t)/2}\Phi(S),
\end{equation}
concluding the final case.

We are now ready to conclude the proof of \cref{prop:reduction}. Plugging \cref{eq:reduction:1,eq:reduction:2,eq:reduction:4,eq:reduction:5,eq:reduction:6,eq:reduction:7} into \cref{eq:sum:decompositions}, we get
\begin{align*}
\frac{\bbP_p(\cN)}{\bbP_p(\bar\cI_{\mathrm{loc}}(R))}&{}\le\begin{multlined}[t]\sum_{(S,T)\in\cD}\Bigg(e^{-1/(3C_2p)}+e^{\log^{19}(1/p)/3}+e^{-\log^5(1/p)/6}\\
+\Phi(S)\left(\frac{\1_{t\ge C_2\log(1/p)}}{e^{t/C_1}}+\frac{\log^{11}(1/p)}{cd}\left(\frac{3C_2^2}{\log^2(1/p)}\right)^{\lceil(s+t)/2\rceil}\right)\Bigg)
\end{multlined}\\
&{}\le e^{\log^{19}(1/p)/2}+\sum_{c=2}^{a-1}\sum_{d=2}^{b-1}
\Phi(c,d)\frac{\log^{8}(1/p)}{cd},\end{align*}
where in the second inequality we used that there are clearly at most $p^{-5}$ decompositions, given that $\max(a,b)\le \Lambda$ (recall \cref{eq:def:Lambda}), as well as $\min(s,t)\ge 2$, since $(S,T)\in\cD$. Hence,
\begin{align*}\Phi(a,b)&{}\le 1+e^{\log^{19}(1/p)/2}+\sum_{c=2}^{a-1}\sum_{d=2}^{b-1}\frac{\log^{8}(1/p)}{cd}\Phi(c,d)\\
&{}\le \left(1+e^{\log^{19}(1/p)/2}\right)\sum_{k=0}^{\Lambda}\sum_{(c_i)_{i=1}^k,(d_i)_{i=1}^k}\prod_{i=1}^k\frac{\log^{8}(1/p)}{c_id_i}\\
&{}\le \left(1+e^{\log^{19}(1/p)/2}\right)\left(\prod_{c=2}^{\Lambda}\left(1+\frac{\log^4(1/p)}{c}\right)\right)^2\le e^{\log^{19}(1/p)}\end{align*}
where the sum runs over increasing positive integer sequences $(c_i)_{i=1}^k,(d_i)_{i=1}^k$ such that $\max(c_k,d_k)\le \Lambda$. This concludes the proof of \cref{prop:reduction} for the two-neighbour model.

The proof of \cref{prop:reduction} for Frob\"ose bootstrap percolation is almost identical, so we only indicate the changes required. Firstly, \cref{eq:sum:decompositions} remains valid and so do the proofs in Cases 1.\ and 3., up to requiring $d(S,T)\le 1$ for decompositions and replacing $g$ by $f$, $W$ by $W^{\mathrm{F}}$, etc.). In Case 2., $\lceil(s+t)/2\rceil$ becomes $s+t-1$ in view of \cref{lem:isotropic:bound,lem:anisotropic:bound:frobose}, while \cref{eq:P:bound} transforms into
\begin{align*}
\bbP_p\left(\cC^{\mathrm{F}}(S,S+T')\right)&{}\ge \bbP_p\left(\cG_|(R(s,d))\right)\bbP_p\left(\cG_-(R(c+s,t))\right)= \exp(-f(dq)s-f((c+s)q)t)\\
&{}\ge (sp)^{s}(2sp)^{t}e^{-O(spt)}\ge p(sp)^{s+t-1}2^{t(1-1/C_1)},
\end{align*}
taking \cref{eq:f:asymptotics} and $s\le 1/(C_2p)$ into account. Finally, Case 4.\ needs a little more care, so we write it out for the reader's convenience.

Let $(S,T)$ be a non-local (Frob\"ose) decomposition of $R$ with $S$ a translate of $R(c,d)$ and $T$ a translate of $R(s,t)$.  Note that for Frob\"ose bootstrap percolation $d(S,T)\le 1$, so we can find a rectangle $T'$ which is a translate of $R(s',t')$ such that $[S\cup T]^{\mathrm F}\subset S+T'$ and $(s',t')\in\{(s,t-1),(s-1,t)\}$. We further assume that $s\le\min(c,d,t)$, $t\le C_2\log(1/p)$ and $\max(c,d)\ge \log^9(1/p)$ (corresponding to Case 4.).

\noindent\textbf{Case 4.1-F} (Tall $S$). Assume that $c\le \log^4(1/p)$, so that $d\ge \log^9(1/p)$, since we have $\max(t,c,d)\ge \log^9(1/p)$. By \cref{lem:anisotropic:bound:frobose} we have
\begin{align}
\nonumber
\bbP_p\left(\cI^{\mathrm F}(S)\right)\bbP_p\left(\cI^{\mathrm F}(T)\right)&{}\le (2sp)^{s+t-1}\left(cp\xi^{\mathrm F}\left(\frac{c+d-1}{d-1}\right)\right)^{c+d-1}\\
&{}\le (2C_2p\log(1/p))^{s+t-1}\left(cp\left(1+\log^{-9/2}(1/p)\right)\right)^{c+d-1}\label{eq:tall:S:1:F}
\end{align}
On the other hand, by \crefdefpart{obs:traversability}{obs:travers:germ_boundary}, $\cI_{\mathrm{loc}}^{\mathrm F}(S+T')$ is implied by the presence of an L-shaped set of $2(c+\min(s',t'))-1$ infections at the bottom-left corner of $S+T'$ and the top-most translate of $R(c+s',d+t'-c-s')$ contained in $S+T'$ having no horizontal gap. Therefore, recalling that $f(z)=-\log z+O(z)$ and $q=p+O(p^2)$, we get
\begin{align}\nonumber
\bbP_p\left(\cI^{\mathrm F}_{\mathrm{loc}}(S+T')\right)&{}\ge p^{2c+2s'-1}\exp(-(d+t'-c-s')f((c+s')q))\\
\nonumber&{}\ge p^{2c+2s'-1}((c+s')p)^{d+t'-c-s'}e^{-O(cdp)}\\
&{}\ge p^{c+d-1+s+t-1}\left(e^{-O(p\log^4(1/p))}(c+1)\right)^{d-\log^4(1/p)},\label{eq:tall:S:2:F}\end{align}
since $s'+t'=s+t-1$.

Combining \cref{eq:tall:S:1,eq:tall:S:2:F}, we get
\begin{multline*}
\frac{\bbP_p(\cI^{\mathrm F}(S))\bbP_p(\cI^{\mathrm F}(T))}{\mathbb P_p(\cI^{\mathrm F}_{\mathrm{loc}}(S+T'))}\\
\begin{aligned}[t]&{}\le \left(2C_2\log(1/p)\right)^{2C_2\log(1/p)}\frac{(ce^{2\log^{-9/2}(1/p)})^{c+d-1}}{(c+1)^{d-\log^4(1/p)}}\\
&{}\le e^{\log^2(1/p)}(2c)^{2\log^4(1/p)}\exp\left(\left(d-\log^4(1/p)\right)\left(-\frac12\log^{-4}(1/p)+2\log^{-9/2}(1/p)\right)\right)\\
&{}\le\exp\left(\log^{9/2}(1/p)-d/\left(3\log^{4}(1/p)\right)\right)\le \exp\left(-\log^5(1/p)/6\right).\end{aligned}\end{multline*}
By \cref{eq:Harris:application} this yields
\begin{equation}
\label{eq:reduction:5:F}
\frac{\bbP_p(\cI^{\mathrm F}(S))\bbP_p(\cI^{\mathrm F}(T))\bbP_p(\cC^{\mathrm F}([S\cup T],R))}{\bbP_p(\bar\cI^{\mathrm F}_{\mathrm{loc}}(R))}\le \exp\left(-\log^{5}(1/p)/6\right).\end{equation}

The Case 4.2-F. of $d\le \log^4(1/p)$ and $c\ge \log^9(1/p)$ is analogous and therefore omitted.

\noindent\textbf{Case 4.3-F} (Squarish $S$) Assume that $\min(c,d)\ge \log^4(1/p)$. By \cref{lem:anisotropic:bound:frobose}
\[\bbP_p\left(\cI^{\mathrm F}(T)\right)\le(2sp)^{s+t-1}.\]
Moreover, using that for any $x\le \Lambda$, $e^{-f(xq)}\ge xp/(C_2\log(1/p))$, we get
\[\bbP_p\left(\cC^{\mathrm F}(S,S+T')\right)\ge \exp(-f(dq)s'-f(cq)t')\ge \frac{cdp^2}{C_2^2\log^2(1/p)}\left(\frac{p\log^3(1/p)}{C_2}\right)^{s'+t'-2}.\]
Combining these two bound with \cref{eq:S-to-R:crossing} as above, we obtain
\[\frac{\bbP_p(\cI^{\mathrm F}(S))\bbP_p(\cI^{\mathrm F}(T))\bbP_p(\cC^{\mathrm F}([S\cup T],R))}{\bbP_p(\bar\cI^{\mathrm F}_{\mathrm{loc}}(R))}\le \frac{\log^8(1/p)}{cd}\left(\frac{2C_2^2}{\log^2(1/p)}\right)^{s+t-1}\Phi^{\mathrm F}(S).\]
This concludes Case 4.3-F. The rest of the proof of \cref{prop:reduction} for Frob\"ose bootstrap percolation is identical to the one for two-neighbour model.
\end{proof}

The next lemma is fairly standard (morally going back to \cite{Aizenman88}), but we include the proof for completeness and in order to state it in a convenient form.
\begin{lemma}[Critical internal filling determines $\tau$]
\label{lem:tau}
Let $R=R(\Lambda,\Lambda)$. Then
\begin{equation}
\lim_{p\to0}\bbP_p\left(\frac{|\log(\tau\sqrt{\bbP_p(\cI(R))})|}{\log (1/p)}\le 7\right)=1,
\label{eq:tau:Fp}
\end{equation}
The same holds for $\tau_{\mathrm{loc}}$ with $\cI$ replaced by $\cI_{\mathrm{loc}}$ or $\bar\cI_{\mathrm{loc}}$. The same holds for Frob\"ose bootstrap percolation.
\end{lemma}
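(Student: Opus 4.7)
The plan is to prove $p^{7}/\sqrt{P} \le \tau \le p^{-7}/\sqrt{P}$ with probability tending to $1$, where I abbreviate $P := \bbP_p(\cI(R))$; this is equivalent to \cref{eq:tau:Fp}. The strategy follows the classical Aizenman--Lebowitz framework, carried out with respect to the supercritical droplet $R$ itself rather than a genuinely critical rectangle.

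For the lower bound, I union bound over possible positions of critical droplets near the origin. If $\tau \le t$, the finite speed of propagation forces $0 \in [A \cap B_t(0)]$, and iterating \cref{lem:AL} at scale $l = \Lambda$ yields a rectangle $S \subset B_{t+\Lambda}(0)$ with $\lng(S) \in [\Lambda, 2\Lambda]$ such that $\cI(S)$ occurs. The key step is a uniform bound $\bbP_p(\cI(S)) \le p^{-O(1)} P$ that I establish in two cases. When $\lng(S) \le \Lambda$, any translate $R'$ of $R$ with $S \subset R'$ satisfies $\cI(S) \cap \cC(S, R') \subset \cI(R')$ by \cref{obs:stacking}, so the Harris \cref{lem:Harris} gives $\bbP_p(\cI(S)) \le P/\bbP_p(\cC(S, R'))$, and \cref{lem:traversability} together with $\Lambda q \gg 1$ and the exponential decay of $g$ at infinity (recall \cref{eq:g:asymptotics}) forces $\bbP_p(\cC(S, R')) \ge 1/2$. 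When $\lng(S) > \Lambda$, a further application of \cref{lem:AL} at scale $\Lambda/2$ to $S$, combined with a union bound over sub-rectangles, reduces to the previous case at a polynomial cost. Summing over the $O(t^2 \Lambda^{O(1)})$ candidate rectangles and plugging $t = p^{7}/\sqrt{P}$ yields $\bbP_p(\tau \le t) = o(1)$.

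For the upper bound, set $L = \Lambda p^{-2}/\sqrt{P}$ and tile $B = [-L, L]^2$ by $(L/\Lambda)^2$ disjoint translates of $R$. By disjointness, the events of their internal filling are mutually independent, so the probability that none is internally filled is at most $(1-P)^{(L/\Lambda)^2} \le e^{-p^{-4}} \to 0$. Conditionally on some translate $D \subset B$ being filled, I show that the infection spreads from $D$ to the origin within $L \log^{O(1)}(1/p)/p \le p^{-7}/\sqrt{P}$ time steps. A naive column-by-column traversability argument across a single $L \times \Lambda$ strip fails because $L g(\Lambda q) \to \infty$; instead one exploits the abundance of further internally filled translates of $R$ in $B$ (since $L^2 P/\Lambda^2 = p^{-4} \to \infty$) and chains them through short-range traversability events of individual probability $1 - o(1)$ given by \cref{lem:traversability}, yielding an effective growth speed of $\Omega(p/\log^{O(1)}(1/p))$.

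The statements for $\tau_{\mathrm{loc}}$ with $\cI$ replaced by $\cI_{\mathrm{loc}}$ or $\bar\cI_{\mathrm{loc}}$ follow by the same arguments, using the local AL \cref{lem:AL:loc} and the inclusions $\cI_{\mathrm{loc}} \subset \bar\cI_{\mathrm{loc}} \subset \cI$; the Frob\"ose variants substitute the gap events $\cG_-, \cG_|$ and the function $f$ for the traversability events and $g$ throughout. The main technical obstacle I anticipate is the multi-stage spreading argument in the upper bound: one must carefully organise the successive absorptions of supercritical droplets so that each stage's traversability has probability $1 - o(1)$ while the accumulated spreading time remains within the polynomial budget $p^{-7}/\sqrt{P}$.
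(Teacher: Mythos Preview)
Your lower bound is essentially the paper's argument: the paper also applies \cref{lem:AL} and then compares $\bbP_p(\cI(S))$ for a sub-critical-scale rectangle $S$ to $P=\bbP_p(\cI(R))$ via a crossing event (they package this as the preliminary identity $\bbP_p(\cI(R))=(1-o(1))\max_{a,b}\bbP_p(\cI(R(a,b)))$ with $\max(a,b)\in[\Lambda/2,\Lambda]$, proved using the event that every $\Lambda/2$-segment in $R$ is occupied, which is exactly a crossing).

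Your upper bound correctly isolates the difficulty but proposes an unnecessarily heavy cure. The paper avoids chaining through many droplets by a simple \emph{two-scale} growth after locating a single internally filled translate $x+R$. First grow $x+R$ to the surrounding box $x+R(1/p^3,1/p^3)$ using the event $\cE_2$ that every $\Lambda$-segment in that box is occupied; since there are at most $p^{-6}$ such segments and each is empty with probability $(1-p)^{\Lambda}=p^{C_1}$, we get $\bbP_p(\cE_2)=1-o(1)$. Then grow from the $1/p^3$-box to the full $2p^4T$-box using the event $\cE_3$ that every $1/p^3$-segment there is occupied; each such segment is empty with probability $(1-p)^{1/p^3}\approx e^{-1/p^2}$, which beats the $O(T^2)$ union bound as long as $T\le\exp(p^{-4/3})$, and in particular for $T=p^{-7}/\sqrt P\sim e^{\lambda_1/p}$. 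The total infection time is then $\Lambda^2+\Lambda/p^3+Tp^4/p^3<T$. This two-scale trick is the missing idea in your sketch; it replaces your multi-stage droplet-absorption scheme by two global occupation events, each of probability $1-o(1)$, combined with a single droplet via Harris.
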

\begin{proof}
Let us first show that 
\begin{equation}
\label{eq:rectangle:to:square}
\bbP_p(\cI(R))=(1-o(1))\max \bbP_p(\cI(R(a,b))),
\end{equation}
where the maximum runs over $a,b\ge 1$ such that $\max(a,b)/\Lambda\in[1/2,1]$. Indeed, $\bbP_p(\cI(R))=\bbP_p(\cI(R(\Lambda,\Lambda)))\le \max\bbP_p(\cI(R(a,b)))$ is clear. To prove the converse first note that by the Harris inequality \cref{lem:Harris}
\[\bbP_p(\cI(R))\ge \bbP_p(\cI(R(a,b)))\bbP_p(\cE_1)\]
for any $a,b\ge 1$ with $\max(a,b)/\Lambda\in[1/2,1]$, where $\cE_1$ is the event that every row or column of length $\Lambda/2$ in $R(\Lambda, \Lambda)$ contains an infection. This entails \cref{eq:rectangle:to:square}, since
\[\bbP_p(\cE_1)\ge 1-\Lambda^2(1-p)^{\Lambda/2}=1-o(1).\]

By the classical results of Aizenman--Lebowitz \cite{Aizenman88} we have that $\bbP_p(\tau\le e^{-1/(C_1p)})\to0$. For any $T\ge 1$, define the event $\cT(T)=\{e^{1/(C_1p)}<\tau<T\}$. If $\cT(T)$ occurs, then there exists a set $A_0\subset R(-T,-T;T,T)$ of infections such that $0\in[A_0]$. Therefore, by the rectangles process (recall \cref{subsec:rectangles:process}), there exists a rectangle $R'\subset R(-T,-T;T,T)$ such that $0\in R'$, $|R'|\ge e^{1/(C_1p)}$ and $\cI(R')$ occurs. However, by \cref{lem:AL}, this implies the existence of a rectangle $R''$ with longest side of length in $[\Lambda/2,\Lambda]$ contained in $R'$. By the union bound and \cref{eq:rectangle:to:square} we have that for any $T\ge 1$
\begin{equation}
\label{eq:tau:1}\bbP_p(\cT(T))\le 9T^2\Lambda^2\max \bbP_p(\cI(R(a,b)))\le T^2p^{-3}\bbP_p(\cI(R))/2.\end{equation}
On the other hand, the Harris inequality \cref{lem:Harris} gives that for any $T>e^{1/(C_1p)}$,
\begin{equation}
\label{eq:tau:2}\bbP_p(\cT(T))\ge \left(1-\left(1-\bbP_p(\cE_2)\bbP_p(\cI(R))\right)^{(Tp^4/\Lambda)^2}\right)\bbP_p(\cE_3),\end{equation}
where $\cE_2$ is the event that every row or column of length $\Lambda$ in $R(1/p^3,1/p^3)$ contains an infection and $\cE_3$ is the event that every row or column of length $1/p^3$ in $\bar R_T$ contains an infection, where $R_T=R(-p^4T,-p^4T;p^4T,p^4T)$. Indeed, if $\cE_3$ occurs, there exists $x\in R_T$ such that $\cI(x+R)$ occurs and $\cE_2$ translated by $x$ occurs, then $\tau\le \Lambda^2+\Lambda/p^3+Tp^4/p^3<T$. By a union bound we obtain
\begin{align}
\label{eq:tau:3}\bbP_p(\cE_2)&{}\ge 1-p^{-6}(1-p)^{\Lambda}=1-o(1),\\
\label{eq:tau:4}
\bbP_p(\cE_3)&{}\ge 1-T^2(1-p)^{1/p^3}=1-o(1),
\end{align}
for any $T\le \exp(p^{-4/3})$. Putting \cref{eq:tau:1,eq:tau:2,eq:tau:3,eq:tau:4} together and taking into account that $\bbP_p(\cI(R))=o(1)$ e.g.\ by \cite{Aizenman88}, we obtain \cref{eq:tau:Fp}, as desired.

The proofs for local and/or Frob\"ose bootstrap percolation are identical.
\end{proof}
Combining \cref{prop:reduction,lem:tau}, we obtain the following result, which allows us to completely restrict our attention to the local model in the remainder of the work.
\begin{corollary}[$\tau$ is local]
\label{cor:reduction}
Let $R=R(\Lambda,\Lambda)$ (recall \cref{eq:def:Lambda}). Then
\[\lim_{p\to 0}\bbP_p\left(e^{-\log^{20}(1/p)}\le \tau\sqrt{\bbP_p(\cI_{\mathrm{loc}}(R))}\le p^{-7}\right)=1.\]
The same holds for Frob\"ose bootstrap percolation.
\end{corollary}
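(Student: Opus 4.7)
The plan is to deduce \cref{cor:reduction} as a direct combination of \cref{prop:reduction} with the three variants of \cref{lem:tau}, noting that \cref{lem:tau} can be used to convert its probabilistic conclusion into a deterministic comparison of probabilities. I would split the statement into an upper and a lower bound on $\tau\sqrt{\bbP_p(\cI_{\mathrm{loc}}(R))}$ and handle them separately.

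For the upper bound I would use that $\cI_{\mathrm{loc}}(R)\subset\cI(R)$ by \cref{eq:def:Iloc}, so $\bbP_p(\cI_{\mathrm{loc}}(R))\le\bbP_p(\cI(R))$, and then apply \cref{lem:tau} to $\tau$ and $\cI(R)$ to obtain
\[\tau\sqrt{\bbP_p(\cI_{\mathrm{loc}}(R))}\le\tau\sqrt{\bbP_p(\cI(R))}\le p^{-7}\]
with high probability. This part is immediate and requires nothing beyond the monotonicity of $\bbP_p$.

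The lower bound is the substantive part. The target is to show the deterministic inequality $\bbP_p(\cI(R))\le e^{\log^{20}(1/p)}\bbP_p(\cI_{\mathrm{loc}}(R))$ for $p$ small enough, after which the lower bound from \cref{lem:tau} applied to $\tau$ and $\cI$ closes the argument. \cref{prop:reduction} already delivers $\bbP_p(\cI(R))\le e^{\log^{19}(1/p)}\bbP_p(\bar\cI_{\mathrm{loc}}(R))$, so it only remains to compare $\bbP_p(\bar\cI_{\mathrm{loc}}(R))$ with $\bbP_p(\cI_{\mathrm{loc}}(R))$. My plan for this last step is to invoke \cref{lem:tau} \emph{twice}, once for $\tau_{\mathrm{loc}}$ with $\cI_{\mathrm{loc}}(R)$ and once for $\tau_{\mathrm{loc}}$ with $\bar\cI_{\mathrm{loc}}(R)$. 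Both events hold with probability $1-o(1)$, so for $p$ small enough their intersection has positive probability, which forces the two intervals that they impose on $\tau_{\mathrm{loc}}$ to overlap. This yields the deterministic inequality
\[\frac{p^{7}}{\sqrt{\bbP_p(\cI_{\mathrm{loc}}(R))}}\le\frac{p^{-7}}{\sqrt{\bbP_p(\bar\cI_{\mathrm{loc}}(R))}},\]
that is, $\bbP_p(\bar\cI_{\mathrm{loc}}(R))\le p^{-28}\bbP_p(\cI_{\mathrm{loc}}(R))$. Chaining with \cref{prop:reduction} and using $p^{-28}e^{\log^{19}(1/p)}\le e^{\log^{20}(1/p)}$ for $p$ small, and finally applying the lower bound $\tau\sqrt{\bbP_p(\cI(R))}\ge p^{7}$ from \cref{lem:tau}, gives the required bound on $\tau\sqrt{\bbP_p(\cI_{\mathrm{loc}}(R))}$ with high probability. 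The Frob\"ose case is identical, swapping $\cI$, $\cI_{\mathrm{loc}}$ and $\bar\cI_{\mathrm{loc}}$ for their Frob\"ose analogues and using the corresponding variants of \cref{prop:reduction,lem:tau}.

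There is no serious obstacle here; the only conceptual step is the bridging device described above, whereby a probabilistic two-sided estimate on $\tau_{\mathrm{loc}}$ around a reference scale, read in its two available forms, pins down a deterministic comparison between $\bbP_p(\bar\cI_{\mathrm{loc}}(R))$ and $\bbP_p(\cI_{\mathrm{loc}}(R))$ that is not otherwise accessible from \cref{prop:reduction} alone. Everything else is arithmetic of logarithms and routine manipulation of the inclusions among $\cI_{\mathrm{loc}}$, $\bar\cI_{\mathrm{loc}}$ and $\cI$.
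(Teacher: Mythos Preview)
Your proposal is correct and follows essentially the same approach as the paper: both combine \cref{prop:reduction} with \cref{lem:tau}, and in particular both use the ``apply \cref{lem:tau} twice'' trick to convert the probabilistic estimate into the deterministic comparison $\bbP_p(\bar\cI_{\mathrm{loc}}(R))\le p^{-28}\bbP_p(\cI_{\mathrm{loc}}(R))$. The only cosmetic difference is in the upper bound, where the paper uses $\tau\le\tau_{\mathrm{loc}}$ together with \cref{lem:tau} for $\tau_{\mathrm{loc}}$ and $\cI_{\mathrm{loc}}$, whereas you use the inclusion $\cI_{\mathrm{loc}}(R)\subset\cI(R)$ together with \cref{lem:tau} for $\tau$ and $\cI$; the two are interchangeable.
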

\begin{proof}
Since $\tau\le \tau_{\mathrm{loc}}$, \cref{lem:tau} gives that
\[\tau\sqrt{\bbP_p(\cI_{\mathrm{loc}}(R))}\le \tau_{\mathrm{loc}}\sqrt{\bbP_p(\cI_{\mathrm{loc}}(R))}\le p^{-7}\]
with high probability. Moreover, by \cref{prop:reduction,lem:tau}, 
\[\tau\sqrt{\bbP_p\left(\bar \cI_{\mathrm{loc}}(R)\right)}\ge \tau e^{-\log^{19}(1/p)}\sqrt{\bbP_p(\cI(R))}\ge p^{7}e^{-\log^{19}(1/p)}\]
with high probability. It remains to note, by applying \cref{lem:tau} twice, that
\[\bbP_p(\cI_{\mathrm{loc}}(R))\ge p^{28}\bbP_p\left(\bar\cI_{\mathrm{loc}}(R)\right).\]
The proof for Frob\"ose bootstrap percolation is analogous.
\end{proof}

\section{Growth sequences}
\label{sec:sequences}

In view of \cref{cor:reduction}, we focus on the local Frob\"ose model for the rest of the paper and seek to prove \cref{th:main}.
In this section we analyse the detailed structure of the events $\cC^{\mathrm F}(S,R)$ for rectangles $S\subset R$.

\subsection{Buffers and Frames}
\label{subsec:frames}

As mentioned in \cref{sec:intro}, we will analyse the infection from one rectangle $R$ to a larger $R'$ by relating it to a Markov chain.
This is done using an exploration procedure that selectively reveals whether certain regions around a rectangle are occupied or not.
To make this exploration precise, we first introduce the notion of \emph{buffers} and \emph{frames}.

Given a rectangle $R = R(a, b; c, d)$, we define its right, up, left and bottom buffers respectively as
\begin{equation}
  \label{e:buffers}
  \begin{split}
    & B^r(R) = R(c,b;c+1,d), \qquad B^u(R) = R(a, d; c, d + 1)\\
    & B^l(R) = R(a - 1, b; a, d), \qquad B^d(R) = R(a, b - 1; c, b).
  \end{split}
\end{equation}
Observe that if all these buffers are free of infections, then a germ located in $R$ cannot locally infect anything outside this enclosed region.

As we inspect the buffers around a rectangle, our Markov exploration process will transition between eight classes of states, that we call \emph{frames}.
\begin{definition}[Frame]
\label{def:frame}
A \emph{framed rectangle} is a rectangle $R(a,b;c,d)$ equipped with a label $s \in \{0, 1, 1', 1'', 2, 2', 3, 4\}$. We denote it by $F(a,b;c,d;s)$ and refer to $(c-a,d-b)$ as its \emph{dimensions} and to $s$ as its \emph{frame state}. For a  framed rectangle $F=F(a,b;c,d;s)$ we set $F_\circ=R(a,b;c,d)$ and define \emph{frame} of $F$ by
\[F_\square=\begin{cases}
\varnothing&s=0,\\
B^r(F_\circ)&s=1,\\
B^r(F_\circ) \cup B^u(F_\circ) & s=2,\\
B^r(F_\circ) \cup B^u(F_\circ) \cup B^l(F_\circ) & s=3,\\
\phantom{B^r(F_\circ)\cup{}}B^u(F_\circ) \cup B^l(F_\circ) & s=2',\\
\phantom{B^r(F_\circ)\cup B^u(F_\circ) \cup {}} B^l(F_\circ) & s=1',\\
B^r(F_\circ) \cup B^u(F_\circ) \cup B^l(F_\circ) \cup B^d(F_\circ) & s=4,\\
\phantom{B^r(F_\circ)\cup{}} B^u(F_\circ) & s=1'',
\end{cases}\]
see \cref{fig:augmented:cycle}. We denote $F_\blacksquare=F_\circ\cup F_\square$.
\end{definition}

\subsection{The framed rectangle Markov chain}
\label{subsec:Markov}
We next define a Markov chain on framed rectangles. Each framed rectangle only has transitions to a bounded number of others, so that such transitions are translation invariant.
That is, the probability of the transition from $F(a,b;c,d;s)$ to $F(a',b';c',d';s')$ is the same as the one from $F(0,0;c-a,d-b;s)$ to $F(a'-a,b'-b;c'-a,d'-b;s')$.
The transition probabilities are given in \cref{tab:transitions} and illustrated in \cref{fig:augmented:cycle}.
The first seven are called \emph{buffer creations} (note that there is no such transition from $s=0$ to $s'=1'$), the next eight are called \emph{loops}, the next seven are the \emph{single buffer deletions}, the next five are the \emph{double buffer deletions}, while the last one is the \emph{triple buffer deletion}.

As we will see, only the buffer creations, loops and single buffer deletions contribute to the second term in the asymptotic behaviour described in \cref{th:main}.
Moreover, since the only transition towards frame state $1''$ is a double buffer deletion, this state (and all transitions to and from it) also does not contribute to the final result nor does the absorbing frame state $4$.

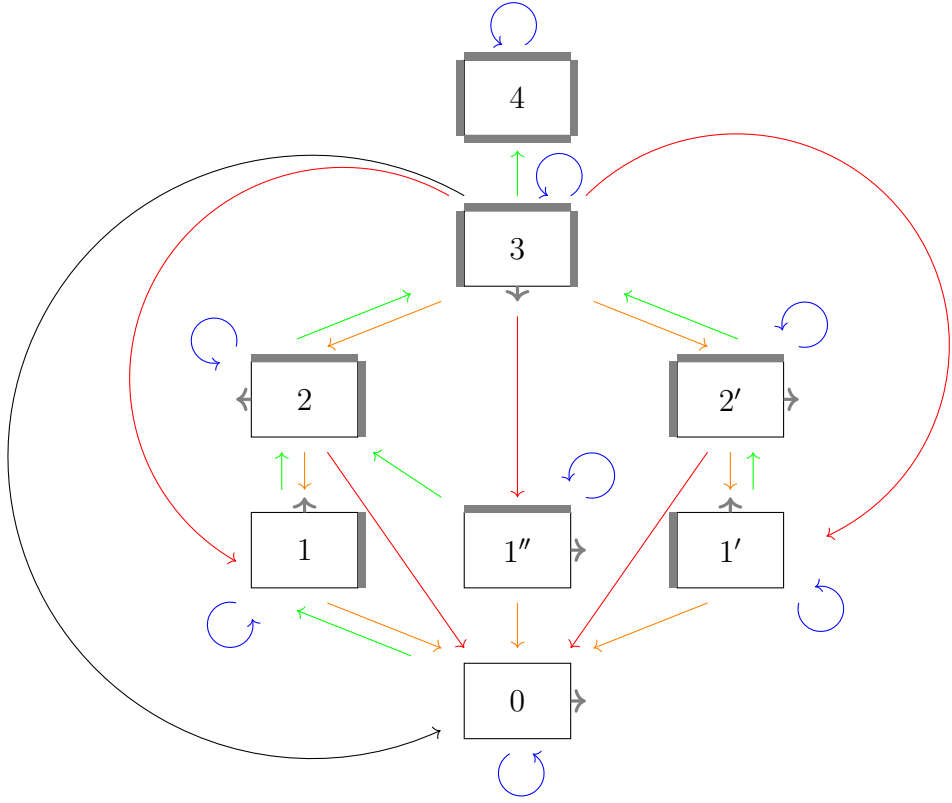
\begin{figure}
    \centering
    \begin{tikzpicture}
        \draw (0, 0) rectangle (1.4, 1) node[midway] {$0$};
        \draw[->, very thick, gray] (1.4, .5) -> ++(.2,0);
        \draw (0, 2) rectangle (1.4, 3) node[midway] {$1''$};
        \draw[->, very thick, gray] (1.4, 2.5) -> ++(.2,0);
        \fill[gray] (0, 3) rectangle (1.4, 3.1);
        \draw (0, 6) rectangle (1.4, 7) node[midway] {$3$};
        \draw[->, very thick, gray] (.7, 6) -> ++(0,-.2);
        \fill[gray] (1.4, 6) rectangle (1.5, 7);
        \fill[gray] (0, 7) rectangle (1.4, 7.1);
        \fill[gray] (-.1, 6) rectangle (0, 7);
        \draw (0, 8) rectangle (1.4, 9) node[midway] {$4$};
        \fill[gray] (1.4, 8) rectangle (1.5, 9);
        \fill[gray] (0, 9) rectangle (1.4, 9.1);
        \fill[gray] (-.1, 8) rectangle (0, 9);
        \fill[gray] (0, 7.9) rectangle (1.4, 8);
        \draw (-2.8, 2) rectangle (-1.4, 3) node[midway] {$1$};
        \draw[->, very thick, gray] (-2.1, 3) -> ++(0,.2);
        \fill[gray] (-1.4, 2) rectangle (-1.3, 3);
        \draw (-2.8, 4) rectangle (-1.4, 5) node[midway] {$2$};
        \draw[->, very thick, gray] (-2.8, 4.5) -> ++(-.2, 0);
        \fill[gray] (-1.4, 4) rectangle (-1.3, 5);
        \fill[gray] (-2.8, 5) rectangle (-1.4, 5.1);
        \draw (2.8, 2) rectangle (4.2, 3) node[midway] {$1'$};
        \draw[->, very thick, gray] (3.5, 3) -> ++(0,.2);
        \fill[gray] (2.8, 2) rectangle (2.7, 3);
        \draw (2.8, 4) rectangle (4.2, 5) node[midway] {$2'$};
        \draw[->, very thick, gray] (4.2, 4.5) -> ++(.2, 0);
        \fill[gray] (2.8, 4) rectangle (2.7, 5);
        \fill[gray] (2.8, 5) rectangle (4.2, 5.1);
        \path[->,orange] (.7, 1.8) edge (.7, 1.2);    
        \path[<-,green] (.7, 7.8) edge (.7, 7.2);    
        \path[->,orange] (-1.8, 1.8) edge (-.3, 1.2); 
        \path[<-,green] (-2.2, 1.7) edge (-.7, 1.1); 
        \path[->,orange] (3.2, 1.8) edge (1.7, 1.2);  
        \path[->,red] (3.2, 3.8) edge (1.4, 1.2);  
        \path[->,red] (-1.8, 3.8) edge (0, 1.2);   
        \path[->,red] (.7, 5.6) edge (.7, 3.2);    
        \path[->,orange] (-.3, 5.8) edge (-1.8, 5.2); 
        \path[<-,green] (-.7, 5.9) edge (-2.2, 5.3); 
        \path[->,orange] (1.7, 5.8) edge (3.2, 5.2);
        \path[<-,green] (2.1, 5.9) edge (3.6, 5.3);
        \path[->,orange] (-2.1, 3.8) edge (-2.1, 3.3); 
        \path[<-,green] (-2.4, 3.8) edge (-2.4, 3.3);
        \path[->,orange] (3.5, 3.8) edge (3.5, 3.3); 
        \path[<-,green] (3.8, 3.8) edge (3.8, 3.3);
        \path[->,green] (-.3, 3.2) edge (-1.2, 3.8);
        \draw[->,blue] (.6, -.2) arc [radius=.3, start angle=120, delta angle=300];
        \draw[->,blue] (-3, 1.8) arc [radius=.3, start angle=75, delta angle=300];
        \draw[->,blue] (-3, 5.2) arc [radius=.3, start angle=-15, delta angle=300];
        \draw[->,blue] (1.4, 7.2) arc [radius=.3, start angle=-60, delta angle=300];
        \draw[->,blue] (.8, 9.2) arc [radius=.3, start angle=-60, delta angle=300];
        \draw[->,blue] (4.4, 5.2) arc [radius=.3, start angle=255, delta angle=300];
        \draw[->,blue] (4.4, 1.8) arc [radius=.3, start angle=165, delta angle=300];
        \draw[->,blue] (1.6, 3.2) arc [radius=.3, start angle=255, delta angle=300];
        \draw[->,red] (1.6, 7.2) arc [radius=2.8, start angle=135, delta angle=-200]; 
        \draw[->,red] (-.2, 7.2) arc [radius=2.8, start angle=60, delta angle=180]; 
        \draw[->,black] (0, 7.2) arc [radius=4, start angle=60, delta angle=235]; 
    \end{tikzpicture}
    \caption{Here we represent all the frame classes as rectangles.
    Their respective buffers are featured in gray, while the transitions are shown as arrows under the following color code: green for buffer creation, blue for loops, orange for single buffer deletions, red for double buffer deletions and black for the transition with a triple buffer deletion. Each rectangle (except for the one corresponding to the absorbing state $4$) is decorated with a gray arrow that indicates the direction that will be explored next.}
    \label{fig:augmented:cycle}
\end{figure}

\begin{table}
    \centering
    \footnotesize
\begin{tabular}{c c c c c c |l||c c c c c c | l}
$s$ & $s'$ & $\alpha$ & $\beta$ & $\gamma$ & $\delta$ & $-\log \pi$ & $s$ & $s'$ & $\alpha$ & $\beta$ & $\gamma$ & $\delta$ & $-\log \pi$\\\hline\hline
0&1&0&0&0&0&$qb$&1&0&0&0&1&1&$\log(1/p)+f(qa)$\\
1&2&0&0&0&0&$qa$&2&1&1&0&0&1&$\log(1/p)+f(qb)+q$\\
2&3&0&0&0&0&$qb$&3&2&1&1&0&0&$\log(1/p)+f(qa)+2q$\\
3&4&0&0&0&0&$qa$&3&2'&0&1&1&0&$\log(1/p)+f(qa)+2q$\\
2'&3&0&0&0&0&$qb$&2'&1'&0&0&1&1&$\log(1/p)+f(qb)+q$\\
1'&2'&0&0&0&0&$qa$&1'&0&1&0&0&1&$\log(1/p)+f(qa)$\\
1''&2&0&0&0&0&$qb$&1''&0&0&0&1&1&$\log(1/p)+f(qb)$\\\hline
0&0&0&0&1&0&$f(qb)$&2&0&1&0&1&1&$2\log(1/p)+f(qb)$\\
1&1&0&0&0&1&$f(qa)+q$&2'&0&1&0&1&1&$2\log(1/p)+f(qb)$\\
2&2&1&0&0&0&$f(qb)+q$&3&1&1&1&0&1&$2\log(1/p)+f(qa)+2q$\\
3&3&0&1&0&0&$f(qa)+2q$&3&1'&0&1&1&1&$2\log(1/p)+f(qa)+2q$\\
2'&2'&0&0&1&0&$f(qb)+q$&3&1''&1&1&1&0&$2\log(1/p)+f(qa)+2q$\\\cline{8-14}
1'&1'&0&0&0&1&$f(qa)+q$&3&0&1&1&1&1&$3\log(1/p)+f(qa)-\log(4-3p)$\\
1''&1''&0&0&1&0&$f(qb)+q$\\
4&4&0&0&0&0&0
\end{tabular}
\caption{Transition probabilities $\pi=\bbP_p(\cT(F(0,0;a,b;s),F(-\alpha,-\beta;a+\gamma,b+\delta;s')))$ of the framed rectangle Markov chain for local Frob\"ose bootstrap percolation.}
\label{tab:transitions}
\end{table}

We are now in position to define the events that correspond to the transitions of the Markov chain (see \cref{fig:example} for an illustration).
Recalling \cref{eq:def:O,eq:def:C:F}, we say that the transition from state (framed rectangle) $F = F(a,b;c,d;s)$ to $F'=F(a',b';c',d';s')$ as in \cref{tab:transitions,fig:augmented:cycle} occurs if
\begin{equation}
  \label{eq:def:transitions}
  \cT(F,F') = \left\{A : (A\setminus F_\blacksquare) \in \left( \cC^{\mathrm F}(F_\circ,F'_\circ)\cap \cO^c\left(F'_\square\right) \right)\right\}\end{equation}
occurs.
Intuitively speaking, we require that the set of infections that have not been explored yet (that is $A \setminus F_\blacksquare$) should be able to transform $F_\circ$ into $F'_\circ$, while also respecting the buffers of $F'$. If $F_\circ=F'_\circ$, the event $\cC^{\mathrm F}(F_\circ,F'_\circ)$ always occurs by definition (\cref{eq:def:C:F}).

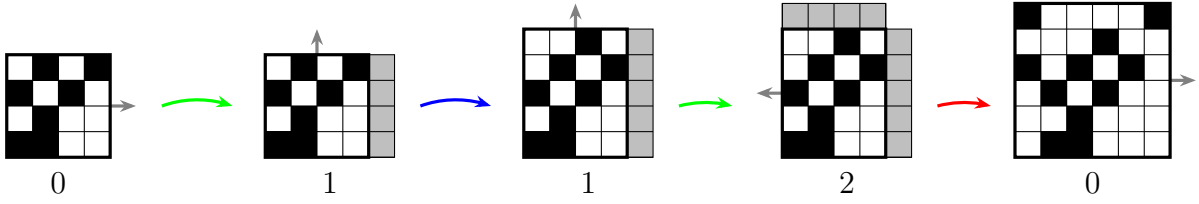
\begin{figure}
\centering
\begin{tikzpicture}[scale=3.4]
  \fill (.1, 0) rectangle ++(.1, .1);
  \fill (0, .2) rectangle ++(.1, .1);
  \fill (.1, .3) rectangle ++(.1, .1);
  \foreach \x in {0,.1,.2,.3} {
    \fill (\x, \x) rectangle ++(.1, .1);
    \draw (\x, 0) -- ++(0, 0.4);
    \draw (0, \x) -- ++(0.4, 0);
  }
  \draw[gray,-{Stealth[length=6]}, very thick] (.4, .2) -- ++(.1, 0);
  \draw[very thick] (0, 0) rectangle ++(0.4, 0.4);
  \node at (.2, -.1) {$0$};

  \draw[-{Stealth[length=6]}, very thick,green] (.6, .2) arc [radius=.4, start angle=110, delta angle=-40];

  \draw[fill=gray!50] (1.4, 0) rectangle ++(0.1, 0.4);
  \fill (1.1, 0) rectangle ++(.1, .1);
  \fill (1, .2) rectangle ++(.1, .1);
  \fill (1.1, .3) rectangle ++(.1, .1);
  \foreach \x in {0,.1,.2,.3} {
    \fill (1 + \x, \x) rectangle ++(.1, .1);
    \draw (1 + \x, 0) -- ++(0, 0.4);
    \draw (1, \x) -- ++(0.5, 0);
  }
  \draw[gray,-{Stealth[length=6]}, very thick] (1.2, .4) -- ++(0, .1);
  \draw[very thick] (1, 0) rectangle ++(0.4, 0.4);
  \node at (1.25, -.1) {$1$};

  \draw[-{Stealth[length=6]}, very thick,blue] (1.6, .2) arc [radius=.4, start angle=110, delta angle=-40];

  \draw[fill=gray!50] (2.4, 0) rectangle ++(0.1, 0.5);
  \fill (2.1, 0) rectangle ++(.1, .1);
  \fill (2, .2) rectangle ++(.1, .1);
  \fill (2.1, .3) rectangle ++(.1, .1);
  \foreach \x in {0,.1,.2,.3} {
    \fill (2 + \x, \x) rectangle ++(.1, .1);
  }
  \fill (2.2, .4) rectangle ++(.1, .1);
  \foreach \x in {0,.1,.2,.3,.4} {
    \draw (2 + \x, 0) -- ++(0, 0.5);
    \draw (2, \x) -- ++(0.5, 0);
  }
  \draw[gray,-{Stealth[length=6]}, very thick] (2.2, .5) -- ++(0, .1);
  \draw[very thick] (2, 0) rectangle ++(0.4, 0.5);
  \node at (2.25, -.1) {$1$};

  \draw[-{Stealth[length=6]}, very thick,green] (2.6, .2) arc [radius=.4, start angle=105, delta angle=-30];

  \draw[fill=gray!50] (3, .5) rectangle ++(0.4, 0.1);
  \draw[fill=gray!50] (3.4, 0) rectangle ++(0.1, 0.5);
  \fill (3.1, 0) rectangle ++(.1, .1);
  \fill (3, .2) rectangle ++(.1, .1);
  \fill (3.1, .3) rectangle ++(.1, .1);
  \foreach \x in {0,.1,.2,.3} {
    \fill (3 + \x, \x) rectangle ++(.1, .1);
  }
  \fill (3.2, .4) rectangle ++(.1, .1);
  \foreach \x in {0,.1,.2,.3,.4} {
    \draw (3 + \x, 0) -- ++(0, 0.6);
    \draw (3, \x) -- ++(0.5, 0);
  }
  \draw[gray,-{Stealth[length=6]}, very thick] (3, .25) -- ++(-.1, 0);
  \draw[very thick] (3, 0) rectangle ++(0.4, 0.5);
  \node at (3.25, -.1) {$2$};

  \draw[-{Stealth[length=6]}, very thick,red] (3.6, .2) arc [radius=.4, start angle=105, delta angle=-30];

  \fill (4.1, 0) rectangle ++(.1, .1);
  \fill (4, .2) rectangle ++(.1, .1);
  \fill (4.1, .3) rectangle ++(.1, .1);
  \fill (3.9, .3) rectangle ++(.1, .1);
  \fill (3.9, .5) rectangle ++(.1, .1);
  \fill (4.4, .5) rectangle ++(.1, .1);
  \foreach \x in {0,.1,.2,.3} {
    \fill (4 + \x, \x) rectangle ++(.1, .1);
  }
  \fill (4.2, .4) rectangle ++(.1, .1);
  \foreach \x in {0,.1,.2,.3,.4,.5} {
    \draw (3.9 + \x, 0) -- ++(0, 0.6);
    \draw (3.9, \x) -- ++(0.6, 0);
  }
  \draw[gray,-{Stealth[length=6]}, very thick] (4.5, .3) -- ++(.1, 0);
  \draw[very thick] (3.9, 0) rectangle ++(0.6, 0.6);
  \node at (4.2, -.1) {$0$};
\end{tikzpicture}
\caption{Illustration of some of the transitions in the Markovian exploration. The frame $F_\square$ is indicated in gray and the frame state is given below the framed rectangle. The sites that have already been revealed are black or white, depending on whether they are infected or not, respectively. The thick rectangle is the interior $F_\circ$ of the framed rectangle.
Finally, a gray arrow represents the direction in which the exploration will continue, while the transition arrows use the color code of \cref{fig:augmented:cycle}.
\label{fig:example}}
\end{figure}

Recalling \cref{eq:def:f,eq:def:q,obs:stacking,subsec:local:F}, one can check the following assertions.
\begin{observation}
\label{obs:transitions}
\Cref{tab:transitions} defines a stochastic matrix denoted by $\cT$. For any framed rectangle $F$, the events $\cT(F,F')$ (see \cref{eq:def:transitions}) corresponding to transitions present in $\cT$ are disjoint for different choices of $F'$. For each transition $\cT(F,F')$ we have $F_\circ\subset F'_\circ$ and $F_\blacksquare\subsetneq F'_\blacksquare$, unless $F$ has buffer state $4$. The transition events $\cT(F,F')$ define a Markov chain $(\cF(t))_{t\in\bbN}$ with transition matrix $\cT$, whose state $\cF(t)$ is measurable with respect to $A\cap \cF(t)_\blacksquare$. 

Given a rectangle $S$, we denote by $(\cF^S(t))_{t\in\bbN}$ the chain with initial state $S$ and frame state 0. If $\cI_{\mathrm{loc}}^{\mathrm F}(S)$ occurs, then for all $t\ge 0$, $\cI^{\mathrm F}_{\mathrm{loc}}(\cF^S_\circ(t))\cap \cO^c(\cF^S_\square(t))$ occurs. We denote $\cF^S(\infty)=\bigcup_{t\ge 0} \cF^S_\circ(t)$. Then for any $x\in\bbZ^2$, either $\cF^{\{x\}}(\infty)=[A]_{\mathrm F}^{x}$, in case $[A]_{\mathrm F}^{x}$ is finite, or both $[A]_{\mathrm F}^{x}$ and $\cF^{\{x\}}(\infty)$ are infinite (although not necessarily equal).
\end{observation}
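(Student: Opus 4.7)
My plan is to realise the transition events as a Markovian exploration of the Bernoulli field $A$ around the rectangle $F_\circ$. For each non-absorbing frame state $s$, the gray arrow in \cref{fig:augmented:cycle} prescribes which buffer of $F_\circ$ is inspected next; each outgoing transition $\cT(F,F')$ then corresponds to a specific Boolean configuration of $A$ on the finite set $F'_\blacksquare\setminus F_\blacksquare$, combining the state of the inspected buffer with, in the case of a Frob\"ose cascade, the state of a small number of additional corner cells.

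Going row by row through \cref{tab:transitions}, one identifies each event and checks its probability. The basic mechanism is that, because $F_\circ$ is germed throughout by the inductive hypothesis, a single infection in an adjacent buffer of length $\ell$ propagates by repeated Frob\"ose unit-square completions along $\partial F_\circ$ and fills the entire buffer, enlarging $F_\circ$ by one layer in that direction; an infected corner cell at the intersection of the newly-grown side with a previously-cleared buffer then allows the cascade to sweep back through that buffer as well. The probability of each row factorises into the basic contributions $e^{-q\ell}$ for an empty buffer of length~$\ell$, $1-e^{-q\ell}=e^{-f(q\ell)}$ for a non-empty one, $p$ for each required infected corner and $1-p=e^{-q}$ for each required empty corner, while the factor $4-3p$ in the row $3\to 0$ arises by summing over the five corner configurations that simultaneously trigger cascades in all four directions (four with exactly one empty corner among $(-1,-1),(a,-1),(-1,b),(a,b)$ and one with all four corners infected, contributing $4p^3(1-p)+p^4=p^3(4-3p)$). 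Summing over all transitions outgoing from a fixed $s$ amounts to partitioning the product space $\{0,1\}^{F'_\blacksquare\setminus F_\blacksquare}$ according to this decision tree, which gives total probability~$1$ and makes the transition events for different $F'$ pairwise disjoint; the inclusions $F_\circ\subset F'_\circ$ and $F_\blacksquare\subsetneq F'_\blacksquare$ are read off the table, and the Markov property follows from the product structure of $\bbP_p$ together with the measurability of $\cF(t)$ with respect to $A\cap\cF(t)_\blacksquare$. The inductive statement $\cI^{\mathrm{F}}_{\mathrm{loc}}(\cF^S_\circ(t))\cap\cO^c(\cF^S_\square(t))$ is then proved by induction on~$t$, combining $\cI^{\mathrm{F}}_{\mathrm{loc}}(\cF^S_\circ(t))$ with the crossing factor $\cC^{\mathrm{F}}(\cF^S_\circ(t),\cF^S_\circ(t+1))$ of the transition event via the Frob\"ose version of \cref{obs:stacking}.

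For the final dichotomy, $\cF^{\{x\}}(\infty)\subset[A]_{\mathrm F}^x$ follows directly from this inductive statement, since the germ $x$ is transported along by successive applications of stacking. Conversely, as long as $\cF^{\{x\}}_\circ(t)$ is strictly contained in the rectangle of the Frob\"ose rectangles process of $A$ that contains $x$, at least one of its four buffers intersects $A$, so the chain cannot be absorbed at state~$4$ and a Frob\"ose cascade must enlarge $F_\circ$ at some later time; combined with the strict monotonicity of $F_\blacksquare$, this forces the chain to reach state~$4$ after finitely many steps exactly when $[A]_{\mathrm F}^x$ is finite, in which case $\cF^{\{x\}}(\infty)$ equals that rectangle, while otherwise both limits are infinite. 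The main difficulty lies in the case analysis of the double and triple buffer deletion transitions (red and black arrows in \cref{fig:augmented:cycle}), where one must pin down the exact corner configurations that let a single revealed infection propagate through two or three previously-cleared sides, so that each configuration is accounted for in exactly one row of \cref{tab:transitions}; the row $3\to 0$ provides the cleanest illustration, its factor $4-3p$ being precisely the aggregate probability of its five cascading patterns.
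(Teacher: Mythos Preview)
Your proposal is correct and is precisely the case-by-case verification that the paper intends when it writes ``one can check the following assertions'' (the paper provides no further argument). Your identification of the $3\to 0$ factor $p^3(4-3p)$ via the five admissible corner patterns, and your inductive propagation of $\cI^{\mathrm F}_{\mathrm{loc}}\cap\cO^c$ through \cref{obs:stacking}, are exactly the checks the authors have in mind; the only minor imprecision is that a transition event need not pin down a \emph{single} Boolean configuration on $F'_\blacksquare\setminus F_\blacksquare$ (for instance $3\to 2$ leaves the top-right corner $(a,b)$ free, as it lies outside $F'_\blacksquare$), but your decision-tree picture already accounts for this and the disjointness and stochasticity computations go through unchanged.
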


\begin{corollary}
\label{cor:Markov}
Given nested rectangles $S\subset R=R(a,b)$, we have
\[\bbP_p\left(\cC^{\mathrm F}(S,R)\right)=\frac{\bbP_p(\cF^S(\infty)=R)}{e^{-2(a+b)q}},\]
\end{corollary}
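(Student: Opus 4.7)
The plan is to establish the set-theoretic identity
\[\{\cF^S(\infty)=R\} = \cC^{\mathrm F}(S,R) \cap \cO^c\bigl(B^r(R) \cup B^u(R) \cup B^l(R) \cup B^d(R)\bigr),\]
from which the claim follows immediately: the two events on the right-hand side are measurable with respect to $A$ restricted to the disjoint regions $R$ and the union of the four buffers respectively, so they are $\bbP_p$-independent; since the four buffers have total size $2(a+b)$ (recall \cref{e:buffers}), their joint emptiness probability equals $(1-p)^{2(a+b)}=e^{-2(a+b)q}$, giving the stated ratio.

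For the forward inclusion, observe that $\cF^S(\infty)=R$ is equivalent to the chain being absorbed at the framed rectangle $(R,4)$, because $4$ is the only absorbing frame state and its $F_\square$ is exactly the union of the four buffers of $R$. By \cref{eq:def:transitions}, each transition $F\to F'$ along the trajectory of $\cF^S$ enforces the crossing $\cC^{\mathrm F}(F_\circ,F'_\circ)$ together with the emptiness of the newly inspected frame cells. Concatenating these successive crossings via the Frob\"ose analogue of \cref{obs:stacking} yields $\cC^{\mathrm F}(S,R)$, while the four buffers of $R$ are all revealed empty precisely when the chain enters state $4$ at interior $R$.

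For the reverse inclusion, assume both $\cC^{\mathrm F}(S,R)$ and the emptiness of the four buffers of $R$. I would trace the deterministic trajectory of $\cF^S$ given $A$ and establish: (i) $\cF^S_\circ(t)\subseteq R$ for all $t$, and (ii) $\cF^S_\circ(t)=R$ for some $t$. For (i), the only transitions growing the interior are the loops and the buffer-deletion transitions in \cref{tab:transitions}, and each such transition requires the corresponding $\cC^{\mathrm F}$-crossing; if this growth were to push the interior beyond $R$, the crossing would require a Frob\"ose-fillable pattern of infections in at least one new row or column overlapping an empty buffer of $R$, which is impossible. For (ii), the witness $[S\cup(A\cap R)]_{\mathrm F}^s=R$ supplied by $\cC^{\mathrm F}(S,R)$ provides the infections in $A\cap R$ needed to drive the chain's interior all the way to $R$, after which at most four consecutive buffer explorations land the chain in state $4$.

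The principal obstacle is the careful verification of (i): one must check that empty buffers truly insulate $R$ from all local Frob\"ose growth, including through corner infections of $A$ that are diagonally adjacent to the vertices of $R$ but outside all four buffers. The key observation is that any such leakage would require completing an L-shape configuration as in \cref{eq:def:FBP} using at least one cell of a buffer of $R$ in the role of the common neighbour $c$, and this is precluded by the assumed buffer emptiness.
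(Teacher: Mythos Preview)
Your proof is correct and follows exactly the same approach as the paper: establish the set identity $\{\cF^S(\infty)=R\}=\cC^{\mathrm F}(S,R)\cap\cO^c(R_\square)$ (with $R$ given frame state $4$), then use independence of the two factors and compute $\bbP_p(\cO^c(R_\square))=e^{-2(a+b)q}$. The paper's own proof simply asserts this identity in one line, whereas you spell out both inclusions carefully; your treatment of the reverse inclusion via (i) and (ii), including the check that corner sites outside the buffers cannot trigger growth under the Frob\"ose rule, is a welcome elaboration of what the paper leaves implicit.
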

\begin{proof}
Consider the event $\cE=\cO^c(R_\square)$, where we associate buffer state $4$ to $R$. Then
\begin{align*}
\bbP_p(\cE)&{}=e^{-2(a+b)q},&
\{\cF(\infty)=R\}&{}=\cE\cap\cC^{\mathrm F}(S,R)\end{align*}
and the latter two events are independent.
\end{proof}

\subsection{The entropy function}
\label{subsec:entropy}
We next introduce the function $h$ which governs the second term in \cref{eq:main}:
\begin{equation}
\label{eq:def:h}
h:(0,\infty)\to(0,\infty):z\mapsto \sqrt{\frac{2+\sqrt 2}{e^z-1}}.
\end{equation}
However, in order to make it appear more naturally, let us start with an heuristic investigation of \cref{tab:transitions}. In view of \cref{lem:optimal:path}, it is not surprising that one should focus on squares, so we consider transitions from a framed square $F(0,0;a,a;s)$. Intuitively, since we are interested in the second term, we want to isolate the part of $\cT$ giving rise to the first term in \cref{eq:main}. This corresponds to taking a factor $e^{-f(qa)}$ for increasing the width or height of the rectangle by 1. Further neglecting double and triple buffer deletions and those to frame states $4$ and $1''$, as well as the additive terms $q$ and $2q$ in \cref{tab:transitions}, we obtain that buffer creations ``cost'' $e^{-qa}$; loops ``cost'' $1$; single buffer deletions ``cost'' $pe^{f(qa)}$. This gives a positive $6\times 6$ matrix, whose Perron--Frobenius eigenvalue will turn out to be $1+h(qa)\sqrt p=1+\sqrt{2+\sqrt 2}\sqrt{e^{-qa}\cdot pe^{f(qa)}}$ (where $\sqrt{2+\sqrt{2}}$ is the Perron--Frobenius eigenvalue of the unweighted bi-directed 6-cycle with one directed edge removed as in \cref{fig:augmented:cycle}). Moreover, the remaining transitions neglected above only perturb this value by a smaller order term, as we rigorously establish in \Cref{subsec:upper:step,subsec:lower:step} below.

We need the following properties of $h$, whose elementary proofs are left to the reader. The function $h$ is analytic, decreasing and satisfies\footnote{Again, we will also use analogous asymptotics for $h'$.}
\begin{equation}\label{eq:h:asymptotics}
h(z)=\begin{cases}
\sqrt{\frac{2+\sqrt 2}{z}}\left(1-O(z)\right)&z\to0,\\
\frac{\sqrt{2+\sqrt 2}}{e^{z/2}}(1+O(e^{-z}))&z\to\infty.
\end{cases}
\end{equation}
In particular, \cref{eq:h:asymptotics} implies that $h$ is integrable, so $\lambda_2^{\mathrm F} := \int_0^\infty h\in(0,\infty)$. Furthermore, one can explicitly compute
\begin{equation}
\label{eq:lambda2}\lambda_2^{\mathrm F} :=\int_0^\infty h=\sqrt{2+\sqrt 2}\int_0^\infty\frac{\md z}{\sqrt{e^{z}-1}}=2\sqrt{2+\sqrt 2}\left[\arctan\left(\sqrt{e^z-1}\right)\right]_{0}^\infty=\pi\sqrt{2+\sqrt 2}.\end{equation}

\section{Upper bound}
\label{sec:upper}
In this section we prove the upper bound in \cref{th:main}.

\subsection{Proof of the upper bound in Theorem~\ref{th:main}}
\label{subsec:upper:reduction}

In view of \cref{cor:reduction}, the upper bound in \cref{th:main} follows immediately, if we prove
\begin{equation}
    \label{eq:upper:main}\bbP_p\left(\cI^{\mathrm F}_{\mathrm{loc}}\left(R\left(a^{(M)},a^{(M)}\right)\right)\right)\ge \exp\left(-\frac2q\int_{0}^\infty f+\frac{2}{\sqrt q}\int_0^\infty h-\frac{2C_2\log^{3/2}(1/q)}{\sqrt[3]q}\right),
\end{equation}
with $a^{(M)}=\Lambda$ (recall \cref{eq:def:Lambda}) and $f,h,q$ from \cref{eq:def:q,eq:def:f,eq:def:h}. To do this, we recursively prove a lower bound on the probability of locally internally filling larger and larger squares $R^{(i)}=R(a^{(i)},a^{(i)})$. We start with $a^{(0)}=1$, so that
\begin{equation}
\label{eq:R0:upper}\bbP_p\left(\cI^{\mathrm F}_{\mathrm{loc}}\left(R^{(0)}\right)\right) = p \ge p\exp\left(-\frac2q\int_0^{a^{(0)}q}f\right).
\end{equation}

The sequence $a^{(i)}$ is defined recursively so that $a^{(i+1)}-a^{(i)}=\lceil(a^{(i)})^{2/3}\rceil$, until reaching the final size $a^{(M)}=\Lambda$ (for the sake of simplicity, we assume that this is possible for an integer $M$). The exponent $2/3$ is not of fundamental importance, but it is chosen in order to optimise the third term in \cref{eq:upper:main}. In order to bound $M$, observe that $a^{(i+1)}-a^{(i)}\ge (a^{(m)})^{2/3}$ for any $i \geq m$. Therefore, $a^{(m+\lceil(a^{(m)})^{1/3}\rceil)}\ge 2a^{(m)}$, so
\begin{equation}
\label{eq:M:bound}M\le \sum_{m=0}^{\log_2 a^{(M)}}1+\left(\frac{\Lambda}{2^{m}}\right)^{1/3}\le \frac{\log^{1/2}(1/q)}{\sqrt[3]q}.\end{equation}

Next comes the main technical result of this section, whose proof is left to \cref{subsec:upper:step}. Intuitively speaking, the proposition below provides a lower bound on the transition probabilities during a step from $a^{(i)}$ to $a^{(i+1)}$. These piecewise estimates will be composed to provide \cref{eq:upper:main}, through a Riemann sum.
\begin{proposition}[Coarse step lower bound]
\label{prop:upper:main}
Let $a\ge 1$ and $b=a+\lceil a^{2/3}\rceil\le \Lambda$. Then
\[\frac{\bbP_p(\cI^{\mathrm F}_{\mathrm{loc}}(R(b,b)))}{\bbP_p(\cI^{\mathrm F}_{\mathrm{loc}}(R(a,a)))}\ge \exp\left(-\frac2q\int_{aq}^{bq}f+2h(aq)(b-a)\sqrt q-C_1^2\log(1/q)\right).\]
\end{proposition}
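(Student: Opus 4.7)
The plan is to use Harris's inequality together with \cref{cor:Markov} to reduce the statement to a lower bound on the probability that the framed-rectangle Markov chain $(\cF^{R(a,a)}(t))_{t\in\bbN}$ from \cref{obs:transitions}, started at the framed rectangle with interior $R(a,a)$ and frame state $0$, reaches the absorbing state $(R(b,b),4)$. Indeed, combining germs and \cref{obs:stacking} gives $\cI^{\mathrm F}_{\mathrm{loc}}(R(a,a))\cap\cC^{\mathrm F}(R(a,a),R(b,b))\subset\cI^{\mathrm F}_{\mathrm{loc}}(R(b,b))$, and since both events on the left are increasing, \cref{lem:Harris} factorises the probability. \Cref{cor:Markov} then rewrites the crossing probability as $e^{4bq}\bbP_p[\cF^{R(a,a)}(\infty)=R(b,b)]$, with $e^{4bq}=e^{O(\log(1/q))}$ absorbable into the error budget.

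I would then lower-bound the chain hitting probability by restricting to trajectories that stay in the six ``cycle'' frame states $\{0,1,2,3,2',1'\}$ and grow the rectangle only via single buffer deletions --- excluding states $1''$ and $4$ (except for the final absorption) and all double and triple buffer deletions --- which yields a valid lower bound by monotonicity. For each intermediate side length $k\in\{a,\ldots,b-1\}$ the one-step transition matrix $T_k$ on the six cycle states has the form $T_k=v_k I+u_k B+pv_k D$ up to state-dependent $(1-p)$-corrections in the loop entries, where $v_k=e^{-f(qk)}$, $u_k=e^{-qk}$, and $B,D$ are the $0/1$-adjacency matrices of the buffer-creation and single-BD edges. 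The bipartition $\{0,2,2'\}\sqcup\{1,3,1'\}$ turns $B+D$ into the adjacency matrix of a bipartite graph --- exactly the bi-directed 6-cycle with the directed edge $0\to1'$ removed --- whose Perron eigenvalue $\sqrt{2+\sqrt 2}=2\cos(\pi/8)$ is verified by the sine-ansatz $\psi_j=\sin((j+1)\pi/8)$, as mentioned in \cref{subsec:entropy}. A diagonal similarity absorbs the weight ratio between buffer creations and deletions, so $\mathrm{PF}(u_k B+pv_k D)=\sqrt{(2+\sqrt 2)u_k p v_k}=v_k h(qk)\sqrt p$, and hence $\mathrm{PF}(T_k)=v_k(1+h(qk)\sqrt p)$.

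To turn this spectral data into the desired lower bound, I would decompose each trajectory into $b-a$ successive elementary ``(BC)+(SD)'' growth excursions, each of which grows the rectangle by $1$ using at least two non-loop transitions. Arbitrary loops between real transitions are summed geometrically at each state, with the contributions across the six cycle states balanced by the positive right Perron eigenvector of $T_k$. The outcome is a lower bound of the form $\prod_{k=a}^{b-1}\mathrm{PF}(T_k)^{2}$ times a boundary factor, whose logarithm by Taylor expansion splits into $2\sum_{k}\log v_k+2\sum_{k}\log(1+h(qk)\sqrt p)$. Applying the Riemann-sum estimates $\sum_{k=a}^{b-1}f(qk)=\frac{1}{q}\int_{aq}^{bq}f+O(\log(1/q))$ and $\sum_{k=a}^{b-1}h(qk)\sqrt p=(b-a)h(qa)\sqrt q+O(q^{1/3})$ (valid since $b-a=\lceil a^{2/3}\rceil\ll a$ and $\sqrt p=\sqrt q(1+O(q))$) yields exactly the target estimate.

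The main technical obstacle is the quantitative Perron--Frobenius perturbation analysis: the actual transition matrix differs from $T_k$ by (i) state-dependent $(1-p)$- and $(1-p)^2$-factors in the loop entries and (ii) the excluded escape transitions (BC from $3$ to $4$, transitions to $1''$, and double and triple BDs), of total probability at most $O(p+e^{-qk})$ per step. One must verify that these perturbations shift $\mathrm{PF}(T_k)$ by at most $O(\sqrt p)$ per size, so that the cumulative error across the $b-a$ intermediate sizes stays within the $C_1^2\log(1/q)$ budget. A secondary issue is converting the geometric-loop summation --- which really computes an expected visit count at size $b$ --- into a genuine lower bound on the hitting probability; this is achieved by dividing by the a priori upper bound $O(1/p)$ on the expected time spent at size $b$, the inverse of the minimum per-step escape rate $\gtrsim p$, which contributes only an innocuous additive $O(\log(1/q))$.
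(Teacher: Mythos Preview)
Your reduction via \cref{obs:stacking}, \cref{lem:Harris} and \cref{cor:Markov} is the same as the paper's, and your identification of the Perron--Frobenius eigenvalue $v_k(1+h(qk)\sqrt p)$ for the six-state matrix is correct and is exactly the heuristic explained in \cref{subsec:entropy}. However, the paper does \emph{not} convert this spectral data into the lower bound the way you propose.

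Instead, the paper \emph{fixes} the number $K=\lfloor h(aq)(b-a)\sqrt p\rfloor$ of single buffer deletions, observes that every good trajectory with $K$ deletions has (after \cref{lem:single:upper} and \cref{lem:W:to:g}) the same probability up to the error budget, and then \emph{counts} such trajectories combinatorially: a factor $\binom{b-a}{K}^2$ for distributing the loop transitions among the horizontal and vertical growth, times $\cM^{2K+3}(0,3)$ for the frame-state sequence, where $\cM$ is the unweighted $0/1$ adjacency matrix of buffer creations and single deletions (\cref{eq:def:M:F:upper}). The explicit diagonalisation \cref{eq:Matrix:bound} gives $\cM^{2K+3}(0,3)\ge(2+\sqrt2)^K$, and Stirling on the binomial recovers the $h$-term.

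Your route has a genuine gap at the ``$b-a$ excursions'' step. A single buffer deletion increases the semi-perimeter by $2$, a loop by $1$, and a buffer creation by $0$; there is no canonical decomposition into $b-a$ blocks each of fixed growth, and the intermediate ``side length $k$'' is not constant across the two non-loop transitions of any such block. Consequently the product $\prod_{k=a}^{b-1}\mathrm{PF}(T_k)^2$ does not correspond to any identifiable sum over trajectories, and for a \emph{lower} bound on a specific matrix entry of a product of non-commuting, $k$-dependent matrices, the Perron eigenvalue alone is not enough---you would still need the explicit eigenvector overlap, which is precisely what the paper extracts in \cref{eq:Matrix:bound}. (The paper does use a PF-perturbation argument, but only for the \emph{upper} bound in \cref{subsec:lower:step}, where \cref{lem:lin:alg} suffices.) Finally, the ``expected visits versus hitting probability'' issue you raise is a red herring: the event $\{\cF^S(\infty)=R(b,b)\}$ is already the hitting event, and no Green's-function correction is needed.
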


Applying \cref{prop:upper:main} to $a=a^{(i)}$, telescoping and then recalling \cref{eq:R0:upper,eq:M:bound}, we obtain
\begin{align}
\label{eq:upper:1}&{}\bbP_p\left(\cI^{\mathrm F}_{\mathrm{loc}}\left(R^{(M)}\right)\right)\\
\nonumber&{}\ge \bbP_p\left(\cI^{\mathrm F}_{\mathrm{loc}}\left(R^{(0)}\right)\right)\exp\left(-\frac{2}{q}\int_{a^{(0)}q}^{a^{(M)}q}f + 2\sqrt q\sum_{i=0}^{M-1}h\left(a^{(i)}q\right)\left(a^{(i+1)}-a^{(i)}\right)-C_1^2M\log(1/q)\right)\\
\nonumber&{}\ge \exp\left(-\frac2q\int_{0}^{a^{(M)}q}f+2\sqrt q\sum_{i=0}^{M-1}h\left(a^{(i)}q\right)\left(a^{(i+1)}-a^{(i)}\right)-\frac{C_2\log^{3/2}(1/q)}{\sqrt[3]q}\right).\end{align}
Since $f$ is positive, the integral above is less than $\lambda_1^{\mathrm F}=\int_0^\infty f$. It therefore remains to evaluate the discrepancy between the Riemann sum in \cref{eq:upper:1} and $\lambda_2^{\mathrm F}=\int_0^\infty h$.
Yet,
\begin{multline}
\label{eq:upper:2}\left|q\sum_{i=0}^{M-1}h\left(a^{(i)}q\right)\left(a^{(i+1)}-a^{(i)}\right)-\int_{a^{(0)}q}^{a^{(M)}q}h\right|\\
\begin{aligned}
&{}\le \sum_{i=0}^{M-1}\max_{a\in[a^{(i)},a^{(i+1)}]}\left|h'(qa)\right|\left(qa^{(i+1)}-qa^{(i)}\right)^2\\
&{}\overset{\mathclap{h'(z) = O(z^{-3/2})}}\le \quad \quad O\left(q^{1/3}\right)\sum_{i=0}^{M-1}\left(qa^{(i+1)}\right)^{-3/2}\left(qa^{(i)}\right)^{2/3}\left(qa^{(i+1)}-qa^{(i)}\right)\\
&{}\le O\left(q^{1/3}\right)\int_0^{qa^{(M)}}\frac{\md x}{x^{5/6}}\le O(q^{1/3})\log^{1/6}(1/q).\end{aligned}\end{multline} Moreover, \cref{eq:h:asymptotics} gives
\begin{equation}
\label{eq:upper:3}
0\le \lambda_2^{\mathrm F}-\int_{a^{(0)}q}^{a^{(M)}q}h\le O\left(\sqrt{a^{(0)}q}+e^{-a^{(M)}q/2}\right)= O(\sqrt q).\end{equation}
Putting \cref{eq:upper:1,eq:upper:2,eq:upper:3} together, we obtain \cref{eq:upper:main}, as desired.

\subsection{Proof of Proposition~\ref{prop:upper:main}}
\label{subsec:upper:step}
We next turn to proving \cref{prop:upper:main}. Fix $a\ge a^{(0)}$ and $b=a+\lceil a^{2/3}\rceil\le \Lambda$. Recalling \cref{obs:stacking}, we have
\[\frac{\bbP_p(\cI^{\mathrm F}_{\mathrm{loc}}(R(b,b)))}{\bbP_p(\cI^{\mathrm F}_{\mathrm{loc}}(R(a,a)))}\ge \max_{x,y}\bbP_p\left(\cC^{\mathrm F}(R(x,y;x+a,y+a),R(b,b)\right),\]
where the max ranges over all possible positions $(x,y)\in R(b-a+1,b-a+1)$ of a translate of $R(a,a)$ inside $R(b,b)$. Further recalling \cref{cor:Markov}, we get
\begin{align}\nonumber\frac{\bbP_p(\cI^{\mathrm F}_{\mathrm{loc}}(R(b,b)))}{\bbP_p(\cI^{\mathrm F}_{\mathrm{loc}}(R(a,a)))}&{}=\max_{x,y}\frac{\bbP_p(\cF^{R(x,y;x+a,y+a)}(\infty)=R(b,b))}{e^{-4bq}}\\
\label{eq:IFloc:ratio}&{}\ge q^3\sum_{x,y}\bbP_p\left(\cF^{R(a,a)}(\infty)=R(-x,-y;b-x,b-y)\right),\end{align}
where the term $q^3$ accommodates the number of choices for $x, y$ in the sum (recalling $a, b \leq \Lambda$).

For the sake of an upper bound, we may simply discard the transitions of the framed rectangle Markov chain that would yield a negligible contribution. To that end, we call a trajectory $(\cR(t))_{t=0}^N$ of this chain \emph{good}, if the following all hold:
\begin{itemize}
\item it only features \emph{good transitions}: buffer creations, loops and single buffer deletions;
\item $\cF(0)=F(0,a;0,a;0)$;
\item $\cF(N-1)\neq \cF(N)=F(-x,-y;b-x,b-y;4)$ for some $(x,y)\in R(b-a+1,b-a+1)$.
\end{itemize}

In order to lower bound \cref{eq:IFloc:ratio} we evaluate the probability of each good trajectory individually. We start by examining a single transition.
\begin{lemma}[Probability of a single transition]
\label{lem:single:upper}
Let $(F,F')$ be a transition in a good trajectory with $F=F(i,j;k,l;s)$ and $F'=F(i-\alpha,j-\beta;k+\gamma,l+\delta;s')$. Then
\[
\bbP_p(\cT(F,F')) =
  \begin{cases}
  1 \times e^{-qa} \times e^{O(qa^{2/3})} & \text{buffer creation,}\\
  e^{-(\alpha+\gamma)f(q(l-j))-(\beta+\delta)f(q(k-i))}\times 1\times e^{O(q)}&\text{loop,}\\
  e^{-f(q(l-j))-f(q(k-i+1))}\times pe^{f(qa)}\times e^{O(a^{-1/3})}&\text{$1$-buffer deletion.}
  \end{cases}
\]
Note that $\alpha, \beta, \gamma, \delta \in \{0, 1\}$ and for good trajectories their sum is either $0$, $1$ or $2$, corresponding to the three cases above respectively.
\end{lemma}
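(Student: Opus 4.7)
The plan is to prove the lemma by direct case analysis against \cref{tab:transitions}: each of the three factorizations is a reorganization of the corresponding table entries in which the first factor isolates the growth cost as a combination of $f$-terms at the relevant dimensions, the second factor records the per-step cost of the transition (nothing for a loop, an empty-buffer cost $e^{-qa}$ for a buffer creation, or the deletion cost $pe^{f(qa)}$ identified heuristically in \cref{subsec:entropy} for a single buffer deletion), and the third factor is a small error absorbing the mismatches between the actual dimensions $k-i$, $l-j$ and the reference scale $a$. Since the seven buffer creations, eight loops, and seven single buffer deletions share the same geometric structure modulo the symmetries of \cref{fig:augmented:cycle}, it suffices to compute one representative per category and transcribe.

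For each transition type I would first identify the newly revealed region $F'_\blacksquare \setminus F_\blacksquare$ and the configuration it must carry. A buffer creation has $F'_\circ = F_\circ$ and merely adds an empty strip of length $k-i$ or $l-j$; its probability $(1-p)^{\text{length}} = e^{-q \cdot \text{length}}$ matches $e^{-qa} \cdot e^{O(q a^{2/3})}$ because both candidate lengths lie in $[a,b]$ with $b-a = O(a^{2/3})$. A loop extends the rectangle by one row or column of length $d \in \{k-i, l-j\}$ in the direction of exploration; by \crefdefpart{obs:traversability}{obs:travers:germ_boundary} the probability is $e^{-f(qd)}$ for the required infection, times a factor $(1-p)^k = e^{O(q)}$ with $k \in \{0,1,2\}$ accounting for the new cells of extended frame. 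The indicator pair $(\alpha+\gamma, \beta+\delta)$ in the formula selects the correct perpendicular dimension.

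A single buffer deletion grows the rectangle by one in both coordinates while absorbing an already-empty buffer, so the Frob\"ose growth proceeds in two steps: an infection somewhere in the new perpendicular strip of length $d \in \{k-i, l-j\}$ (factor $e^{-f(qd)}$), plus an infection in the single newly exposed corner cell required to bypass the empty buffer (factor $p$), accompanied by $e^{O(q)}$ for the emptiness of the lone new buffer cell. The resulting expression $p \cdot e^{-f(qd) + O(q)}$ coincides with the lemma's $e^{-f(q(l-j)) - f(q(k-i+1))} \cdot p e^{f(qa)} \cdot e^{O(a^{-1/3})}$ exactly when the discrepancy
\[\Delta := f(qa) + f(qd) - f(q(l-j)) - f(q(k-i+1))\]
is $O(a^{-1/3})$.

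Controlling $\Delta$ is the principal quantitative step and the main (though mild) obstacle. Using $|f'(\xi)| = 1/(e^\xi - 1) \le 1/\xi$ uniformly and the fact that all five arguments $a, a+1, k-i, k-i+1, l-j$ lie in $[a, b+1]$ with $b-a\le a^{2/3}+1$, the mean value theorem yields $|f(qd_1) - f(qd_2)| \le q|d_1 - d_2|/(qa) = O(a^{-1/3})$ for any two of them. The additive $q$ corrections in the table are themselves $O(a^{-1/3})$ since $a \le \Lambda \ll q^{-3}$ forces $q \le a^{-1/3}$. Summing four such estimates bounds $\Delta$ by $O(a^{-1/3})$ as required. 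The remaining single buffer deletions, loops, and buffer creations are handled identically by the symmetries of \cref{fig:augmented:cycle}, completing the proof.
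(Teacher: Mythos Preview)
Your proposal is correct and follows essentially the same approach as the paper: read the exact transition probabilities from \cref{tab:transitions}, then rearrange each into the three-factor form by absorbing the discrepancy between the actual dimensions $k-i,l-j$ and the reference scale $a$ into the error term via the mean value theorem with $|f'(z)|\le 1/z$. The paper's proof is more terse---it fixes the representative $s=2$ and works directly from the table without the event-level narrative you provide---but the quantitative step (your bound on $\Delta$) is exactly the computation in \cref{eq:transition:21}. Your extra description of the underlying events is not needed since the table already records the probabilities, and for some transitions the geometric picture you sketch is slightly off (e.g.\ which cell carries the lone $p$ factor depends on the frame state), but this does not affect the argument since you ultimately rely on the table values.
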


\begin{remark}
\label{rem:single:upper}
Before digging into the proof, a few observations are in order.
\begin{itemize}
  \item Each term above is spelled as a product $A \times B \times C$, where the first factor corresponds to growth costs, while $B$ stands for buffer manipulations and $C$ for error terms.
  \item Note also that the term $A$ in the third line (corresponding to \emph{buffer deletions}) has been artificially factored in a way that makes the diagonal growth of the box dimensions mimic the cost of one horizontal and one vertical steps (see Figure~\cref{fig:good_trajectory}).
  The cost of this modification is absorbed in the corresponding error term $C$.
\end{itemize}
\end{remark}

\begin{proof}[Proof of \cref{lem:single:upper}]
For concreteness, we assume that $s=2$, the other cases being treated identically. If $s'=3$ (buffer creation), then \cref{tab:transitions} gives
\[\bbP_p(\cT(F,F'))=e^{-q(l-j)}=e^{-qa}e^{qO(b-a)}.\] 
If $s'=2$ (loop), so that $\alpha=1$ and $\beta=\gamma=\delta=0$, then \cref{tab:transitions} gives directly the result stated. If $s'=1$ (buffer deletion), then \cref{tab:transitions} gives
\begin{align}
\nonumber
\bbP_p(\cT(F,F'))&{}=pe^{-f(q(l-j))-q}\\
\nonumber
&{}=e^{-f(q(l-j))-f(q(k-i+1))}pe^{f(qa)}\exp\left(O(q)+qO(b-a)\max_{z\in[aq,bq]}\left|f'(z)\right|\right)\\
\label{eq:transition:21}
&{}=e^{-f(q(l-j))-f(q(k-i+1))}pe^{f(qa)}\exp(O(q)+O(b-a)/a),
\end{align}
where above we have expanded $f$ linearly around $qa$ and used that $|f'(z)| \leq O(z^{-1})$.
\end{proof}

In the language of \cref{rem:single:upper}, note that in \cref{lem:single:upper}, the terms $A$, $B$, $C$ will contribute to the first, second and third order terms in \cref{eq:upper:main}. Moreover, the $A$ terms are made so that along a trajectory they give the differential form $W_p^{\mathrm F}$ (recall \cref{subsec:variations}) evaluated along a certain path from $(a,a)$ to $(b,b)$. In order to control this arbitrary path, we need the following analytic bound.
\begin{lemma}[Diagonal deviation cost]
\label{lem:W:to:g}Let $1\le a\le b$ and $\gamma$ be a piecewise linear coordinatewise non-decreasing path from $(a,a)$ to $(b,b)$. Then
\[\left|W^{\mathrm F}_p(\gamma)-\frac{2}{q}\int_{aq}^{bq}f\right|\le O\left((b-a)^3/a^2\right)
.\]
\end{lemma}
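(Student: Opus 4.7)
The starting point is the identity
\[
W^{\mathrm F}_p(\gamma) - \frac{2}{q}\int_{aq}^{bq} f = \int_0^1 \bigl[f(q\gamma_1(t)) - f(q\gamma_2(t))\bigr]\bigl[\gamma_2'(t) - \gamma_1'(t)\bigr]\,\md t,
\]
valid for any parametrisation $(\gamma_1(t),\gamma_2(t))_{t\in[0,1]}$ of $\gamma$ with $\gamma_i(0)=a$ and $\gamma_i(1)=b$. Indeed, by \cref{eq:def:Wp} we have $W^{\mathrm F}_p(\gamma) = \int_0^1 [f(q\gamma_1)\gamma_2' + f(q\gamma_2)\gamma_1']\,\md t$, while the substitution $z=q\gamma_i(t)$ gives $\int_0^1 f(q\gamma_i(t))\gamma_i'(t)\,\md t = q^{-1}\int_{aq}^{bq} f$ for each $i\in\{1,2\}$; summing these two identities yields the claim. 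A direct estimate of the right-hand side by the mean value theorem only gives $O((b-a)^2/a)$, which is too weak by a factor of $(b-a)/a$, so we must exploit the cancellation encoded in the boundary conditions.

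The key is a symmetrisation. Set $m := (\gamma_1+\gamma_2)/2$ and $d := (\gamma_1-\gamma_2)/2$, so that $d(0)=d(1)=0$, $m$ is non-decreasing from $a$ to $b$, and $\gamma_2' - \gamma_1' = -2d'$. A second-order Taylor expansion of $f$ around $qm$ with Lagrange remainder gives
\[
f(q\gamma_1) - f(q\gamma_2) = 2qd\,f'(qm) + r(t), \qquad |r(t)| \le C\frac{d(t)^2}{a^2},
\]
where the bound on $r$ combines the global estimate $|f''(z)| \le C/z^2$ on $(0,\infty)$ (which follows from $f''(z)=e^z/(e^z-1)^2$ and \cref{eq:f:asymptotics}) with $q\gamma_i(t)\ge qa$.

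For the principal term, using $d d' = \frac{1}{2}(d^2)'$ and integrating by parts (no boundary terms since $d$ vanishes at $0$ and $1$) yields
\[
-4q\int_0^1 d d'\,f'(qm)\,\md t = 2q^2\int_0^1 d(t)^2\, m'(t)\, f''(qm(t))\,\md t.
\]
Since $m',f''\ge 0$, this is at most $\frac{q^2}{2}(b-a)^2\int_0^1 m'(t)f''(qm(t))\,\md t = \frac{q}{2}(b-a)^2(f'(qb)-f'(qa))$. Another application of the mean value theorem combined with $|f''(z)|\le C/z^2$ gives $|f'(qb)-f'(qa)|\le C(b-a)/(qa^2)$, so the principal term is $O((b-a)^3/a^2)$. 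The contribution of $r$ is bounded directly by
\[
\int_0^1 |r(t)|\cdot 2|d'(t)|\,\md t \le \frac{2C}{a^2}\cdot\frac{(b-a)^2}{4}\int_0^1 |d'(t)|\,\md t \le \frac{C(b-a)^3}{a^2},
\]
using $|d|\le (b-a)/2$ and $\int_0^1 |d'|\,\md t \le \int_0^1(\gamma_1' + \gamma_2')\,\md t = 2(b-a)$. Adding the two estimates gives the lemma.

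The main obstacle is the polynomial sharpness of the bound: a crude estimate falls short by a factor $(b-a)/a$, and recovering it requires simultaneously exploiting (i) the boundary cancellation $d(0) = d(1) = 0$ via integration by parts, and (ii) the nonnegativity of $m'$ and $f''$, which prevents oscillations of $\gamma$ around the diagonal from accumulating in absolute value.
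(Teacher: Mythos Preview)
Your proof is correct and follows essentially the same strategy as the paper's: both start from the identity
\[
W^{\mathrm F}_p(\gamma)-\frac{2}{q}\int_{aq}^{bq}f=\int_{\gamma}\bigl(f(qx)-f(qy)\bigr)(\md y-\md x),
\]
then exploit the boundary condition (the path starts and ends on the diagonal) together with the bound $f''(z)=O(z^{-2})$.

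The only difference is in how the linear part is killed. The paper Taylor-expands $f(x)-f(y)$ about the \emph{fixed} point $aq$, so the linear term becomes $f'(aq)\int_{\gamma_p}(x-y)(\md y-\md x)$, which vanishes exactly because $(x-y)(\md y-\md x)=-\tfrac12\,\md(x-y)^2$ is an exact form; the whole estimate then follows in one line from the second-order Taylor remainder. You instead expand about the moving midpoint $qm(t)$, which forces an integration by parts on $dd'=\tfrac12(d^2)'$ and a separate bound on the resulting principal term $2q^2\int d^2 m' f''(qm)\,\md t$. Both arrive at the same place, but the fixed-basepoint version is a step shorter.
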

\begin{proof}
Let $\gamma_p=q\cdot \gamma$. Then
\begin{align*}
qW^{\mathrm F}_p(\gamma)&{}=W^{\mathrm F}(\gamma_p)=\int_{\gamma_p} (f(y)\md x+f(x)\md y)\\
&{}=\int_{\gamma_p} (f(x)\md x+f(y)\md y) + \int_{\gamma_p} (f(x)-f(y))(\md y-\md x)\\
&{}=2\int_{aq}^{bq} f+ f'(aq)\int_{\gamma_p} (x-y)(\md y-\md x)+O(qb-qa)^3\sup_{x \in [aq,bq]}f''(x)\\
&{}= 2\int_{aq}^{bq}f + O(q(b-a))^3/(aq)^{2},\end{align*}
where we used \cref{eq:f:asymptotics} and the fact that the differential form $(x-y)(\md y-\md x)=-\frac{1}{2}\md (x-y)^2$ is exact, so closed, to get
\[\int_{\gamma_p}(x-y)(\md y-\md x)=\int_{((aq,aq),(bq,bq))}(x-y)(\md y-\md x)=0.\qedhere\]
\end{proof}

\begin{figure}
  \centering
  \begin{tikzpicture}[scale=.6,
  square/.style={regular polygon,regular polygon sides=4},
    triangle/.style={regular polygon,regular polygon sides=3}]
    \draw[->] (0, 0) -- (10, 0);
    \draw[->] (0, 0) -- (0, 8);
    \draw[thick, color=blue] (2, 2) -- (3, 2);
    \draw[thick, color=blue] (3, 2) -- (4, 2);
    \draw[thick,arrows=-{>[scale=1.3, length=3, width=3.5,bend]},green] (4.5, 2) arc [radius=.5, start angle=90, delta angle=-270];
    \draw[thick, color=blue] (4, 2) -- (4, 3);
    \draw[thick,arrows=-{>[scale=1.3, length=3, width=3.5,bend]},green] (3.5, 3) arc [radius=.5, start angle=270, delta angle=-270];
    \draw[thick, color=blue] (4, 3) -- (5, 3);
    \draw[thick, color=orange] (5, 3) -- (6, 4);
    \draw[dashed,color=gray] (5, 3) -- (6, 3) -- (6, 4);
    \draw[thick, color=blue] (6, 4) -- (6, 5);
    \draw[thick, color=blue] (6, 5) -- (6, 6);
    \draw[thick, color=blue] (6, 6) -- (6, 7);
    \draw[thick, color=orange] (6, 7) -- (7, 8);
    \draw[dashed, color=gray] (6, 7) -- (7, 7) -- (7, 8);
    \draw[thick, color=blue] (7, 8) -- (8, 8);
    \node[anchor=north east] at (2.2, 2) {\small$(a, a)$};
    \node[anchor=west] at (8, 8) {\small$(b, b)$};
    \node at (2,2) [circle,fill,inner sep=1pt] (.1) {};
    \node at (3,2) [circle,fill,inner sep=1pt] (.1) {};
    \node at (4,2) [circle,fill,inner sep=1pt] (.1) {};
    \node at (7,8) [circle,fill,inner sep=1pt] (.1) {};
    \node at (8,8) [circle,fill,inner sep=1pt] (.1) {};
    \node at (4,2) [square,draw,inner sep=1.5pt] {};
    \node at (4,3) [square,draw,inner sep=1.5pt] {};
    \node at (6,4) [square,draw,inner sep=1.5pt] {};
    \node at (6,5) [square,draw,inner sep=1.5pt] {};
    \node at (6,6) [square,draw,inner sep=1.5pt] {};
    \node at (6,7) [square,draw,inner sep=1.5pt] {};
    \node at (4,3) [triangle,fill,inner sep=.8pt] {};
    \node at (5,3) [triangle,fill,inner sep=.8pt] {};
    \node at (12,5) [circle,fill,inner sep=1pt] (.1) {};
    \node at (12.5,5.1) [anchor=west] {state $0$};
    \node at (12,3.5) [square,draw,inner sep=1.5pt] {};
    \node at (12.5,3.6) [anchor=west] {state $1$};
    \node at (12,2) [triangle,fill,inner sep=.8pt] {};
    \node at (12.5,2.1) [anchor=west] {state $2$};
  \end{tikzpicture}
  \caption{A good trajectory, with frame states marked with the symbols in the legend.
  Blue edges maintain a state, green arrows correspond to buffer creation and orange edges stand for (single) buffer deletions, as in \Cref{fig:augmented:cycle}
  (in particular, loops in this picture do not correspond to loops in \Cref{fig:augmented:cycle}). The dashed lines represent the transformation indicated in the buffer deletion case of \cref{lem:single:upper}.}
  \label{fig:good_trajectory}
\end{figure}
We next put together the contributions of all the transitions in a good trajectory $(\cF(t))_{t=0}^N$. Denote by $K$ the number of (single) buffer deletions performed by $(\cF(t))_{t=0}^N$.
As noted below \cref{lem:single:upper}, each of these buffer deletions correspond to a diagonal jump in the size of the box and it can be transformed into one horizontal and one vertical steps, giving rise to a new trajectory $\gamma$, depicted with dashed lines in \cref{fig:good_trajectory}.

Next we will follow the trajectory $\gamma$, multiplying the cost of each of its steps, according to \cref{lem:single:upper}.
Note that these costs are written as a the product of three factors and we group those in the calculation  below. 
Let $\gamma$ be the path formed by the first terms in \cref{lem:single:upper}. Further note that the number of buffer creations is $K+4$. Then,
\begin{align}
\nonumber\prod_{t=1}^N\bbP_p(\cT(\cF(t-1),\cF(t)))&{}=e^{-W^{\mathrm F}_p(\gamma)}\times e^{-4qa}\left(e^{-qa}pe^{f(qa)}\right)^K\times e^{O(q(b-a))+O(K+1)(qa^{2/3}+a^{-1/3})},\\
&{}=\exp\left(-\frac2q\int_{aq}^{bq}f\right)\times \left(\frac{p}{e^{aq}-1}\right)^K\times e^{O(1+qa)(1+Ka^{-1/3})}.\label{eq:trajectory:upper}
\end{align}
In view of \cref{eq:trajectory:upper}, it remains to add up all good trajectories. Moreover, since we are only proving an upper bound, we are free to choose the value of $K$, so fix
\begin{equation}
\label{eq:def:K}
K=\left\lfloor h(aq)(b-a)\sqrt p\right\rfloor=O\left(a^{1/6}\right)\end{equation}
(recall \cref{eq:def:h}). Since the right-hand side of \cref{eq:trajectory:upper} only depends on the trajectory through $K$, it remains to count the number of good trajectories with $K$ buffer deletions. Note that a good trajectory that goes from state zero to four, while performing $K$ buffer deletions, undergoes exactly $2K + 3$ non-loop transitions. Moreover, given the sequence of non-loop transitions performed, one still has to distribute the loop ones respecting the total change in height and width. Thus, the total number of such trajectories is
\begin{equation}\label{eq:number:trajectories}\binom{b-a+1}{K+1}^2\cM^{2K+3}(0,3)\ge \binom{b-a}{K}^2\cM^{2K+3}(0,3),\end{equation}
where $\cM^{2K+3}(0,3)$ is the $(0,3)$-entry of the $(2K+3)$-rd power of the matrix
\begin{equation}
\label{eq:def:M:F:upper}
\cM=\begin{pmatrix}
0 & 1 & 0 & 0 & 0 & 0\\
1 & 0 & 1 & 0 & 0 & 0\\
0 & 1 & 0 & 1 & 0 & 0\\
0 & 0 & 1 & 0 & 1 & 0\\
0 & 0 & 0 & 1 & 0 & 1\\
1 & 0 & 0 & 0 & 1 & 0
\end{pmatrix},
\end{equation}
corresponding to good non-loop transitions between frame states $0, 1, 2, 3, 2'$ and $1'$.
But $\cM$ is bipartite, so
\begin{multline}
\cM^{2K+3}(0,3)\\
\begin{aligned}[b]
&{}=\begin{pmatrix}
1 & 0 & 0
\end{pmatrix}
\begin{pmatrix}
1 & 1 & 0\\
1 & 2 & 1\\
1 & 1 & 2
\end{pmatrix}^{K+1}
\begin{pmatrix}
0\\
1\\
1
\end{pmatrix}\\
&{}=
\begin{pmatrix}
1 & 0 & 0
\end{pmatrix}
\begin{pmatrix}
-1 & \frac{2+\sqrt2}{4} & \frac{2-\sqrt2}{4}\\
0 & -\frac{1}{2\sqrt 2} & \frac{1}{2\sqrt 2}\\
1 & -\frac{1}{2\sqrt 2} & \frac{1}{2\sqrt 2}\\
\end{pmatrix}
\begin{pmatrix}
1 & 0 & 0\\
0 & 2-\sqrt 2 & 0\\
0 & 0 & 2+\sqrt 2
\end{pmatrix}^{K+1}
\begin{pmatrix}
0 & -1 & 1\\
1 & -\sqrt 2 & 1\\
1 & \sqrt 2 & 1
\end{pmatrix}
\begin{pmatrix}
0\\
1\\
1
\end{pmatrix}\\
&{}=\frac{(1-\sqrt2)(2-\sqrt 2)^{K}+(1+\sqrt2)(2+\sqrt 2)^{K}}{2}\ge \left(2+\sqrt 2\right)^{K}.\end{aligned}
\label{eq:Matrix:bound}\end{multline}

Recall that for any $\sqrt n\ge m$ it holds\footnote{\url{https://rjlipton.wpcomstaging.com/2014/01/15/bounds-on-binomial-coefficents/}} that $\binom nm \ge n^m/(4m!)\ge (en/m)^{m}/(4em)$. Plugging \cref{eq:number:trajectories,eq:Matrix:bound,eq:trajectory:upper} into \cref{eq:IFloc:ratio} and using this fact, we obtain
\begin{align}
\nonumber\frac{\bbP_p(\cI^{\mathrm F}_{\mathrm{loc}}(R(b,b)))}{\bbP_p(\cI^{\mathrm F}_{\mathrm{loc}}(R(a,a)))}&{}\ge \frac{q^3 e^{-2/q\int_{aq}^{bq}f}}{(4eK)^{2}}\left(\sqrt{\frac{p(2+\sqrt2)}{e^{aq}-1}}\cdot\frac{e(b-a)}{K}\right)^{2K}e^{O(1+qa)(1+Ka^{-1/3})}\\
\nonumber&{}\overset{\mathclap{\cref{eq:def:K}, \cref{eq:def:h}}}\ge \quad \frac{q^3}{(4eK)^2}\exp\left(-\frac2q\int_{aq}^{bq}f+2K+O(1 + qa)\right)
\\
\nonumber&{}\overset{\mathclap{\cref{eq:def:K}}}\ge \quad q^4\exp\left(-\frac2q\int_{aq}^{bq}f+2K+O(qa)\right)\\
\label{eq:upper:conclusion}&{}\ge \exp\left(-\frac2q\int_{aq}^{bq}f+2h(aq)(b-a)\sqrt q -O(C_1)\log(1/q)\right)\end{align}
where in the last inequality we used \cref{eq:def:q}, together with the fact that $b\le \Lambda$, completing the proof of \cref{prop:upper:main}.

\section{Lower bound}
\label{sec:lower}

\subsection{Deducing the lower bound of Theorem~\ref{th:main} from Proposition~\ref{prop:main:lower}}
\label{subsec:lower:reduction}
By \cref{cor:reduction}, in order to prove the lower bound in \cref{th:main}, it suffices to prove
\begin{equation}
\label{eq:main:lower}\bbP_p\left(\cI^{\mathrm F}_{\mathrm{loc}}(R)\right)\le \exp\left(-\frac{2\lambda_1^{\mathrm F}}{q}+\frac{2\lambda_2^{\mathrm F}}{\sqrt q}+\frac{\log^{2}(1/p)}{\sqrt[3]q}\right)\end{equation}
with $R=R(\Lambda,\Lambda)$. In order to do this, we will use the following lower bound analogue of \cref{prop:upper:main}, whose proof is left to \cref{subsec:lower:step}.
\begin{proposition}[Coarse step upper bound]
\label{prop:main:lower}
Let $S\subset T$ be rectangles with $\lng(T)\le \min \{ 2C_2\sh(T), \Lambda \}$ and the semi-perimeters such that $\phi(T)=\phi(S)+\lceil (\phi(S))^{2/3}\rceil$. Then
\[\bbP_p\left(\cC^{\mathrm F}(S,T)\right)\le \exp\left(-W^{\mathrm F}_p(\gamma_{S,T})+h(q\sh(T))(\phi(T)-\phi(S))\sqrt q+C_3\log(1/q)\right).\]
\end{proposition}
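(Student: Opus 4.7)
The plan is to mirror the proof of \cref{prop:upper:main} with upper and lower bounds on probabilities exchanged throughout. This forces one to sum over \emph{all} Markov trajectories rather than evaluate a single specific one, and to carefully estimate the contribution of bad transitions. By \cref{cor:Markov} and \cref{obs:transitions},
\[
\bbP_p\left(\cC^{\mathrm F}(S,T)\right) = e^{2\phi(T)q}\bbP_p\left(\cF^S(\infty)=T\right) \le e^{2\phi(T)q}\sum_{(\cF(t))_{t=0}^N}\prod_{t=1}^N \bbP_p\left(\cT(\cF(t-1),\cF(t))\right),
\]
where the sum is over Markov trajectories from $(S,0)$ to $(T,4)$; these are disjoint by \cref{obs:transitions}. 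The prefactor $e^{2\phi(T)q}\le p^{-O(C_1)}$ is absorbed by the $C_3\log(1/q)$ error. The first task is to separate \emph{good} trajectories (as in \cref{subsec:upper:step}) from \emph{bad} ones (those using double or triple buffer deletions, or visiting frame state $1''$). Inspection of \cref{tab:transitions} shows that every bad transition carries an extra factor of $p$ relative to a composition of good transitions with the same net effect on the dimensions, so the contribution of bad trajectories is of lower order and absorbed in the error.

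For good trajectories the upper-bound version of \cref{lem:single:upper} applies. Each such trajectory is encoded by a coordinate-wise non-decreasing piecewise-linear path $\gamma$ from the bottom-left corner of $S$ to that of $T$ (with the diagonal buffer-deletion steps straightened as in \cref{fig:good_trajectory}) together with a sequence of cycle moves through $0\to 1\to 2\to 3\to 2'\to 1'\to 0$ and the final transition $3\to 4$. For fixed $\gamma$ and fixed number $K$ of single buffer deletions this yields
\[
\prod_{t=1}^N\pi_t \le \exp\left(-W_p^{\mathrm F}(\gamma)+O(q\phi(T))\right)\left(\frac{p}{e^{q\sh(T)}-1}\right)^K.
\]
The number of admissible interleavings of cycle moves with loops is at most $\binom{\phi(T)-\phi(S)}{K+O(1)}^2\cM^{2K+3}(0,3)$, and \cref{eq:Matrix:bound} provides an upper bound of order $(2+\sqrt 2)^K$ on the matrix factor (the computation there is in fact exact up to constants). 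Using $\binom{n}{k}\le (en/k)^k$, the sum over $K$ is a geometric series dominated by its maximum at $K\approx h(q\sh(T))(\phi(T)-\phi(S))\sqrt p$, producing the factor $\exp\bigl(h(q\sh(T))(\phi(T)-\phi(S))\sqrt q\bigr)$. Summing over $\gamma$ then invokes \cref{lem:optimal:path} to reduce to $W_p^{\mathrm F}(\gamma)\ge W_p^{\mathrm F}(\gamma_{S,T})$, and the entropic cost of non-optimal paths is controlled via \cref{lem:W:to:g} (applied to each diagonal segment) and absorbed in the error.

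The main anticipated obstacle is controlling the perturbation of the effective transfer matrix along the trajectory: its entries depend on the current dimension $a$ through $e^{-qa}$ and $pe^{f(qa)}$, so the Perron--Frobenius eigenvalue $1+h(qa)\sqrt p+o(\sqrt p)$ drifts slightly as $a$ ranges over $[\sh(S),\sh(T)]$. The quantitative perturbation theory of the Perron--Frobenius eigenvalue of positive matrices---highlighted in \cref{subsubsec:main} as the novel analytic ingredient of the lower bound---should guarantee that the cumulative effect of this drift along $\gamma$ contributes at most $O(\log(1/q))$ to the final estimate, comfortably within the $C_3\log(1/q)$ budget. Assembling these estimates then yields the claimed bound.
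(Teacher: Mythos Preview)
Your overall plan---decompose $\cC^{\mathrm F}(S,T)$ via the framed-rectangle Markov chain, bound each trajectory, and sum via a transfer matrix whose Perron--Frobenius eigenvalue produces the $h$-term---is the paper's. But two steps are genuinely wrong or missing. First, the claim that bad transitions carry ``an extra factor of $p$'' is false: carrying out the analogue of \cref{lem:single:upper} for all transition types (this is \cref{lem:single:lower}) shows that a $k$-buffer deletion has middle factor $p^ke^{kf(qa)}$, and combined with the identity $K_0=K_2+2K_3+3K_4$ (buffer creations versus deletions, \cref{eq:K:relations}) every trajectory, good or bad, carries exactly the weight $P^{K_0}$ with $P=pe^{-qa+f(qa)}$. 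Bad trajectories therefore cannot be discarded; they must be retained as additional unit entries in the $6\times 6$ transfer matrix $\cM(P)$ (with state $1''$ folded into $1$ by doubling the $3\to1$ entry). The eigenvalue perturbation is applied to \emph{this} matrix: one checks via the characteristic polynomial that the extra entries move the roots of $\cM(P)/\sqrt P$ by only $O(\sqrt P)$ from those of the good-transition matrix \cref{eq:def:M:F:upper}, giving $\rho(\cM(P))\le\sqrt{(2+\sqrt2)P}\,e^{O(\sqrt P)}$. That is the actual role of the perturbation theory; the drift of the current dimension along the trajectory is handled separately, by fixing $a=\sh(T)$ once in \cref{lem:single:lower} and absorbing the discrepancy into the per-transition error terms.

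Second, $\cM(P)$ is not normal, so a spectral-radius bound does not directly control the entry $(\cI+\cM(P))^{J-1}(0,3)$ needed in \cref{eq:lower:sum:trajectories}. The paper closes this with \cref{lem:lin:alg}, bounding the norm of $\cM^n$ by a constant (depending on the eigenvalue separation) times $\rho(\cM)^n$; this step is essential and absent from your sketch. Finally, neither \cref{lem:W:to:g} nor a separate sum over paths $\gamma$ is needed here: one application of \cref{lem:optimal:path} replaces $W_p^{\mathrm F}(\gamma)$ by $W_p^{\mathrm F}(\gamma_{S,T})$ uniformly, after which the whole sum over trajectories is exactly the single matrix entry above.
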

In order to apply \cref{prop:main:lower}, we need to extract a sequence of rectangles analogous to $R^{(i)}$ in \cref{subsec:upper:reduction}. By iterating \cref{lem:AL:loc}, we see that $\cI^{\mathrm F}_{\mathrm{loc}}(R)$ implies the existence of a sequence of nested rectangles $(R_i)_{i=2}^{2\Lambda}$ such that $R_{2\Lambda}=R$, $\phi(R_{i})=i$ for all $i\in\{2,\dots,2\Lambda\}$ and the event
\begin{equation}
\label{eq:Ri:decomposition}
\{R_0\subset A\}\cap\bigcap_{i=1}^{2\Lambda}\cC^{\mathrm F}(R_{i-1},R_{i})\end{equation}
occurs.

In view of the hypotheses of \cref{prop:main:lower}, we need to ensure that typically there are no rectangles with too large aspect ratio. Such uneven rectangles can appear before, during or after the critical scale.

This motivates us to introduce $N_0$ and $N_1$, which intuitively correspond to the last violation of the aspect ratio before the critical scale and to the first violation after the critical scale respectively. More precisely,
\begin{align}
  \label{eq:def:N0}
  N_0&{} = \max \left\{ i \in \{2, \dots, 2\Lambda\}: \lng(R_i) \in \left[ C_2\sh(R_i), 1/(C_1q) \right] \right\},\\
  \label{eq:def:N1}
  N_1&{} = \min \left\{ i \in \{2, \dots, 2\Lambda\}: \sh(R_i) \in \left[ C_1/q,  \lng(R_i)/C_2 \right] \right\},
\end{align}
with the conventions that for $a > b$, $[a, b] = \varnothing$, $\max\varnothing = 2$ and $\min\varnothing = 2\Lambda$.

Let us now give an overview of the proof of the lower bound in \cref{th:main} that follows.
We will first consider what happens between $N_0$ and $N_1$, which clearly encapsulates the critical region $[1/(C_1q), C_1/q]$.
This critical region comprises the bulk of the contributions in our estimates and the existence of uneven rectangles within it will be ruled out in \cref{eq:strange}.
In \cref{eq:truncation:lower}, we show that what happens after $N_1$ can be ignored without affecting our bounds. Finally, we prove that any potential gain one could have obtained by reaching a rectangle with large aspect ratio before $1/(C_1 q)$ (this event corresponds to $N_0 > 2$) would be offset by the cost of creating such an anisotropic rectangle to start with, see \cref{eq:truncation:lower}.

We start by coarse-graining the sequence $(R_i)_{i=N_0}^{N_1}$. To do so, we set $\phi_0=N_0$, recursively define $\phi_{k+1}=\phi_k+\lceil\phi_k^{2/3}\rceil$ and choose $M$ so that $N_1\in(\phi_{M-1},\phi_M]$. For the sake of simplicity we assume that $N_1=\phi_M$ and recall that \cref{eq:M:bound} applies.

We call a sequence of rectangles $(R_{\phi_j})_{j=0}^M$, which can be obtained as above a \emph{coarse sequence} and we say that a coarse sequence $(R_{\phi_j})_{j=0}^M$ is \emph{good}, if for every $j\in\{0,\dots,M\}$ we have $\lng(R_{\phi_j})\le 2C_2\sh(R_{\phi_j})$.
Of course we will need to control non-good sequences as well and for this we introduce the following definition.
We say that a rectangle $R'\subset R$ is \emph{strange} if $\lng(R')\ge C_2\sh(R')$, $\lng(R')>1/(C_1q)$ and $\sh(R')<C_1/q$, or in words: $R'$ is critical and has a large aspect ratio.

We now use \cref{eq:Ri:decomposition,obs:stacking} to write our main decomposition
\begin{equation}
  \label{eq:bound_Iloc}
  \bbP_p\left(\cI^{\mathrm F}_{\mathrm{loc}}(R)\right) \le \sum_{R'\text{ strange}}\bbP_p\left(\cI^{\mathrm F}_{\mathrm{loc}}(R')\right)+\sum_{M}\sum_{(R_{\phi_j})_{j=0}^M}\bbP_p\left(\cI^{\mathrm F}_{\mathrm{loc}}(R_{\phi_0})\cap\bigcap_{j=1}^M\cC^{\mathrm F}\left(R_{\phi_{j-1}},R_{\phi_j}\right)\right),
\end{equation}
where the above sum ranges over all possible good coarse sequences.

We start by controlling the first term above.
By \crefdefpart{obs:traversability}{obs:travers:infect} and \cref{eq:gaps}, 
\begin{equation*}
  \sum_{R'\mathrm{ strange}}\bbP_p\left(\cI^{\mathrm F}_{\mathrm{loc}}(R')\right)
  \le \sum_{R'\mathrm{ strange}}\exp \left\{ -\lng(R')f \left( q\sh(R') \right) \right\}.
\end{equation*}
However, there are at most $|R|^2\le p^{-5}$ possible $R'$ and
\[\lng(R')f(q\sh(R'))\ge \begin{cases}
\frac{C_2f(C_1)}{C_1q}&1/(C_1q)\le \sh(R')\le C_1/q\\
\frac{f(C_1/C_2)}{C_1q}&1/(C_1q)\le\lng(R')\le C_1/q\\
\frac{C_1f(1/C_1))}{q}&\sh(R')<1/(C_1q),\lng(R')\ge C_1/q,
\end{cases}\]
so that 
\begin{equation}
    \label{eq:strange}
    \sum_{R'\mathrm{ strange}}\bbP_p\left(\cI^{\mathrm F}_{\mathrm{loc}}(R')\right)\le \exp\left(-C_1/q\right),
\end{equation}
which is much smaller than our objective in \cref{eq:main:lower}.

In order to control the second term in \cref{eq:bound_Iloc}, we start by observing that the number of coarse sequences is at most the number of ways to choose the number $M$ and then the $M$ rectangles contained in $R$. Thus, by \cref{eq:M:bound,eq:def:Lambda}, for small enough $q$ there are at most
\begin{equation}
  \label{eq:coarse:seq:number}
  q^{-1/2}\left(q^{-5}\right)^{\log^{1/2}(1/q)/\sqrt[3]q}\le \exp\left(\frac{\log^{5/3}(1/q)}{\sqrt[3]q}\right)
\end{equation}
coarse sequences.

By definition, for any good coarse sequence $(R_{\phi_j})_{j=0}^M$, we can apply \cref{prop:main:lower,eq:M:bound} to get 
\begin{multline}
\label{eq:main:lower:applied}\bbP_p\left(\bigcap_{j=1}^M\cC^{\mathrm F}\left(R_{\phi_{j-1}},R_{\phi_j}\right)\right)\\
\le
\exp\left(\sum_{j=1}^{M}\left(-W^{\mathrm F}_p\left(\gamma_{R_{\phi_{j-1}},R_{\phi_{j}}}\right)+h(a_jq)(\phi_{j}-\phi_{j-1})\sqrt q\right)+C_3\log^{3/2}(1/q)/\sqrt[3]q\right),
\end{multline}
where $a_j=\sh(R_{\phi_j})$. Since both the first and second terms above depend on the choice of the good coarse sequence, we need a refinement of \cref{lem:optimal:path} optimising both terms simultaneously. However, the second order term being of smaller order, the optimal path will only be a perturbation of $\gamma_{R_{\phi_0},R_{\phi_M}}$. This is established in the following lemma.

\begin{lemma}
\label{lem:Hp}Fix rectangles $S\subset T$ such that $\lng(T) \le \Lambda$. Let
\[H_p(\gamma)=\int_\gamma f(yq)\md x+f(xq)\md y-\sqrt qh(q\min(x,y))(\md x+\md y).\]
Then for any path $\gamma$ from $S$ to $T$ contained in $\{(x,y)\in(0,\infty)^2:y/(3C_2)\le x\le 3C_2y\}$ we have
\[H_p(\gamma)\ge H_p(\gamma_{S,T})-C_2\log(1/q),\]
recalling that $\gamma_{S, T}$ from \cref{eq:gammaR,fig:optimal_path}.
\end{lemma}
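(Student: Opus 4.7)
The plan is to combine the optimality of $\gamma_{S,T}$ for $W_p^F$ (\cref{lem:optimal:path}, rescaled from $W^F$ to $W_p^F$ by the change of variables $(x,y)\mapsto(qx,qy)$) with a pointwise Young-type inequality controlling the $h$-correction. Since $W_p^F(\gamma)\ge W_p^F(\gamma_{S,T})$, it suffices to establish
\[
\sqrt q\left[\int_\gamma h(q\min(x,y))(\md x+\md y)-\int_{\gamma_{S,T}}h(q\min(x,y))(\md x+\md y)\right]\le \bigl[W_p^F(\gamma)-W_p^F(\gamma_{S,T})\bigr]+C_2\log(1/q),
\]
i.e., any gain from the $h$-integral by deviating from $\gamma_{S,T}$ is paid for, up to $C_2\log(1/q)$, by the resulting loss in $W_p^F$.

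The natural coordinates for this comparison are $t=x+y$ and $\delta=y-x$, in which a coordinate-wise non-decreasing path from $(a,b)$ to $(c,d)$ is encoded by a function $\delta\colon[a+b,c+d]\to\mathbb R$ with $|\delta'|\le 1$; the region constraint becomes $|\delta|\le\alpha t$ for $\alpha=(3C_2-1)/(3C_2+1)<1$, while $\md x+\md y=\md t$ and $\min(x,y)=(t-|\delta|)/2$. In the generic case of \cref{eq:optimal:path}, $\gamma_{S,T}$ corresponds to a piecewise-linear $\delta^*$ with $\delta^*\equiv 0$ on an interior subinterval. Setting $f_\pm(t)=f(q(t\pm\delta)/2)$, convexity of $f$ (via integral-remainder Taylor together with monotonicity of $f''$) yields the lower bound $(f_++f_-)/2-f(qt/2)\ge(q\delta)^2 f''(q(t-|\delta|)/2)/8$ on the symmetric part of $W_p^F$'s integrand, while convexity and monotonicity of $h$ give the upper bound $h(q(t-|\delta|)/2)-h(qt/2)\le|h'(q(t-|\delta|)/2)|\,q|\delta|/2$ on the $h$-integrand.

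Young's inequality $Au-Cu^2\le A^2/(4C)$ applied with $A=\sqrt q\cdot q|h'(z)|/2$, $u=|\delta|$, $C=q^2 f''(z)/8$ at $z=q(t-|\delta|)/2$ then produces an integrand $q|h'(z)|^2/(2f''(z))$. Using the explicit formulas $|h'(z)|^2=(2+\sqrt 2)e^{2z}/(4(e^z-1)^3)$ and $f''(z)=e^z/(e^z-1)^2$, this collapses cleanly to $(2+\sqrt 2)/(4(1-e^{-z}))$, whose antiderivative is $((2+\sqrt 2)/4)\log(e^z-1)$. On the range $z\in[\Omega(q),O(\log(1/q))]$, coming from $a+b\ge 2$ and $\lng(T)\le\Lambda$, the resulting integral evaluates to $O(\log(1/q))$, which can be absorbed by choosing $C_2$ sufficiently large.

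I anticipate two main technical obstacles. First, the antisymmetric cross term $\int\delta'(f_--f_+)/2\,\md t$ in $W_p^F$ is nonzero for general $\gamma$; it will be handled by integration by parts, with boundary terms cancelling against those of $\gamma_{S,T}$ (the paths share endpoints) and the remaining interior contribution bounded by the same Young-type argument. Second, and more delicately, the quadratic lower bound on $W_p^F$'s symmetric part must remain sharp uniformly when $|\delta|$ is comparable to $t$: the choice to evaluate both $|h'|$ and $f''$ at $q(t-|\delta|)/2$ rather than at $qt/2$ is precisely what enables Young's inequality to close the gap up to the logarithmic integrand, and relies on $f''$ being decreasing. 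Non-generic cases of \cref{eq:optimal:path} where $\gamma_{S,T}$ has no diagonal segment are handled by perturbing around the corresponding piecewise-linear $\delta^*$ with $|\delta^{*\prime}|=1$, with an identical Young computation.
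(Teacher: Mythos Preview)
Your approach via the $(t,\delta)$ parametrisation and a pointwise Young inequality is genuinely different from the paper's Green's-theorem argument, and the collapse of $|h'|^2/f''$ to $(2+\sqrt2)/(4(1-e^{-z}))$ is a nice observation. However, the key convexity bound you claim,
\[\frac{f_++f_-}{2}-f(qt/2)\;\ge\;\frac{(q\delta)^2}{8}\,f''\!\left(\frac{q(t-|\delta|)}{2}\right),\]
is false. Since $f''$ is \emph{decreasing}, the integral-remainder lower bound on the symmetric second difference uses the \emph{minimum} of $f''$ on $[q(t-|\delta|)/2,\,q(t+|\delta|)/2]$, which is attained at the right endpoint $z_+=q(t+|\delta|)/2$, not at $z_-$. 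Concretely, at $qt/2=1$ and $q|\delta|/2=0.5$ one computes $(f(0.5)+f(1.5))/2-f(1)\approx 0.134$, whereas $(0.5)^2 f''(0.5)/2\approx 0.49$. You flag exactly this step as the delicate one and invoke monotonicity of $f''$, but the direction is reversed.

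This is not cosmetic. Replacing $f''(z_-)$ by the correct $f''(z_+)$ leaves the Young integrand as $q|h'(z_-)|^2/(2f''(z_+))$, with $h'$ and $f''$ evaluated on opposite sides of the diagonal. In the regime $qt/2\gg 1$, where $|h'(z)|^2\asymp e^{-z}$ and $f''(z)\asymp e^{-z}$, the ratio behaves like $e^{z_+-z_-}=e^{q|\delta|}$; with $|\delta|$ permitted up to $\alpha t\le 2\alpha\Lambda$, the resulting integral over $t$ is polynomially large in $1/q$, not $O(\log(1/q))$. The paper avoids this mismatch by applying Green's theorem to the full one-form $H_p$ at once: the difference $qH_p(\gamma_{S,T})-qH_p(\gamma)$ becomes the double integral of $f'(x)-\sqrt q\,h'(x)-f'(y)$ over the enclosed region, and one then bounds the width $y_x-x$ of the set where this is positive by linearising $f'(y)-f'(x)\approx f''(x)(y-x)$. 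That naturally yields $|h'(x)|^2/f''(x)$ with \emph{both} factors evaluated at the same point $x=\min(x,y)$, precisely the evaluation you wanted but cannot extract from the symmetric Taylor bound.
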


\begin{proof}
By symmetry we may assume that $S=R(a,b)$ and $T=R(c,d)$ with $a \leq b$, $c \leq d$, \begin{equation}
\label{eq:def:Delta}
\gamma\subset \Delta=\left\{(x,y)\in[1,\infty)^2:x\le y\le \min(3C_2x,\Lambda)\right\}.\end{equation}
By Green's theorem, defining $\Gamma$ as the region enclosed by $q\gamma$ and $q\gamma_{S,T}$,
\begin{align}
qH_p(\gamma_{S,T})-qH_p(\gamma)={}&\iint_{\Gamma}(f'(x)-\sqrt q h'(x)-f'(y))\md x\md y\nonumber\\
\le{}&\iint_{\bar\Gamma}(f'(x)-\sqrt q h'(x)-f'(y))\md x\md y.\label{eq:Green}\end{align}
with $\bar \Gamma=\{(x,y)\in q\Delta:f'(y)\le f'(x)-\sqrt q h'(x)\}$. Recalling that $f$ is convex and writing $y_x=\max\{y:(x,y)\in\bar\Gamma\}$, we get that \cref{eq:Green} is at most
\begin{equation}-\sqrt q\int_{q}^{\Lambda q}\md x\int_{x}^{y_x}h'(x)\md y \label{eq:HgST}=-\sqrt q\int_{q}^{\Lambda q}(y_x-x)h'(x)\md x,\end{equation}
which will be bounded by splitting the integral in three separate intervals.

Observe that by \cref{eq:f:asymptotics,eq:h:asymptotics}, for any $x\in[C_1q,\log(1/q)-C_1]$, we have $-\sqrt q h' (x)\le -f'(x)/2$, so that $f'(y_x)\in[f'(x),f'(x)/2]$. But then \cref{eq:f:asymptotics} implies that $f''(y)=\Omega(f''(x))$ for any $x\in[C_1q,\log(1/q)-C_1]$ and $y\in[x,y_x]$, so that
\[y_x\le x + O(\sqrt q) \frac{|h'(x)|}{f''(x)}\le x+\frac{O(\sqrt q)}{\min(1,x)|h'(x)|},\]
where in the last inequality we used \cref{eq:f:asymptotics,eq:h:asymptotics}. Therefore,
\begin{equation}
\label{eq:Green:part1}
-\sqrt q\int_{C_1q}^{\log(1/q)-C_1}(y_x-x)h'(x)\md x\le O(q)\int_{C_1q}^{\log(1/q)-C_1}\frac{\md x}{\min(1,x)}\le O(q\log(1/q)).
\end{equation}

Turning to $x\in[q,C_1q]$, we can use \cref{eq:h:asymptotics,eq:def:Delta} to get
\begin{equation}
\label{eq:Green:part2}-\sqrt q\int_{q}^{C_1q}(y_x-x)h'(x)\md x\le -3C_2C_1q^{3/2}\int_{q}^{C_1q}h'(x)\md x\le C_2^2q.\end{equation}
Finally, for $x\in[\log(1/q)-C_1,\Lambda]$, by \cref{eq:def:Delta,eq:h:asymptotics,eq:def:Lambda}, we have
\begin{equation}
\label{eq:Green:part3}-\sqrt q\int_{\log(1/q)-C_1}^{\Lambda q}(y_x-x)h'(x)\md x\le -q^{3/2}\Lambda \int_{\log(1/q)-C_1}^{\infty} h'(x)\md x\le C_2q\log(1/q)/2.
\end{equation}
Plugging \cref{eq:Green:part1,eq:Green:part2,eq:Green:part3} into \cref{eq:HgST}, we obtain the desired
\[qH_p(\gamma_{S,T})-qH_p(\gamma)\le C_2 q\log(1/q).\qedhere\]
\end{proof}

Returning to \cref{eq:main:lower:applied}, the fact that $h$ is decreasing and \cref{lem:Hp} give
\begin{align}
\nonumber\sum_{j=1}^MW_p^{\mathrm F}\left(\gamma_{R_{\phi_{j-1}},R_{\phi_j}}\right)-h(a_jq)\left(\phi_j-\phi_{j-1}\right)&{}\ge \sum_{j=1}^MH_p\left(\gamma_{R_{\phi_{j-1}},R_{\phi_j}}\right)\\
\label{eq:Hp}&{}\ge H_p\left(\gamma_{R_{\phi_{0}},R_{\phi_M}}\right)-C_2\log\frac1q.\end{align}
We would like to relate the last quantity to \begin{align}
\label{eq:lambda}\frac{2\lambda_1^{\mathrm F}}{q}&{}=\lim_{\substack{x\to0\\y\to\infty}}W^{\mathrm F}_p((x,x),(y,y)),&
\frac{2\lambda_2^{\mathrm F}}{\sqrt q}&{}=\lim_{\substack{x\to0\\y\to\infty}} \left(W_p^{\mathrm F}-H_p\right)((x,x),(y,y))\end{align}
(recall \cref{eq:integral,eq:lambda2}), so we need to deal with the two segments in $\gamma_{R_{\phi_0},R_{\phi_M}}$ off the diagonal (recall \cref{eq:optimal:path}). By \cref{eq:f:asymptotics,eq:h:asymptotics}, if $N_1\neq 2\Lambda$, then \cref{eq:def:N1} implies that $\sh(R_{N_1}) \leq \lng(R_{N_1})/C_2 \leq \Lambda/C_2 < \log(1/q)/(C_1q)$ and
\begin{align}
\nonumber H_p((\sh(R_{N_1}),\sh(R_{N_1})),R_{N_1})&{}=(\lng(R_{N_1})-\sh(R_{N_1}))(f-\sqrt q h)(q\sh(R_{N_1}))\\
&{}\ge \frac{2}{q}\int_{q\sh(R_{N_1})}^{q\Lambda}f.
\label{eq:truncation:upper}\end{align}
Similarly, using \cref{lem:anisotropic:bound:frobose,eq:def:N0}, recall that $f(z)=-\log z+O(z)$ for $z\to 0$ to get 
\[\bbP_{p}\left(\cI^{\mathrm F}_{\mathrm{loc}}(R_{N_0})\right)\le (2p\sh(R_{N_0}))^{N_0-1}\le \exp(-N_0f(q\lng(R_{N_0})/C_1)).\]
Hence, if $N_0\neq 2$, then \cref{eq:f:asymptotics,eq:h:asymptotics} give
\begin{align}
\nonumber
&\bbP_p\left(\cI^{\mathrm F}_{\mathrm{loc}}(R_{N_0})\right)\exp(-H_p(R_{N_0},(\lng(R_{N_0}),\lng(R_{N_0}))))\\
\nonumber &{}=\bbP_p\left(\cI^{\mathrm F}_{\mathrm{loc}}\left(R_{N_0}\right)\right)\exp\left(-(\lng(R_{N_0})-\sh(R_{N_0}))f(q\lng(R_{N_0}))+\frac1{\sqrt q}\int_{q\sh(R_{N_0})}^{q\lng(R_{N_0})}h\right)\\
\nonumber&{}\le \exp\left(-2\lng(R_{N_0})(\log(q\lng(R_{N_0})) + C_0)\right)\\
\label{eq:truncation:lower}&{}\le \exp\left(-\frac2q\int_{q}^{q\lng(R_{N_0})}f\right).\end{align}
Assembling \cref{eq:truncation:lower,eq:truncation:upper} and using that $\sh(R_{\phi_M})\ge C_1/q\ge 1/(C_1q)\ge \lng(R_{\phi_0})$ by definition, we get (regardless whether $N_0=2$ and/or $N_1=2\Lambda$)
\begin{align*}
\bbP_p\left(\cI^{\mathrm F}_{\mathrm{loc}}\left(R_{\phi_0}\right)\right)\exp\left(-H_p\left(\gamma_{R_{\phi_0},R_{\phi_M}}\right)\right)
&{}\le \exp\left(-\frac{2}q\int_{q}^{q\Lambda}(f-\sqrt qh)\right)\\
&{}\le \exp\left(-\frac{2\lambda^{\mathrm F}_1}{q}+\frac{2\lambda^{\mathrm F}_2}{\sqrt q}+3\log\frac1q\right),\end{align*}
using \cref{eq:f:asymptotics,eq:integral,eq:h:asymptotics,eq:lambda2}. Further recalling \cref{eq:main:lower:applied,eq:Hp}, we deduce that for any good coarse sequence $(R_{\phi_j})_{j=0}^M$
\begin{multline}
\label{eq:good:coarse}
\bbP_p\left(\cI^{\mathrm F}_{\mathrm{loc}}(R_{\phi_0})\cap \bigcap_{j=1}^M\cC^{\mathrm F}\left(R_{\phi_{j-1}},R_{\phi_j}\right)\right)\\
\begin{aligned}[t]&{}\le \bbP_p\left(\cI^{\mathrm F}_{\mathrm{loc}}(R_{\phi_0})\right)\exp\left(-H_p\left(\gamma_{R_{\phi_0},R_{\phi_M}}\right)+C_2\log\frac1q+C_3\frac{\log^{3/2}(1/q)}{\sqrt[3]q}\right)\\
&{}\le \exp\left(-\frac{2\lambda^{\mathrm F}_1}{q}+\frac{2\lambda^{\mathrm F}_2}{\sqrt q}+2C_3\frac{\log^{3/2}(1/q)}{\sqrt[3]q}\right).\end{aligned}
\end{multline}

We finally turn back to \cref{eq:bound_Iloc}.
Together with \cref{eq:strange,eq:coarse:seq:number,eq:good:coarse}, it leads to
\begin{equation}
\bbP_p\left(\cI^{\mathrm F}_{\mathrm{loc}}(R)\right) \le e^{-C_1/q}+\exp\left(-\frac{2\lambda_1^{\mathrm F}}{q}+\frac{2\lambda^{\mathrm F}_2}{\sqrt q}+\frac{2\log^{5/3}(1/q)}{\sqrt[3]q}\right),
\end{equation}
This concludes the proof of \cref{eq:main:lower} and, therefore, of \cref{th:main}, modulo proving \cref{prop:main:lower}.

\subsection{One coarse step: proof of Proposition~\ref{prop:main:lower}}
\label{subsec:lower:step}

The proof of \cref{prop:main:lower} is also based on the decomposition into growth sequences introduced in \cref{sec:sequences}. Fix rectangles $S$ and $T$ as in the statement. We may further assume that $\sh(T)\ge C_2$, because otherwise $\bbP_p(\cC^{\mathrm{F}}(S,T))\le 1\le e^{-W^{\mathrm{F}}_p(\gamma_{S,T})+C_3\log(1/q)}$.
We begin by using \cref{cor:Markov} to write
\begin{equation}
\label{eq:lower:initial}    \bbP_p \left( \cC^F(S, T) \right) = \frac{\bbP_p ( \cF^S(\infty) = T )}{e^{-2 \phi(T) q}}
    \overset{\phi(T) \leq 2\Lambda}\leq e^{5C_1 \log(1/p)} \; \bbP_p \left( \cF^S(\infty) = T \right),
\end{equation}
so that one can focus on the last probability above. In turn, it can be decomposed as
\begin{equation}
  \label{eq:lower:sum:paths}
    \bbP_p \left( \cF^S(\infty) = T \right) = \sum_{F_1, \dots, F_J}
    \prod_{i = 1}^J \bbP_p \left( \cT(F_{i - 1}, F_i) \right),
\end{equation}
where $J = \phi(T) - \phi(S)$ and $F_1, \dots, F_J$ ranges over the possible trajectories of $\cF^S$ reaching~$T$.

The next result is analogous to \cref{lem:single:upper}, but it is not restricted to good transitions.

\begin{lemma}
\label{lem:single:lower}
Let $(F, F')$ be a transition in a trajectory from $S$ to $T$ such that $F=F(i,j;k,l;s)$ and $F'=F(i-\alpha,j-\beta;k+\gamma,l+\delta;s')$. Setting $a = \sh(T)$, we have
\begin{multline*}
\bbP_p(\cT(F,F')) \\
\leq
  \begin{cases}
  1 \times e^{-qa} \times e^{O(C_2qa^{2/3})
  } & \text{buffer creation,}\\
  e^{-(\alpha+\gamma)f(q(l-j))-(\beta+\delta)f(q(k-i))}\times 1\times 1  &\text{loop,}\\
  e^{-f(q(l-j))-f(q(k-i+1))}\times pe^{f(qa)}\times e^{O(C_2a^{-1/3})
  }&\text{$1$-buffer deletion,}\\
  e^{-(\alpha + \gamma)f(q(l-j)) - (\beta + \delta)f(q(k-i+\alpha+\gamma))}\times p^2e^{2 f(qa)}\times e^{O(C_2a^{-1/3})
  }&\text{$2$-buffer deletion,}\\
  e^{-2f(q(l-j))-2f(q(k-i+2))}\times p^3e^{3f(qa)}\times e^{O(C_2a^{-1/3})
  }&\text{$3$-buffer deletion.}
  \end{cases}
\end{multline*}
Note that $\alpha, \beta, \gamma, \delta \in \{0, 1\}$ and their sum can assume any value between $0$ and $4$, corresponding to the five cases above respectively.
\end{lemma}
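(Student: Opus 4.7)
The plan is to extend the case analysis of \cref{lem:single:upper} to all five transition types by direct computation from \cref{tab:transitions}. First I would exploit the geometric constraints imposed by the hypothesis that $(F,F')$ lies in a trajectory from $S$ to $T$: since $S \subset F \subset F' \subset T$, $\phi(T) - \phi(S) \le \lceil \phi(S)^{2/3}\rceil = O(a^{2/3})$, and $\lng(T) \le 2C_2 a$ with $a = \sh(T)$, all four relevant dimensions---namely $l-j$, $k-i$, and the shifted $k-i+\alpha+\gamma$, $l-j+\beta+\delta$---lie in the interval $[a - O(a^{2/3}),\, 2 C_2 a]$. In particular, each such dimension $d$ satisfies $d \ge a - O(a^{2/3})$ on the lower side, while $f$-values at $d$ can be compared to $f(qa)$ on the upper side by monotonicity.

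The analytic heart of the argument is a Taylor estimate for $f$. Using $f'(z) = O(1/z)$ from \cref{eq:f:asymptotics} and $|qd - qa| \le O(q a^{2/3})$ whenever $d \ge a - O(a^{2/3})$, one obtains $f(qd) \le f(qa) + O(a^{-1/3})$; on the other side, monotonicity of $f$ trivially gives $f(qd) \le f(qa)$ when $d \ge a$. With this in hand, I would verify each case by reading off the probability from \cref{tab:transitions}. Buffer creations give $\pi = e^{-qd}$ with $d \ge \sh(F) \ge a - O(a^{2/3})$, yielding $\pi \le e^{-qa} \cdot e^{O(C_2 q a^{2/3})}$. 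Loops match the claimed factor exactly after dropping the nonnegative additive $q$ or $2q$ terms. For $k$-buffer deletions ($k=1,2,3$), the claimed bound factors as a ``virtual path cost'' of $k+1$ unit steps (namely $\alpha+\gamma$ horizontal at height $l-j$, followed by $\beta+\delta$ vertical at width $k-i+\alpha+\gamma$, in the spirit of \cref{rem:single:upper}) multiplied by $p^k e^{k f(qa)}$, so the required inequality reduces after rearrangement to applying the Taylor estimate $k+1$ times, combined with the monotonicity $f(q(k-i+\alpha+\gamma)) \le f(q(k-i))$.

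The hard part will be the 3-buffer deletion $3 \to 0$, whose probability $p^3(4-3p)e^{-f(q(k-i))}$ carries the extra multiplicative factor $(4-3p) \le 4$ coming from the $-\log(4-3p)$ entry of \cref{tab:transitions}. Unlike the additive $q$ terms appearing in the loops, this absolute constant cannot be dropped by monotonicity alone; the resulting bounded contribution $\log(4-3p)$ must be absorbed into the implicit constant hidden in the $e^{O(C_2 a^{-1/3})}$ error factor (which is consistent with the asymptotic conventions of \cref{sec:preliminaries} in the relevant range of~$a$, where $C_2$ is a large parameter swallowing bounded additive constants). Once that bookkeeping is in place, four applications of the Taylor estimate together with the monotonicity $f(q(k-i+2)) \le f(q(k-i))$ close the case exactly as for smaller $k$, completing the proof.
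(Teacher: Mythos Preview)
Your approach matches the paper's, which treats a single representative case (the $2$-deletion $3\to1''$) via exactly the same Taylor bound $|f(qd)-f(qa)|=O(C_2a^{-1/3})$ and declares the remaining transitions analogous. Your geometric bounds on the dimensions and your treatment of the first four cases are correct (up to the cosmetic point that the lower bound on the dimensions should carry a $C_2^{2/3}$ factor, i.e.\ $a-O(C_2^{2/3}a^{2/3})$, coming from $\phi(S)\le 3C_2a$; this is immaterial since $C_2^{2/3}\le C_2$).

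Your resolution of the $3$-buffer deletion, however, does not work as written. You correctly isolate the extra factor $4-3p$, but the claim that $\log(4-3p)$ can be ``absorbed into the implicit constant hidden in the $e^{O(C_2a^{-1/3})}$ error factor'' is false. Under the paper's conventions $O(C_2a^{-1/3})$ is bounded by an absolute constant times $C_2a^{-1/3}$, and since $a=\sh(T)$ ranges up to $\Lambda\to\infty$ as $p\to0$, the product $C_2a^{-1/3}$ can be made arbitrarily small; no choice of implicit constant makes $e^{O(C_2a^{-1/3})}$ dominate a fixed factor of~$4$. That $C_2$ is large is irrelevant---it multiplies something that vanishes. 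The paper's one-case sketch does not treat this transition explicitly either, so the lemma as literally stated shares this imprecision. The clean fix is to allow an additional $O(1)$ in the exponent for the $3$-deletion line; this is harmless in \cref{subsec:lower:step}, since it only multiplies the $(3,0)$ entry of $\cM(P)$ by a bounded constant, perturbing the characteristic polynomial only at order~$P$ and leaving the leading eigenvalue $\sqrt{(2+\sqrt2)\,P}$, hence $\lambda_2^{\mathrm F}$, unchanged.
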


\begin{remark}
It is worth noting that:
\begin{itemize}
\item The lemma only states upper bounds, in contrast with \cref{lem:single:upper}.
\item The same observations made in \cref{rem:single:upper} are in place here, with the manipulation of the $A$ term being pertinent to all types of buffer deletions.
\item In the case of buffer deletions, $B$ includes a factor of the form $e^{K f(qa)}$, where $K$ always corresponds to the number of buffers being deleted.
\item The first three cases of \cref{lem:single:lower} were considered in \cref{lem:single:upper} as well. However, the error estimates differ because in this section we deal with more skewed aspect ratios and only provide upper bounds.
\end{itemize}
\end{remark}
\begin{proof}
For concreteness, consider the case $s=3$, $s'=1''$,  $\alpha=\beta=\gamma=1$ and $\delta=0$, which is a 2-buffer deletion. In this case, by \cref{tab:transitions}, we have
\begin{align*}\cT(F,F')&{}=p^2e^{-f(q(k-i))-2q}\le e^{-2f(q(l-j))-f(q(k-i+2))} p^2 e^{2f(qa)}\\
&{}=e^{-(\alpha+\gamma)f(q(l-j))-(\beta+\delta)f(q(k-i+\alpha+\gamma))}p^2e^{2f(qa)}e^{O(q(a-\sh(S))|f'(q\sh(S))|)}\\
&{}=e^{-(\alpha+\gamma)f(q(l-j))-(\beta+\delta)f(q(k-i+\alpha+\gamma))}p^2e^{2f(qa)}e^{O(C_2a^{1/3})},\end{align*}
where in the first inequality we used that $f$ is non-increasing and in the second one we recalled that $|f'(z)|\le O(1/z)$. The other transitions are treated analogously.
\end{proof}

We are now in position to prove \cref{prop:main:lower}.

\begin{proof}[Proof of \cref{prop:main:lower}]
With \cref{lem:single:lower}, we can return to \cref{eq:lower:sum:paths} and fix any trajectory $F_1, \dots, F_J$ from $S$ to $T$.
This trajectory naturally induces a path $\gamma$ on $\mathbb{Z}^2$ going from $(a_S, b_S)$ to $(a_T, b_T)$ (the dimensions of $S$ and $T$ respectively).
This path only makes unitary jumps to the north of east directions and it is obtained as in \cref{fig:good_trajectory} by replacing any transitions involving a buffer deletion by multiple unitary jumps.
Further let $K_0+4,K_1,K_2,K_3,K_4$ be the number of buffer creations, loops, single, double and triple buffer deletions in the trajectory $F_1,\dots,F_J$ respectively. Observe that 
\begin{align}
\label{eq:K:relations}
K_0&{}=K_2+2K_3+3K_4,&\sum_{i=0}^4iK_i&{}=\phi(T)-\phi(S)
\end{align}
(recall \cref{tab:transitions}). Recalling \cref{eq:def:Wp,lem:single:lower}, we obtain
\begin{align}
  \nonumber
  \prod_{i = 1}^J \bbP_p \left( \cT(F_{i - 1}, F_i)\right) & =
  e^{-W^F_p(\gamma)} \times p^{K_0} e^{K_0 (- q a + f(qa))} \times e^{-4 q a  + O(K_0 +1)C_2(q a^{2/3} + a^{-1/3})},\\
  \label{eq:lower:prod:path}
  &\leq e^{-W^F_p(\gamma_{S,T})} \times p^{K_0} e^{K_0 (- q a + f(qa))} \times e^{ O(K_0+1)C_2(qa^{2/3}+a^{-1/3})},
\end{align}
using \cref{eq:K:relations} and \cref{lem:optimal:path}. Moreover, the total number of transitions is $J=4+\sum_{i=0}^4K_i=4+\phi(T)-\phi(S)$. Hence, summing \cref{eq:lower:prod:path} over all trajectories gives
\begin{equation}
\label{eq:lower:sum:trajectories}
\sum_{F_1, \dots, F_J} \prod_{i=1}^{J} \bbP_p \left( \cT(F_{i - 1}, F_i )\right)
  \le e^{-W_p^{\mathrm F}(\gamma_{S,T})}p^{-3}(\cI+\cM(P'))^{J-1}(0,3),\end{equation}
where $\cI$ is the identity matrix and
\begin{align*}
\cM(x)&{}=\begin{pmatrix}
0 & x & 0 & 0 & 0 & 0\\
1 & 0 & x & 0 & 0 & 0\\
1 & 1 & 0 & x & 0 & 0\\
1 & 2 & 1 & 0 & 1 & 1\\
1 & 0 & 0 & x & 0 & 1\\
1 & 0 & 0 & 0 & x & 0
\end{pmatrix}&P'&{}=pe^{-aq+f(aq)+O(C_2)(qa^{2/3}+a^{-1/3})},
\end{align*}
recall \cref{fig:augmented:cycle}.

Note that the matrix $\mathcal{M}(x)$ does not include the $1''$ state, like in \cref{eq:def:M:F:upper}.
However, since we are now looking for an upper bound on the probability, we had to compensate for this removal by doubling the entry corresponding to the $3 \to 1$ transition.
In order to see why this is enough, note first that the weights going out of $1$ and $1''$ are the same.
Therefore, since we are evaluating $(\mathcal{I} + \mathcal{M})^{J - 1}$ at $(0, 3)$, we can join together the transitions $3 \to 1 \to s$ with those of type $3 \to 1'' \to s$.

Clearly, setting $P=pe^{-aq+f(aq)}$, we have $\cM(P')\le\cM(P)e^{O(C_2)(qa^{2/3}+a^{-1/3})}$ entry-wise. Note that since $a=\sh(T)\ge C_2$, we have $P\le pe^{f(aq)}\le C^{-2}_1$.
We next seek to bound the spectral radius $\rho(\cM(P))$, viewing it as a perturbation of the same matrix without the double and triple buffer deletions, which amounts to the matrix we already considered in \cref{eq:def:M:F:upper}.

The characteristic polynomial of $\cM(P)/\sqrt P$ reads
\[(X-1)(X+1)\left(X^4-4X^2+2-4X\sqrt P - P\right).\]
Setting $(\lambda_i)_{i=1}^6=(\sqrt{2+\sqrt 2},1,\sqrt{2-\sqrt2},-\sqrt{2-\sqrt 2},-1,-\sqrt{2+\sqrt 2})$, we have
\[X^4-4X^2+2=(X-\lambda_1)(X-\lambda_3)(X-\lambda_4)(X-\lambda_6).\]
Yet, for some constant $C>0$ and any $y\in\{\lambda_i+\xi C\sqrt{P}:i\in\{1,\dots,6\},\xi\in\{-1,1\}\}$, we have
\[\left|4y\sqrt P+P\right|<\left|y^4-4y^2+2\right|.\]
Hence, the sign of the characteristic polynomial at $\lambda_i\pm C\sqrt P$ remains unchanged by the perturbation. Since $\sqrt P<1/C_1$, the intermediate value theorem allows us to conclude that $\cM(P)/\sqrt P$ has eigenvalues $(\lambda'_i)_{i=1}^6$ with $|\lambda_{i}-\lambda'_i|\le C\sqrt P$ for $i\in\{1,\dots,6\}$.\footnote{Note that if the non-perturbed matrix had complex eigenvalues, we would have used Rouch\'e's theorem instead.} In particular,
\[\rho(\cM(P))\le \sqrt{2+\sqrt2}\sqrt Pe^{O(\sqrt P)}.\]

However, in \cref{eq:lower:sum:trajectories}, we need a particular coefficient of a power of $\cI+\cM(P)$, so we also need some control on the norm of $\cM(P)$ and not only its spectral radius (because the matrix is not symmetric). The following linear algebra result can be deduced from the surprisingly recent bound of Jiang and Lam \cite{Jiang97}. For the reader's convenience we produce a simpler proof.
\begin{lemma}
\label{lem:lin:alg}
Fix a dimension $d\ge 1$ and a submultiplicative norm $\vvvert\cdot\vvvert$ on the space $\bbM_{d}(\bbC)$ of $d\times d$ complex matrices.\footnote{For instance, the operator norm, but recall that in finite dimension all norms are equivalent.} Let $(\lambda_i)_{i=1}^d$ be the eigenvalues of $\cM\in\bbM_d(\bbC)$. Assume that $\varepsilon=\min_{i\neq j}|\lambda_i-\lambda_j|>0$. Then for every $n\ge 0$ we have 
\[\vvvert\cM^n\vvvert\le d\left(\frac{(1+\vvvert\cI\vvvert)\vvvert\cM\vvvert}{\varepsilon}\right)^{d-1}\rho(\cM)^n.\]
\end{lemma}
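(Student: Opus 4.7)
The plan is to exploit the fact that, since the eigenvalues $(\lambda_i)_{i=1}^d$ are pairwise distinct, $\cM$ is diagonalisable and admits an explicit spectral resolution via Lagrange (Sylvester) interpolation. Introduce the spectral projectors
\[
L_i=\prod_{j\ne i}\frac{\cM-\lambda_j\cI}{\lambda_i-\lambda_j},\qquad i=1,\dots,d.
\]
A routine computation shows that $L_iL_j=\delta_{ij}L_i$ and $\sum_i L_i=\cI$, so that $\cM=\sum_i\lambda_iL_i$, and, more importantly for us,
\[
\cM^n=\sum_{i=1}^d\lambda_i^n\,L_i.
\]

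Once this closed form is available, the rest is a direct application of the triangle inequality together with submultiplicativity of $\vvvert\cdot\vvvert$. Namely,
\[
\vvvert\cM^n\vvvert\le\sum_{i=1}^d|\lambda_i|^n\prod_{j\ne i}\frac{\vvvert\cM-\lambda_j\cI\vvvert}{|\lambda_i-\lambda_j|}.
\]
For each factor in the numerator, I would bound $\vvvert\cM-\lambda_j\cI\vvvert\le\vvvert\cM\vvvert+|\lambda_j|\vvvert\cI\vvvert\le(1+\vvvert\cI\vvvert)\vvvert\cM\vvvert$, where the inequality $|\lambda_j|\le\rho(\cM)\le\vvvert\cM\vvvert$ follows from Gelfand's formula $\rho(\cM)=\lim_{k\to\infty}\vvvert\cM^k\vvvert^{1/k}$ combined with submultiplicativity. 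The denominator is at least $\varepsilon$ by hypothesis. Finally, bounding $|\lambda_i|^n\le\rho(\cM)^n$ and summing the $d$ terms yields exactly the stated inequality.

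The only conceptual step is the recognition that Lagrange interpolation on the spectrum provides a usable explicit formula for $\cM^n$; after that, the argument is purely mechanical, so I would not anticipate any genuine obstacle. A minor point worth spelling out is the verification that $\rho(\cM)\le\vvvert\cM\vvvert$ for an arbitrary submultiplicative norm, which, as noted, is immediate from Gelfand's formula. No part of the argument uses any structure on $\vvvert\cdot\vvvert$ beyond submultiplicativity, matching the generality required in the statement.
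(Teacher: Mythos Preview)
Your proof is correct and is essentially the same as the paper's: both use the Lagrange interpolation projectors $L_i=\prod_{j\ne i}(\cM-\lambda_j\cI)/(\lambda_i-\lambda_j)$, write $\cM^n=\sum_i\lambda_i^nL_i$, and bound each $\vvvert L_i\vvvert$ via submultiplicativity together with $|\lambda_j|\le\rho(\cM)\le\vvvert\cM\vvvert$. The paper presents it in polynomial form and takes $d\max_i$ rather than summing, but the argument is identical.
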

\begin{proof}
This proof is due to Quentin Moulard. 

Consider the Lagrange interpolation polynomials
\[L_i(X)=\frac{\prod_{j\neq i}(X-\lambda_j)}{\prod_{j\neq i}(\lambda_j-\lambda_i)}.\]
Note that $\cM^n=\sum_{i=1}^d\lambda_i^n L_i(\cM)$, so 
\[\vvvert\cM^n\vvvert\le d\rho(\cM)^n\max_{i}\vvvert L_i(\cM)\vvvert.\]
Yet, by submultiplicativity, for any $i\in\{1,\dots,d\}$, we get
\[
    \vvvert L_i(\cM)\vvvert\le \frac{\prod_{j\neq i}\vvvert\cM-\lambda_j\cI\vvvert}{\prod_{j\neq i}|\lambda_j-\lambda_i|}\le \left(\frac{\vvvert\cM\vvvert + \rho(\cM)\vvvert\cI\vvvert}{\varepsilon}\right)^{d-1},
\]
which concludes the proof since $\rho(\cM)=\lim_{m\to\infty}\vvvert \cM^m\vvvert^{1/m}\le \vvvert\cM\vvvert$.
\end{proof}
We are now ready to conclude the proof of \cref{prop:main:lower}. Assembling \cref{lem:lin:alg,eq:lower:sum:trajectories,eq:lower:sum:paths,eq:lower:initial}, we obtain
\begin{multline*}
\bbP_p\left(\cC^{\mathrm F}(S,T)\right)\\\le e^{-W_p^{\mathrm F}(\gamma_{S,T})}p^{-6C_1}\left(1+\sqrt{2+\sqrt2}\sqrt P\exp\left(O\left(\sqrt{P}+C_2\left(qa^{2/3}+a^{-1/3}\right)\right)\right)\right)^{\phi(T)-\phi(S)}.\end{multline*}
Recalling that $h(aq)=\sqrt{2+\sqrt2}\sqrt{P/p}$ (see \cref{eq:def:h,eq:def:f}) and \cref{eq:def:q}, we get
\begin{align*}
\bbP_p\left(\cC^{\mathrm F}(S,T)\right)\le{}&e^{-W_p^{\mathrm F}(\gamma_{S,T})}p^{-6C_1}e^{(\phi(T)-\phi(S))\sqrt ph(aq)(1+O(\sqrt P+C_2(qa^{2/3}+a^{-1/3})))}\\
\le{}& e^{-W_p^{\mathrm F}(\gamma_{S,T})+(\phi(T)-\phi(S))\sqrt qh(aq)}p^{-6C_1}e^{C_2^3a^{2/3}\sqrt P (\sqrt P+qa^{2/3}+a^{-1/3})}\\
\le&{}e^{-W_p^{\mathrm F}(\gamma_{S,T})+(\phi(T)-\phi(S))\sqrt qh(aq)}p^{-6C_1}e^{C_2^4},\end{align*}
where in the last inequality we observed that $\sqrt P=pe^{-aq+f(aq)}= O(1/a)$ by \cref{eq:def:q,eq:f:asymptotics} and $a^{-1/6}+qa^{5/6}\le 1$, since $1\le a\le\Lambda$. This concludes the proof of \cref{prop:main:lower}.
\end{proof}

\section{The bootstrap percolation paradox}
\label{sec:paradox}
Having completed the proof of \cref{th:main}, we may return to the bootstrap percolation paradox. We focus on the Frob\"ose model, but one can proceed analogously for the two-neighbour and modified two-neighbour models (see \cref{fig:modified}). 

Recalling \cref{cor:reduction}, estimating $\tau$ is equivalent to computing 
\[\widetilde\Pi(p)=\left(\bbP_p\left(\cI_{\mathrm{loc}}^{\mathrm{F}}(R(\Lambda,\Lambda))\right)\right)^{-1/2}.\]
Also recalling \cref{eq:Ri:decomposition,obs:stacking,cor:Markov}, we obtain that
\[\widetilde\Pi(p)=p^{O(C_1)}\max_{x\in R(\Lambda,\Lambda)}\left(\bbP_p\left(\exists t\ge 0:\cF^{\{x\}}(t)=R(\Lambda,\Lambda)\right)\right)^{-1/2}.\]
Proceeding in a very similar way, it is possible to show the more convenient result (recall that $\phi(R)$ denotes the semi-perimeter of the rectangle $R$)
\begin{equation}
\label{eq:def:Pip}
\Pi(p):=\left(\bbP_p\left(\exists t\ge 0:\phi\left(\cF^{\{0\}}(t)\right)=\left\lceil\frac{2\log(1/p)}{p}\right\rceil\right)\right)^{-1/2}=\widetilde\Pi(p)p^{O(C_1)}.\end{equation}

In order to compute $\Pi(p)$ more efficiently, we project the Markov chain $\cF^{\{0\}}$ to a process on $\bbN^2\times\{0,1,1',1'',2,2',3,4\}$ via the mapping $F(a,b;c,d;s)\mapsto (c-a,d-b,s)$. We denote this process $(\cR(t))_{t\ge 0}$ with $\cR(0)=(1,1,0)$. Using translation invariance, it is not hard to check that this process is also a Markov chain, whose transition probabilities can be obtained by summing the appropriate transition probabilities of the original chain listed in \cref{tab:transitions} (the corresponding expressions can be found in the supplementary material Rust code\footnote{\label{footnote:supplementary}Implementations in C, Rust and Python-Taichi can be found in \url{https://github.com/augustoteixeira/bootstrap} or as part of the arxiv submission source.}). Since $\phi(\cF^{\{0\}}(t))$ increases by at most 4 at each step and the chain $\cR$ has no cycles except the absorbing states, we can compute the probability of reaching each possible state of $\cR$ recursively via dynamic programming. This way, for any fixed $p$, we can compute $\Pi(p)$ exactly. 

\begin{table}
    \centering
    \begin{tabular}{c|c||c|c||c|c||c|c}
    \scriptsize{$\log_2\frac1p$}&\scriptsize{$\log\Pi(p)$}&\scriptsize{$\log_2\frac1p$}&\scriptsize{$\log\Pi(p)$}&\scriptsize{$\log_2\frac1p$}&\scriptsize{$\log\Pi(p)$}&\scriptsize{$\log_2\frac1p$}&\scriptsize{$\log\Pi(p)$}\\\hline
    \scriptsize{2}&\tiny{1.8231469544522168}&\scriptsize{6}&\tiny{71.22104704459092}&\scriptsize{10}&\tiny{1519.9177238798902}&\scriptsize{14}&\tiny{26243.443273103207}\\
    \scriptsize{3}&\tiny{4.742671577932995}&\scriptsize{7}&\tiny{159.10494055779233}&\scriptsize{11}&\tiny{3130.360634994818}&\scriptsize{15}&\tiny{52891.342141028406}\\
    \scriptsize{4}&\tiny{12.392931032600497}&\scriptsize{8}&\tiny{344.5259389380065}&\scriptsize{12}&\tiny{6393.748024566681}&\scriptsize{16}&\tiny{106363.14234086743}\\
    \scriptsize{5}&\tiny{30.54732365348029}&\scriptsize{9}&\tiny{729.489374480061}&\scriptsize{13}&\tiny{12981.361913134877}&\scriptsize{17}&\tiny{213556.78508818566}
    \end{tabular}
    \caption{The numerical results for $\Pi(p)$ of \cref{eq:def:Pip} for (local) Frob\"ose bootstrap percolation.}
    \label{tab:frobose}
\end{table}

The main practical issue with the above algorithm is that the probabilities we compute are of order $\exp(-\lambda_1^{\mathrm F}/p)$, which quickly becomes too small for conventional data types (IEEE 754).
In order to substantially increase the precision, we store the logarithms of each probability $a = \log(r)$, using the expression $\log(r + s) = \log(e^a + e^b) = \max\{a, b\} + \log(1 + e^{-|a - b|})$ in order to obtain the log of the sum or two probabilities without losing precision.
Other optimisations have also been implemented, such as restricting calculations to only regions where the probabilities are non-zero.
But most importantly, we have made use of a Graphical Processing Unit (GPU) in order to take advantage of the parallelism built into our algorithm, which was responsible for large speed benefits.

Our implementation of the algorithm can be found in the supplementary material,$^{\ref{footnote:supplementary}}$ but we emphasise that it still has a lot of room for further optimisations (hand-written CUDA code, optimal scheduling, extra and more powerful GPU's...).
We ran this code on a Intel Xeon E3-1240 v3, with a single NVIDIA GeForce RTX 4060 8GB during 24 hours. The resulting data is plotted in \cref{fig:frobose} and given in \cref{tab:frobose}. In \cref{fig:frobose:2} we use it to successively estimate $\alpha$, $\lambda_1^{\mathrm F}$, $\beta<\alpha$ and $\lambda_2^{\mathrm F}$, given the exact values of the previous ones, so that
\begin{equation}
\label{eq:estimation}
\log \Pi(p)\approx \frac{\lambda_1^{\mathrm F}}{p^\alpha}-\frac{\lambda_2^{\mathrm{F}}}{p^\beta}.\end{equation}
Since there the numerical results are exact up to computation precision and we are interested in asymptotics, we use a simple linear regression based on the last three data points. This somewhat arbitrary choice was made before the data was generated, in order to avoid overfitting. The results are in excellent agreement with the rigorous result \cref{eq:main}. Moreover, directly fitting all four parameters in \cref{eq:estimation} based on the last four data points in \cref{tab:frobose} yields 
\begin{align}
\label{eq:4fit}
\alpha&{}\approx0.99978
,&\lambda_1^{\mathrm{F}}&{}\approx1.6507
,&\beta&{}\approx0.53239
,&
\lambda_2^{\mathrm{F}}&{}\approx 4.2518
,\end{align}
which is also in good agreement with \cref{th:main}, see code for fitting and plotting in the tagged repository. Note that the data points in \cref{fig:frobose:2,fig:frobose} are simply reparametrisations of the ones given in \cref{tab:frobose}).

\begin{figure}
    \centering
\subcaptionbox{Simultaneously estimating $\alpha\approx1.007$ and $\lambda_1^{\mathrm F}\approx 1.505$.}[.48\textwidth]{
\centering    
\begin{tikzpicture}[x=0.55cm,y=0.35cm]
\draw[->] (1,0) -- (12.5,0) node[above left]{$\log\frac1p$};
\foreach \x in {1,2,3,4,5,6,7,8,9,10,11,12}
\draw[shift={(\x,0)}] (0pt,2pt) -- (0pt,-2pt) node[below] {\footnotesize $\x$};
\draw[->] (1,0) -- (1,13) node[right]{$\log\log\Pi(p)$};
\foreach \y in {2,4,6,8,10,12}
\draw[shift={(1,\y)}] (2pt,0pt) -- (-2pt,0pt) node[left] {\footnotesize $\y$};
\fill[color=red] (1.3862943611198906, 0.60056410377630554) circle (2pt);
\fill[color=red] (2.0794415416798357, 1.5566006009671121) circle (2pt);
\fill[color=red] (2.7725887222397811, 2.5171262320386472) circle (2pt);
\fill[color=red] (3.4657359027997265, 3.4192770763292959) circle (2pt);
\fill[color=red] (4.1588830833596715, 4.2657883792943716) circle (2pt);
\fill[color=red] (4.8520302639196169, 5.0695639880218302) circle (2pt);
\fill[color=red] (5.5451774444795623, 5.8421693820691747) circle (2pt);
\fill[color=red] (6.2383246250395077, 6.5923448023736384) circle (2pt);
\fill[color=red] (6.9314718055994531, 7.3264114833489051) circle (2pt);
\fill[color=red] (7.6246189861593985, 8.0489034957452024) circle (2pt);
\fill[color=red] (8.317766166719343, 8.7630759207367106) circle (2pt);
\fill[color=red] (9.0109133472792884, 9.4712699087242758) circle (2pt);
\fill[color=red] (9.7040605278392338, 10.175171456747272) circle (2pt);
\fill[color=blue] (10.397207708399179, 10.87599493982103) circle (2pt);
\fill[color=blue] (11.090354888959125, 11.574614389306269) circle (2pt);
\fill[color=blue] (11.78350206951907, 12.271658048596455) circle (2pt);
\draw[color=blue] (10.397207708399179, 10.876257571520204) node[left,color=black]{$1.00676\log\frac{1}{p}+\log1.50499$}--(11.78350206951907, 12.271920680295629);
\end{tikzpicture}}
\quad
\subcaptionbox{Estimating $\lambda_1^{\mathrm F}\approx 1.635$, given $\alpha=1$.}[.48\textwidth]{
\centering    
\begin{tikzpicture}[x=0.47cm,y=0.2cm]
\draw[->] (0,0) -- (13.5,0) node[above left]{$\frac{1}{10^{4}p}$};
\foreach \x in {0,1,2,3,4,5,6,7,8,9,10,11,12,13}
\draw[shift={(\x,0)}] (0pt,2pt) -- (0pt,-2pt) node[below] {\footnotesize $\x$};
\draw[->] (0,0) -- (0,22) node[right]{$10^{-4}\log\Pi(p)$};
\foreach \y in {0,3,6,9,12,15,18,21}
\draw[shift={(0,\y)}] (2pt,0pt) -- (-2pt,0pt) node[left] {\footnotesize $\y$};
\fill[color=red] (0.0004, 0.00018231469544522166) circle (2pt);
\fill[color=red] (0.0008, 0.00047426715779329954) circle (2pt);
\fill[color=red] (0.0016, 0.0012392931032600496) circle (2pt);
\fill[color=red] (0.0032, 0.0030547323653480293) circle (2pt);
\fill[color=red] (0.064, 0.0071221047044590915) circle (2pt);
\fill[color=red] (0.0128, 0.015910494055779233) circle (2pt);
\fill[color=red] (0.0256, 0.03445259389380065) circle (2pt);
\fill[color=red] (0.0512, 0.0729489374480061) circle (2pt);
\fill[color=red] (0.1024, 0.15199177238798903) circle (2pt);
\fill[color=red] (0.2048, 0.3130360634994818) circle (2pt);
\fill[color=red] (0.4096, 0.6393748024566681) circle (2pt);
\fill[color=red] (0.8192, 1.2981361913134877) circle (2pt);
\fill[color=red] (1.6384, 2.624344327310321) circle (2pt);
\fill[color=blue] (3.2768, 5.289134214102841) circle (2pt);
\fill[color=blue] (6.5536, 10.636314234086743) circle (2pt);
\fill[color=blue] (13.1072, 21.355678508818567) circle (2pt);
\draw[color=blue] (3.2768, 5.2855621805651207) --(13.1072, 21.35389249204971) node[midway,left,color=black]{$\frac{1.634555}{10^4p}+C$};
\end{tikzpicture}}\\
\subcaptionbox{Simultaneously estimating $\beta\approx0.5100$ and $\lambda_2^{\mathrm F}\approx 5.027$, given $\alpha=1$ and $\lambda_1^{\mathrm{F}}=\pi^2/6$.}[.48\textwidth]{
\centering    
\begin{tikzpicture}[x=0.55cm,y=0.7cm]
\draw[->] (1,1.5) -- (12.5,1.5) node[above left]{$\log\frac1p$};
\foreach \x in {1,2,3,4,5,6,7,8,9,10,11,12}
\draw[shift={(\x,1.5)}] (0pt,2pt) -- (0pt,-2pt) node[below] {\footnotesize $\x$};
\draw[->] (1,1.5) -- (1,8) node[right]{$\log\left(\frac{\pi^2}{6p}-\log\Pi(p)\right)$};
\foreach \y in {1.5,2.5,3.5,4.5,5.5,6.5,7.5}
\draw[shift={(1,\y)}] (2pt,0pt) -- (-2pt,0pt) node[left] {\footnotesize $\y$};
\fill[color=red] (1.3862943611198906, 1.5595308805175614) circle (2pt);
\fill[color=red] (2.0794415416798357, 2.1302298221956164) circle (2pt);
\fill[color=red] (2.7725887222397811, 2.6337586044884755) circle (2pt);
\fill[color=red] (3.4657359027997265, 3.0951506615424593) circle (2pt);
\fill[color=red] (4.1588830833596715, 3.5279690312664931) circle (2pt);
\fill[color=red] (4.8520302639196169, 3.9405447652489878) circle (2pt);
\fill[color=red] (5.5451774444795623, 4.3382991495395649) circle (2pt);
\fill[color=red] (6.2383246250395077, 4.72487907928942) circle (2pt);
\fill[color=red] (6.9314718055994531, 5.1028787190762612) circle (2pt);
\fill[color=red] (7.6246189861593985, 5.4742197557928414) circle (2pt);
\fill[color=red] (8.317766166719343, 5.8403564807948118) circle (2pt);
\fill[color=red] (9.0109133472792884, 6.2024099272892776) circle (2pt);
\fill[color=red] (9.7040605278392338, 6.5612519683354353) circle (2pt);
\fill[color=blue] (10.397207708399179, 6.9175643735790402) circle (2pt);
\fill[color=blue] (11.090354888959125, 7.2718820538025799) circle (2pt);
\fill[color=blue] (11.78350206951907, 7.6246252955853304) circle (2pt);
\draw[color=blue] (10.397207708399179, 6.9178267799858366) node[left,color=black]{$0.510037\log\frac{1}{p}+\log5.027234$}--(11.78350206951907, 7.6248877019921277);
\end{tikzpicture}}
\quad
\subcaptionbox{Estimating $\lambda_2^{\mathrm F}\approx 5.735$, given $\alpha=1$, $\lambda_1^{\mathrm F}=\pi^2/6$ and $\beta=1/2$.}[.48\textwidth]{
\centering    
\begin{tikzpicture}[x=1.6cm,y=0.2cm]
\draw[->] (0,0) -- (4,0) node[above left]{$\frac{1}{10^2\sqrt p}$};
\foreach \x in {0,0.5,1,1.5,2,2.5,3,3.5}
\draw[shift={(\x,0)}] (0pt,2pt) -- (0pt,-2pt) node[below] {\footnotesize $\x$};
\draw[->] (0,0) -- (0,22) node[right]{$\frac{1}{10^{2}}\left(\frac{\pi^2}{6}-\log\Pi(p)\right)$};
\foreach \y in {0,3,6,9,12,15,18,21}
\draw[shift={(0,\y)}] (2pt,0pt) -- (-2pt,0pt) node[left] {\footnotesize $\y$};
\fill[color=red] (0.02, 0.047565893129406886) circle (2pt);
\fill[color=red] (0.028284271247461901, 0.084168009568528143) circle (2pt);
\fill[color=red] (0.04, 0.13926014036971121) circle (2pt);
\fill[color=red] (0.056568542494923803, 0.22090566485662955) circle (2pt);
\fill[color=red] (0.08, 0.34054733233695567) circle (2pt);
\fill[color=red] (0.11313708498984761, 0.51446619998780641) circle (2pt);
\fill[color=red] (0.16, 0.76577182175139469) circle (2pt);
\fill[color=red] (0.22627416997969521, 1.1271686774623095) circle (2pt);
\fill[color=red] (0.32, 1.6449476057269359) circle (2pt);
\fill[color=red] (0.45254833995939042, 2.3846433391034947) circle (2pt);
\fill[color=red] (0.64, 3.4390191324365378) circle (2pt);
\fill[color=red] (0.90509667991878084, 4.9393796248579358) circle (2pt);
\fill[color=red] (1.28, 7.0715647813813405) circle (2pt);
\fill[color=blue] (1.8101933598375617, 10.098573614542767) circle (2pt);
\fill[color=blue] (2.56, 14.392566640979348) circle (2pt);
\fill[color=blue] (3.6203867196751234, 20.480129217450731) circle (2pt);
\draw[color=blue] (1.8101933598375617, 10.096180492385326) --(3.6203867196751234, 20.478437024544995) node[midway,left,color=black]{$\frac{5.735441}{10^2\sqrt p}+C$};
\end{tikzpicture}}
\caption{Estimating the asymptotic expansion \cref{eq:estimation}, using the numerical estimates of $\Pi(p)$ from \cref{eq:def:Pip} for Frob\"ose bootstrap percolation. In \cref{th:main} we showed that in fact $\alpha=1$, $\lambda_1^{\mathrm{F}}=\pi^2/6\approx1.6449$, $\beta=1/2$, $\lambda_2^{\mathrm F}=\pi\sqrt{2+\sqrt{2}}\approx5.8049$.}
\label{fig:frobose:2}
\end{figure}

\section{Outlook}
\label{sec:outlook}
\subsection{Frob\"ose and two-neighbour bootstrap percolation}
Naturally, one may want to improve the bounds in \cref{th:main,th:2n}. In view of \cref{th:locality}, this problem only concerns the corresponding local models. Such sharper results can be obtained along lines of our proof, but, as the precision increases, additional technical issues arise.

The main problem is the fact that we artificially cut trajectories into coarse steps, imposing their endpoints to be squares in \cref{subsec:upper:reduction}. Similarly, in \cref{subsec:lower:reduction}, we take a union bound on coarse sequences. In order to go beyond the error term $q^{-1/3}$, one would need to cut trajectories into larger pieces (or, preferably, not at all). Yet, this entails problems, when applying \cref{lem:W:to:g}. Therefore, one would like to restrict attention to paths $\gamma$ not deviating too much form the diagonal. Taking this into account and despite \cref{fig:frobose:3}, we do \emph{not} believe that the third order exponent equals $1/5$, but heuristic computations rather suggest it to be $1/4$ (recall that in \cref{th:main} we showed that it is at most $1/3$, which is certainly compatible with \cref{fig:frobose:3}).

For the two-neighbour model additional problems arise due to parity, somewhat like in Case 4.3 of the proof of \cref{prop:reduction}. More precisely, the error term on the right-hand side of \cref{lem:traversability:exact} eventually becomes large. This translates the fact that, at small scales, the amount of consecutive loop transitions we typically want to use is insufficient for the two-term recurrence \cref{eq:g:recurrrence} to stabilise. Consequently, the factor $\alpha(aq)$ in \cref{lem:loops} becomes essentially equal to 2 for even $\ell$ and $0$ for odd $\ell$. This needs to be taken into account when counting trajectories.

\subsection{Modified two-neighbour bootstrap percolation}
\label{subsec:modified}
A much more challenging model to tackle is modified two-neighbour bootstrap percolation. Much of the treatment carried out here can also be done for its local version. This leads to the function 
\begin{equation}
\label{eq:h:modified}
h_{2'}(z)=\sqrt{(2+\sqrt2)e^{-z}e^{2f(z)}}=\frac{\sqrt{2+\sqrt2}}{2\sinh(z/2)},
\end{equation}
which is not integrable at 0. The logarithmic divergence of the primitive of $h_{2'}$ led to the discovery of \cite{Hartarsky23mod-2n}. Thus, in order to obtain the second term, one needs to integrate $h_{2'}$ down to scale $1/\sqrt p$, at which point it stops being valid. Indeed, below that scale one would use essentially no loop transitions.

However, a more serious problem arises for the modified model. Namely, locality in the strong form of \cref{th:locality} does \emph{not} hold. To see this, observe that at small scales, configurations with minimal number of infections dominate. But a minimal configuration of infections internally filling a square is an object known under the name \emph{separable permutation} (this was observed in some form already in \cite{Shapiro91}; also see \cite{Dukes23} for a recent account of extremal problems in bootstrap percolation). The latter are counted by Schr\"oder numbers, which grow exponentially faster than the number of local internally filling configurations. Indeed, typical separable permutations admit a scaling limit called the \emph{Browninan separable permuton} \cite{Bassino18}, which is far from corresponding to a local decomposition. Moreover, non-local decompositions arise naturally on scale $1/\sqrt p$ from successive single buffer deletion, buffer creation, single buffer deletion, which can be replaced by finding a $1\times 1$ internally filled square at the corner of the current rectangle. Thus, we expect that the second term of the asymptotic expansion of $\log\tau$ will coincide with its its local analogue, but will be followed by a subsequent correction only $\log(1/p)$ away reflecting non-locality.

In view of the above discussion, we ask the following.
\begin{question}
\label{conj:mod:2n}
For modified two-neighbour bootstrap percolation on $\bbZ^2$ and any $\varepsilon>0$, do we have
\begin{multline*}
\lim_{p\to 0}\bbP_p\Bigg(\exp\left(\frac{\pi^2}{6p}-\frac{(\sqrt{2+\sqrt 2}+\varepsilon)\log(1/p)}{2\sqrt p}\right)\le \tau^{\mathrm{mod}}\\
\le \exp\left(\frac{\pi^2}{6p}-\frac{(\sqrt{2+\sqrt 2}-\varepsilon)\log(1/p)}{2\sqrt p}\right)\Bigg)=1?\end{multline*}
\end{question}
We supply a numerical estimate along the lines discussed in \cref{sec:paradox} in \cref{fig:modified} supporting an affirmative answer to \cref{conj:mod:2n} in the case of the local modified model. Note that $\sqrt{2+\sqrt 2}/2\approx 0.924$ is indeed only $4\%$ away from the estimated value for the second order constant.

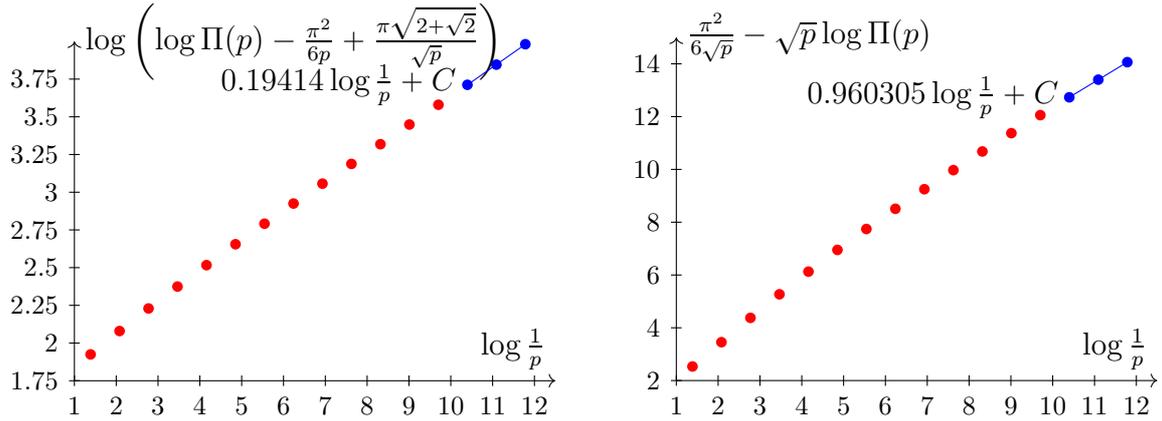
\begin{figure}
    \centering
\subcaptionbox{Estimating the third order exponent $0.194$ for Frob\"ose bootstrap percolation. Examining the discrete derivatives suggests that it is not yet close to its limiting value.\label{fig:frobose:3}}[.48\textwidth]{
\centering
\begin{tikzpicture}[x=0.55cm,y=2cm]
\draw[->] (1,1.75) -- (12.5,1.75) node[above left]{$\log\frac1p$};
\foreach \x in {1,2,3,4,5,6,7,8,9,10,11,12}
\draw[shift={(\x,1.75)}] (0pt,2pt) -- (0pt,-2pt) node[below] {\footnotesize $\x$};
\draw[->] (1,1.75) -- (1,4) node[right]{$\log\left(\log\Pi(p)-\frac{\pi^2}{6p}+\frac{\pi\sqrt{2+\sqrt 2}}{\sqrt p}\right)$};
\foreach \y in {1.75,2,2.25,2.5,2.75,3,3.25,3.5,3.75}
\draw[shift={(1,\y)}] (2pt,0pt) -- (-2pt,0pt) node[left] {\footnotesize $\y$};
\fill[color=red] (1.3862943611198906, 1.9247190957029456) circle (2pt);
\fill[color=red] (2.0794415416798357, 2.0796856982186616) circle (2pt);
\fill[color=red] (2.7725887222397811, 2.229327194210327) circle (2pt);
\fill[color=red] (3.4657359027997265, 2.3746212871028725) circle (2pt);
\fill[color=red] (4.1588830833596715, 2.516447079584414) circle (2pt);
\fill[color=red] (4.8520302639196169, 2.6552398120696648) circle (2pt);
\fill[color=red] (5.5451774444795623, 2.7912460059711899) circle (2pt);
\fill[color=red] (6.2383246250395077, 2.9249432096017456) circle (2pt);
\fill[color=red] (6.9314718055994531, 3.0569327843018503) circle (2pt);
\fill[color=red] (7.6246189861593985, 3.187828289786574) circle (2pt);
\fill[color=red] (8.317766166719343, 3.3182537286249989) circle (2pt);
\fill[color=red] (9.0109133472792884, 3.4487861844365435) circle (2pt);
\fill[color=red] (9.7040605278392338, 3.5799439114767466) circle (2pt);
\fill[color=blue] (10.397207708399179, 3.7121789795943307) circle (2pt);
\fill[color=blue] (11.090354888959125, 3.8458693096102881) circle (2pt);
\fill[color=blue] (11.78350206951907, 3.9813185853961119) circle (2pt);
\draw[color=blue] (10.397207708399179, 3.7118858219660193) node[left,color=black]{$0.19414\log\frac{1}{p}+C
$}--(11.78350206951907, 3.9810254277678006);
\end{tikzpicture}}\quad
\subcaptionbox{Estimating the second order constant $\lambda'_2\approx0.960$ for modified two-neighbour bootstrap percolation on $\bbZ^2$. In \cref{conj:mod:2n} we ask whether $\lambda'_2=\sqrt{2+\sqrt2}/2\approx0.924$.\label{fig:modified}}[.48\textwidth]{
\centering
    \begin{tikzpicture}[x=0.55cm,y=0.35cm]
\draw[->] (1,2) -- (12.5,2) node[above left]{$\log\frac1p$};
\foreach \x in {1,2,3,4,5,6,7,8,9,10,11,12}
\draw[shift={(\x,2)}] (0pt,2pt) -- (0pt,-2pt) node[below] {\footnotesize $\x$};
\draw[->] (1,2) -- (1,15) node[right]{$\frac{\pi^2}{6\sqrt p}-\sqrt p\log\Pi(p)$};
\foreach \y in {2,4,6,8,10,12,14}
\draw[shift={(1,\y)}] (2pt,0pt) -- (-2pt,0pt) node[left] {\footnotesize $\y$};
\fill[color=red] (1.3862943611198906, 2.5336173824903687) circle (2pt);
\fill[color=red] (2.0794415416798357, 3.4542750157635616) circle (2pt);
\fill[color=red] (2.7725887222397811, 4.375715428902341) circle (2pt);
\fill[color=red] (3.4657359027997265, 5.269321110664854) circle (2pt);
\fill[color=red] (4.1588830833596715, 6.127509797976364) circle (2pt);
\fill[color=red] (4.8520302639196169, 6.950806312754377) circle (2pt);
\fill[color=red] (5.5451774444795623, 7.742799297258774) circle (2pt);
\fill[color=red] (6.2383246250395077, 8.507654470088749) circle (2pt);
\fill[color=red] (6.9314718055994531, 9.24951321142202) circle (2pt);
\fill[color=red] (7.6246189861593985, 9.972215649531723) circle (2pt);
\fill[color=red] (8.317766166719343, 10.679135394361651) circle (2pt);
\fill[color=red] (9.0109133472792884, 11.373169080311992) circle (2pt);
\fill[color=red] (9.7040605278392338, 12.056750506103697) circle (2pt);
\fill[color=blue] (10.397207708399179, 12.731894171509936) circle (2pt);
\fill[color=blue] (11.090354888959125, 13.400250258565961) circle (2pt);
\fill[color=blue] (11.78350206951907, 14.063159033781114) circle (2pt);
\draw[color=blue] (10.397207708399179, 12.732802056816745) node[left,color=black]{$0.960305\log\frac1p+C
$}--(11.78350206951907, 14.064066919087924);
\end{tikzpicture}}
\caption{Estimating the asymptotics, using the numerical estimates of $\Pi(p)$ from \cref{eq:def:Pip}.}
\end{figure}

\subsection{Generalisations}
Recall that \cite{Duminil-Copin23} provides sharp thresholds analogous to \cref{eq:Holroyd} for a range of well-behaved models. It is then natural to investigate when locality holds for them. Roughly speaking, we expect this to be the case, whenever ``corner growth is not more advantageous than side growth''. If, on the contrary, ``corner growth is more advantageous than side growth'', then we expect the phenomenology of modified two-neighbour model to arise and the model to deviate from its local counterpart at some polynomial scale (smaller than the critical one).

It is also natural to consider higher dimensional models, starting with two-neighbour bootstrap percolation in 3 dimensions (or its variants), but eventually also the $r$-neighbour model (see \cite{Balogh12} for the analogue of \cref{eq:Holroyd}). It is also unclear whether locality can be useful for studying bootstrap percolation beyond $\mathbb Z^d$ (also see e.g.\ \cite{Kolesnik22}).

Independently of establishing locality, our work provides motivation for studying local models in their own right. One could define a general local model as a polygon with prescribed side directions and finite number of possible frame states which grows in each direction with a certain size-dependent probability in a Markovian way (recall \cref{subsec:Markov}). Can one quantify their probability of never being absorbed?

\subsection{Bootstrap percolation paradox}
Finally, we end with a note of optimism regarding numerical work. While simulations have previously proved highly unreliable for studying bootstrap percolation, locality has allowed us to obtain very accurate results at par with rigorous results. Can this new viewpoint enable numerical works to go beyond theoretical ones and provide reliable predictions for the behaviour of bootstrap percolation models? One interesting specific direction of research along this line is the following. In \cite{Duminil-Copin23}, the sharp threshold constant $\lambda_1$ is determined implicitly as the solution of an infinite-dimensional optimisation problem with a cost functional which is itself implicitly defined. It is therefore not clear how one could determine this constant in practice. The numerical approach discussed in \cref{sec:paradox} may allow to numerically estimate these constants.

\section*{Acknowledgements}
I.H. was supported by the Austrian Science Fund (FWF): P35428-N and ERC Starting Grant 680275 ``MALIG.''
A.T. was supported by grants ``Projeto Universal'' (406250/2016-2) and ``Produtividade em Pesquisa'' (304437/2018-2) from CNPq and
``Jovem Cientista do Nosso Estado'', (202.716/2018) from FAPERJ. We thank Quentin Moulard and Jana Reker for enlightening discussions and Instituto Superior T\'ecnico of the University of Lisbon and IMPA, where part of the work was done, for their hospitality.

\appendix
\section{Local two-neighbour bootstrap percolation}
\label{appendix}
In this appendix we discuss the additional technicalities needed for the proof of \cref{th:2n} as compared to \cref{th:main}. We will not dwell on trivial changes such as replacing $f$ by $g$, $W^{\mathrm F}$ by $W$, etc. In view of \cref{cor:reduction}, \cref{th:2n} is proved as soon as we establish 
\begin{equation}
\label{eq:2n:main}\exp\left(-\frac{2\lambda_1}{q}+\frac{2\lambda_2}{\sqrt q}-\frac{\log^{2}(1/q)}{\sqrt[3]q}\right)\le \bbP_p(\cI_{\mathrm{loc}}(R))\le \exp\left(-\frac{2\lambda_1}{q}+\frac{2\lambda_2}{\sqrt q}+\frac{\log^{2}(1/q)}{\sqrt[3]q}\right),\end{equation}
where $R=R(\Lambda,\Lambda)$ (recall \cref{eq:def:q,eq:integral,eq:def:Lambda}) and $\lambda_2=\int_0^\infty h_2$ with $h_2$ defined in \cref{eq:def:h2} below.

\subsection{Preliminaries}

\subsubsection{Refined traversability}
We first need the auxiliary function $\alpha$ for the following refinement of \cref{lem:traversability}. Recall $f,\beta,g,q$ from \cref{eq:def:q,eq:def:beta,eq:def:f,eq:def:g} and set
\begin{align}
\label{eq:def:alpha}
\alpha:(0,\infty)\to(1,2):z\mapsto\frac{2\beta(e^{-f(z)})}{\sqrt{e^{-f(z)}(4-3e^{-f(z)})}},\\
\label{eq:def:beta:bar}\bar\beta:(0,1)\to[-1/3,0):u\mapsto\frac{u-\sqrt{u(4-3u)}}{2}.\end{align}
\begin{lemma}[Refined traversability probability]
\label{lem:traversability:exact}
Let $R=R(a,b)$ be a rectangle. Set $u=1-e^{-bq}=e^{-f(bq)}$. Then
\[\left|\frac{\bbP_p(\cT_\rightarrow (R))}{\alpha(bq)\exp(-ag(bq))}-1\right|\le \left(\frac{|\bar\beta(u)|}{\beta(u)}\right)^{a+1}.\]
\end{lemma}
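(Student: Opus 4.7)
The plan is to reduce $\cT_\rightarrow(R)$ to a one-dimensional event by exploiting column independence, and then solve the resulting linear recurrence exactly via its characteristic polynomial---whose roots turn out to be precisely $\beta(u)$ and $\bar\beta(u)$.

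First, observe that $\cT_\rightarrow(R)$ depends on $A$ only through the column-occupation indicators $I_i=\1_{A\cap R(i,0;i+1,b)\neq\varnothing}$ for $i=0,\dots,a-1$. By independence of columns, the $I_i$ are i.i.d.\ Bernoulli with parameter $u=1-e^{-bq}$, and the definition of $\cT_\rightarrow$ unpacks to $\{I_{a-1}=1\}\cap\bigcap_{i=0}^{a-2}\{I_i+I_{i+1}\ge 1\}$. Writing $r_a=\bbP_p(\cT_\rightarrow(R))$ and introducing the auxiliary quantity $s_a=\bbP_p(\{I_{a-1}=0\}\cap\bigcap_{i=0}^{a-2}\{I_i+I_{i+1}\ge 1\})$, conditioning on the value of the newly added column yields the coupled one-step relations $r_{a+1}=u(r_a+s_a)$ and $s_{a+1}=(1-u)r_a$. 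Eliminating $s_a$ gives the two-term linear recurrence
\[r_{a+1}=u\,r_a+u(1-u)\,r_{a-1},\qquad r_0=1,\ r_1=u.\]

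The characteristic polynomial $x^2-ux-u(1-u)=0$ has roots precisely $\beta(u)$ and $\bar\beta(u)$, since $\beta+\bar\beta=u$ and $\beta\bar\beta=-u(1-u)$ by \cref{eq:def:beta,eq:def:beta:bar}. Solving using the initial conditions yields the closed form
\[r_a=\frac{\beta^{a+1}-\bar\beta^{a+1}}{\beta-\bar\beta}=\frac{\beta^{a+1}}{\beta-\bar\beta}\left(1-\left(\frac{\bar\beta}{\beta}\right)^{a+1}\right).\]
Recognising $\exp(-ag(bq))=\beta^a$ from \cref{eq:def:g} and $\sqrt{u(4-3u)}=\beta-\bar\beta$, the prefactor $\beta^{a+1}/(\beta-\bar\beta)$ is identified with $\alpha(bq)\exp(-ag(bq))$ (up to the overall normalisation built into \cref{eq:def:alpha}). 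Since $\bar\beta(u)<0<\beta(u)$ for $u\in(0,1)$, the deviation of the bracketed factor from $1$ has absolute value exactly $(|\bar\beta(u)|/\beta(u))^{a+1}$, giving the claimed bound.

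The approach is essentially algebraic and involves no new ideas beyond careful bookkeeping: the substantive content is the observation that the characteristic roots of the natural two-term recurrence for $r_a$ \emph{coincide} with the quantities $\beta,\bar\beta$ entering the definitions of $g$ and $\alpha$, which immediately delivers an exact closed form for $\bbP_p(\cT_\rightarrow(R))$ and thus the sharp error bound. The only point that requires care is verifying that the initial conditions $r_0=1$, $r_1=u$ combine with the normalisation of \cref{eq:def:alpha} to produce precisely the stated prefactor $\alpha(bq)\exp(-ag(bq))$; this is the main place a stray constant could sneak in.
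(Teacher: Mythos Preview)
Your proof is correct and follows essentially the same route as the paper: both set up the two-term recurrence $x_{n+2}=ux_{n+1}+u(1-u)x_n$ with initial values $x_0=1$, $x_1=u$, solve it via the characteristic roots $\beta(u),\bar\beta(u)$ to obtain the closed form $x_a=(\beta^{a+1}-\bar\beta^{a+1})/(\beta-\bar\beta)$, and then identify the leading factor $\beta^{a+1}/(\beta-\bar\beta)$ with $\alpha(bq)e^{-ag(bq)}$. Your derivation of the recurrence via the auxiliary quantity $s_a$ is slightly more explicit than the paper's, which simply states the recurrence; otherwise the arguments coincide.
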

\begin{proof}
Let $x_n=\bbP_p(\cT_\rightarrow(R(n,b)))$ for any non-negative integer $n$. We first claim that 
\begin{equation}
\label{eq:xn}x_n=\frac{(\beta(u))^{n+1}-(\bar\beta(u))^{n+1}}{\beta(u)-\bar\beta(u)}.\end{equation}
This is verified directly for $n\in\{0,1\}$. Moreover, 
\begin{equation}
\label{eq:g:recurrrence}x_{n+2}=x_{n+1}u+x_{n}(1-u)u.\end{equation}
Solving this recurrence relation, we obtain \cref{eq:xn}. Finally, we use that
\[\alpha(bq)e^{-ag(bq)}=\frac{(\beta(u))^{a+1}}{\beta(u)-\bar\beta(u)}.\qedhere\]
\end{proof}

\subsubsection{The entropy function}
With \cref{eq:def:alpha} at hand, we are ready to define $h$. Recalling 
\cref{eq:def:f,eq:def:g,eq:def:alpha}, define the following functions from $(0,\infty)$ to itself
\begin{align}
\nonumber\cM_{2,1}(z)&{}= pe^{-f(z)+3g(z)},&\cM_{3,1}(z)&{}= pe^{-f(z)+4g(z)},\\
\nonumber\cM_{2,2}(z)&{}=2pe^{-f(z)+4g(z)-z},&\cM_{3,2}(z)&{}= pe^{-2f(z)+5g(z)-z},\\
\cM_{0,0}(z)&{}= e^{-2z},
\label{eq:def:M}
\end{align}
\begin{equation}
h_2(z)=\alpha(z)\sqrt{\left(2+\sqrt2\right)\cM_{0,0}(z)\frac{\cM_{2,1}(z)+\cM_{3,1}(z)+\cM_{2,2}(z)+\cM_{3,2}(z)}{p}}.
\label{eq:def:h2}
\end{equation}

We will need the following properties of the above functions, whose elementary proofs are left to the reader (or to a computation software). The functions $\alpha$ and $h$ are analytic,
\begin{align}
\nonumber\alpha(z)&{}=\begin{cases}
1+\frac{\sqrt z}{2}+O(z)&z\to0,\\
2-2e^{-z}+O(e^{-2z})&z\to\infty,
\end{cases}\\
\label{eq:h2:asymptotics}
h_2(z)&{}=\begin{cases}
    \sqrt{\frac{3(2+\sqrt 2)}{z}}\left(1-\frac{\sqrt z}{6}+O(z)\right)&z\to0,\\
    \frac{2\sqrt{2(2+\sqrt 2)}}{e^z}(1+O(e^{-z}))&z\to\infty.
\end{cases}
\end{align}
In particular, \cref{eq:h:asymptotics} implies that $h_2$ is integrable, so $\lambda_2=\int_0^\infty h_2\in(0,\infty)$ and one can numerically compute that $\lambda_2\approx7.054547$.

\subsubsection{Frames}
We first need to change \cref{e:buffers} into
\begin{equation}
  \label{e:buffers:2n}
  \begin{split}
    & B^r(R) = R(c,b;c+2,d), \qquad B^u(R) = R(a, d; c, d + 2)\\
    & B^l(R) = R(a - 2, b; a, d), \qquad B^d(R) = R(a, b - 2; c, b).
  \end{split}
\end{equation}
Then \cref{def:frame} is replaced by the following. In words, each buffer now has thickness 2 and whenever two adjacent ones are present, we also add the corner site between them.
\begin{definition}
\label{def:frame:2n}
A \emph{framed rectangle} is a rectangle $R(a,b;c,d)$ equipped with a label $s\in\{0,1,1',1'',2,2',2'',3,4\}$. We denote it by $F(a,b;c,d;s)$. We refer to $(c-a,d-b)$ as its \emph{dimensions} and to $s$ as its \emph{frame state}. For a framed rectangle $F=F(a,b;c,d;s)$ we set $F_\circ=R(a,b;c,d)$ and define the \emph{frame} of $F$ by
\[F_\square=\begin{cases}
\varnothing&s=0,\\
B^r(F_\circ)&s=1,\\
B^r(F_\circ)\cup \{(c,d)\}\cup B^u(F_\circ)&s=2,\\
B^r(F_\circ)\cup \{(c,d)\}\cup B^u(F_\circ)\cup\{(a-1,d)\}\cup B^l(F_\circ)&s=3,\\
\phantom{B^r(F_\circ)\cup \{(c,d)\}\cup{}}B^u(F_\circ)\cup\{(a-1,d)\}\cup B^l(F_\circ)&s=2',\\
\phantom{B^r(F_\circ)\cup \{(c,d)\}\cup B^u(F_\circ)\cup\{(a-1,d)\}\cup {}}B^l(F_\circ)&s=1',\\
\phantom{B^r(F_\circ)\cup \{(c,d)\}\cup{}}B^u(F_\circ)&s=1'',\\
B^r(F_\circ)\cup\phantom{\{(c,d)\}\cup B^u(F_\circ)\cup\{(a-1,d)\}\cup {}} B^l(F_\circ)&s=2'',\\
\begin{aligned}
&{}B^r(F_\circ)\cup \{(c,d)\}\cup B^u(F_\circ)\cup\{(a-1,d)\}\cup B^l(F_\circ)\\
&{}\cup\{(a-1,b-1)\}\cup B^d(F_\circ)\cup\{(c,b-1)\}\end{aligned}&x=4,
\end{cases}\]
see \cite{Hartarsky19}*{Fig.\ 1}. We denote $F_\blacksquare=F_\circ\cup F_\square$.
\end{definition}
Note that an additional frame state $2''$ appeared with respect to \cref{def:frame}.

\subsubsection{The framed rectangle Markov chain}
It is possible to construct a Markov chain along the lines of \cref{subsec:Markov}. Unfortunately, doing so requires an extensive amount of casework, which we have not been able to condense into a reasonable amount of space. We therefore restrict our attention to proving the lower bound of \cref{eq:2n:main}. We nevertheless mention that for the upper bound of \cref{eq:2n:main} the changes only affect \cref{subsec:lower:step} and go along the same lines as the ones for the lower bound. 

For the purposes of the lower bound of \cref{eq:2n:main}, it is sufficient to construct a sub-Markovian process by only considering the relevant transitions, which are nonetheless rather numerous. We thus replace \cref{tab:transitions} by \cref{tab:transitions:2n}.

In order to keep transition events $\cT(F,F')$ disjoint for distinct choices of $F'$ we need a finer choice of events as compared to \cref{eq:def:transitions}. Since they are completely analogous, we only provide the definitions (and associated verifications in the sequel) for transitions from framed rectangles with frame state $1$ (for other frame states the only difference, except symmetry, is that we also need to enforce a bounded number of additional sites in $F'_\square\setminus F_\square$ to be healthy). For $F=F(0,0;a,b;1)$ and $F'=F(0,0;a+\gamma,b+\delta;s')$ we define
\begin{equation}
\cT(F,F')=\begin{cases}
    \cO^c(R(0,b;a,b+2))\cap\cO^c(\{(a,b)\})&(s',\gamma,\delta)=(2,0,0)\\
    \cO^c(R(a,b;a+2,b+1))\cap\cO(R(0,b;a,b+1))&(s',\gamma,\delta)=(1,0,1)\\
    \begin{aligned}&\cO^c(R(a,b;a+2,b+2))\cap\cO^c(R(0,b;a,b+1))\\&{}\cap\cO(R(0,b+1;a,b+2))\end{aligned}&(s',\gamma,\delta)=(1,0,2)\\
    \cO(R(0,b;a+1,b+1))\cap \cO(\{(a+1,b)\})&(s',\gamma,\delta)=(0,2,1)\\
    \begin{aligned}&\cO(\{(a,b)\})\cap\cO^c(\{(a+1,b)\})\\&{}\cap \cO(R(a+1,0;a+2,b+1))\end{aligned}&(s',\gamma,\delta)=(0,3,1)\\    \begin{aligned}&\cO^c(R(0,b;a+1,b+1))\cap\cO(R(0,b+1;a,b+2))\\
    &{}\cap \cO(R(a+1,b;a+2,b+2))\end{aligned}&(s',\gamma,\delta)=(0,2,2)\\
    \begin{aligned}&\cO^c(R(0,b;a+1,b+1))\cap\cO(R(0,b+1;a,b+2))\\
    &{}\cap\cO^c(R(a+1,b;a+2,b+2))\cap \cO(\{(a,b+1)\})\\
    &{}\cap\cO(R(a+1,0;a+2,b+2))\end{aligned}&(s',\gamma,\delta)=(0,3,2)
\end{cases}
\label{eq:def:transitions:2n}
\end{equation}
We note that for $F$ with frame state $1$ the only transitions we are omitting (as they are not required for the upper bound) correspond to $(s',\gamma,\delta)\in\{(1,1,1),(1,1,2)\}$, but for frame states $2,2'$ there are tens of them and for frame state $3$ almost a hundred. 

As in \cref{obs:transitions}, we have the following.
\begin{observation}
\label{obs:transitions:2n}
\Cref{tab:transitions:2n} defines a sub-stochastic matrix $\cT$. For any framed rectangle $F$, the events $\cT(F,F')$ (see \cref{eq:def:transitions:2n} corresponding to transitions present in $\cT$ are disjoint for different choices of $F'$. For each transition $\cT(F,F')$ we have $F_\circ\subset F'_\circ$ and $F_\blacksquare\subsetneq F'_\blacksquare$, unless $F$ has buffer state 4. If $F=F(a,b;c,d;s)$ and $F'=F(a',b';c',d';s)$, then
\[\cT(F,F')\subset\left\{A:(A\setminus F_\blacksquare)\in\left(\cC^{\mathrm F}(R(a,b;c,d),R(a',b';c',d'))\cap \cO^c(F'_\square)\right)\right\}.\]
The transition events $\cT(F,F')$ define a sub-Markovian process $(\cF(t))_{t\in\bbN}$ with transition matrix $\cT$, whose state $\cF(t)$ is measurable with respect to $A\cap \cF(t)_\blacksquare$.\footnote{For the reader's convenience, we recall that such a sub-Markovian process may be viewed simply as a Markov chain with an additional absorbing cemetery state.}

Given a rectangle $S$, we denote by $(\cF^S(t))_{t\in\bbN}$ this process with initial state $S$ with frame state 0. If $\cI_{\mathrm{loc}}(S)$ occurs, then for all $t\ge 0$, $\cI_{\mathrm{loc}}(\cF^S_\circ(t))\cap \cO^c(\cF^S_\square(t))$ occurs. We denote $\cF^S(\infty)=\bigcup_{t\ge 0}\cF_\circ^S(t)$. Then for any $x\in\bbZ^2$, either $\cF^{\{x\}}(\infty)=[A]^x$, in case $[A]^x$ is finite, or both $[A]^x$ and $\cF^{\{x\}}(\infty)$ are infinite (although not necessarily equal).
\end{observation}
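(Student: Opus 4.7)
The proof parallels that of \cref{obs:transitions} for the Frob\"ose model: the only genuine novelties are the extra frame state $2''$, the thicker buffers of width~$2$ defined in \cref{e:buffers:2n}, and the sub-stochasticity of $\cT$, which simply reflects that \cref{tab:transitions:2n} lists only the transitions needed for the lower bound of \cref{eq:2n:main}, with the remaining probability discarded.

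First, I would verify the entries of \cref{tab:transitions:2n} and the disjointness assertion by direct computation. For a row with source $F = F(0,0;a,b;s)$ and target $F'$, the event $\cT(F,F')$ defined in \cref{eq:def:transitions:2n} (or its analogue for the remaining source states) is an intersection of occupancy and vacancy conditions on sites of $F'_\blacksquare \setminus F_\blacksquare$, a set disjoint from $F_\blacksquare$; independence of the Bernoulli variables under $\bbP_p$ yields a product of $p$'s and $e^{-q}=1-p$'s reproducing the tabulated values. Disjointness is built into the definitions, as each target event prescribes a complete and distinct occupation pattern on $F'_\blacksquare \setminus F_\blacksquare$. The containments $F_\circ \subset F'_\circ$ and $F_\blacksquare \subsetneq F'_\blacksquare$ follow from the non-negativity of $(\alpha,\beta,\gamma,\delta)$ together with \cref{def:frame:2n}, while the set inclusion $\cT(F,F') \subset \{A : A \setminus F_\blacksquare \in \cC^{\mathrm F}(F_\circ, F'_\circ) \cap \cO^c(F'_\square)\}$ follows from \cref{obs:stacking}: the infections revealed in $F'_\circ \setminus F_\circ$ combined with $F_\circ$ two-neighbour-percolate to $F'_\circ$ via the rectangles process of \cref{subsec:rectangles:process}, while $\cO^c(F'_\square)$ is enforced explicitly.

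The sub-Markov structure follows by induction on $t$ since $\cF(t+1)$ depends on $A$ only via $\cF(t+1)_\blacksquare \setminus \cF(t)_\blacksquare$, which is independent of $A \cap \cF(t)_\blacksquare$ under $\bbP_p$; and the preservation of $\cI_{\mathrm{loc}}(\cF^S_\circ(t)) \cap \cO^c(\cF^S_\square(t))$ starting from $\cI_{\mathrm{loc}}(S)$ is then immediate by induction from the set inclusion above together with \cref{obs:stacking}. The substantive step is the comparison of $\cF^{\{x\}}(\infty)$ with $[A]^x$: the inclusion $\cF^{\{x\}}(\infty) \subset [A]^x$ is straightforward from the preceding properties and \cref{eq:def:loc2n}, while for the converse in the finite case, the buffers of \cref{e:buffers:2n} being of width exactly $2$ ensure that when the exploration halts at a framed rectangle $F$ with $F_\square$ entirely vacant, no pair of infected sites outside $F_\circ$ can two-neighbour-infect anything in $F_\circ$ (or vice versa); hence $F_\circ$ is closed under the local two-neighbour dynamics from the germ, forcing $F_\circ = [A]^x$. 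The infinite case follows contrapositively by the same closure argument.

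The main obstacle is purely the combinatorial bookkeeping: for the full analogue of \cref{eq:def:transitions:2n} across all nine source frame states, one must verify exhaustiveness, namely that every configuration $A$ consistent with the exploration being in state $F$ and with $[A]^x$ finite falls into exactly one of the listed disjoint events $\cT(F,F')$ (rather than into the discarded sub-stochastic complement). Each of the $221$ rows is individually a trivial verification, but the complete enumeration is sizeable, which is why the authors display only the seven representative transitions of \cref{eq:def:transitions:2n} and an excerpt of \cref{tab:transitions:2n}, leaving the full casework implicit.
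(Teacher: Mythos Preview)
Your proposal is correct and matches the paper's treatment, which offers no proof beyond the remark ``As in \cref{obs:transitions}, we have the following.'' You also correctly flag that the final assertion $\cF^{\{x\}}(\infty)=[A]^x$ tacitly requires exhaustiveness of the transition list (hence the full 221-row table rather than the sub-stochastic excerpt of \cref{tab:transitions:2n}), though this last claim is not actually invoked for the lower bound, as \cref{cor:Markov:2n} needs only the forward inclusion.
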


Similarly, \cref{cor:Markov} becomes the following.
\begin{corollary}
\label{cor:Markov:2n}
Given nested rectangles $S\subset R=R(a,b)$, we have
\[\bbP_p\left(\cC(S,R)\right)\ge \frac{\bbP_p(\cF^S(\infty)=R)}{e^{-4(a+b+1)q}}.\]
\end{corollary}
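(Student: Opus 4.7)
The plan is to mimic the proof of \cref{cor:Markov} for the Frob\"ose model, with the caveat that the equality there becomes an inequality here because \cref{obs:transitions:2n} only gives a sub-Markovian process (we deliberately omitted numerous transitions that are not required for the lower bound of \cref{eq:2n:main}). Concretely, associate frame state $4$ to $R=R(a,b)$, so that $R$ becomes a framed rectangle $F$ with $F_\circ=R$, and set $\cE=\cO^c(F_\square)$.

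First, I would compute $\bbP_p(\cE)$ by counting the sites in $F_\square$. Using \cref{eq:buffers:2n} (with thickness $2$) and the definition of $F_\square$ for state $4$ in \cref{def:frame:2n}, the right and left buffers each contribute $2b$ sites, the top and bottom each contribute $2a$ sites, and the four added corner sites contribute a further $4$, for a total of $4(a+b)+4=4(a+b+1)$ sites. Since these sites are pairwise disjoint and $\cE$ is exactly the event that they are all uninfected, independence of the initial configuration yields $\bbP_p(\cE)=(1-p)^{4(a+b+1)}=e^{-4(a+b+1)q}$ by \cref{eq:def:q}.

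Next, I would establish the inclusion $\{\cF^S(\infty)=R\}\subset\cC(S,R)\cap\cE$. If $\cF^S(\infty)=R$, there is some finite $t^*$ for which $\cF^S(t^*)=F$; by \cref{obs:transitions:2n}, this implies $\cO^c(F_\square)=\cE$. Moreover, the final statement of \cref{obs:transitions:2n} asserts that each single transition $\cT(F_{i-1},F_i)$ along the trajectory is contained (up to the already accounted-for frame sites) in the crossing event $\cC(F_{i-1,\circ},F_{i,\circ})$, so stacking these crossings via \cref{obs:stacking} from $\cF^S_\circ(0)=S$ to $\cF^S_\circ(t^*)=R$ gives that $\cC(S,R)$ occurs.

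Finally, $\cC(S,R)$ is measurable with respect to $A\cap R=A\cap F_\circ$ (see \cref{eq:def:C}) while $\cE$ is measurable with respect to $A\cap F_\square$, and these two sets are disjoint by \cref{def:frame:2n}. Hence these events are $\bbP_p$-independent, and combining the inclusion above with independence yields
\[\bbP_p(\cF^S(\infty)=R)\le\bbP_p(\cC(S,R)\cap\cE)=\bbP_p(\cC(S,R))\,e^{-4(a+b+1)q},\]
which rearranges to the claim. There is no genuine obstacle here; the only point requiring care is the bookkeeping of frame sites for state $4$ (easily verified from \cref{def:frame:2n,eq:buffers:2n}) and the observation that the inequality, rather than equality, stems purely from discarding transitions in \cref{tab:transitions:2n} and hence operates in the direction we need for proving the lower bound in \cref{eq:2n:main}.
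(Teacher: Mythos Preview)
Your proposal is correct and follows exactly the approach the paper intends: it is the direct adaptation of the proof of \cref{cor:Markov} to the two-neighbour setting, with the frame-site count updated to $4(a+b+1)$ via \cref{def:frame:2n} and the equality weakened to an inclusion because the chain in \cref{obs:transitions:2n} is only sub-Markovian. One cosmetic remark: the inclusion $\cT(F,F')\subset\{A:(A\setminus F_\blacksquare)\in\cC(F_\circ,F'_\circ)\cap\cO^c(F'_\square)\}$ you invoke is not literally the final statement of \cref{obs:transitions:2n} but the one in its middle, and you should note explicitly that monotonicity of $\cC$ lets you pass from $A\setminus F_\blacksquare$ to $A$; otherwise the argument is complete.
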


\begin{table}
    \centering
    \small
\begin{tabular}{c c c c c c | l}
$s$ & $s'$ & $\alpha$ & $\beta$ & $\gamma$ & $\delta$ & $-\log \bbP_p(\cT(F(0,0;a,b;s),F(-\alpha,-\beta;a+\gamma,b+\delta;s')))$\\\hline\hline
0&1&0&0&0&0&$2qb$\\
1&2&0&0&0&0&$q(2a+1)$\\
2&3&0&0&0&0&$q(2b+1)$\\
3&4&0&0&0&0&$q(2a+2)$\\
2'&3&0&0&0&0&$q(2b+1)$\\
1'&2'&0&0&0&0&$q(2a+1)$\\\hline
0&0&0&0&1&0&$f(qb)$\\
0&0&0&0&2&0&$f(qb)+qb$\\
1&1&0&0&0&1&$f(qa)+2q$\\
1&1&0&0&0&2&$f(qa)+4q+qa$\\
2&2&1&0&0&0&$f(qb)+2q$\\
2&2&2&0&0&0&$f(qb)+4q+qb$\\
3&3&0&1&0&0&$f(qa)+4q$\\
3&3&0&2&0&0&$f(qa)+8q+qa$\\
2'&2'&0&0&1&0&$f(qb)+2q$\\
2'&2'&0&0&2&0&$f(qb)+4q+qb$\\
1'&1'&0&0&0&1&$f(qa)+4q$\\
1'&1'&0&0&0&2&$f(qa)+8q+qa$\\
4&4&0&0&0&0&0\\\hline
1&0&0&0&2&1&$\log(1/p)+f(q(a+1))$\\
1&0&0&0&3&1&$\log(1/p)+f(q(b+1))+q$\\
1&0&0&0&2&2&$f(2q)+f(qa)+q(a+1)$\\
1&0&0&0&3&2&$\log(1/p)+f(qa)+f(q(b+2))+q(a+3)$\\
1'&0&2&0&0&1&$\log(1/p)+f(q(a+1))$\\
1'&0&3&0&0&1&$\log(1/p)+f(q(b+1))+q$\\
1'&0&2&0&0&2&$f(2q)+f(qa)+q(a+1)$\\
1'&0&3&0&0&2&$\log(1/p)+f(qa)+f(q(b+2))+q(a+3)$\\
2&1&1&0&0&2&$\log(1/p)+f(q(b+1))+3q$\\
2&1&1&0&0&3&$\log(1/p)+f(q(a+1))+6q$\\
2&1&2&0&0&2&$f(2q)+f(qb)+q(b+4)$\\
2&1&2&0&0&3&$\log(1/p)+f(qb)+f(q(a+2))+q(b+8)$\\
2'&1'&0&0&1&2&$\log(1/p)+f(q(b+1))+3q$\\
2'&1'&0&0&1&3&$\log(1/p)+f(q(a+1))+6q$\\
2'&1'&0&0&2&2&$f(2q)+f(qb)+q(b+4)$\\
2'&1'&0&0&2&3&$\log(1/p)+f(qb)+f(q(a+2))+q(b+8)$\\
3&2'&0&1&2&0&$\log(1/p)+f(q(a+1))+5q$\\
3&2'&0&1&3&0&$\log(1/p)+f(q(b+1))+8q$\\
3&2'&0&2&2&0&$f(2q)+f(qa)+q(a+8)$\\
3&2'&0&2&3&0&$\log(1/p)+f(qa)+f(q(b+2))+q(a+12)$\\
3&2&2&1&0&0&$\log(1/p)+f(q(a+1))+5q$\\
3&2&3&1&0&0&$\log(1/p)+f(q(b+1))+8q$\\
3&2&2&2&0&0&$f(2q)+f(qa)+q(a+8)$\\
3&2&3&2&0&0&$\log(1/p)+f(qa)+f(q(b+2))+q(a+12)$
\end{tabular}
\caption{Transition probabilities for the framed rectangle (sub-)Markov chain for local two-neighbour bootstrap percolation. The table only features the transitions which actually contribute to the final result---\cref{th:2n}.}
\label{tab:transitions:2n}
\end{table}

\subsection{Lower bound of (\ref{eq:2n:main})}
We are now ready to proceed to proving the lower bound of \cref{eq:2n:main}, which gives the upper bound of \cref{th:2n}. \cref{subsec:upper:reduction} requires essentially no change. The only difference is that we start our sequence with $a^{(0)}=q^{-2/3}$, while $a^{(M)}=\Lambda$ remains unchanged (once again, we assume for simplicity that these are both integers and that an integer $M$ is compatible with the recurrence relation $a^{(i+1)}-a^{(i)}=\lceil (a^{(i)})^{2/3}\rceil$). This makes the error term in \cref{eq:upper:3} of order $\sqrt[6]q$ instead, but does not impact the final result. Furthermore, \cref{eq:R0:upper} is replaced by 
\[\bbP_p\left(\cI_{\mathrm{loc}}\left(R^{(0)}\right)\right)\ge p^3\exp\left(-W_p\left(\gamma_{R^{(0)}}\right)\right),\]
as provided by \cref{prop:Holroyd:lower}. It therefore remains to prove the following analogue of \cref{prop:upper:main} by applying the method of \cref{subsec:upper:step}.
\begin{proposition}[Coarse step lower bound]
\label{prop:upper:main:2n}
Let $a\ge a^{(0)}$ and $b=a+\lceil a^{2/3}\rceil\le \Lambda$. Then
\[\frac{\bbP_p(\cI_{\mathrm{loc}}(R(b,b)))}{\bbP_p(\cI_{\mathrm{loc}}(R(a,a)))}\ge \exp\left(-\frac2q\int_{aq}^{bq}g+2h_2(aq)(b-a)\sqrt q-C_1^2\log(1/q)\right).\]
\end{proposition}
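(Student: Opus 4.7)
My plan is to adapt the proof of \cref{prop:upper:main} from \cref{subsec:upper:step} to the two-neighbour setting, using the framed rectangle sub-Markov chain of \cref{obs:transitions:2n} whose transitions appear in \cref{tab:transitions:2n}. As in \cref{eq:IFloc:ratio}, \cref{cor:Markov:2n} combined with \cref{obs:stacking} reduces the target ratio to
\[q^{O(C_1)} \sum_{(x,y) \in R(b-a+1,b-a+1)} \bbP_p\bigl(\cF^{R(a,a)}(\infty) = R(-x,-y;b-x,b-y)\bigr),\]
and I restrict attention to \emph{good trajectories}, i.e.\ those using only the transitions listed in \cref{tab:transitions:2n} and ending at a framed rectangle of frame state $4$: buffer creations, loops of extension $1$ or $2$, and single buffer deletions with total dimension changes $(k,j)\in\{(2,1),(3,1),(2,2),(3,2)\}$.

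Each good trajectory naturally induces a coordinate-wise non-decreasing path $\gamma$ on $\bbZ^2$ from $(a,a)$ to $(b,b)$ by replacing each buffer deletion by its underlying unit jumps as in \cref{fig:good_trajectory}. The main new ingredient compared to \cref{lem:single:upper} is the treatment of \emph{loop runs}: a maximal sequence of single or double extension loops in one direction, with total extension $n$. Summing the per-step probabilities from \cref{tab:transitions:2n} over all ways of decomposing such a run into single and double extensions reproduces exactly the two-term recurrence \cref{eq:g:recurrrence} (up to a uniform $e^{-2q}$ factor per unit extension coming from the buffer, absorbable into our $O(C_1\log(1/q))$ error budget); hence by \cref{lem:traversability:exact} the total contribution of the run is at least $\alpha(qa)\exp(-n g(qa)+O(nq))(1-e^{-\Omega(n)})$. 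Thus each loop run yields the traversability rate $g$ (rather than the naive per-loop rate $f$) and supplies one factor of $\alpha(qa)$. After extracting the $W_p$-baseline, each buffer creation additionally contributes $\cM_{0,0}(qa)=e^{-2qa}$, and each single buffer deletion of type $(k,j)$ contributes $\cM_{k,j}(qa)/p$, after the same trick of factoring the path cost along the associated unit jumps as in \cref{rem:single:upper}.

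Putting these ingredients together, the product of transition probabilities along a good trajectory with $K$ deletions and $r\asymp K$ loop runs is at least
\[e^{-W_p(\gamma)+O(C_1\log(1/q))}\cdot\alpha(qa)^{r}\cdot\cM_{0,0}(qa)^{K+4}\prod_{i=1}^{K}\bigl(\cM_{k_i,j_i}(qa)/p\bigr),\]
and the analogue of \cref{lem:W:to:g} with $g$ in place of $f$ (whose proof carries over verbatim, since $g$ shares the same convexity, integrability and $-1/(2z)$ leading asymptotic for $g'$) bounds $W_p(\gamma)\le\tfrac{2}{q}\int_{aq}^{bq}g+O(1)$. Summing over deletion types and over all good trajectories with fixed $K$---the latter reducing as in \cref{eq:number:trajectories,eq:Matrix:bound} to the $(0,3)$-entry of the $(2K+3)$-rd power of a weighted version of the $6\times 6$ cycle matrix of \cref{eq:def:M:F:upper}---yields a Perron--Frobenius factor whose square equals $(2+\sqrt 2)\,\alpha^2(qa)\,\cM_{0,0}(qa)\sum_{(k,j)}\cM_{k,j}(qa)/p=h_2^2(qa)\cdot p$ by \cref{eq:def:h2}. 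Choosing $K=\lfloor h_2(qa)(b-a)\sqrt p\rfloor$ as in \cref{eq:def:K} and concluding as in \cref{eq:upper:conclusion} (the key cancellation being $\binom{b-a}{K}^2(h_2\sqrt p)^{2K}\asymp e^{2K}$ via Stirling) produces the claimed $2h_2(qa)(b-a)\sqrt q$ in the exponent. The main obstacle is the bookkeeping for the four deletion types and for the loop-run resummation: in particular, verifying that each of the four $\cM_{k,j}(qa)$ factors emerges correctly after factoring out the $W_p$-baseline along its distinct $(k,j)$ pattern of unit jumps, and that the weighted matrix analysis of \cref{eq:Matrix:bound} carries over with the new entries and produces exactly $h_2$, not a different combination of the $\cM_{k,j}$'s.
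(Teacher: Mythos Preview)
Your overall plan matches the paper's, but you have glossed over the one genuinely new difficulty in the two-neighbour case: controlling the loop runs. Your claim that a run of total extension $n$ contributes at least $\alpha(qa)e^{-ng(qa)+O(nq)}(1-e^{-\Omega(n)})$ is not correct. By \cref{lem:traversability:exact} the true multiplicative error is $1-(\bar\beta/\beta)^{n+1}$, and when $aq$ is small one has $|\bar\beta|/\beta=1-\Theta(\sqrt{aq})$, so the decay rate is $\Omega(n\sqrt{aq})$, not $\Omega(n)$. In particular, for odd $n$ with $n\sqrt{aq}=o(1)$ the factor $1-(|\bar\beta|/\beta)^{n+1}$ is $o(1)$, and such short runs would kill your lower bound. (This is exactly the parity issue flagged in \cref{sec:outlook}.)

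The paper deals with this by restricting to \emph{good reduced trajectories} in which every loop run has length at least $m=\lceil C_1\log(1/q)\max(1,1/\sqrt{aq})\rceil$; then \cref{lem:loops} gives the clean factor $\alpha(aq)e^{-\ell g}$ up to negligible error. This forced minimum length reduces the free space for distributing loops, so the trajectory count becomes $\binom{b-a-2mK}{K}^2$ rather than your $\binom{b-a}{K}^2$, and one must then check that $(1-O(mK)/(b-a))^{O(K)}\ge 1/2$, i.e.\ that $mK^2/(b-a)=o(1)$. This is precisely where the hypothesis $a\ge a^{(0)}=q^{-2/3}$ is used (giving $m\le 2C_1\log(1/q)a^{-1/4}$ and hence $mK^2/(b-a)\le q^{1/20}$). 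Your proposal uses $a^{(0)}$ nowhere, which is a symptom of the gap. Once you impose the minimum run length and carry the modified binomial through, the rest of your sketch (the $2K+4$ factors of $\alpha$, the four deletion types summing to give $h_2$ via \cref{eq:def:h2}, and the Stirling cancellation) matches the paper.
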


\begin{proof}
 Fix $a\ge a^{(0)}$ and $b=a+\lceil a^{2/3}\rceil\le \Lambda$ and recall \cref{eq:def:M}. Since we have already restricted $\cT$ to the relevant transitions, we may directly proceed to the analogue of \cref{lem:single:upper}.
\begin{lemma}[Probability of a single transition]
\label{lem:single:upper:2n}
Let $(F,F')$ be a transition in \cref{tab:transitions:2n} with $F=F(i,j;c,d;s)$ and $F'=F(i-\alpha,j-\beta;c+\gamma,d+\delta;s')$, such that $\{c-i,d-j\}\subset [a,b]$. If $s=1\neq s'$ and $\alpha=\beta=0$, then
\[\bbP_p\left(\cT(F,F')\right)=e^{-\gamma g(q(d-j))-\delta g(q(c-i+\gamma))}\times\cM_{\gamma,\delta}(aq)\times \begin{cases}
    e^{O(a^{-1/3})}&s'=0,\\
    e^{O(qa^{2/3})}&s'=2.
\end{cases}\]
The same expressions apply for $s\in\{0,1',2,2',3\}$ up to symmetry.
\end{lemma}
The proof of \cref{lem:single:upper:2n} is analogous to the one of \cref{lem:single:upper} and therefore omitted. However, note that loops were left out. Indeed we need to pay special attention to sequences of loops, since there are now two loops per frame state rather than a single one. This is dealt with by the following lemma. 

\begin{lemma}[Loop sequences]
\label{lem:loops}
Let $F=F(0,0;c,d;1)$ and $F'=F(0,0;c,d+\ell;1)$ be framed rectangles with $\ell\ge C_1\log(1/q)\max(1,1/\sqrt{cq})$ and $c\in[a,b]$. Then
\[\cT(F,F'):=\bbP_p\left(\exists t\ge 1,\cF(t)=F'|\cF(0)=F\right)\ge e^{-\ell g(cq)}\times \alpha(aq)\times e^{-O(q\ell+\sqrt qa^{1/6})}.\]
Analogous statements hold for other buffer states.
\end{lemma}
\begin{proof}
Recalling \cref{obs:stacking,eq:def:transitions:2n}, we obtain
\begin{align*}
\bbP_p\left(\exists t\ge 1,\cF(t)=F'|\cF(0)=F\right)&{}=\bbP_p\left(\cO^c(F'_\square\setminus F_\square)\right)\bbP_p\left(\cT_\uparrow(R(c,\ell))\right)\\
&{}\ge \alpha(cq)e^{-\ell g(cq)}\left(1-\left(\frac{|\bar\beta(e^{-f(cq)})|}{\beta(e^{-f(cq)})}\right)^{\ell+1}\right)e^{-O(q\ell)}\\
&{}\ge \alpha(cq)e^{-\ell g(cq)}e^{-O(q\ell)},
\end{align*}
using \cref{lem:traversability:exact} in the first inequality and $\ell\ge C_1\log(1/q)\max (1,1/\sqrt {cq})$ and the fact that $|\bar\beta(u)|/\beta(u)\le 1-\sqrt{u}/2$ for any $u\in[0,1]$ in the second one (recall \cref{eq:def:beta:bar,eq:def:beta}). Moreover,
\begin{align*}
\alpha(cq)&{}\ge \alpha(aq)\exp\left(-qO(b-a)\max_{z\in[aq,bq]}\left|(\log\alpha)'(z)\right|\right)\\
&{}=\alpha(aq)\exp\left(-qO(b-a)/\sqrt{aq}\right)=\alpha(aq)\exp\left(-O\left(\sqrt q a^{1/6}\right)\right).\qedhere\end{align*}
\end{proof}

Consider a possible trajectory $(F(t))_{t\ge 0}$ of the (sub-Markovian) chain $(\cF^{R(a,a)}(t))_{t\ge 0}$. Let $(F^{(t)})_{t\ge 0}$ be the sequence obtained by omitting any $F(t)$ such that the previous and next transitions are both loops. We call this sequence a \emph{reduced trajectory}. Similarly, we denote the (random) reduced trajectory associated to $(\cF^{R(a,a)}(t))_{t\ge 0}$ by $(\cF^{(t)})_{t\ge 0}$. Note that the reduced trajectory becomes finite, if $\cF^{R(a,a)}(\infty)$ is finite. 

\begin{definition}[Good reduced trajectory]
We say that a finite reduced trajectory $(F^{(t)})_{t=0}^{4K+8}$ is \emph{good} if it satisfies the following conditions. For each $t\in[0,2K+3]$, 
\begin{itemize}
\item $F^{(2t)}$ and $F^{(2t+1)}$ have the same frame state,
\item $F^{(2t+1)}$ and $F^{(2t+2)}$ have distinct frame states,
\item $\phi(F^{(2t+1)})-\phi(F^{(2t)})\ge C_1\log(1/q)\max(1,1/\sqrt{aq})$.
\end{itemize}
\end{definition}

Assembling \cref{lem:loops,lem:single:upper:2n}, we obtain that for any fixed good finite reduced trajectory $(F^{(t)})_{t=0}^{4K+8}$ with $F^{(4K+8)}$ of frame state $4$, we have\footnote{Note that we use the notation $\cT$ from \cref{tab:transitions:2n} and \cref{lem:single:upper:2n} for even and odd $t$ respectively.}
\begin{multline*}
\bbP_p\left(\forall t\le 4K+8, \cF^{(t)}=F^{(t)}\right)
=\prod_{t=1}^{4K+8}\bbP_p\left(\cT\left(F^{(t-1)},F^{(t)}\right)\right)\\
\ge e^{-W_p(\gamma)}\times \prod_{t=0}^{2K+3}(\alpha(aq)\cM_{\gamma_t,\delta_t}(aq))\times e^{-O(q(b-a)+(K+1)(qa^{2/3}+a^{-1/3}+\sqrt qa^{1/6}))}\end{multline*}
where $\gamma$ is the piecewise linear path from $(a,a)$ to $(b,b)$ corresponding to the first order terms in \cref{lem:loops,lem:single:upper:2n} and $\gamma_t,\delta_t$ are $\gamma,\delta$ in \cref{lem:single:upper:2n} for $F=F^{(2t+1)},F'=F^{(2t+2)}$. Furthermore, \cref{lem:W:to:g} applies without change to $W_p$ and $g$. Thus,
\begin{multline*}
\bbP_p\left(\forall t\le 4K+8, \cF^{(t)}=F^{(t)}\right)\\
\ge \exp\left(-\frac2q\int_{aq}^{bq}g\right)\times\prod_{t=0}^{2K+3}\left(\alpha(aq)\cM_{\gamma_t,\delta_t}(aq)\right)\times e^{-O(1+(K+1)(qa^{2/3}+a^{-1/3}))}.\end{multline*}

We now set $K$ to be given by \cref{eq:def:K} with $h_2$ instead of $h$ (recall \cref{eq:def:h2}). The last ingredient we need is an analogue of \cref{eq:number:trajectories}. Namely, we seek to bound the number $N(\Gamma)$ of good finite reduced trajectories $(F^{(t)})_{t=0}^{4K+8}$ with $F^{(0)}=F(0,0;a,a;0)$, $F^{(4K+8)}=F(-x,-y;b-x,b-y;4)$ for any $(x,y)\in\bbZ^2$ featuring a given sequence $\Gamma=(\gamma_t,\delta_t)_{t=0}^{2K+3}$. Note that $\gamma_t$ for odd $t$ and $\delta_t$ for even $t$ correspond to extending rectangles in the horizontal directions, while the remaining $\gamma_t$ and $\delta_t$ are vertical extensions. Let 
\begin{align*}
\alpha&{}=\sum_{t=0}^{K+1}\delta_{2t}+\gamma_{2t+1},&\beta&{}=\sum_{t=0}^{K+1}\gamma_{2t}+\delta_{2t+1}\end{align*}
and $m=\lceil C_1\log(1/q)\max(1,1/\sqrt{aq})\rceil\le 2C_1\log(1/q)a^{-1/4}$ (since $a\ge a^{(0)}$). Then 
\begin{align*}
N(\Gamma)&{}=\binom{b-a-\alpha-m(K+2)+(K+1)}{K+1}\binom{b-a-\beta-m(K+2)+(K+1)}{K+1}\\
&{}\ge \binom{b-a-2mK}{K}^2.\end{align*}
Proceeding as in \cref{eq:IFloc:ratio,eq:Matrix:bound,eq:upper:conclusion} we conclude that
\begin{multline*}
\frac{\bbP_p(\cI_{\mathrm{loc}}(R(a,a)))}{\bbP_p(\cI_{\mathrm{loc}}(R(b,b)))}\\
\ge \exp\left(-\frac2q\int_{aq}^{bq}g\right)\times e^{2h_2(aq)(b-a)\sqrt q}\times e^{-O(C_1)\log(1/q)}\left(1-\frac{O(mK)}{b-a}\right)^{O(K)}.\end{multline*}
This concludes the proof of \cref{prop:upper:main:2n}, since, recalling \cref{eq:def:K} gives
\[\frac{mK^2}{b-a}\le \frac{O(C_1)\log(1/q)a^{-1/4+2/6}}{a^{2/3}}\le a^{-1/13}\le q^{1/20},\]
so that 
\[\left(1-\frac{O(mK)}{b-a}\right)^{O(K)}\ge 1/2.\qedhere\]
\end{proof}

\bibliographystyle{plain}
\bibliography{Bib}
\end{document}